\documentclass[english,a4paper,12pt,reqno]{amsart}
\usepackage{tgtermes}
\usepackage[T1]{fontenc}
\usepackage{textcomp}
\usepackage[utf8]{inputenc}
\usepackage{xcolor}
\usepackage{babel}
\usepackage{verbatim}
\usepackage{float}
\usepackage{calc}
\usepackage{mathrsfs}
\usepackage{mathtools}
\usepackage{amstext}
\usepackage{amsthm}
\usepackage{amssymb}
\usepackage{graphicx}
\usepackage{esint}
\usepackage[bookmarks=true,bookmarksnumbered=true,bookmarksopen=true,bookmarksopenlevel=3,
 breaklinks=true,pdfborder={0 0 0},pdfborderstyle={},backref=false,colorlinks=true]
 {hyperref}
\hypersetup{
 bookmarksdepth=3,linkcolor=red,citecolor=blue}

\makeatletter
\numberwithin{equation}{section}
\numberwithin{figure}{section}

\usepackage{tikz-cd}
\usepackage[twoside,inner=1.3in,outer=1in,top=1in,bottom=1in]{geometry}
\usepackage{csquotes}
\DeclareRobustCommand{\skipsubsectionsinTOC}{\addtocontents{toc}{\protect\setcounter{tocdepth}{1}}}
\DeclareRobustCommand{\showsubsectionsinTOC}{\addtocontents{toc}{\protect\setcounter{tocdepth}{3}}} 

\makeatother

\theoremstyle{plain}
\newtheorem{thm}{\protect\theoremname}
\newtheorem{cor}[thm]{\protect\corollaryname}
\newtheorem{prop}[thm]{\protect\propositionname}
\theoremstyle{remark}
\newtheorem{rem}[thm]{\protect\remarkname}
\theoremstyle{definition}
\newtheorem{defn}[thm]{\protect\definitionname}
\newtheorem{example}[thm]{\protect\examplename}
\theoremstyle{plain}
\newtheorem{lem}[thm]{\protect\lemmaname}
\usepackage[style=alphabetic,backref=true, isbn=false, url=false, doi=false, backend=biber]{biblatex}
\providecommand{\corollaryname}{Corollary}
\providecommand{\definitionname}{Definition}
\providecommand{\examplename}{Example}
\providecommand{\lemmaname}{Lemma}
\providecommand{\propositionname}{Proposition}
\providecommand{\remarkname}{Remark}
\providecommand{\theoremname}{Theorem}

\begin{document}
\title{K-unstable Toric Varieties and Secondary Polytopes}
\author{Chi Li}
\address{%
\begin{tabular}[t]{@{}l@{}}
Department of Mathematics, Rutgers University, Piscataway, NJ 08854, U.S.A.\\
Department of Mathematics, Johns Hopkins University, Baltimore, MD 21218, U.S.A.
\end{tabular}}
\email{chi.li@rutgers.edu, cli261@jh.edu}
\author{$\;$Li Sheng}
\address{School of Mathematics, Sichuan University, Chengdu 610064, P. R. China.}
\email{lshengscu@gmail.com}
\author{$\;$Yi Yao}
\address{School of Mathematics, Hunan University, Changsha 410082, P. R. China.}
\email{yeeyoe@163.com}
\subjclass[2020]{32Q15, 14M25, 14L24, 32Pxx}
\begin{abstract}
For K-unstable toric varieties, Székelyhidi's optimal test-function
$\Theta_{P}$ is a mysterious concave function over the moment polytope
$P$, which gives the maximal destabilizer for K-stability, and encodes
the limiting behavior of the divergent Calabi flow. As balanced norms
quantize cscK metrics, we show that $\Theta_{P}$ can be quantized
by the maximal destabilizers for Chow-stability, which are given by
the shortest GKZ vectors, i.e., the least norm point on the secondary
polytope for the set $P\cap k^{-1}\mathbb{Z}^{n}$. Properties and
algorithm for general sGKZ vectors are given. Our result may provide
a new method to detect the K-unstability of toric varieties. 
\end{abstract}

\maketitle
\global\long\def\lam{\lambda}%
 
\global\long\def\vphi{\varphi}%
 
\global\long\def\cch{\mathcal{H}}%
 
\global\long\def\bbc{\mathbb{C}}%
 
\global\long\def\bbcs{\mathbb{C}^{*}}%
 
\global\long\def\bbr{\mathbb{R}}%
 
\global\long\def\bbz{\mathbb{Z}}%
 
\global\long\def\bbn{\mathbb{N}}%

\global\long\def\ra{\rightarrow}%
 
\global\long\def\lead{\leadsto}%
 
\global\long\def\'{\prime}%
 
\global\long\def\cleq{\preceq}%
\global\long\def\cgeq{\succeq}%
 
\global\long\def\na{{\scriptscriptstyle \mathsf{NA}}}%
 
\global\long\def\an{\mathrm{an}}%
 
\global\long\def\ma{\mathrm{MA}}%
 
\global\long\def\deff{\coloneqq}%

\tableofcontents{}

\section{Introduction}

\skipsubsectionsinTOC

Consider a seemingly innocent variational problem. Let $P$ be a full-dimensional
polytope in $\bbr^{n}$, one wants to minimize the functional 
\begin{equation}
\Psi(\eta)=\frac{1}{2}\int_{P}\eta^{2}\mathrm{d}x+\int_{\partial P}\eta\ \mathrm{d}\sigma,\ \eta:P\rightarrow\bbr\text{ is convex},\label{eq: begin Phi}
\end{equation}
where $\mathrm{d}x$ is Lebesgue measure and boundary measure $\mathrm{d}\sigma$
is proportional to the Lebesgue measure on each facet. If one decreases
the boundary integral, convexity will force the $L^{2}$-norm to increase.
Indeed, $\Psi$ admits a unique minimizer, which is affine in most
situations. It remains unknown whether the minimizer is always piecewise
affine. 

This variational problem originates from Székelyhidi \cite{szekelyhidi_optimal_2008}.
Its minimizer relates to the long-time behavior of the Calabi flow
on the toric Kähler manifold associated to $P$. We discuss this context
below.

Let $(X,L)$ be the polarized toric Kähler manifold induced by a lattice
Delzant polytope $P\subset M_{\bbr}\coloneqq M\otimes\bbr$, where
$M$ is a lattice of rank $n$. As an instance of Yau--Tian--Donaldson
conjecture, the existence of extremal metrics in class $c_{1}(L)$
is expected to be equivalent to a stability condition on $P$, see
\cite{donaldson_scalar_2002,szekelyhidi_introduction_2014}. With
the aid of torus symmetry, the existence can be reduced to the solvability
of a fourth-order PDE on $P$. The stability condition is called the
relative K-polystability, which requires that the relative Donaldson--Futaki
invariant
\[
\mathbb{M}^{\mathrm{rel}}_{P}(\eta)\coloneqq\int_{\partial P}\eta\ \mathrm{d}\sigma-\int_{P}\eta\ell_{P}\ \mathrm{d}x\geq0
\]
for all rational piecewise affine convex functions $\eta$, and equality
holds only for affine $\eta$. See §\ref{subsec:Szekelyhidi's-optimal-test}
for details. 

It is widely accepted that this stability condition needs to be strengthened
to make the equivalence hold. There are two kinds of strengthened
conditions: uniform K-stabilities \cite{dervan_uniform_2016} and
testing more functions \cite{szekelyhidi_filtrations_2015}. To date,
various versions of equivalence have been established, refer to \cites{donaldson_extremal_2008}{chen_extremal_2018}{chen_constant_2021-1}{hisamoto_stability_2020}{li_geodesic_2022}{li_extremal_2023}{jubert_yautiandonaldson_2023}{nitta_uniform_2021}
and their references. Meanwhile, there exist several criteria for
practically verifying stability, see \cite{zhou_relative_2008,yotsutani_relative_2019}. 

\subsection*{\texorpdfstring{Optimal test function $\Theta_{P}$}{Optimal test function Theta\_P}}

The unstable case has received less attention, see \cites{wang_existence_2011}{wang_k-stability_2014}{sektnan_investigation_2018}{yotsutani_relative_2019}{araujo_calabi_2023}{hwang_toric_2024}
for examples. When $P$ is relatively K-unstable (i.e. $\mathbb{M}^{\mathrm{rel}}_{P}$
may be negative), \cite{szekelyhidi_optimal_2008} showed the minimization
problem 
\[
\inf\left\{ \frac{\mathbb{M}^{\mathrm{rel}}_{P}(\eta)}{\left\Vert \eta\right\Vert _{L^2}}\ \Big|\ 0\neq\eta\in\mathcal{C}_{1}(P)\cap L^{2}(P)\right\} 
\]
admits a unique minimizer $\ell_{P}-\Theta_{P}$ up to scaling, see
(\ref{eq:space C1}) for the enlarged space $\mathcal{C}_{1}(P)$
of convex functions. Proposition \ref{prop: recharac Theta} shows
that $-\Theta_{P}$ is actually the unique minimizer of $\Psi$ (\ref{eq: begin Phi})
above. Let $\Theta_{P}=\ell_{P}$ when $P$ is relatively K-semistable
(i.e. $\mathbb{M}^{\mathrm{rel}}_{P}\geq0$). We call this concave
function $\Theta_{P}$ ($=B$ in \cite{szekelyhidi_optimal_2008})
the \emph{optimal test function} for $P$. 

The boundedness of $\Theta_{P}$ is proved by Li, Lian and the second
author \cite{li_extremal_2023}. We find that their proof can yield
stronger results. 
\begin{thm}[Continuity of $\Theta_{P}$, §\ref{proof of Li-Lian-Sheng}]
\label{thm: theta is continuous-INTRO}  The optimal test function
$\Theta_{P}$ can extend to a concave function on the whole space.
In particular, $\Theta_{P}$ is continuous up to the boundary of $P$,
and the subdifferential set $\partial\left(-\Theta_{P}\right)\left(\mathrm{int}P\right)$
is bounded. 
\end{thm}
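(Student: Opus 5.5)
The plan is to prove that the subgradients of $-\Theta_{P}$ on $\mathrm{int}P$ stay in a fixed bounded set. Everything else then follows formally: $-\Theta_{P}$ is Lipschitz on $\mathrm{int}P$, so it extends continuously to $\overline{P}$. It then extends to a convex function on $\bbr^{n}$ by taking the supremum of all its supporting affine functions, each of which has slope in the bounded set. Thus the core claim is: there is $C>0$ such that for every $x\in\mathrm{int}P$ and every $p\in\partial(-\Theta_{P})(x)$ we have $|p|\le C$.

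I would get this bound by using the variational characterization of Proposition \ref{prop: recharac Theta}: $-\Theta_{P}$ is the unique minimizer of $\Psi$ over convex functions. It gives the following comparison principle. For any convex $\eta$, the function $\max(-\Theta_{P},\eta)$ is again an admissible competitor, so $\Psi(\max(-\Theta_{P},\eta))\ge\Psi(-\Theta_{P})$. Write $u=-\Theta_{P}$ and $U=\{\eta>u\}$. Expanding the inequality gives
\[
\frac12\int_{U}(\eta^{2}-u^{2})\,\mathrm{d}x+\int_{\partial P\cap\overline U}(\eta-u)\,\mathrm{d}\sigma\ \ge 0 .
\]
This is the mechanism by which Li--Lian--Sheng \cite{li_extremal_2023} obtained boundedness, and I would reuse their $L^{\infty}$ bound $|u|\le C_{0}$.

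Now suppose $p\in\partial u(x_{0})$ with $|p|$ large, and set $\nu=p/|p|$. Take $\eta$ to be the supporting affine function $\eta(y)=u(x_{0})+p\cdot(y-x_{0})$, raised slightly: $\eta+\epsilon$. Then $U$ contains a thin region near the face of $P$ in direction $\nu$, where $\eta$ exceeds $u$. On $U$, the quantity $\eta-u$ is at most $\epsilon$ plus the gap between $u$ and its tangent plane; since $u\ge\eta$ off $U$, this gap is controlled. Because $|u|\le C_{0}$ and the slope is huge, $U$ lies in a slab of width roughly $C_{0}/|p|$ near $\partial P$. It is therefore a cap cut off by the hyperplane $\{\eta+\epsilon=u\}$.

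Next compare the two contributions to the inequality. The interior term is bounded by $C\cdot\epsilon\cdot\mathrm{vol}(U)$. The boundary term enters with a sign: the inequality must fail if the boundary term is negative enough. I would therefore use instead $\eta=u(x_{0})+p\cdot(y-x_{0})-\epsilon$ and the competitor $\min$-type construction on the other side. Concretely, I would replace $u$ by $\max(u,\ell)$, where $\ell$ is an affine function tilted to cut the cap. The point is that on the boundary piece $\partial P\cap\overline U$, of $(n-1)$-measure comparable to $\mathrm{vol}(U)\cdot|p|$ because the slab is thin, increasing $u$ costs boundary integral linearly. The interior gain, however, is only of order $\mathrm{vol}(U)$. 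Balancing these shows that a cap with large slope contradicts minimality once $|p|\ge C(n,P,C_{0})$.

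I expect the main obstacle to be this geometric comparison of $\sigma(\partial P\cap U)$ versus $\mathrm{vol}(U)$ uniformly near the vertices of $P$. Near a vertex the cap is a small simplex-like region, and the ratio of facet area to volume is of order $1/\text{width}$. The delicate part is to make sure that the facets meeting the cap carry boundary measure $\mathrm{d}\sigma$ with uniformly positive density. This holds because $\mathrm{d}\sigma$ is proportional to Lebesgue measure on each facet with positive constants. I would handle this by using the polytope's finiteness: finitely many facets, and cones at vertices with uniform angles.
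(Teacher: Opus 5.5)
There is a genuine gap: the competitors you write down push $u=-\Theta_P$ in the wrong direction. Both $\max(u,\eta)$ and $\max(u,\ell)$ lie above $u$. With $w=(\eta-u)_+\ge0$ one has
\[
\Psi(u+w)-\Psi(u)=\int_P uw+\frac12\int_P w^2+\int_{\partial P}w\,\mathrm{d}\sigma ,
\]
so raising $u$ always costs boundary integral. Your geometric observation is that near a point with large $|p|$ the set $U$ is a thin slab whose boundary trace is large compared with its volume. That is precisely the regime in which raising is unfavourable: $\Psi(\max(u,\eta))\ge\Psi(u)$ then holds and nothing is contradicted. If the section $U$ does not reach $\partial P$ at all, the inequality only says that $u$ is not too negative on average in $U$, which carries no information about $|p|$.

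You notice the sign problem (``the boundary term enters with a sign'') and suggest a ``$\min$-type construction on the other side''. But $\min(u,\ell)$ is not convex: along any line crossing $\{u=\ell\}$ inside $\mathrm{int}P$ the slope drops from that of $u$ to that of $\ell$, giving a concave kink. So it is not an admissible competitor, and the proposal then falls back to $\max(u,\ell)$. As written, the claimed contradiction for $|p|\ge C(n,P,C_0)$ does not follow.

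The missing idea is a \emph{convex} competitor lying \emph{below} $u$ that changes $u$ only near $\partial P$. This is the Li--Lian--Sheng construction the paper reproduces, not a $\max$-comparison. Translate so that $0\in\mathrm{int}P$, normalize $u^o=u-\ell\ge0$ with $u^o(0)=0$, let $f^o=(u^o)^*$, and take the Legendre truncation
\[
u^o_h=\bigl(f^o+I_{\{f^o\le h\}}\bigr)^*,\qquad u_h=u^o_h+\ell\le u .
\]
This is the supremum of those supporting affine functions whose dual value is at most $h$, so it is convex. Its key properties are:
\begin{enumerate}
\item $u_h$ agrees with $u$ on the star-shaped set $W_h=\partial f^o(\{f^o\le h\})$.
\item $W_h$ exhausts $P$ uniformly as $h\to\infty$.
\item $u_h$ is affine along each ray beyond $W_h$.
\end{enumerate}
Hence $v_h=u-u_h$ is nondecreasing along rays from $0$. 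Integrating over the cones on the facets gives
\[
\int_P v_h\le C\Bigl(1-\inf_q r_h(q)^n\Bigr)\int_{\partial P}v_h\,\mathrm{d}\sigma .
\]
Lowering therefore saves $\int_{\partial P}v_h\,\mathrm{d}\sigma$ on the boundary. Because $0\le u^o_h\le u^o$, it costs only $O\bigl(\int_P v_h\bigr)$ in the $L^2$ term. Minimality then forces $v_h\equiv0$ for $h\gg1$, so $u=u_h$. This $u_h$ is a finite convex function on all of $M_{\mathbb{R}}$ with subgradients in the compact set $S_{f^o}(h)+p^o$.

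This argument is your heuristic ``boundary trace dominates volume in a thin shell'', applied in the direction where it actually yields a contradiction. It also needs no a priori $L^\infty$ bound. Your reduction of the theorem to a uniform subgradient bound, and the extension by supporting planes, are fine.
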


As \cite{szekelyhidi_optimal_2008} showed, $\Theta_{P}$ encodes
important information of $X$. For instance, if the Calabi flow admits
long-time solution, then the scalar curvature (as function on $P$)
would converge to $\Theta_{P}$. More interestingly, when $\Theta_{P}$
is piecewise affine, it induces a polyhedral decomposition of $P$,
then every subpolytope is semistable in a certain sense, see Theorem
\ref{thm: Gabor paper}. Hence $\Theta_{P}$ is analogous to the Harder--Narasimhan
filtration for the unstable holomorphic vector bundles, where the
monotonicity of the slopes of quotient bundles mimics the concavity
of $\Theta_{P}$. To the authors' knowledge, there is no explicit
example of $\Theta_{P}$ so far, see \cite{sektnan_investigation_2018}
for examples with zero boundary measure. By the way, the optimal test-functions
for Ding-stability of toric Fano varieties have been determined in
\cite{yao_mabuchi_2022}. They are always piecewise affine. 

\subsection*{Maximal destabilizers for K-stability}

Optimal test function $\Theta_{P}$ is a realization of maximal K-destabilizers
for general K-unstable Kähler manifold $(X,L)$. Recall that the constant
scalar curvature Kähler metrics are critical points of K-energy. When
$(X,L)$ is K-unstable, Xia \cite{xia_sharp_2021} show that there
is a unique (modulo parallel relation) finite-energy geodesic ray
along which the K-energy decreases fastest. By \cites{berman_variational_2021}{li_geodesic_2022},
this ray can be encoded into a non-Archimedean metric on $L$, which
we call the \emph{maximal K-destabilizer} of $(X,L)$. In the toric
setting, with the aid of symmetry, it reduces to $\Theta_{P}$, see
\cite[Example 44]{yao_maximal_2025}. 

\subsection*{Maximal destabilizers for Chow-stability}

The third author \cite{yao_maximal_2025} suggests to study the maximal
K-destabilizers via the Chow-stability. Since its birth in \cite{tian_kahler-einstein_1997},
K-stability has been closely connected with Chow-stability. Let $\iota_{k}:X\hookrightarrow\mathbb{P}R^{*}_{k}$
be the Kodaira embedding, where $R_{k}\coloneqq\mathrm{H}^{0}(X,kL)$.
Chow-stability means the GIT stability of Chow-form attached to $\iota_{k}(X)$.
Ross and Thomas \cite{ross_study_2006} showed that asymptotic Chow-semistability
can imply K-semistability. Hence if $(X,L)$ is K-unstable, then $(X,kL)$
is Chow-unstable for sufficiently large $k$. We can consider the
\emph{maximal Chow-destabilizer} for $\iota_{k}(X)$, which is defined
as a filtration on $R_{k}$ induced by the 1-ps of $\mathrm{SL}\left(R_{k}\right)$
minimizing the Chow-weights. 

A natural question is whether the maximal Chow-destabilizer converges
to the maximal K-destabilizer in a certain sense as $k\rightarrow\infty$.
This is similar to the stable case, in which Donaldson \cite{donaldson_scalar_2001}
used balanced metrics to approximate cscK metrics. One difference
is that there is no Euler--Lagrange equation for maximal destabilizers,
only a variational inequality. 

This paper study this problem for toric varieties. The main results
are 

(1) We determine the maximal Chow-destabilizers by a combinatorial
object called the shortest GKZ vector, see Theorem \ref{thm: Chow-desta INTRO}
below. We find two properties for sGKZ vectors: convexity and positivity,
and provide an algorithm to compute them. 

(2) We establish the $L^{2}$-convergence of maximal Chow-destabilizers
to the maximal K-destabilizer, see Theorem \ref{thm:main converge}
below. 

\subsection*{Maximal Chow-destabilizers for toric varieties}

For toric varieties, an explicit formula of Chow-weights (for 1-ps
in the diagonal torus) is obtained in \cite{kapranov_chow_1992},
see also \cite{gelfand_discriminants_1994,ono_algebro-geometric_2013}.
We will derive this formula via the NA expression of Chow-weight obtained
in \cite{yao_maximal_2025}. 

Let $(X^{\an},L^{\an})$ be the Berkovich analytification of $(X,L)$
with the trivially-normed base field $\bbc$. Let $\mathcal{N}(R_{k})$
be the space of NA norms (or filtrations) on $R_{k}$. Then the Chow-weights
for subvariety $\iota_{k}(X)$ are given by functional 
\begin{equation}
M_{k}:\mathcal{N}(R_{k})\ra\bbr,\ M_{k}(\chi)=\mathbb{E}\left(\mathrm{FS}_{k}(\chi)\right)-E_{k}(\chi),\label{eq:Chow w INTRO}
\end{equation}
where $\mathrm{FS}_{k}(\chi)$ (\ref{def FSk}) is the NA Fubini--Study
metric induced by $\chi$, $\mathbb{E}$ is NA Monge--Ampère energy,
and $E_{k}(\chi)$ is the average of jumping numbers (\ref{eq: def Ek}). 

If $\iota_{k}(X)$ is Chow-unstable (i.e. $\inf M_{k}<0$), the maximal
Chow-destabilizer $\chi^{*}_{k}$ is the unique (up to scaling) minimizer
of the functional 
\[
M_{k}(\chi)/\left\Vert \chi\right\Vert _{2},\ \chi\in\mathcal{N}(R_{k})\backslash\{\chi_{tr}\},
\]
see \cite[Theorem 1]{yao_maximal_2025}. Moreover, $\chi^{*}_{k}$
is invariant under all automorphisms of $(X,L)$, that allows us to
search the minimizer only among the invariant norms. 

Apply these facts to toric variety $(X,L)$ given by a lattice polytope
$P\subset M_{\bbr}$. It suffices to consider the space $\mathcal{N}(R_{k})^{T}$
of torus-invariant norm on $R_{k}$. This space can be identified
with 
\[
\bbr^{P_{k}}=\left\{ f:P_{k}\ra\bbr\right\} ,\ P_{k}=P\cap k^{-1}M.
\]
On the other hand, via the tropicalization map, the space $\mathrm{CPSH}^{\na}_{T}(L)$
of invariant continuous psh metrics on $L^{\an}$ can be identified
with the space $\mathcal{C}(P)$ of convex functions on $P$ (continuous
up to $\partial P$). Then the NA Fubini--Study map $\mathrm{FS}_{k}$
and super-norm map $\mathrm{SN}_{k}$ (\ref{def SNk}) are identified
with the convex envelope $\vee$ (\ref{eq:conv envelop}) and restriction
respectively. 
\begin{eqnarray*}
\mathcal{N}(R_{k})^{T} & \xrightleftharpoons[\ \ \ \mathrm{SN}_{k}\ \ \ ]{\mathrm{FS}_{k}} & \mathrm{CPSH}^{\na}_{T}(L)\\
(\ref{eq:identify N W})\ \big\downarrow\simeq &  & \simeq\big\downarrow\ (\ref{eq:identify CPSH to conv})\\
\bbr^{P_{k}} & \xrightleftharpoons[\text{restriction}]{\text{convex envelope }\vee} & \mathcal{C}(P)
\end{eqnarray*}
The Chow-weight $M_{k}$ (\ref{eq:Chow w INTRO}) is identified with
\[
M_{P_{k}}(f)\coloneqq\frac{1}{\left|P_{k}\right|}\sum_{u\in P_{k}}f(u)-\frac{1}{V_{P}}\int_{P}f^{\vee}\ \mathrm{d}x,\ \forall f\in\bbr^{P_{k}},
\]
which is consistent with the formula in \cite{kapranov_chow_1992,ono_algebro-geometric_2013}. 

We analyze $M_{P_{k}}$ via Gelfand--Kapranov--Zelevinsky's secondary
polytope $\Sigma(P_{k})\subset\bbr^{P_{k}}$, which encodes all subdivisions
of $(P,P_{k})$ and the refinement relations between them. Despite
its complicated structure, we are only concerned with the least-norm
point on $\Sigma(P_{k})$, which we call the \textit{shortest GKZ
vector} $\sigma_{k}\in\Sigma(P_{k})$. 
\begin{thm}[determine the maximal Chow-destabilizers, Theorem \ref{thm: when semistable},
\ref{thm: Chow destabi}]
\label{thm: Chow-desta INTRO}

For toric variety $(X,L)$ given by lattice polytope $P\subset M_{\bbr}$,
suppose that $\iota_{k}:X\hookrightarrow\mathbb{P}R^{*}_{k}$ is an
embedding ($k\ge n-1$ is enough). 

(1) Subvariety $\iota_{k}(X)\subset\mathbb{P}R^{*}_{k}$ is Chow-semistable
(w.r.t. the action of $\mathrm{SL}(R_{k})$) if and only if the constant
vector $\left|P_{k}\right|^{-1}\in\Sigma(P_{k})$, or equivalently
$\sigma_{k}\equiv\left|P_{k}\right|^{-1}$. 

(2) When $\iota_{k}(X)$ is Chow-unstable, the minimizers (i.e. maximal
Chow-destabilizers) of 
\begin{equation}
\inf\left\{ \frac{M_{k}(\chi)}{\left\Vert \chi\right\Vert _{2}}\ \Big|\ \chi\in\mathcal{N}(R_{k})\backslash\{\chi_{tr}\}\right\} \label{eq: min Chow  INTRO}
\end{equation}
 are of the form $a\chi^{*}_{k}$ ($a>0$), where $\chi^{*}_{k}$
is diagonalized by the basis $\left\{ s_{u,k}\right\} _{u\in P_{k}}$
(\ref{eq:eigen basis}) and satisfies 
\[
\chi^{*}_{k}(s_{u,k})=\left|P_{k}\right|^{-1}-\sigma_{k}(u),\ \forall u\in P_{k}.
\]
\end{thm}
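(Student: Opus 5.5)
The plan is to reduce everything to a finite-dimensional convex problem on $\bbr^{P_{k}}$ and then read it through the geometry of the secondary polytope. By \cite[Theorem 1]{yao_maximal_2025}, the maximal Chow-destabilizer is unique up to scaling and invariant under $\mathrm{Aut}(X,L)\supset T$. It therefore suffices to minimize over $\mathcal{N}(R_{k})^{T}\simeq\bbr^{P_{k}}$. On this space $M_{k}$ becomes $M_{P_{k}}$ and $\|\chi\|_{2}$ becomes the normalized $\ell^{2}$-norm of $f$, up to an additive normalization. For (1), Chow-semistability is the statement $\inf M_{k}\ge0$, and restricting to $T$-invariant norms loses nothing.

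The key identity is the GKZ description of the integral term. For $f\in\bbr^{P_{k}}$, the function $f^{\vee}$ is affine on the cells of the regular subdivision $S_{f}$ of $(P,P_{k})$ induced by $f$. Hence
\[
\frac{1}{V_{P}}\int_{P}f^{\vee}\,\mathrm{d}x=\max_{\phi\in\Sigma(P_{k})}\langle\phi,f\rangle ,
\]
where $\langle\phi,f\rangle=\sum_{u}\phi(u)f(u)$ and the GKZ vectors are normalized so that $\phi(u)=V_{P}^{-1}\sum_{\tau\ni u}\mathrm{vol}(\tau)/(n+1)$ over triangulations. The inequality ``$\ge$'' holds because $f^{\vee}$ is the lower envelope. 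The two sides are equal at the vertices of $\Sigma(P_{k})$ given by triangulations refining $S_{f}$, with a sign convention chosen so that concave lifts correspond to maximization. This is the standard fact that $\Sigma(P_{k})$ is the polytope whose support function is the integral of the convex envelope, and I would prove it directly by integrating an affine function over a simplex: the integral equals the volume times the average of the vertex values. Consequently
\[
M_{P_{k}}(f)=\min_{\phi\in\Sigma(P_{k})}\langle c-\phi,f\rangle,\qquad c\equiv|P_{k}|^{-1}.
\]

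For (1), $\inf M_{P_{k}}\ge0$ holds iff $\langle c-\phi,f\rangle\ge0$ has a common solution $\phi$ for all $f$. By the minimax theorem on the compact convex set $\Sigma(P_{k})$, this happens iff $c\in\Sigma(P_{k})$. Since $\sum_{u}\phi(u)=1$ for every $\phi\in\Sigma$, the point $c$ is the orthogonal projection of $0$ onto the affine hull of $\Sigma$. So $c\in\Sigma$ is equivalent to $\sigma_{k}=c$.

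For (2), minimize $\min_{\phi}\langle c-\phi,f\rangle/\|f\|$. By homogeneity this is the same as minimizing over $\|f\|\le1$, and minimax gives
\[
\inf_{\|f\|\le1}\max_{\phi}\langle\phi-c,f\rangle\cdot(-1)=-\min_{\phi\in\Sigma}\|\phi-c\|.
\]
The optimal $\phi$ is the nearest point of $\Sigma$ to $c$, and the optimal $f$ is proportional to $c-\phi$. Because $c$ is the orthogonal projection of $0$ onto $\mathrm{aff}\,\Sigma$, Pythagoras shows that the point of $\Sigma$ nearest to $c$ is exactly $\sigma_{k}$. Thus $f^{*}=c-\sigma_{k}$, unique up to positive scaling by strict convexity of the norm.

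The step I expect to be hardest is matching normalizations. The $L^{2}$ norm on $\mathcal{N}(R_{k})$, the treatment of translations of $\chi$ (the $\mathrm{SL}$ versus $\mathrm{GL}$ issue), and the orthogonality to constants must all agree with the Euclidean structure on $\bbr^{P_{k}}$. Since $f^{*}=c-\sigma_{k}$ has zero sum, it is already normalized, which is reassuring. I would check that $\|\chi\|_{2}$ for a diagonal norm equals the standard deviation of $f$ weighted by $|P_{k}|^{-1}$. Finally, I would use the uniqueness in \cite{yao_maximal_2025} to conclude that all minimizers in \eqref{eq: min Chow  INTRO} are of the form $a\chi^{*}_{k}$.
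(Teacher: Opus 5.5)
Your route is the paper's. You reduce to $T$-invariant norms via the symmetry principle of \cite{yao_maximal_2025}, which also settles (1) because in the unstable case the destabilizer is $T$-invariant; the paper instead cites the equivalence of Chow-semistability with semistability for the maximal torus. You then express the integral term through $\Sigma(P_k)$ and read everything off from the position of $\Sigma(P_k)$ relative to $c\equiv|P_k|^{-1}$, using the minimax theorem where the paper uses Lemma~\ref{lem: min-max}.

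However, your central identity has the wrong sign. Since $f^{\vee}\le\varLambda_T(f)$ for every triangulation $T$, one gets $\frac{1}{V_P}\int_P f^{\vee}\le\langle g_T,f\rangle$, with equality when $T\prec S(f)$. So the integral equals $\min_{\phi\in\Sigma(P_k)}\langle\phi,f\rangle$, not a max, and
\[
M_{P_k}(f)=\max_{\phi\in\Sigma(P_k)}\langle c-\phi,f\rangle .
\]
This is a convex function (the support function of $c-\Sigma(P_k)$), not the concave $\min_{\phi}\langle c-\phi,f\rangle$ you display. Taken literally, your formula would make (1) read $\Sigma(P_k)=\{c\}$, and your display in (2) would evaluate to $-\max_{\phi}\|\phi-c\|$, the farthest point. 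Your ``common solution'' minimax step and your saddle-point/uniqueness step only make sense for the max form. With the correct formula, (2) is one line. For $f\neq0$,
\[
M_{P_k}(f)/\|f\|\ge\langle c-\sigma_k,f\rangle/\|f\|\ge-\|\sigma_k-c\|,
\]
and equality in Cauchy--Schwarz forces $f=t(\sigma_k-c)$ with $t>0$. Equality is attained there, because $\min_{\phi\in\Sigma(P_k)}\langle\phi,\sigma_k\rangle=\|\sigma_k\|^2$ gives $M_{P_k}(\sigma_k-c)=-\|\sigma_k-c\|^2$.

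Consequently the minimizing $f$ is $\sigma_k-c$, not $c-\sigma_k$. Indeed, $M_{P_k}(c-\sigma_k)=\max_{\phi}\langle\phi,\sigma_k\rangle-c\ge\|\sigma_k-c\|^2>0$. The asserted form $\chi^*_k(s_{u,k})=|P_k|^{-1}-\sigma_k(u)$ appears only through the sign in the identification $\chi_f(s_{u,k})=-kf(u)$, which is exactly what turns $M_k$ into $M_{P_k}$; the factor $k$ is absorbed into $a$. You have conflated $f$ with the values of $\chi$.

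Two smaller corrections:
\begin{enumerate}
\item $c$ is the orthogonal projection of $0$ onto the hyperplane $\{\sum_u g(u)=1\}\supset\Sigma(P_k)$, not onto $\mathrm{aff}\,\Sigma(P_k)$. The affine hull also carries the barycenter constraints, and its foot point is $\ell_{P_k}$, which equals $c$ only under Ono's condition. Pythagoras on the hyperplane is all you need.
\item $\|\chi\|_2$ is the raw quadratic mean of the jumping numbers, not their standard deviation. Since $M_k$ is translation invariant, the standard deviation would make minimizers unique only modulo adding constants. The raw norm is what forces zero mean and gives uniqueness up to $a>0$.
\end{enumerate}
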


With respect to the diagonal torus action, the equivalence in (1)
has been obtained in \cite{ono_algebro-geometric_2013}. The symmetry
of maximal destabilizers enables the equivalence to hold also for
$\mathrm{SL}(R_{k})$. 

\subsection*{Shortest GKZ vectors for general point sets }

We have not found any existing studies on the shortest GKZ vectors,
which may be of independent interest for combinatorics. Thus we investigate
it for general point sets that generalize $P_{k}$. 

A marked polytope $(Q,A)$ is a polytope $Q\subset M_{\bbr}$ with
marking point set $A$ such that $Q=\mathrm{conv}(A)$. A triangulation
of $(Q,A)$ gives rise to a GKZ vector $g_{T}:A\ra\bbr$, see (\ref{eq:def GKZ vec}).
The secondary polytope $\Sigma(A)\subset\bbr^{A}$ is the convex hull
of all GKZ vectors which never contains the origin. Its faces (vertices)
are in one-to-one correspondence to the regular subdivisions (triangulations)
of $(Q,A)$. 

Shortest GKZ vector $\sigma_{A}$ is the unique point on $\Sigma(A)$
with the least norm, which may not be a vertex of $\Sigma(A)$. Note
that $\sigma_{A}$ depends only on affine relations among the points
in $A$, not on the background metric. One may think of $\sigma_{A}$
as representing the most uniform subdivision of $(Q,A)$ in some sense. 
\begin{thm}[properties of shortest GKZ vectors, Theorem \ref{thm: convex of mGKZ},
\ref{thm: positive of mGKZ}]
For any marked polytope $(Q,A)$ with shortest GKZ vector $\sigma_{A}\in\Sigma(A)$,
we have 

(1) $\sigma_{A}$ is convex, that means the restriction of the convex
envelope $\sigma^{\vee}_{A}|_{A}=\sigma_{A}$. 

(2) $\sigma_{A}:A\ra\bbr$ is positive at each point. 
\end{thm}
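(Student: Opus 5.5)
The plan is to work with the standard linear-functional description of the secondary polytope. For a height function $f\in\bbr^{A}$ and a triangulation $T$ of $(Q,A)$, the GKZ formula (\ref{eq:def GKZ vec}) gives $\langle f,g_{T}\rangle=(n+1)\int_{Q}f_{T}\,\mathrm{d}x$, where $f_{T}$ is the piecewise affine interpolation of $f$ on $T$ (up to the normalising constant used in the paper's convention). Minimising over $T$ yields
\[
\min_{g\in\Sigma(A)}\langle f,g\rangle=(n+1)\int_{Q}f^{\vee}\,\mathrm{d}x .
\]
The minimising face of $\Sigma(A)$ is spanned by the $g_{T}$ with $T$ refining the regular subdivision $S_{f}$ induced by the lower hull of $f$.

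Since $\sigma=\sigma_{A}$ is the least-norm point of the convex set $\Sigma(A)$, it is characterised by the variational inequality $\langle\sigma,g-\sigma\rangle\ge0$ for all $g\in\Sigma(A)$. Hence $\sigma$ minimises the functional $\langle\sigma,\cdot\rangle$ on $\Sigma(A)$, so $\sigma$ lies in the face associated with the height $\sigma$ itself. Therefore $\sigma=\sum_{i}\lambda_{i}g_{T_{i}}$ is a convex combination with every $T_{i}$ refining $S_{\sigma}$. I would also record that every $g_{T}\ge0$ componentwise, so $\sigma\ge0$ and consequently $\sigma^{\vee}\ge0$.

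\emph{Convexity.} Suppose $a\in A$ with $\sigma^{\vee}(a)<\sigma(a)$. Then the lifted point $(a,\sigma(a))$ is not on the lower hull, so $a$ is not a vertex of any cell of $S_{\sigma}$. Every $T_{i}$ refines $S_{\sigma}$, so no $T_{i}$ uses $a$ as a vertex, which gives $g_{T_{i}}(a)=0$ for all $i$ and hence $\sigma(a)=0$. This yields $\sigma^{\vee}(a)<0$, contradicting $\sigma^{\vee}\ge0$. Thus $\sigma^{\vee}|_{A}=\sigma$.

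\emph{Positivity.} Suppose $\sigma(a)=0$ for some $a\in A$. Since $\sigma^{\vee}\ge0$ and $\sigma^{\vee}(a)=0$, the set $Z=\{\sigma^{\vee}=0\}$ is the minimum set of the convex function $\sigma^{\vee}$. It is therefore a nonempty face of the lower-hull complex, i.e. a (possibly lower-dimensional) face of some cell of $S_{\sigma}$. Its vertices are points $v\in A$, and by convexity (part (1)) they satisfy $\sigma(v)=\sigma^{\vee}(v)=0$.

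On the other hand, take any triangulation $T_{i}$ refining $S_{\sigma}$. The point $v$ lies in some full-dimensional simplex of $T_{i}$ contained in a cell of $S_{\sigma}$ having $v$ as a vertex. Since $v$ is an extreme point of that cell, it must be a vertex of the simplex. Hence $g_{T_{i}}(v)$, being a sum of volumes of the simplices containing $v$ as a vertex, is strictly positive. Taking the convex combination gives $\sigma(v)>0$, a contradiction.

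The main point requiring care is this last step. One must check that vertices of the face $Z$ really are vertices of cells of $S_{\sigma}$, hence marked points, and are forced to be vertices of every refining triangulation. One must also make sure that the normalisation of $g_{T}$ in (\ref{eq:def GKZ vec}) matches the integral identity used at the start; I expect these to be routine once the face-correspondence for $\Sigma(A)$ is invoked.
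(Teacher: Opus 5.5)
Your proof is correct. Part (1) takes a genuinely different route from the paper, and part (2) is a variant of the paper's argument.

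\textbf{Convexity.} The paper argues variationally. Set $H(f)=\fint_Q f^{\vee}=\min_{g\in\Sigma(A)}\langle f,g\rangle$ and $\rho=\sigma_A^{\vee}|_A$. Then $0\le\rho\le\sigma_A$ and $\rho^{\vee}=\sigma_A^{\vee}$, so $H(\rho)/\|\rho\|\ge H(\sigma_A)/\|\sigma_A\|$. Uniqueness of the maximizers in Lemma \ref{lem: min-max} then forces $\rho=\sigma_A$. This uses only that $H$ sees $f$ through $f^{\vee}$, and never touches the face structure of $\Sigma(A)$.

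You instead invoke Theorem \ref{thm: structure of Sigma A} to write $\sigma_A=\sum_i\lambda_i g_{T_i}$ with $T_i\prec S(\sigma_A)$. The paper derives this decomposition too, but only for positivity. You then argue combinatorially: a point with $\sigma_A^{\vee}(a)<\sigma_A(a)$ lies in no marking set of $S(\sigma_A)$, so it gets weight zero from every $T_i$. Note that what you need here is that $a$ is not a \emph{marked point} of any cell, not merely not a vertex. This is exactly what the definition of $S(f)$ gives.

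\textbf{Positivity.} The paper uses linearity of the interpolation operators: $\sigma_A^{\vee}=\varLambda_{T_j}(\sigma_A)\ge\lambda_j\varLambda_{T_j}(g_{T_j})>0$ on $Q$. This is short and gives an explicit lower bound. You argue by contradiction through the vertices of $\{\sigma_A^{\vee}=0\}$. That requires an extra polyhedral fact, which is true: this set is cut out of the epigraph of $\sigma_A^{\vee}\ge 0$ by the supporting hyperplane $\{y=0\}$, so it is a face of a cell. Hence its vertices are cell vertices, and therefore vertices of every refining triangulation.

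A small bonus of your route is that part (2) does not actually need part (1). You only use $0\le\sigma_A^{\vee}(a)\le\sigma_A(a)$ and the fact that cell vertices lie in the marking sets. Part (1) then follows from (2) together with the weight-zero observation.

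Finally, with the paper's normalization one has $\langle f,g_T\rangle=\fint_Q\varLambda_T(f)$, with no factor $n+1$. This plays no role in your argument.
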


The convexity reminds us the concavity of $\Theta$, see the remark
after \cite[Proposition 9]{szekelyhidi_optimal_2008}. By (2), we
know that if $\sigma_{A}$ is a vertex of $\Sigma(A)$, then the corresponding
triangulation must use all points in $A$. 

\subsection*{Connect the maximal destabilizers for Chow and K-stability}

The problem is reduced to connecting $\Theta_P$
with the shortest GKZ vectors $\sigma_{k}$ attached to $P_{k}$. 

A natural idea is to connect the variational problem behind $\Theta$
and $\sigma_{k}$. In Appendix \ref{Appendix: Riemann sum}, we verify
the following folklore expansion of Riemann sum 
\[
\sum_{u\in P_{k}}\eta(u)=\int_{P}\eta\ \mathrm{d}x\cdot k^{n}+\frac{1}{2}\int_{\partial P}\eta\ \mathrm{d}\sigma\cdot k^{n-1}+\mathrm{o}(k^{n-1}),\ \eta\in\mathcal{C}(P),
\]
which may fail if $\eta$ is merely continuous. Consider a quantum
functional 
\[
\Psi_{k}(\eta)\coloneqq\frac{1}{2k^{n}}\sum_{u\in P_{k}}\eta(u)^{2}+\frac{2}{k^{n-1}}\left(\sum_{u\in P_{k}}\eta(u)-k^{n}\int_{P}\eta\ \mathrm{d}x\right),\ \eta\in\mathcal{C}(P),
\]
which is the toric version of the general functional $\breve{\mathbb{Q}}_{k}$
in \cite{yao_maximal_2025}. We have 
\[
\lim_{k\rightarrow\infty}\Psi_{k}(\eta)=\Psi(\eta),\ \forall\eta\in\mathcal{C}(P).
\]
Theorem \ref{thm: min Psi_k} shows that $\Psi_{k}$ admits a unique
minimizer 
\begin{equation}
\psi_{k}\coloneqq2V_{P}k^{n+1}\sigma^{\vee}_{k}-2k.\label{eq:quantum mini INTRO}
\end{equation}
Recall that $-\Theta_{P}$ is the minimizer of $\Psi$. Our main result
is the following. 
\begin{thm}[Convergence §\ref{subsec:Convergence}]
\label{thm:main converge} (1) When $P$ is a Delzant lattice polytope,
the quantum minimizer $\psi_{k}$ (\ref{eq:quantum mini INTRO}) converges
to $-\Theta_{P}$ in $L^{2}(P)$ and uniformly on every compact set
$K\subset\mathrm{int}P$. In particular, at any point $x\in\mathrm{int}P$,
we have 
\begin{equation}
V_{P}k^{n}\sigma^{\vee}_{k}(x)=1-\frac{\Theta_{P}(x)}{2k}+\mathrm{o}\left(\frac{1}{k}\right).
\end{equation}
Moreover, we have 
\[
\min\Psi_{k}\ra\min\Psi,\ \lim_{k}\mathbb{M}^{\mathrm{rel}}_{P}(\psi_{k})=\mathbb{M}^{\mathrm{rel}}_{P}(-\Theta_{P})=-\left\Vert \Theta_{P}-\ell_{P}\right\Vert ^{2}_{L^{2}}.
\]

(2) For a general lattice polytope $P$, $\psi_{k}$ uniformly converges
to $-\Theta_{P}$ over $P$ if and only if $\{\psi_{k}\}$ is $C^{0}$-precompact,
i.e., uniformly bounded and equicontinuous. 
\end{thm}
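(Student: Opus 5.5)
We will follow the standard variational scheme: $\Psi_k\to\Psi$ pointwise on $\mathcal{C}(P)$, $\psi_k$ minimizes $\Psi_k$ (Theorem \ref{thm: min Psi_k}), and $-\Theta_P$ is the unique minimizer of $\Psi$ (Proposition \ref{prop: recharac Theta}). The aim is a $\Gamma$-convergence type argument in $L^2(P)$ with compactness coming from convexity.

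\emph{Upper bound.} By Theorem \ref{thm: theta is continuous-INTRO}, $-\Theta_P\in\mathcal{C}(P)$, being continuous up to $\partial P$. The Riemann sum expansion of Appendix \ref{Appendix: Riemann sum} then gives $\Psi_k(-\Theta_P)\to\Psi(-\Theta_P)$, hence
\[
\limsup_k\min\Psi_k\le\limsup_k\Psi_k(-\Theta_P)=\min\Psi.
\]
In the Delzant case we can also approximate $-\Theta_P$ by smooth convex functions if needed.

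\emph{Lower bound and compactness.} This is the main obstacle. We need uniform control of $\psi_k$ from $\Psi_k(\psi_k)\le C$.
\begin{enumerate}
\item The positivity and convexity of $\sigma_k$ (Theorems \ref{thm: convex of mGKZ} and \ref{thm: positive of mGKZ}) give $\psi_k\ge -2k$ trivially, which is too weak. Instead we use the quadratic term: $\frac{1}{k^n}\sum\psi_k(u)^2$ is bounded once the linear term is controlled.
\item The linear term $\frac{2}{k^{n-1}}(\sum\psi_k-k^n\int\psi_k)$ should be bounded below by $\int_{\partial P}\psi_k\,\mathrm{d}\sigma-o(1)\cdot(\text{norm})$ uniformly over convex functions. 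For this we need a one-sided, uniform Riemann sum estimate for convex functions. On a Delzant polytope, near each vertex we use the unimodular cone structure to compare lattice sums with integrals slice by slice. Convexity makes the discrete-versus-continuous error along lattice lines one-signed up to the boundary correction, in the spirit of the Euler--Maclaurin/trapezoid rule.
\item This yields a uniform $L^2$ bound on $\psi_k$, and with convexity a bound in $L^\infty_{\mathrm{loc}}(\mathrm{int}P)$ together with local Lipschitz bounds.
\end{enumerate}
By Arzelà--Ascoli, a subsequence converges locally uniformly on $\mathrm{int}P$ to a convex $\eta$, which lies in $\mathcal{C}_1(P)\cap L^2$. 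Lower semicontinuity then follows from two facts. First, Fatou's lemma handles the quadratic term. Second, for the boundary term we use the lower semicontinuity of $\eta\mapsto\int_{\partial P}\eta\,\mathrm{d}\sigma$ (extended as in (\ref{eq:space C1})) under interior convergence, combined with the one-sided estimate above. Together these give $\Psi(\eta)\le\liminf\Psi_k(\psi_k)$.

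\emph{Conclusion.} Combining the two bounds, $\eta$ minimizes $\Psi$, so $\eta=-\Theta_P$ by uniqueness. Hence the whole sequence converges and $\min\Psi_k\to\min\Psi$. For $L^2$ convergence, the equality of limits forces $\|\psi_k\|_{\ell^2}$-type norms to converge to $\|\Theta_P\|_{L^2}$, which upgrades weak convergence to strong convergence. The pointwise formula is the definition (\ref{eq:quantum mini INTRO}) rearranged. The convergence of $\mathbb{M}^{\mathrm{rel}}_P(\psi_k)$ follows from $L^2$ convergence plus convergence of the boundary integrals, which we read off from convergence of $\Psi$-values together with convergence of the quadratic terms. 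Its value $-\|\Theta_P-\ell_P\|^2$ is Székelyhidi's identity. For (2), if $\{\psi_k\}$ is $C^0$-precompact, every uniform limit is, by the same argument with Appendix \ref{Appendix: Riemann sum} now applicable uniformly, a minimizer of $\Psi$, hence equals $-\Theta_P$. The converse is trivial.
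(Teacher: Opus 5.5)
Your overall scheme matches the paper's. The recovery sequence is $-\Theta_P$ itself, using continuity from Theorem \ref{thm: theta is continuous-INTRO} and \eqref{eq:EM}. The liminf uses Fatou on lattice sums, a one-sided Delzant Riemann-sum estimate and Donaldson's lower semicontinuity \eqref{eq: Donald liminf}, and part (2) goes through \eqref{eq:EM enhanced}. But the step you call the main obstacle is not actually resolved, and as written it fails. A bound $L_k(\psi_k)\ge\int_{\partial P}\psi_k\,\mathrm{d}\sigma-o(1)\cdot(\text{norm})$ gives no control of $\|\psi_k\|_k$. The reason is that $\int_{\partial P}\psi_k\,\mathrm{d}\sigma$ has no a priori lower bound: you only know $\psi_k\ge-2k$, so it may be of order $-k$. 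Moreover, the Delzant estimate \eqref{eq:key estimate} holds only for \emph{nonnegative} convex functions, because its error terms are multiplied by $\phi(u)$, so it cannot be applied to $\psi_k$ directly.

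The missing idea is to split off the affine part. The paper represents $L_k$ on affine functions by some $\beta_k\in\mathcal{A}$, with $\beta_k\to\ell_P$. Then $\psi_k^\perp=\psi_k+\beta_k$ minimizes $\Psi_k^{\mathrm{rel}}$, whose linear part is affine-invariant. This gives $L_k^{\mathrm{rel}}(\psi_k^\perp)=L_k^{\mathrm{rel}}(\phi_k)$, where $\phi_k=\psi_k^\perp-\alpha_k\ge 0$ is normalized at $x_0$. For $\phi_k$ the fine estimate yields $L_k^{\mathrm{rel}}(\phi_k)\ge\frac{1}{2k^{n-1}}\sum_{u\in P_k\cap\partial P}\phi_k(u)-C\|\phi_k\|_k$, and now the boundary sum is nonnegative and can be dropped. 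Finally, the projection control \eqref{eq:projection control}, $\|\phi_k\|_k\le C_\pi\|\psi_k^\perp\|_k$, is proved by contradiction via compactness of normalized convex functions. It closes the inequality $0\ge\frac12\|\psi_k^\perp\|_k^2-C\|\psi_k^\perp\|_k$. This produces uniform bounds on $\|\phi_k\|_k$, $\sup_P|\alpha_k|$, $\int_{\partial P}\phi_k$ and $\int_P\phi_k^2$. These are exactly the hypotheses of Proposition \ref{prop: Donaldson compact}; your Arzel\`a--Ascoli step from a lattice $\ell^2$ bound is unsupported without them. You could instead get coercivity from $\int_{\partial P}\eta\ge\int_P\eta\Theta_P$, but comparing $\|\eta\|_{L^2}$ with $\|\eta\|_k$ for sign-changing convex $\eta$ again needs the projection control.

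The $L^2$ upgrade also conflates two norms. Your Fatou argument is on lattice sums, so equality of limits gives $\|\psi_k\|_k\to\|\Theta_P\|_{L^2}$, not $\|\psi_k\|_{L^2(P)}\to\|\Theta_P\|_{L^2}$. Weak convergence plus convergence of lattice norms does not give strong convergence. Bounded $\int_{\partial P}$ and bounded $\int_P\phi^2$ do not prevent concentration at $\partial P$. For example, $\delta^{-1}\max\{0,1-(x_1+x_2)/\delta\}$ at a corner of $[0,1]^2$ has boundary integral $1$ and $\int_P\phi^2=\frac1{12}$, yet tends to $0$ locally uniformly as $\delta\to0$. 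The paper writes $\psi_k=\phi_k+\gamma_k$ with $\gamma_k=\alpha_k-\beta_k$ converging uniformly. It then uses Lemma \ref{lem: little} to get $\|\phi_k\|_k\to\|\phi_*\|_{L^2}$. It applies \eqref{eq:key estimate} to the convex function $\phi_k^2$, which again needs $\phi_k\ge0$, to obtain $\limsup_k\int_P\phi_k^2\le\limsup_k\|\phi_k\|_k^2$. A localized rough comparison near $\partial P$ would also serve here, but some such comparison is needed.

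Similarly, convergence of $\mathbb{M}^{\mathrm{rel}}_P(\psi_k)$ needs more than $L_k(\psi_k)\to\int_{\partial P}\psi$. You must bound $\int_{\partial P}\phi_k$ from above by $L_k(\phi_k)$ via \eqref{eq:key usage}, and from below via \eqref{eq: Donald liminf 2}. The remaining pieces agree with the paper: the upper bound, the pointwise formula, the identification of the limit by uniqueness, and part (2).
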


Hence for smooth K-unstable toric varieties, the maximal Chow-destabilizers
quantize the maximal K-destabilizers in the above sense. The proof
uses a $\Gamma$-convergence argument, the keystone is the estimate
(\ref{eq:key estimate}), which simplifies a result of Lee and Yotsutani
\cite{lee_asymptotic_2026}. This estimate is the only part relying
on Delzant condition, it may fail for non-Delzant polytopes, see Remark
\ref{rem: fail nonDelzant}. 
\begin{cor}[sGKZ vectors can detect K-unstability]
If $P$ is a relatively K-unstable Delzant polytope (i.e. $\Theta_{P}\neq\ell_{P}$),
then $\mathbb{M}^{\mathrm{rel}}_{P}(\sigma^{\vee}_{k})<0$ for sufficiently
large $k$. 
\end{cor}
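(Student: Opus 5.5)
The plan is to deduce the corollary directly from Theorem \ref{thm:main converge}(1), using only the fact that $\mathbb{M}^{\mathrm{rel}}_{P}$ is invariant under adding affine functions and is positively homogeneous.

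\textbf{Step 1: reduce to $\psi_k$.} By (\ref{eq:quantum mini INTRO}),
\[
\sigma^{\vee}_{k}=\frac{\psi_{k}+2k}{2V_{P}k^{n+1}}.
\]
This is a positive multiple of $\psi_{k}$ plus a constant. The relative Donaldson--Futaki invariant vanishes on affine functions; this is the normalization making $\mathbb{M}^{\mathrm{rel}}_{P}(\text{affine})=0$, which is built into the definition of $\ell_{P}$ and which I take as given from §\ref{subsec:Szekelyhidi's-optimal-test}. It is also linear. Hence
\[
\mathbb{M}^{\mathrm{rel}}_{P}(\sigma^{\vee}_{k})=\frac{1}{2V_{P}k^{n+1}}\,\mathbb{M}^{\mathrm{rel}}_{P}(\psi_{k}).
\]
So it suffices to show $\mathbb{M}^{\mathrm{rel}}_{P}(\psi_{k})<0$ for large $k$.

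\textbf{Step 2: use the limit.} Theorem \ref{thm:main converge}(1) gives
\[
\lim_{k}\mathbb{M}^{\mathrm{rel}}_{P}(\psi_{k})=-\left\Vert \Theta_{P}-\ell_{P}\right\Vert ^{2}_{L^{2}}.
\]
We need this limit to be strictly negative, i.e. $\Theta_{P}\neq\ell_{P}$ as elements of $L^{2}(P)$. The hypothesis says $\Theta_{P}\neq\ell_{P}$ as functions. Both are continuous on $P$: $\Theta_{P}$ by Theorem \ref{thm: theta is continuous-INTRO}, and $\ell_{P}$ because it is affine. Two continuous functions that differ somewhere differ on an open set, so the $L^{2}$ norm of the difference is positive. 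Once the limit is negative, $\mathbb{M}^{\mathrm{rel}}_{P}(\psi_{k})<0$ for all sufficiently large $k$, and Step 1 finishes the proof.

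\textbf{Main obstacle.} The only delicate point is Step 1: checking that $\mathbb{M}^{\mathrm{rel}}_{P}$ really annihilates constants, so that the shift by $2k$ can be dropped. This holds because $\ell_{P}$ is chosen so that $\int_{\partial P}f\,\mathrm{d}\sigma=\int_{P}f\ell_{P}\,\mathrm{d}x$ for every affine $f$. If one preferred not to rely on that, one would instead track the constant term explicitly, but this is a routine check. Everything substantive is already contained in Theorem \ref{thm:main converge}.
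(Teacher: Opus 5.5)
Your argument is correct and is the intended derivation from Theorem~\ref{thm:main converge}(1). Since $\mathbb{M}^{\mathrm{rel}}_{P}$ is linear and vanishes on affine functions, $\mathbb{M}^{\mathrm{rel}}_{P}(\sigma^{\vee}_{k})=(2V_{P}k^{n+1})^{-1}\mathbb{M}^{\mathrm{rel}}_{P}(\psi_{k})$, whose limit is $-\Vert\Theta_{P}-\ell_{P}\Vert^{2}_{L^{2}}<0$; your use of the continuity of $\Theta_{P}$ (Theorem~\ref{thm: theta is continuous-INTRO}) correctly upgrades $\Theta_{P}\neq\ell_{P}$ to a strictly negative limit.
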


When $n=2$, to check the relative K-semistability of a polygon, we
only need to take simple convex functions of the form $\max\{0,\alpha\}$
($\alpha$ is affine), see \cite{donaldson_scalar_2002,wang_existence_2011}.
This rule is not available in higher dimensions, but at least it suffices
to take all sGKZ vectors.

\subsection*{Organizations}

In §\ref{sec:2}, we review the structure of Berkovich analytification
of toric varieties. Translate the Fubini--Study and super-norm operators
into simple operations in convex-analysis, in order to derive the
formula of Chow-weights in §\ref{subsec:Toric-reduction-of Chow}.
In §\ref{sec:3}, we study the minimization problem for Chow-weight
via the secondary polytopes, and investigate the shortest GKZ vectors
in §\ref{subsec:Properties-of-shortest GKZ}. In §\ref{sec:4}, we
collect several properties of $\Theta_{P}$. §\ref{subsec:  min Qk proof}
is devoted to the main result: Theorem \ref{thm:main converge}. In
Appendix, we prove a folklore expansion of Riemann sums, and provide 
an algorithm to compute the shortest GKZ vectors. 

\subsection*{AI disclosure}

Generative AI tools (GPT, Gemini) were used to help construct the proof
of minimizer convergence and the Riemann sum expansion (\ref{eq:EM}).
Computer program $\mathtt{sGKZ}$ \cite{yao_program_2025} was built
by GPT 5.6 and tested by Sagemath function \cite{whitney_sagemath_nodate}. The authors verified all AI generated contents and take full responsibility for the paper.

\subsection*{Acknowledges }

We are grateful to Siarhei Finski, Jiyuan Han, Minghao Miao, Yalong Shi, Naoto Yotsutani and Bin Zhou for inspiring discussions. Special thanks to Mingchen Xia for his attention and suggestions on this project since its early stage. The third author is supported by the National Natural Science Foundation of China (NSFC) with Grant No. 12231006. 

\showsubsectionsinTOC

\section{\label{sec:2}Non-Archimedean quantization on toric varieties}

In this section, we review the non-Archimedean pluripotential theory
on toric varieties. We reduce the Fubini--Study map and super-norm
map to some convex-analysis operations. The main references are \cite{burgos_gil_arithmetic_2014,boucksom_non-archimedean_2024}. 

\subsection{Toric varieties}

We review the structure of toric varieties, refer to \cite{cox_toric_2011}
for details. 

Let $M$ be a lattice of rank $n$. It induce a group algebra
\[
\mathbb{C}[M]\coloneqq\left\{ \textrm{finite sum}\sum_{m\in M}c_{m}\mathsf{x}^{m}\mid c_{m}\in\bbc\right\} 
\]
whose product is defined by $\mathsf{x}^{m}\cdot\mathsf{x}^{m'}=\mathsf{x}^{m+m'}$.
There is a coproduct 
\[
\mathbb{C}[M]\ra\mathbb{C}[M]\otimes\mathbb{C}[M],\ \mathsf{x}^{m}\mapsto\mathsf{x}^{m}\otimes\mathsf{x}^{m},
\]
which makes $\mathbb{T}\coloneqq\mathrm{Spec}\mathbb{C}[M]$ a group
scheme. All $\bbc$-points of $\mathbb{T}$ constitute the complex
torus group $T_{\bbc}\coloneqq\mathrm{Hom}_{\bbz}(M,\mathbb{C}^{*})=N\otimes_{\mathbb{Z}}\mathbb{C^{*}}$,
where $N$ is the dual lattice of $M$. The characters of $T_{\bbc}$
are given by 
\[
\mathsf{x}^{m}:T_{\bbc}\rightarrow\mathbb{C^{*}},\ \mathsf{x}^{m}(n\otimes z)=z^{\left\langle m,n\right\rangle },\ n\in N,\ z\in\bbc^{*}.
\]
The cocharacters $\mathbb{C^{*}}\rightarrow T_{\bbc}$ are given by
$z\mapsto n\otimes z$, $\forall n\in N$. Let $T_{\bbr}\coloneqq N\otimes_{\mathbb{Z}}\mathbb{S}^{1}$
be the compact torus. 

A toric variety $X$ is a complex normal variety equipped with a faithful
$\mathbb{T}$-action and $X$ has an open dense orbit. It turns out
that $X$ can be described by a combinatorial structure called a fan.
A fan $\mathcal{F}$ is a finite collection of rational polyhedral
convex cones in $N_{\bbr}\coloneqq N\otimes_{\mathbb{Z}}\mathbb{R}$,
see \cite[Definition 3.1.2]{cox_toric_2011}. It induces a toric variety
$X_{\mathcal{F}}$, which is briefly reviewed below. 

First, each cone $\sigma\in\mathcal{F}$ induces a finitely-generated
semigroup $M_{\sigma}\coloneqq M\cap\sigma^{\vee}$, where $\sigma^{\vee}=\{x\in M_{\bbr}\mid\left\langle x,\cdot\right\rangle \geq0\ \textrm{on }\sigma\}$
is the dual cone. Then $M_{\sigma}$ induces a finitely generated
algebra $\mathbb{C}[M_{\sigma}]$ in the same way as $\mathbb{C}[M]$.
We obtain affine variety $X_{\sigma}\coloneqq\mathrm{Spec}\mathbb{C}[M_{\sigma}]$,
which carries a $\mathbb{T}$-action defined by the morphism 
\[
\mathbb{C}[M_{\sigma}]\ra\mathbb{C}[M]\otimes\mathbb{C}[M_{\sigma}],\ \mathsf{x}^{m}\ra\mathsf{x}^{m}\otimes\mathsf{x}^{m},\ \forall m\in M_{\sigma}.
\]
For two cones $\sigma\subset\sigma'$ in $\mathcal{F}$, since $M_{\sigma}\supset M_{\sigma'}$,
there is a $\mathbb{T}$-equivariant embedding $X_{\sigma}\hookrightarrow X_{\sigma'}$.
Then glueing $\left\{ X_{\sigma}\mid\sigma\in\mathcal{F}\right\} $
together via these embeddings, we obtain toric variety $X_{\mathcal{F}}$,
which is proper if and only if $N_{\bbr}$ is the union of all cones
in $\mathcal{F}$. 

Let $\mathcal{F}(i)\subset\mathcal{F}$ be the subset of $i$-dimensional
cones. These cones correspond one-to-one to the $\mathbb{T}$-orbits
in $X_{\mathcal{F}}$ of dimension $n-i$, see \cite[§3.2]{cox_toric_2011}.
Denote by $O(\sigma)$ the orbit corresponding to $\sigma\in\mathcal{F}$.
Its Zariski closure $V(\sigma)$ is also a toric variety of dimension
$n-i$, and we have $V(\sigma)=\bigsqcup_{\tau\succ\sigma}O(\tau)$.
For each 1-dimensional cone $\rho\in\mathcal{F}(1)$, let $n_{\rho}\in N$
be the primitive generator and $D_{\rho}\coloneqq V(\rho)$ the associated
prime divisor. For any $m\in M$, $\mathsf{x}^{m}$ defines a rational
function on $X_{\mathcal{F}}$ and we have 
\begin{equation}
\mathrm{div}(\mathsf{x}^{m})=\sum_{\rho\in\mathcal{F}(1)}\left\langle m,n_{\rho}\right\rangle D_{\rho}.\label{eq:div formu}
\end{equation}

Let $P$ be a full-dimensional lattice polytope in $M_{\mathbb{R}}$.
It induces an (inner) normal fan 
\[
\mathcal{F}_{P}\coloneqq\left\{ \sigma_{F}:F\ \text{is a face of }P\right\} ,
\]
where 
\[
\sigma_{F}\coloneqq\left\{ y\in N_{\bbr}\mid\min_{P}\left\langle \cdot,y\right\rangle =\left\langle x,y\right\rangle ,\ \forall x\in F\right\} .
\]
Let $\left\{ n_{\rho}\right\} $ be the primitive inner normal vectors
of the facets, then we have 
\[
P=\left\{ x\in M_{\mathbb{R}}\mid\left\langle x,n_{\rho}\right\rangle +a_{\rho}\geq0,\ \forall\rho\in\mathcal{F}_{P}(1)\right\} ,
\]
 where $a_{\rho}\in\mathbb{Z}$ since the vertices of $P$ are lattice
point. 

Consider the toric variety $X_{P}$ induced by the fan $\mathcal{F}_{P}$.
Define 
\[
D_{P}\coloneqq\sum_{\rho\in\mathcal{F}_{P}(1)}a_{\rho}D_{\rho},
\]
which is an ample and base-point-free Cartier divisor on $X_{P}$,
see \cite[Proposition 6.1.10]{cox_toric_2011}. Let $L_{P}=\mathcal{O}_{X_{P}}(D_{P})$
be the associated line bundle. In the next, subscript ``$P$'' will
be omitted since $P$ is fixed. We will fix a rational section $s_{D}$
of $L$ such that $\mathrm{div}(s_{D})=D$. The $\mathbb{T}$-action
on $X$ can be lifted to $L$ in a unique way such that $s_{D}$ is
equivariant. Let $m_{\sigma}\in M$ be the vertex of $P$ corresponding
to a maximal cone $\sigma\in\mathcal{F}_{P}(n)$. By the formula (\ref{eq:div formu}),
\begin{equation}
s_{\sigma}\coloneqq\mathsf{x}^{m_{\sigma}}s_{D}\in\mathrm{H}^{0}(X,L)\label{eq:frame}
\end{equation}
 is a global section and nonvanishing on $X_{\sigma}$. We will take
$s_{\sigma}$ as a frame of $L$ over $X_{\sigma}$. 

For any $k\geq1$, we regard the rescaled lattice set 
\[
P_{k}\coloneqq P\cap k^{-1}M
\]
as the ``quantization'' of $P$ at level $k$. For each $u\in P_{k}$,
$s_{u,k}\coloneqq\mathsf{x}^{ku}s^{k}_{D}$ is a global section of
$kL$. The action of $t\in T_{\bbc}$ on $s_{u,k}$ is $t\cdot s_{u,k}=\mathsf{x}^{ku}(t^{-1})s_{u,k}$.
With respect to the $T_{\bbc}$-action, we have a weight decomposition
\begin{equation}
R_{k}\coloneqq\mathrm{H}^{0}(X,kL)=\bigoplus_{u\in P_{k}}\mathbb{C}\cdot s_{u,k}.\label{eq:eigen basis}
\end{equation}
In particular, $N_{k}\coloneqq\dim R_{k}=\left|P_{k}\right|=V_{P}k^{n}+\frac{1}{2}V_{\partial P}k^{n-1}+\mathrm{O}\left(k^{n-2}\right)$. 

Since $kL$ is globally generated for any $k\geq1$, its global sections
induce a morphism $X\rightarrow\mathbb{P}R^{*}_{k}$. By \cite[Proposition 6.1.10]{cox_toric_2011},
it is an embedding ($kL$ is very ample) when $k\ge n-1$. 

\subsection{Torus-invariant NA norms }

Recall a NA norm $\left\Vert \right\Vert :R_{k}\rightarrow\mathbb{R}_{\geq0}$
is a norm satisfying that $\left\Vert as\right\Vert =\left\Vert s\right\Vert $
for $\forall a\in\mathbb{C}^{*}$ and $\left\Vert s+s'\right\Vert \leq\max\left(\left\Vert s\right\Vert ,\left\Vert s'\right\Vert \right)$.
We prefer to use the valuative notation $\chi=-\log\left\Vert \right\Vert $.
A NA norm $\chi$ is equivalent to a left-continuous non-increasing
filtration $(F^{\lam}_{\chi})_{\lam\in\bbr}$ on $R_{k}$, that is
$F^{\lam}_{\chi}=\{s\in R_{k}\mid\chi(s)\geq\lam\}$. Let $\mathcal{N}(R_{k})$
be the space of NA norms, and $\mathcal{N}(R_{k})^{T}$ be the subspace
of $T_{\bbc}$-invariant NA norms, i.e. those satisfy $\chi(t\cdot s)=\chi(s)$
for any $t\in T_{\bbc}$ and $s\in R_{k}$. 
\begin{prop}
A NA norm $\chi$ on $R_{k}$ is $T_{\bbc}$-invariant if and only
if $\chi$ is diagonalized by the eigen-sections $\left\{ s_{u,k}\mid u\in P_{k}\right\} $. 
\end{prop}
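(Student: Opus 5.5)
We first make precise what "diagonalized" means. A NA norm $\chi$ on $R_k$ is diagonalized by the basis $\{s_{u,k}\}_{u\in P_k}$ if
\[
\chi\Big(\sum_{u}c_u s_{u,k}\Big)=\min_{c_u\neq 0}\chi(s_{u,k})
\]
for all coefficients $c_u\in\bbc$. The proof has two directions.

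\emph{Diagonalized implies invariant.} This direction is immediate. For $t\in T_{\bbc}$ we have $t\cdot s_{u,k}=\mathsf{x}^{ku}(t^{-1})s_{u,k}$, and $\mathsf{x}^{ku}(t^{-1})\in\bbc^{*}$. Hence
\[
t\cdot\sum_u c_u s_{u,k}=\sum_u c'_u s_{u,k},\qquad c'_u=\mathsf{x}^{ku}(t^{-1})c_u,
\]
so the two sections have the same support $\{u: c_u\neq0\}$. Since a diagonal norm depends only on this support, $\chi(t\cdot s)=\chi(s)$.

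\emph{Invariant implies diagonalized.} This is the main direction. Let $\chi$ be $T_{\bbc}$-invariant and consider its filtration $F^{\lambda}_{\chi}=\{s:\chi(s)\ge\lambda\}$. Each $F^\lambda_\chi$ is a linear subspace, by the ultrametric inequality together with $\chi(as)=\chi(s)$ for $a\in\bbc^*$. It is $T_{\bbc}$-stable because $\chi$ is invariant.

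We then use the standard fact that a subspace $W$ stable under a torus acting diagonally with pairwise distinct characters is spanned by the weight vectors it contains. Here $W=\bigoplus_u (W\cap \bbc s_{u,k})$, and the characters $\mathsf{x}^{ku}$, $u\in P_k$, are pairwise distinct since $ku\in M$ are distinct. To prove the fact, take $s=\sum_{u\in S}c_u s_{u,k}\in W$ with $S$ its support. Choose a cocharacter $n\in N$ on which the values $\langle ku,n\rangle$, $u\in S$, are distinct; this is possible since only finitely many hyperplanes must be avoided. Applying $t=n\otimes z$ for $|S|$ distinct generic values of $z$ yields elements of $W$. By Vandermonde invertibility, suitable linear combinations of these isolate each $c_u s_{u,k}$, so each lies in $W$. (Alternatively, one can use that the images of $T_{\bbc}$ span the algebra of diagonal operators, since distinct characters are linearly independent.) Consequently every $F^\lambda_\chi$ is spanned by a subset of $\{s_{u,k}\}$.

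Finally we show $\chi(s)=\min_{u\in S}\chi(s_{u,k})$ for $s$ with support $S$. The inequality $\ge$ follows from the ultrametric inequality applied to $s=\sum c_u s_{u,k}$ (taking $-\log$ of the norm inequality). For $\le$, set $\lambda=\chi(s)$. Then $s\in F^\lambda_\chi$, which is spanned by basis vectors, so every $s_{u,k}$ with $u\in S$ lies in $F^\lambda_\chi$. Hence $\chi(s_{u,k})\ge\lambda$ for all $u\in S$, giving $\min_{u\in S}\chi(s_{u,k})\ge\chi(s)$. The two inequalities give equality.

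The only nontrivial step is the weight-space decomposition of invariant subspaces, and it is standard linear algebra. One should note that it uses the full complex torus $T_{\bbc}$; the compact torus would also suffice via Fourier averaging. No earlier results of the paper are needed beyond the weight decomposition (\ref{eq:eigen basis}).
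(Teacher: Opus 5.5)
Your proposal is correct and follows essentially the same route as the paper: the paper observes that each $F^{\lambda}_{\chi}$ is $T_{\bbc}$-stable, hence (by multiplicity-freeness of $R_k$) spanned by some of the $s_{u,k}$, so $\chi$ is diagonalized, with the converse called trivial. You simply supply the details the paper leaves implicit: the Vandermonde argument for the weight-space decomposition, and the two-sided recovery of $\chi(s)=\min_{c_u\neq 0}\chi(s_{u,k})$ from the filtration.
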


\begin{proof}
For any $\chi\in\mathcal{N}(R_{k})^{T}$, the associated filtration
$F^{\lambda}=\left\{ s\in R_{k}\mid\chi(s)\geq\lambda\right\} $ is
$T_{\bbc}$-invariant for any $\lam$. Since $R_{k}$ is multiplicity-free,
$F^{\lambda}$ is spanned by some $s_{u,k}$. This implies that $\chi$
is diagonalized by $(s_{u,k})$. The converse direction is trivial. 
\end{proof}

By this fact, we identify $\mathcal{N}(R_{k})^{T}$ with the space
$\bbr^{P_{k}}\coloneqq\{f:P_{k}\rightarrow\mathbb{R}\}$ via the map
\begin{equation}
f\mapsto\chi_{f},\ \textrm{where }\chi_{f}\left(\sum_{u\in P_{k}}c_{u}s_{u,k}\right)=\min\left\{ -kf(u)\mid c_{u}\neq0\right\} .\label{eq:identify N W}
\end{equation}
There is a Tits building structure on $\mathcal{N}(R_{k})$ such that
$\mathcal{N}(R_{k})^{T}$ is an apartment, see \cite{boucksom_variational_2019}. 

\subsection{Berkovich analytification }

Let $(X,L)$ be the polarized toric variety associated to lattice
polytope $P$. Endowing $\bbc$ with the trivial valuation. As a set,
Berkovich's analytification $X^{\an}$ is 
\[
X^{\an}\coloneqq\bigcup_{\sigma\in\mathcal{F}_{P}}X^{\an}_{\sigma},\ X^{\an}_{\sigma}\coloneqq\left\{ v:\mathbb{C}[M_{\sigma}]\rightarrow\mathbb{R}\cup\{+\infty\}\right\} ,
\]
where $v$ is semivaluation extending the trivial valuation on $\mathbb{C}$.
For $f\in\mathbb{C}[M_{\sigma}]$ and $v\in X^{\an}_{\sigma}$, we
write $\left|f\right|(v)\coloneqq e^{-v(f)}$. When $\sigma=\{0\}$,
$X^{\an}_{\sigma}$ is the NA analytic torus $\mathbb{T}^{\an}$.
Endowed with the Berkovich topology, $X^{\an}$ is a compact Hausdorff
space. There is a kernel map $X^{\an}\ra X$ and a center map $X^{\an}\ra X$,
which are continuous and anti-continuous respectively. 

The \emph{tropicalization} of $X^{\an}$ is a topological space $N_{P}$
that is homeomorphic to $P$ and contains $N_{\mathbb{R}}$ as an
open dense subset, see \cite[§4.1]{burgos_gil_arithmetic_2014}. There
are two continuous maps 
\[
\varrho:X^{\an}\rightarrow N_{P},\ \vartheta:N_{P}\rightarrow X^{\an}
\]
called the tropicalization and the Gauss section respectively. We
only give their definitions on $\mathbb{T}^{\an}$ and $N_{\mathbb{R}}$.
For $v\in\mathbb{T}^{\an}$, i.e. a semivaluation on $\mathbb{C}[M]$,
$\varrho(v)\in N_{\mathbb{R}}$ is defined as 
\[
\varrho(v):M\rightarrow\mathbb{R},\ m\mapsto-v(\mathsf{x}^{m}).
\]
Note that $v(\mathsf{x}^{m})$ must be finite since $v(\mathsf{x}^{m})+v(\mathsf{x}^{-m})=v(1)=0$. 

For the Gauss section, given a $y\in N_{\mathbb{R}}$, $\vartheta_{y}\coloneqq\vartheta(y)\in\mathbb{T}^{\an}$
is defined by 
\[
\vartheta_{y}\left(\sum_{m\in M}c_{m}\mathsf{x}^{m}\right)=\min_{c_{m}\neq0}\left\langle m,-y\right\rangle ,\ \textrm{if }\exists c_{m}\neq0.
\]
It is a valuation on the function field of $X$. For instance, $\vartheta(0)=v_{tr}$
the trivial valuation. Moreover, the center of $\vartheta_{y}$ is
the generic point of toric subvariety $V(\sigma_{-y})$, where $\sigma_{-y}\in\mathcal{F}_{P}$
is the minimal cone containing $-y$. 

The image of $\vartheta$ is called the skeleton of $X^{\an}$, denoted
by $\mathrm{Sk}(X^{\an})$. We have $\varrho\circ\vartheta=\mathrm{Id}$,
so $\mathscr{R}\coloneqq\vartheta\circ\varrho$ is a retraction to
the skeleton. For $v\in\mathbb{T}^{\an}$, we have 
\begin{equation}
\left(\mathscr{R}v\right)\left(\sum_{m\in M}c_{m}\mathsf{x}^{m}\right)=\min_{c_{m}\neq0}v(\mathsf{x}^{m})\leq v\left(\sum_{m\in M}c_{m}\mathsf{x}^{m}\right).\label{eq: Rv}
\end{equation}

\begin{center}
\begin{tikzcd}
\mathbb{T}^\mathrm {an} \arrow[r, hook] \arrow[d, "\varrho"'] & X^{\mathrm {an}} \arrow[d, "\varrho"'] \arrow[r, "\mathscr{R}", bend left] & \mathrm{Sk}(X^{\mathrm {an}}) \arrow[l, hook'] \\
N_{\mathbb R} \arrow[r, "\text{dense}", hook] & N_P \arrow[ru, "\vartheta"'] &
\end{tikzcd}
\end{center}
\begin{rem}
Although $\mathbb{T}^{\an}$ has no usual group structure, one can
define a multi-valued product and an action on $X^{\an}$, refer to
\cite[§4.2]{burgos_gil_arithmetic_2014}. The NA compact torus $\mathbb{S}^{\an}\subset\mathbb{T}^{\an}$
is the subset of semivaluations such that $v(\mathsf{x}^{m})=0$ for
all $m\in M$. Then the tropicalization $N_{P}$ is the quotient space
of $X^{\an}$ modulo the action of $\mathbb{S}^{\an}$. In the Archimedean
setting, $N_{P}$ is the quotient space of $X(\bbc)$ modulo the action
of $T_{\bbr}$. So in both settings, $N_{P}$ is the common stage
for toric reduction. 
\end{rem}

\subsection{Toric metrics }

Let $L^{\an}$ be the analytification of $L$. Since the base field
$\bbc$ is trivially normed, there is a canonical metric on $L^{\an}$
called the trivial metric. Regarding the trivial metric, the norm
function of a section $s\in R_{k}$ is $\left|s\right|_{tr}(v)\coloneqq e^{-v(s)}$.
When $v$ is a divisorial valuation, $v(s)$ is just the vanishing
order of $s$ along the divisor. Any metric on $L^{\an}$ is represented
by a function $\vphi:X^{\an}\ra\bbr\cup\{-\infty\}$. With respect
to metric $\vphi$, the norm of $s\in R_{k}$ is $\left|s\right|_{\varphi}\coloneqq\left|s\right|_{tr}e^{-k\varphi}$. 

A Fubini--Study metric $\varphi$ on $L^{\an}$ is defined to have
the form 
\[
\varphi=\frac{1}{m}\max_{j}\left\{ \log\left|s_{j}\right|_{tr}+\lambda_{j}\right\} ,
\]
where $m\geq1$, $\lambda_{j}\in\mathbb{R}$ and $\{s_{j}\}\subset R_{m}$
is a finite set of sections with no common zeros. A psh metric $\varphi$
(not $\equiv-\infty$) on $L^{\an}$ is defined to be the pointwise
limit of a decreasing sequence of Fubini--Study metrics. Denote by
$\mathrm{PSH}^{\na}(L)$ the set of psh metrics, and $\mathrm{CPSH}^{\na}(L)$
the set of continuous psh metrics. We may omit ``$L$'' when it
is fixed. 
\begin{defn}
Suppose that $(X,L)$ is a polarized variety and $kL$ is globally
generated. The NA Fubini--Study map $\mathrm{FS}_{k}:\mathcal{N}(R_{k})\rightarrow\mathrm{CPSH}^{\na}(L)$
is defined by 
\begin{equation}
\mathrm{FS}_{k}(\chi)\coloneqq\frac{1}{k}\sup_{s\in R_{k}\backslash\{0\}}\left\{ \log\left|s\right|_{tr}+\chi(s)\right\} =\frac{1}{k}\max_{1\leq j\leq N_{k}}\left\{ \log\left|s_{j}\right|_{tr}+\chi(s_{j})\right\} ,\label{def FSk}
\end{equation}
where $(s_{j})$ is any basis diagonalizing $\chi$. 
\end{defn}

When $(X,L)$ is the toric variety associated to lattice polytope
$P$, $kL$ is always globally generated. If $\chi$ is $T_{\bbc}$-invariant,
then $\mathrm{FS}_{k}(\chi)$ is torus-invariant in the following
sense. 
\begin{defn}
Let $(X,L)$ be the toric variety associated to a lattice polytope
$P\subset M_{\bbr}$. 

(1) For any $U\subset N_{P}$, function $\vphi:\varrho^{-1}(U)\rightarrow\bbr$
is \textit{torus-invariant} if $\mathscr{R}^{*}\vphi=\varphi$, i.e.
$\varphi$ is constant along the fibers of tropicalization map. Let
$\mathrm{PSH}^{\na}_{T}(L)$ ($\mathrm{CPSH}^{\na}_{T}(L)$) be the
space of torus-invariant (continuous) psh metrics on $L^{\an}$. 

(2) For a torus-invariant metric $\vphi$ on $L^{\an}$, its tropical
potential $F_{\varphi}:N_{\mathbb{R}}\rightarrow\mathbb{R}$ is defined
as 
\begin{equation}
F_{\varphi}(y)\coloneqq-\log\left|s_{D}\right|_{\varphi}(\vartheta_{y})=\vartheta_{y}(s_{D})+\varphi(\vartheta_{y})=F_{P}(y)+\varphi(\vartheta_{y}).\label{eq:def of trop poten}
\end{equation}
The last $=$ is due to Proposition \ref{prop: trop of FS} (1) below,
and $F_{P}(y)\coloneqq\max_{u\in P}\left\langle u,y\right\rangle $
is the support function of $P$. 
\end{defn}

For instance, as the functions on $\mathbb{T}^{\an}$, $v\mapsto v(\mathsf{x}^{m})$
and $\left|s_{D}\right|_{tr}$ are torus-invariant. Thus for $\forall u\in P_{k}$,
the norm function $\left|s_{u,k}\right|_{tr}=\left|\mathsf{x}^{ku}\right|\left|s_{D}\right|^{k}_{tr}$
is torus-invariant. For any $\chi\in\mathcal{N}(R_{k})^{T}$, since
it is diagonalized by $(s_{u,k})$, $\mathrm{FS}_{k}(\chi)$ is torus-invariant. 
\begin{prop}
\label{prop: trop of FS}(1) For any $y\in N_{\bbr}$, we have $\vartheta_{y}(s_{D})=F_{P}(y)\coloneqq\max_{u\in P}\left\langle u,y\right\rangle $. 

(2) Let $\chi\in\mathcal{N}(R_{k})^{T}$ given by $f:P_{k}\ra\bbr$
such that $\chi(s_{u,k})=-kf(u)$. The tropical potential for $\mathrm{FS}_{k}(\chi)$
is 
\begin{equation}
F(y)=\max_{u\in P_{k}}\left\{ \left\langle u,y\right\rangle -f(u)\right\} ,\ \forall y\in N_{\mathbb{R}}.\label{eq:trop of FS}
\end{equation}
\end{prop}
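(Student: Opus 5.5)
For part (1), I will compute $\vartheta_{y}(s_{D})$ in the local frame $s_{\sigma}=\mathsf{x}^{m_{\sigma}}s_{D}$ from (\ref{eq:frame}). Write $s_{D}=\mathsf{x}^{-m_{\sigma}}s_{\sigma}$. Since $s_{\sigma}$ trivializes $L$ on $X_{\sigma}$, its trivial norm at any point whose center lies in $X_{\sigma}$ is $1$, so $\vartheta_{y}(s_{\sigma})=0$. The formula defining the Gauss section then gives
\[
\vartheta_{y}(s_{D})=\vartheta_{y}(\mathsf{x}^{-m_{\sigma}})=\langle m_{\sigma},y\rangle .
\]
This requires choosing $\sigma$ so that the center of $\vartheta_{y}$ lies in $X_{\sigma}$. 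The center is the generic point of $V(\sigma_{-y})$, so it suffices to take a maximal cone $\sigma$ containing $-y$. Then $-y$ lies in the inner normal cone at the vertex $m_{\sigma}$, i.e. $\langle\cdot,-y\rangle$ attains its minimum on $P$ at $m_{\sigma}$. Hence $\langle m_{\sigma},y\rangle=\max_{u\in P}\langle u,y\rangle=F_{P}(y)$. I expect this sign bookkeeping (inner normal fan, $-y$ versus $y$, and which frame is regular at the center) to be the only real obstacle, so I will check it once carefully in this step.

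For part (2), I will start from the diagonal formula (\ref{def FSk}) with the basis $s_{u,k}$:
\[
\mathrm{FS}_{k}(\chi)=\frac{1}{k}\max_{u\in P_{k}}\{\log|s_{u,k}|_{tr}-kf(u)\}.
\]
Evaluating at $\vartheta_{y}$ gives
\[
\log|s_{u,k}|_{tr}(\vartheta_{y})=-\vartheta_{y}(\mathsf{x}^{ku})-k\vartheta_{y}(s_{D})=k\langle u,y\rangle-kF_{P}(y),
\]
using $\vartheta_{y}(\mathsf{x}^{m})=\langle m,-y\rangle$ and part (1). Therefore
\[
\mathrm{FS}_{k}(\chi)(\vartheta_{y})=\max_{u\in P_{k}}\{\langle u,y\rangle-f(u)\}-F_{P}(y).
\]
Finally, (\ref{eq:def of trop poten}) gives $F(y)=F_{P}(y)+\varphi(\vartheta_{y})$, and substituting the previous line yields (\ref{eq:trop of FS}).
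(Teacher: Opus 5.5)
Your proposal is correct and follows the paper's proof essentially step for step. In (1) you pick a maximal cone containing $-y$, write $s_D=\mathsf{x}^{-m_\sigma}s_\sigma$ using the frame (\ref{eq:frame}), and read off $\langle m_\sigma,y\rangle=F_P(y)$; in (2) you evaluate the diagonal Fubini--Study formula at $\vartheta_y$ using (1) and substitute into (\ref{eq:def of trop poten}). Your explicit checks that the center of $\vartheta_y$ lies in $X_\sigma$ and that $\vartheta_y(s_\sigma)=0$ spell out what the paper uses implicitly.
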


\begin{proof}
(1) Take a full-dimensional cone $\sigma\in\mathcal{F}_{P}$ containing
$-y$. Let $m_{\sigma}$ be the corresponding vertex of $P$, then
we have $\left\langle m_{\sigma},y\right\rangle =\max_{u\in P}\left\langle u,y\right\rangle =F_{P}(y)$.
Note that $X_{\sigma}$ contains the center of valuation $\vartheta_{y}$,
and $s_{\sigma}\coloneqq\mathsf{x}^{m_{\sigma}}s_{D}$ (\ref{eq:frame})
is a frame of $L$ over $X_{\sigma}$. We represent $s_{D}=\mathsf{x}^{-m_{\sigma}}s_{\sigma}$.
By the definition of $\vartheta_{y}(s_{D})$, we have 
\[
\vartheta_{y}(s_{D})=\vartheta_{y}(\mathsf{x}^{-m_{\sigma}})=\left\langle m_{\sigma},y\right\rangle =F_{P}(y).
\]

(2) Since $\chi$ is diagonalized by $\left\{ s_{u,k}\coloneqq\mathsf{x}^{ku}s^{k}_{D}\mid u\in P_{k}\right\} $,
we have 
\[
\mathrm{FS}_{k}(\chi)=\max_{u\in P_{k}}\left\{ k^{-1}\log\left|s_{u,k}\right|_{tr}-f(u)\right\} .
\]
By (1), we have $\log\left|s_{u,k}\right|_{tr}(\vartheta_{y})=-\vartheta_{y}(s_{u,k})=\left\langle ku,y\right\rangle -kF_{P}(y)$.
Put it into the definition (\ref{eq:def of trop poten}), then (\ref{eq:trop of FS})
follows. 
\end{proof}

To capture the behavior of tropical potentials at infinity, we consider
their Legendre transforms. The Legendre transform of a function $F:N_{\mathbb{R}}\rightarrow\mathbb{R}$
is defined as 
\begin{align*}
F^{*}(x) & =\sup_{y\in N_{\mathbb{R}}}\left\{ \left\langle x,y\right\rangle -F(y)\right\} :M_{\mathbb{R}}\rightarrow\mathbb{R}\cup\{+\infty\}.
\end{align*}
Conversely, the Legendre transform of $\phi:M_{\mathbb{R}}\rightarrow\mathbb{R}\cup\{+\infty\}$
($\not\equiv+\infty$) is defined as 
\[
\phi^{*}(y)=\sup_{x\in M_{\mathbb{R}}}\left\{ \left\langle x,y\right\rangle -\phi(x)\right\} :N_{\mathbb{R}}\rightarrow\mathbb{R}\cup\{+\infty\}.
\]
Let $\mathcal{C}(P)$ be the space of convex functions on $P$ which
are continuous up to the boundary. We have a bijection 
\[
\left\{ \text{convex }F:N_{\bbr}\ra\bbr\mid\left|F-F_{P}\right|\le C\right\} \xrightarrow{\simeq}\mathcal{C}(P),\ F\mapsto F^{*}.
\]
Since for any $F$ in the LHS, $F^{*}$ is a finite-valued lsc convex
function on $P$. By \cite{gale_convex_1968}, $F^{*}$ is also upper
semicontinuous, thus $F^{*}\in\mathcal{C}(P)$. 
\begin{thm}[{\cite[Theorem 4.8.1]{burgos_gil_arithmetic_2014}}]
 Let $(X,L)$ be the toric variety given by a lattice polytope $P\subset M_{\bbr}$.
Then the map 
\begin{equation}
\mathrm{CPSH}^{\na}_{T}(L)\ra\mathcal{C}(P),\ \varphi\mapsto\left(F_{\varphi}\right)^{*}\label{eq:identify CPSH to conv}
\end{equation}
 is a bijection, where $F_{\vphi}$ (\ref{eq:def of trop poten})
is the tropical potential of $\vphi$. 
\end{thm}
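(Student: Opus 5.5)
The plan is to factor the map $\varphi\mapsto(F_{\varphi})^{*}$ into two bijections: first $\varphi\mapsto F_{\varphi}$, onto convex functions $F$ on $N_{\bbr}$ with $|F-F_{P}|\le C$; then Legendre transform, onto $\mathcal{C}(P)$. The second bijection was recorded just before the statement, using Gale's theorem for upper semicontinuity on $P$ and Fenchel--Moreau for invertibility. So everything reduces to the first.

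\emph{Well-definedness.} Start with a Fubini--Study metric $\varphi=\frac{1}{m}\max_{j}\{\log|s_{j}|_{tr}+\lambda_{j}\}$. I would first replace it by $\mathscr{R}^{*}\varphi$, which is again torus-invariant. By (\ref{eq: Rv}), for $s=\sum_{u}c_{u}s_{u,m}$ one has $\vartheta_{y}(s)=\min_{c_{u}\neq0}\vartheta_{y}(s_{u,m})$. Then Proposition~\ref{prop: trop of FS} gives
\[
F_{\varphi}(y)=\max_{u\in P_{m}}\{\langle u,y\rangle-f(u)\}
\]
for a suitable $f$ defined on a subset of $P_{m}$ that contains the vertices, since the sections have no common zeros. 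This is convex, and it lies within bounded distance of $F_{P}$ because $P=\mathrm{conv}$ of the indices. For a general continuous psh $\varphi$, I would take a decreasing limit of Fubini--Study metrics. Continuity on the compact space $X^{\an}$ gives uniform convergence (Dini). Hence $F_{\varphi}$ is a uniform limit of convex functions with uniformly bounded $|F-F_{P}|$, so it stays in the left-hand class.

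\emph{Injectivity.} A torus-invariant $\varphi$ is determined by its values on the skeleton, since $\varphi=\mathscr{R}^{*}\varphi$. The skeleton is $\vartheta(N_{P})$, and $N_{\bbr}$ is dense in $N_{P}$. By continuity, $\varphi$ is therefore determined by $y\mapsto\varphi(\vartheta_{y})=F_{\varphi}(y)-F_{P}(y)$.

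\emph{Surjectivity.} Given $\phi\in\mathcal{C}(P)$, I would approximate it from above, uniformly on $P$, by a decreasing sequence of rational piecewise-affine convex functions $\phi_{j}$. Concretely, take convex envelopes of $\phi$ restricted to $P_{k}$ for $k=2^{j}$, plus small rational constants to make the sequence strictly decreasing. Each $\phi_{j}^{*}$ has the form (\ref{eq:trop of FS}). It is therefore the tropical potential of $\mathrm{FS}_{k}(\chi)$ for an invariant norm, and rational values can be realized after rescaling $k$. The sequence $\mathrm{FS}_{k_{j}}(\chi_{j})$ is decreasing, because its tropical potentials decrease on $N_{\bbr}$ and all these metrics are torus-invariant. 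Its limit $\varphi$ is psh, and $\varphi$ has tropical potential $\phi^{*}$, since Legendre transform converts uniform convergence into uniform convergence.

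\emph{Main obstacle.} I expect the hard step to be showing that this limit is \emph{continuous} on all of $X^{\an}$, including the boundary strata $\varrho^{-1}(N_{P}\setminus N_{\bbr})$. My first attempt would use uniform convergence of $\varphi_{j}-\varphi_{0}$. On each chart $X^{\an}_{\sigma}$ this difference is expressed through $\phi_{j}-\phi_{0}$ on the face of $P$ dual to $\sigma$, which makes the uniform bound $\sup_{P}|\phi_{j}-\phi|$ apply on the whole space. This uniform control is also what ties continuity on $N_{P}\cong P$ to continuity of $\phi$ up to $\partial P$.
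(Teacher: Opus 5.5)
Most of your outline works. $\mathscr{R}^{*}$ of a Fubini--Study metric is again Fubini--Study: it is built from the monomials in the supports of the $s_j$, these have no common zeros, so every vertex of $P$ occurs. This gives well-definedness. Injectivity via $\varphi=\varphi\circ\vartheta\circ\varrho$ and the density of $N_{\bbr}$ in $N_P$ is also correct. The paper gives no proof of its own and cites \cite[Theorem 4.8.1]{burgos_gil_arithmetic_2014}, whose argument has the same tropicalization-plus-Legendre-duality structure. However, your surjectivity step has the monotonicity backwards.

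\textbf{The monotonicity error.} Legendre transform reverses order. If $\phi_j\searrow\phi$ on $P$, then $\phi_j^{*}\nearrow\phi^{*}$ on $N_{\bbr}$. Since $\varphi_j\circ\vartheta=\phi_j^{*}-F_P$, your metrics $\mathrm{FS}_{k_j}(\chi_j)$ \emph{increase}. You can also see this directly: larger $f$ gives smaller $\chi=-kf$, hence smaller $\mathrm{FS}_k(\chi)$. The paper defines psh metrics as decreasing limits of Fubini--Study metrics, so ``its limit $\varphi$ is psh'' does not follow. Adding small positive constants to the $\phi_j$ does not help.

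\textbf{The fix: approximate from below.}
\begin{enumerate}
\item Compare $(\phi|_{P_k})^{\vee}$ with the affine interpolation on a triangulation of $(P,P_k)$ of mesh $O(1/k)$ and use the uniform continuity of $\phi$. This gives $0\le(\phi|_{P_k})^{\vee}-\phi\le\omega_{\phi}(C/k)$.
\item Choose $k_j$ with this bound at most $2^{-j}$, and set $\phi_j=(\phi|_{P_{k_j}}-2^{1-j})^{\vee}$.
\item Then $\phi-2^{1-j}\le\phi_j\le\phi-2^{-j}\le\phi_{j+1}$. So $\phi_j\nearrow\phi$ uniformly, and the associated invariant metrics $\mathrm{FS}_{k_j}(\chi_j)$ decrease.
\end{enumerate}
Equivalently, any uniformly convergent sequence $\varphi_j\to\varphi$ of Fubini--Study metrics becomes decreasing after adding the constants $2\epsilon_j$, where $\|\varphi_j-\varphi\|_\infty\le\epsilon_j$ and $3\epsilon_{j+1}\le\epsilon_j$.

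\textbf{The ``main obstacle'' is not one.} Continuity of the limit needs no chart-by-chart analysis over faces of $P$. For torus-invariant continuous metrics, use $\varphi=\varphi\circ\vartheta\circ\varrho$, continuity of $\varphi\circ\vartheta$ on $N_P$, density of $N_{\bbr}$, and the fact that Legendre transform is $1$-Lipschitz in sup norm. Together these give
\[
\sup_{X^{\an}}|\varphi_i-\varphi_j|=\sup_{N_{\bbr}}|\phi_i^{*}-\phi_j^{*}|\le\sup_{P}|\phi_i-\phi_j|.
\]
So the $\varphi_j$ are uniformly Cauchy on $X^{\an}$, and the limit is continuous automatically.

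\textbf{Rationality is irrelevant.} The paper's Fubini--Study metrics allow arbitrary real $\lambda_j$. In any case, rescaling $k$ cannot make irrational values of $\phi$ rational.
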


For any $\chi\in\mathcal{N}(R_{k})^{T}$ given by $f:P_{k}\ra\bbr$,
we define $f(x)=+\infty$ for $x\notin P_{k}$, then (\ref{eq:trop of FS})
is just $f^{*}$. Hence the Legendre transform of the tropical potential
of $\mathrm{FS}_{k}(\chi)$ is $\left(f^{*}\right)^{*}$, which is
the convex envelope of $f$, i.e. 
\begin{equation}
f^{\vee}(x)\coloneqq\sup\left\{ \eta(x)\mid\eta\in\mathcal{C}(P),\ \eta\le f\ \mathrm{on}\ P_{k}\right\} ,\ \text{for }x\in P.\label{eq:conv envelop}
\end{equation}
Obviously, $f^{\vee}$ is a piecewise affine convex function over
$P$. For any affine function $\alpha$, we have $\left(f+\alpha|_{P_{k}}\right)^{\vee}=f^{\vee}+\alpha$.
For any $f_{1},f_{2}\in\bbr^{P_{k}}$, we have 
\[
\left(f_{1}+f_{2}\right)^{\vee}\geq f^{\vee}_{1}+f^{\vee}_{2}.
\]

In a summary, under (\ref{eq:identify N W}) and (\ref{eq:identify CPSH to conv}),
Fubini--Study map $\mathrm{FS}_{k}:\mathcal{N}(R_{k})^{T}\ra\mathrm{CPSH}^{\na}_{T}(L)$
is identified with the convex envelope map 
\begin{equation}
\vee:\bbr^{P_{k}}\ra\mathcal{C}(P),\ f\mapsto f^{\vee}.\label{eq:identify FS_k}
\end{equation}

\subsection{The super-norm map}

The super-norm map is introduced in \cite{boucksom_spaces_2021,boucksom_non-archimedean_2018},
which plays the role of the Hilbert map in the Archimedean geometric
quantization picture. 
\begin{defn}
Let $(X,L)$ be a polarized variety. Denote by $\mathcal{L}^{\infty}$
the space of bounded functions on $X^{\an}$. The super-norm map $\mathrm{SN}_{k}:\mathcal{L}^{\infty}\rightarrow\mathcal{N}(R_{k})$
is defined as
\begin{equation}
\mathrm{SN}_{k}(\varphi)(s)\coloneqq-\log\left(\sup_{X^{\an}}\left|s\right|_{tr}e^{-k\varphi}\right)=\inf_{v\in X^{\an}}\left\{ v(s)+k\varphi(v)\right\} .\label{def SNk}
\end{equation}
\end{defn}

In the toric setting, we show the super-norms induced by torus-invariant
metrics are $T_{\bbc}$-invariant. 
\begin{prop}
\label{prop:SN is inv}(1) Suppose that metric $\varphi$ is torus-invariant,
then for any $s\in R_{k}$ and $v\in X^{\an}$ we have $\left|s\right|_{\varphi}(v)\leq\left|s\right|_{\varphi}(\mathscr{R}v)$,
where $\mathscr{R}:X^{\an}\ra\mathrm{Sk}(X^{\an})$ is (\ref{eq: Rv}).
This implies 
\begin{equation}
\sup_{X^{\an}}\left|s\right|_{\varphi}=\sup_{\mathrm{Sk}(X^{\an})}\left|s\right|_{\varphi}.\label{eq:sup skeleton}
\end{equation}

(2) If bounded metric $\varphi$ is torus-invariant, then the induced
super-norm $\mathrm{SN}_{k}(\varphi)$ is $T_{\bbc}$-invariant. 
\end{prop}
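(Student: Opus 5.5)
We first prove (1) on the torus $\mathbb{T}^{\an}$ and then extend it to all of $X^{\an}$. Take $v\in\mathbb{T}^{\an}$ and $s\in R_{k}$, and write $s=\sum_{u\in P_{k}}c_{u}s_{u,k}=\big(\sum_{u}c_{u}\mathsf{x}^{ku}\big)s_{D}^{k}$. Then
\[
-\log\left|s\right|_{\varphi}(v)=v\Big(\sum_{u}c_{u}\mathsf{x}^{ku}\Big)+k\,v(s_{D})+k\varphi(v),
\]
where $v(s_{D})$ is computed in a local frame $s_{\sigma}$, so that $v(s_{D})=v(\mathsf{x}^{-m_{\sigma}})$. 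The terms $v(\mathsf{x}^{-m_{\sigma}})$ and $\varphi(v)$ depend only on $\varrho(v)$. Hence they are unchanged when $v$ is replaced by $\mathscr{R}v$: the first because $\mathscr{R}v$ and $v$ agree on monomials, the second because $\varphi$ is torus-invariant. By (\ref{eq: Rv}), $(\mathscr{R}v)\big(\sum c_{u}\mathsf{x}^{ku}\big)\le v\big(\sum c_{u}\mathsf{x}^{ku}\big)$. Therefore $-\log\left|s\right|_{\varphi}(\mathscr{R}v)\le-\log\left|s\right|_{\varphi}(v)$, which is exactly $\left|s\right|_{\varphi}(v)\le\left|s\right|_{\varphi}(\mathscr{R}v)$ on $\mathbb{T}^{\an}$.

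Next we pass to all of $X^{\an}$. There are two routes. The first is a density argument. $\mathbb{T}^{\an}$ is dense in $X^{\an}$. The function $\left|s\right|_{tr}$ is continuous, and $\mathscr{R}=\vartheta\circ\varrho$ is continuous. If $\varphi$ is continuous, then the inequality passes to limits directly. For merely bounded $\varphi$ this route has a gap, so instead we argue directly on each chart $X^{\an}_{\sigma}$. There, $v$ is a semivaluation on $\mathbb{C}[M_{\sigma}]$, possibly taking the value $+\infty$ on some monomials. The same min-inequality $\min_{c_{m}\neq0}v(\mathsf{x}^{m})\le v\big(\sum c_{m}\mathsf{x}^{m}\big)$ holds by the ultrametric property. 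The retraction on $X^{\an}_{\sigma}$ is given by the same monomial formula, namely $\mathscr{R}v$ is the monomial semivaluation determined by $\varrho(v)\in N_{P}$. So the computation above repeats verbatim using the frame $s_{\sigma}$: $s=\big(\sum c_{u}\mathsf{x}^{ku-km_{\sigma}}\big)s_{\sigma}^{k}$ with exponents in $M_{\sigma}$. The main (modest) obstacle is this bookkeeping at boundary points: checking that $\mathscr{R}$ on $X^{\an}_{\sigma}$ is indeed the monomial semivaluation with the same values on $\mathsf{x}^{m}$, $m\in M_{\sigma}$, and that torus-invariance $\mathscr{R}^{*}\varphi=\varphi$ gives $\varphi(\mathscr{R}v)=\varphi(v)$. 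Given the pointwise inequality, (\ref{eq:sup skeleton}) follows at once: $\mathscr{R}v\in\mathrm{Sk}(X^{\an})$, and the skeleton is a subset of $X^{\an}$, so the two suprema are equal.

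For (2), by (\ref{eq:sup skeleton}) we have $\mathrm{SN}_{k}(\varphi)(s)=\inf_{y}\{\vartheta_{y}(s)+k\varphi(\vartheta_{y})\}$, with $y$ ranging over $N_{P}$. Fix $y\in N_{\bbr}$ in the dense open part. Since $\vartheta_{y}$ is monomial,
\[
\vartheta_{y}(s)=\min_{c_{u}\neq0}\vartheta_{y}(s_{u,k}).
\]
Taking the infimum over $y$ and exchanging the two infima gives
\[
\mathrm{SN}_{k}(\varphi)(s)=\min_{c_{u}\neq0}\mathrm{SN}_{k}(\varphi)(s_{u,k}).
\]
Boundary points $y\in N_{P}\setminus N_{\bbr}$ are handled similarly on charts, or by density together with (1). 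Thus $\mathrm{SN}_{k}(\varphi)$ is diagonalized by $\{s_{u,k}\}$. By the earlier proposition characterizing $T_{\bbc}$-invariant norms as exactly those diagonalized by the eigen-sections, $\mathrm{SN}_{k}(\varphi)\in\mathcal{N}(R_{k})^{T}$.
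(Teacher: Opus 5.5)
Your proof is correct and is essentially the paper's. In (1) you use the same inequality, coming from the monomial formula (\ref{eq: Rv}) and $\varphi\circ\mathscr{R}=\varphi$; your chart-by-chart treatment of points outside $\mathbb{T}^{\an}$ is more careful than the paper, which applies (\ref{eq: Rv}) on all of $X^{\an}$ without comment, and you are right that a density argument alone would not cover a non-continuous $\varphi$. In (2), the paper checks $v(t\cdot s)=v(s)$ for skeleton points $v$ and gets invariance directly, whereas you exchange the infima to obtain $\mathrm{SN}_{k}(\varphi)(s)=\min_{c_{u}\neq0}\mathrm{SN}_{k}(\varphi)(s_{u,k})$ and then invoke the trivial direction of the earlier characterization. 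Both rest on the same fact that skeleton points are monomial semivaluations, and your version has the small bonus of giving the diagonalization by $\{s_{u,k}\}$ directly, which is what is used right afterwards to identify $\mathrm{SN}_{k}$ with restriction.
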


\begin{proof}
(1) By (\ref{eq: Rv}) and $\varphi(v)=\varphi(\mathscr{R}v)$ for
any $v\in X^{\an}$, we have 
\[
\left|s\right|_{\varphi}(v)=\left|s\right|_{tr}(v)e^{-k\varphi(v)}\leq\left|s\right|_{tr}(\mathscr{R}v)e^{-k\varphi(\mathscr{R}v)}=\left|s\right|_{\varphi}(\mathscr{R}v).
\]
(2) By (\ref{eq:sup skeleton}), we only need to show that $\left|t\cdot s\right|_{\varphi}(v)=\left|s\right|_{\varphi}(v)$
for any $t\in T_{\bbc}$, $s\in R_{k}$ and $v\in\mathrm{Sk}(X^{\an})$.
It amounts to show $v(t\cdot s)=v(s)$. Let $s=\sum_{u\in P_{k}}c_{u}s_{u,k}$,
then $t\cdot s=\sum c_{u}t^{-ku}s_{u,k}$, where $t^{-ku}\coloneqq\mathsf{x}^{-ku}(t)\in\mathbb{C}^{*}$.
Take a maximal affine open set $X_{\sigma}$ containing the center
of $v$, and represent $s_{u,k}=\mathsf{x}^{ku-km_{\sigma}}s^{k}_{\sigma}$
by the local frame $s_{\sigma}$ of $L$, then we have 
\[
v(t\cdot s)=v\left(\sum_{u\in P_{k}}c_{u}t^{-ku}\mathsf{x}^{ku-km_{\sigma}}\right)=\min_{c_{u}\neq0}v\left(\mathsf{x}^{ku-km_{\sigma}}\right)=v(s),
\]
where the second $=$ is due to $\mathscr{R}v=v$ and (\ref{eq: Rv}). 
\end{proof}

By the above proposition, the super-norms induced by invariant metrics
are determinated by their values on eigen-sections $s_{u,k}$. 
\begin{prop}
\label{prop: SN is res}Assume that $\varphi:X^{\an}\rightarrow\bbr$
is torus-invariant, bounded and upper semi-continuous, then we have
$\mathrm{SN}_{k}(\varphi)(s_{u,k})=-kF^{*}_{\varphi}(u)$ for all
$k\geq1$ and $u\in P_{k}$, where $F^{*}_{\varphi}$ is the Legendre
transform of the tropical potential $F_{\varphi}$ of $\varphi$. 
\end{prop}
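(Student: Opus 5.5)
We need $\mathrm{SN}_k(\varphi)(s_{u,k}) = \inf_{v}\{v(s_{u,k})+k\varphi(v)\}$. The plan is to reduce the infimum to the skeleton, compute it there as a Legendre transform, and then handle the boundary by density.

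\textbf{Reduction to the skeleton.} By Proposition \ref{prop:SN is inv}(1), specifically (\ref{eq:sup skeleton}), the supremum of $|s_{u,k}|_\varphi$ over $X^{\an}$ equals its supremum over $\mathrm{Sk}(X^{\an})=\vartheta(N_P)$. Hence
\[
\mathrm{SN}_k(\varphi)(s_{u,k})=\inf_{z\in N_P}\{\vartheta_z(s_{u,k})+k\varphi(\vartheta_z)\}.
\]

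\textbf{Evaluation on the dense part $N_{\bbr}\subset N_P$.} For $y\in N_{\bbr}$, the proof of Proposition \ref{prop: trop of FS} gives $\vartheta_y(s_{u,k})=-\langle ku,y\rangle+kF_P(y)$. By (\ref{eq:def of trop poten}), $\varphi(\vartheta_y)=F_\varphi(y)-F_P(y)$. Therefore
\[
\vartheta_y(s_{u,k})+k\varphi(\vartheta_y)=-k\bigl(\langle u,y\rangle-F_\varphi(y)\bigr),
\]
and taking the infimum over $y\in N_{\bbr}$ yields exactly $-kF_\varphi^*(u)$. This already gives $\mathrm{SN}_k(\varphi)(s_{u,k})\le -kF_\varphi^*(u)$.

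\textbf{Boundary points of $N_P$ (the main obstacle).} We must show the infimum over all of $N_P$ is not smaller than the infimum over $N_{\bbr}$, i.e.\ that points of $N_P\setminus N_{\bbr}$ cannot give a strictly smaller value. Here upper semicontinuity enters. The function $z\mapsto \vartheta_z(s_{u,k})+k\varphi(\vartheta_z)=-\log|s_{u,k}|_\varphi(\vartheta_z)$ is the sum of two pieces:
\begin{itemize}
\item $z\mapsto\vartheta_z(s_{u,k})=-\log|s_{u,k}|_{tr}(\vartheta_z)$, which is continuous on $N_P$ with values in $[0,+\infty]$, since $\vartheta$ is continuous and $|s|_{tr}$ is continuous on $X^{\an}$;
\item $z\mapsto k\varphi(\vartheta_z)$, which is usc because $\varphi$ is usc.
\end{itemize}
The sum is therefore usc on $N_P$ (as an $\bbr\cup\{+\infty\}$-valued function, using that $\varphi$ is bounded). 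For a usc function $g$ and a dense subset, every boundary point $z_0$ is a limit of $y_j\in N_{\bbr}$ with $\limsup g(y_j)\le g(z_0)$. Hence $\inf_{N_{\bbr}} g\le g(z_0)$, and so $\inf_{N_P}g=\inf_{N_{\bbr}}g$.

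To make this rigorous I would work in the affine chart $X_\sigma$ containing the center of $\vartheta_{z_0}$. There $s_{u,k}=\mathsf{x}^{ku-km_\sigma}s_\sigma^k$, and $z\mapsto\vartheta_z(\mathsf{x}^{m})$ for $m\in M_\sigma$ extends continuously to the partial compactification with values in $[0,\infty]$. If the value at $z_0$ is $+\infty$ there is nothing to prove. Combining the two cases gives $\mathrm{SN}_k(\varphi)(s_{u,k})=-kF_\varphi^*(u)$.
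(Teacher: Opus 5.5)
Your proof is correct and uses the same ingredients as the paper's: the retraction inequality of Proposition \ref{prop:SN is inv}(1), semicontinuity plus density, and the Legendre computation on $\vartheta(N_{\bbr})$. The only difference is the order. The paper first passes from $X^{\an}$ to the dense torus $\mathbb{T}^{\an}$ using lower semicontinuity of $|s|_{\varphi}$, and then retracts onto $\vartheta(N_{\bbr})$. You instead retract onto all of $\vartheta(N_P)$ first, and then use density of $N_{\bbr}$ in $N_P$ together with continuity of $\vartheta$. The paper's order only needs the retraction inequality on $\mathbb{T}^{\an}$, where (\ref{eq: Rv}) is actually written down. Since Proposition \ref{prop:SN is inv}(1) is stated for all $v\in X^{\an}$, your route is equally valid.
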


\begin{proof}
For $s\in R_{k}$, the function $v\mapsto\left|s\right|_{\varphi}=e^{-v(s)-k\varphi(v)}$
is lsc on $X^{\an}$, and $T^{\an}\subset X^{\an}$ is dense, so we
have $\sup_{X^{\an}}\left|s\right|_{\varphi}=\sup_{T^{\an}}\left|s\right|_{\varphi}$.
By Proposition \ref{prop:SN is inv} (1), it suffices to take the
supremum over the skeleton, so $\sup_{T^{\an}}\left|s\right|_{\varphi}=\sup_{y\in N_{\bbr}}\left|s\right|_{\varphi}(\vartheta_{y})$.
Hence 
\begin{align*}
\mathrm{SN}_{k}(\varphi)(s_{u,k}) & =\inf_{y\in N_{\mathbb{R}}}\left\{ \vartheta_{y}(s_{u,k})+k\varphi(\vartheta_{y})\right\} =\inf_{y\in N_{\mathbb{R}}}\left\{ \vartheta_{y}\left(\mathsf{x}^{ku}\right)+\vartheta_{y}\left(s^{k}_{D}\right)+k\varphi(\vartheta_{y})\right\} \\
 & =k\inf_{y\in N_{\mathbb{R}}}\left\{ F_{\varphi}(y)-\left\langle u,y\right\rangle \right\} =-kF^{*}_{\varphi}(u),
\end{align*}
where in the second row we used $\vartheta_{y}\left(s_{D}\right)=F_{P}(y)$
(Proposition \ref{prop: trop of FS}) and (\ref{eq:def of trop poten}). 
\end{proof}

In a summary, under (\ref{eq:identify N W}) and (\ref{eq:identify CPSH to conv}),
super-norm map $\mathrm{SN}_{k}:\mathrm{CPSH}^{\na}_{T}(L)\ra\mathcal{N}(R_{k})^{T}$
is identified with the restriction map 
\[
\mathcal{C}(P)\ra\bbr^{P_{k}},\ \eta\mapsto\eta|_{P_{k}}.
\]

\subsection{Asymptotics of the volumes of super-norms}

Let $(X,L)$ be a polarized variety. A NA norm $\chi$ on $R_{k}$
corresponds to a filtration $F^{\lambda}_{\chi}=\{s\in R_{k}\mid\chi(s)\geq\lam\}$
with jumping numbers: 
\[
\lambda_{i}(\chi)\coloneqq\max\{\lambda\mid\dim F^{\lambda}_{\chi}\geq i\},\ 1\leq i\leq N_{k}\coloneqq\dim R_{k}.
\]
If $\{s_{i}\}\subset R_{k}$ is a basis diagonalizing $\chi$, then
$\lam_{1}\geq\cdots\geq\lam_{N_{k}}$ is the re-arrangement of $\left(\chi(s_{i})\right)_{i}$. 

The volume of $\chi$ is defined as 
\begin{equation}
E_{k}(\chi)\coloneqq\frac{1}{kN_{k}}\sum^{N_{k}}_{i=1}\lambda_{i}(\chi).\label{eq: def Ek}
\end{equation}
For $\varphi\in\mathrm{CPSH}^{\na}(L)$, Boucksom--Eriksson \cite[Theorem A]{boucksom_spaces_2021}
gives the leading term of volume asymptotics, that is 
\begin{equation}
E_{k}\left(\mathrm{SN}_{k}(\varphi)\right)=\mathbb{E}(\varphi)+\mathrm{o}(1),\textrm{ as }k\ra\infty,\label{eq: BE expand}
\end{equation}
where $\mathbb{E}(\varphi)$ is the NA Monge--Ampère energy. The
next-order term is expected to be the NA K-energy, which is readily
verified when $\vphi$ is induced by an ample test-configuration,
see \cite[Theorem 46]{yao_maximal_2025}. 

Suppose that $(X,L)$ is the polarized toric variety given by lattice
polytope $P$, and $\varphi\in\mathrm{CPSH}^{\na}_{T}(L)$ with tropical
potential $F_{\vphi}$. By Proposition \ref{prop: SN is res}, we
have 
\[
E_{k}\left(\mathrm{SN}_{k}(\varphi)\right)=-\frac{1}{\left|P_{k}\right|}\sum_{u\in P_{k}}F^{*}_{\varphi}(u).
\]
Since $F^{*}_{\varphi}$ is a continuous convex function over $P$,
by the asymptotics of Riemann sum (\ref{eq:EM}), we obtain 
\[
E_{k}\left(\mathrm{SN}_{k}(\varphi)\right)=-\frac{1}{V_{P}}\int_{P}F^{*}_{\varphi}\ \mathrm{d}x-\frac{1}{2V_{P}k}\mathbb{M}_{P}\left(F^{*}_{\varphi}\right),
\]
where 
\[
\mathbb{M}_{P}\left(\eta\right)\coloneqq\int_{\partial P}\eta\ \mathrm{d}\sigma-\int_{P}\eta\bar{S}\ \mathrm{d}x,\ \bar{S}\coloneqq\frac{V_{\partial P,\mathrm{d}\sigma}}{V_{P}}
\]
is the toric NA K-energy, see §\ref{subsec:Szekelyhidi's-optimal-test}
for more. Compare with (\ref{eq: BE expand}), we obtain 
\begin{equation}
\mathbb{E}(\varphi)=-\frac{1}{V_{P}}\int_{P}F^{*}_{\varphi}\ \mathrm{d}x,\ \forall\varphi\in\mathrm{CPSH}^{\na}_{T}(L).\label{eq:MA toric formula}
\end{equation}

\section{\label{sec:3}Maximum Chow-destabilizers and secondary polytopes}

In this section, we show that the maximal Chow-destabilizer of toric
variety $(X_{P},L_{P})$ is determined by the shortest GKZ vector,
which is the least norm point on the secondary polytope associated
to point set $P_{k}$. 

\subsection{\label{subsec:Toric-reduction-of Chow}Toric reduction of the maximal
Chow-destabilizers}

By the torus symmetry, the minimization problem for Chow-weights can
be reduced into a convex optimization problem. 

Given a polarized variety $(X,L)$, suppose $k$ is large enough such
that $\iota_{k}:X\hookrightarrow\mathbb{P}R^{*}_{k}$ is an embedding.
Then we can talk about the Chow-stability of $\iota_{k}(X)$ in the
sense of GIT, see \cite[§4.1]{yao_maximal_2025} for a recap of Chow-stability. 

Chow-stability is usually defined via the Chow-weights assigned to
the 1-ps of $\mathrm{SL}(R_{k})$. The 1-ps with minimum normalized
Chow-weight can differ from each other by conjugations. To eliminate
this ambiguity, \cite{yao_maximal_2025} replace 1-ps by the NA norms
on $R_{k}$. Then Chow-weight turns into a functional $M_{k}$ on
$\mathcal{N}(R_{k})$ which has a NA expression (see \cite[Theorem 3]{yao_maximal_2025})
\begin{equation}
M_{k}(\chi)=\mathbb{E}\left(\mathrm{FS}_{k}(\chi)\right)-E_{k}(\chi),\ \forall\chi\in\mathcal{N}(R_{k}).\label{eq: Chow wt}
\end{equation}
We focus on the \emph{Chow-unstable} case, i.e. $\inf M_{k}<0$. Since
$M_{k}$ is homogeneous, we consider the normalized Chow-weight: 
\[
\bar{M}_{k}(\chi)\coloneqq\frac{M_{k}(\chi)}{\left\Vert \chi\right\Vert _{2}},\ \textrm{for }\chi\in\mathcal{N}(R_{k})\backslash\{\chi_{tr}\},\ \textrm{where }\left\Vert \chi\right\Vert ^{2}_{2}\coloneqq\frac{1}{N_{k}}\sum^{N_{k}}_{i=1}\lambda^{2}_{i}(\chi).
\]
By \cite[Theorem 1]{yao_maximal_2025}, in the unstable case, $\bar{M}_{k}$
admits a unique (modulo scaling) minimizer $\chi^{*}_{k}$, which
is called the maximal Chow-destabilizer for $\iota_{k}(X)$. Moreover,
uniqueness implies that $\chi^{*}_{k}$ inherits all symmetries of
$(X,L)$, see \cite[Corollary 2]{yao_maximal_2025}. 

Let $(X,L)$ be the toric variety given by lattice polytope $P$.
When $k\ge n-1$, $\iota_{k}$ is an embedding. Suppose $\iota_{k}(X)$
is Chow-unstable, by the symmetry principle mentioned above, the maximal
Chow-destabilizer is $T_{\bbc}$-invariant. Hence, to determine it,
it suffices to minimize $\bar{M}_{k}$ over $\mathcal{N}(R_{k})^{T}$.
Under identification $\mathcal{N}(R_{k})^{T}\cong\bbr^{P_{k}}$ and
(\ref{eq:identify FS_k}), and use (\ref{eq:MA toric formula}), we
see that Chow-weight (\ref{eq: Chow wt}) is identified with 
\begin{equation}
M_{P_{k}}:\bbr^{P_{k}}\ra\bbr,\ M_{P_{k}}(f)\coloneqq\frac{1}{N_{k}}\sum_{u\in P_{k}}f(u)-\frac{1}{V_{P}}\int_{P}f^{\vee}\ \mathrm{d}x.\label{eq:chow weight Pk}
\end{equation}
The normalized Chow-weight is identified with 
\begin{equation}
\bar{M}_{P_{k}}(f)\coloneqq\frac{M_{P_{k}}(f)}{\left\Vert f\right\Vert _{2}},\ \left\Vert f\right\Vert ^{2}_{2}\coloneqq\sum_{u\in P_{k}}f(u)^{2}.\label{eq: Chow weight formula}
\end{equation}
The next task is to find the minimizer of $\bar{M}_{P_{k}}$. We generalize
this problem by replacing $P_{k}$ with a more general point configuration. 

\subsection{Marked polytopes }
\begin{defn}
(1) Let $V$ be a $n$-dimensional real vector space. A \emph{marked
polytope} in $V$ is a pair $(Q,A)$, where $A\subset V$ is a finite
subset (mark points) and its convex hull $Q=\mathrm{conv}(A)$ is
full-dimensional. Our primary example is $(P,P_{k})$. 

(2) For $(Q,A)$, we define an inner product space 
\[
\bbr^{A}\coloneqq\left\{ f:A\rightarrow\mathbb{R}\right\} ,\ \left\langle f,g\right\rangle \coloneqq\sum_{u\in A}f(u)g(u).
\]
The convex envelope map $\vee:\bbr^{A}\rightarrow\mathcal{C}(Q)$
is defined as before. 
\end{defn}

\begin{rem}
Even if $A$ is not $P_{k}$, $\vee$ also relates to the NA Fubini--Study
map. Let $M$ be a lattice, and $(Q,A)$ is a marked polytope in $M_{\bbr}$.
Suppose $A=\{m_{1},\cdots,m_{s}\}\subset M$, then the Zariski closure
of the image of map 
\[
T_{\bbc}\ra\mathbb{P}^{s-1},\ t\mapsto\left[\mathsf{x}^{m_{1}}(t):\cdots:\mathsf{x}^{m_{s}}(t)\right]
\]
is a toric variety $X_{A}$ (may not isomorphic to $X_{Q}$). Take
the Berkovich analytification of $X_{A}$, and consider the Fubini--Study
map from the space of invariant NA norms on $\bbc^{a}$ to the space
of NA metrics on $\mathcal{O}(1)|_{X_{A}}$, we will arrive at the
envelope map $\vee$. 
\end{rem}

There are several interactive properties between Fubini--Study map
and super-norm map, see \cite[Lemma 7.23]{boucksom_spaces_2021} and
\cite[Proposition 35]{yao_maximal_2025}. The following are its toric
concretizations. 
\begin{prop}
\label{prop:FS vs SN} Given a marked polytope $(Q,A)$. 
\end{prop}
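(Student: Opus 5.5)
Since the items of Proposition \ref{prop:FS vs SN} are the toric translations of the identities in \cite[Lemma 7.23]{boucksom_spaces_2021} and \cite[Proposition 35]{yao_maximal_2025}, I expect them to be the following.
\begin{itemize}
\item[(a)] $(f^{\vee})|_{A}\le f$ for every $f\in\bbr^{A}$, with equality iff $f$ is the restriction of a convex function ("$f$ is convex").
\item[(b)] $(\eta|_{A})^{\vee}\ge\eta$ on $Q$ for every $\eta\in\mathcal{C}(Q)$, with equality on $A$.
\item[(c)] $\bigl((f^{\vee})|_{A}\bigr)^{\vee}=f^{\vee}$ and $\bigl((\eta|_{A})^{\vee}\bigr)|_{A}=\eta|_{A}$, i.e. $\mathrm{FS}\circ\mathrm{SN}\circ\mathrm{FS}=\mathrm{FS}$ and $\mathrm{SN}\circ\mathrm{FS}\circ\mathrm{SN}=\mathrm{SN}$.
\item[(d)] Monotonicity and equivariance: $f\le g$ implies $f^{\vee}\le g^{\vee}$, and $(f+\alpha|_{A}+c)^{\vee}=f^{\vee}+\alpha+c$ for affine $\alpha$ and constant $c$.
\end{itemize}
I would prove all of them directly from the definition (\ref{eq:conv envelop}) of $\vee$ as a supremum over convex competitors. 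I would not pass through the Berkovich picture, since under (\ref{eq:identify N W}), (\ref{eq:identify CPSH to conv}) and Proposition \ref{prop: SN is res} the two formulations are equivalent.

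First I make the envelope explicit. $f^{\vee}$ is the largest convex function on $Q$ lying below $f$ on $A$. Equivalently,
\[
f^{\vee}(x)=\min\Bigl\{\sum_{u\in A}\lambda_{u}f(u)\ \Big|\ \lambda_{u}\ge0,\ \sum\lambda_{u}=1,\ \sum\lambda_{u}u=x\Bigr\}.
\]
This is the lower convex hull of the lifted points $\{(u,f(u))\}$, which is the Legendre double-dual description $(f^{*})^{*}$ already used above. From this formula, $f^{\vee}$ is finite, convex and piecewise affine on $Q=\mathrm{conv}(A)$, so it lies in $\mathcal{C}(Q)$. Taking the trivial combination $\lambda=\delta_{u}$ gives $f^{\vee}(u)\le f(u)$, which is the inequality in (a). Items (d) follow immediately, because the competitor class in (\ref{eq:conv envelop}) is monotone in $f$ and invariant under adding affine functions.

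For (b), $\eta$ is itself a competitor in the supremum defining $(\eta|_{A})^{\vee}$, so $(\eta|_{A})^{\vee}\ge\eta$. Combining with (a) on $A$ gives the chain
\[
\eta|_{A}\le(\eta|_{A})^{\vee}|_{A}\le\eta|_{A},
\]
hence equality on $A$. The equality case in (a) then follows: if $f=\eta|_{A}$ for some convex $\eta$, then $f^{\vee}|_{A}=f$ by (b); conversely, if $f^{\vee}|_{A}=f$, take $\eta=f^{\vee}$.

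For (c), monotonicity and (a) give $\bigl((f^{\vee})|_{A}\bigr)^{\vee}\le f^{\vee}$. Since $f^{\vee}$ is convex, (b) applied with $\eta=f^{\vee}$ gives the reverse inequality. The second identity in (c) is just the equality-on-$A$ part of (b).

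The only step needing genuine care is the claim that $f^{\vee}\in\mathcal{C}(Q)$, in particular that it is continuous up to $\partial Q$. If $\vee$ is defined as a supremum over convex functions, one must rule out upper-semicontinuity failures at the boundary. I would handle this through the explicit min-formula above. For each $x\in Q$ it is a finite linear program, and its value is a polyhedral (piecewise affine) function of $x$ on the polytope $Q$. Hence it is continuous, which settles the point without appealing to \cite{gale_convex_1968}.
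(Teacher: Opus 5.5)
Your proposal is correct and follows the paper's proof. Your (a)--(c) contain items (1) and (2) of the proposition: the identity $\mathrm{PL}(Q,A)=\{\eta\in\mathcal{C}(Q)\mid(\eta|_{A})^{\vee}=\eta\}$ is your (c), and $\mathcal{C}(A)=\{f\mid f^{\vee}|_{A}=f\}$ is the equality case of your (a). Your argument $\bigl((f^{\vee})|_{A}\bigr)^{\vee}\le f^{\vee}\le\bigl((f^{\vee})|_{A}\bigr)^{\vee}$, via monotonicity and (b), is exactly the paper's. Item (3), that $\vee:\mathcal{C}(A)\to\mathrm{PL}(Q,A)$ is a bijection inverse to restriction, follows at once and is worth stating explicitly. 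Your only addition is the lower-convex-hull/LP justification that $f^{\vee}\in\mathcal{C}(Q)$, which the paper treats as obvious; it is a correct and welcome detail.
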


\begin{enumerate}
\item For $\forall f\in\bbr^{A}$ and $\eta\in\mathcal{C}(Q)$, we have
\[
f^{\vee}|_{A}\le f,\ \left(\eta|_{A}\right)^{\vee}\ge\eta.
\]
\item Consider the image of operation $\vee$, we have 
\[
\mathrm{PL}(Q,A)\coloneqq\left\{ f^{\vee}\mid f\in\bbr^{A}\right\} =\left\{ \eta\in\mathcal{C}(Q)\mid\left(\eta|_{A}\right)^{\vee}=\eta\right\} .
\]
Consider the restrictions of convex functions, we have 
\[
\mathcal{C}(A)\coloneqq\left\{ \eta|_{A}\mid\eta\in\mathcal{C}(Q)\right\} =\left\{ f\in\bbr^{A}\mid f^{\vee}|_{A}=f\right\} .
\]
\item We have a bijection $\vee:\mathcal{C}(A)\rightarrow\mathrm{PL}(Q,A)$,
whose inverse is the restriction map. 
\[
\begin{array}{ccc}
\bbr^{A} & \xrightleftharpoons[\mathrm{restriction}]{\vee} & \mathcal{C}(Q)\\
\cup &  & \cup\\
\mathcal{C}(A) & \stackrel{1-1}{\longleftrightarrow} & \mathrm{PL}(Q,A)
\end{array}
\]
\end{enumerate}
\begin{proof}
(1) is obvious. (2) For the first equality, $\supset$ is trivial.
For $\forall f\in\bbr^{A}$, let $\eta=f^{\vee}$. By (1) we have
$\eta|_{A}\le f$, so $\left(\eta|_{A}\right)^{\vee}\le f^{\vee}=\eta$.
By (1) we also have $\left(\eta|_{A}\right)^{\vee}\ge\eta$, hence
$\left(\eta|_{A}\right)^{\vee}=\eta$, so the $\subset$ direction
holds. The second equality can be proved similarly. (3) follows from
(2). 
\end{proof}

\begin{defn}
\label{def: Chow-weight}For marked polytope $(Q,A)$, we define the
``Chow-weight'' functional $M_{A}:\bbr^{A}\rightarrow\mathbb{R}$
by 
\begin{equation}
M_{A}(f)\coloneqq\frac{1}{\left|A\right|}\sum_{u\in A}f(u)-\frac{1}{\left|Q\right|_{\mathrm{d}x}}\int_{Q}f^{\vee}\ \mathrm{d}x,\label{eq: M_A}
\end{equation}
which generalizes (\ref{eq:chow weight Pk}). It is independent of
the choices of Lebesgue measure $\mathrm{d}x$. We say $(Q,A)$ is
\emph{Chow-semistable} if $M_{A}(f)\geq0$ for all $f\in\bbr^{A}$.
Otherwise, we say $(Q,A)$ is \emph{Chow-unstable}. 
\end{defn}

Suppose $(Q,A)$ is Chow-unstable, consider the minimization problem:
\begin{equation}
\inf\left\{ \bar{M}_{A}(f)\coloneqq\frac{M_{A}(f)}{\left\Vert f\right\Vert }\Bigm|0\not\equiv f:A\ra\bbr\right\} .\label{eq: min M_A}
\end{equation}
We analyze this problem via the secondary polytopes. 

\subsection{Secondary polytopes }

Roughly speaking, the secondary polytope $\Sigma(A)$ associated to
a marked polytope $(Q,A)$ is the ``moduli space'' of polyhedral
subdivisions of $Q$ with nodes in $A$. The main references are \cite[§7]{gelfand_discriminants_1994}
and \cite[§5]{de_loera_triangulations_2010}. 
\begin{defn}
Given a $n$-dimensional marked polytope $(Q,A)$. 

(1) A \emph{subdivision} of $(Q,A)$ is a family $S=\left\{ (Q_{i},A_{i})\mid i\in I\right\} $
of $n$-dimensional marked polytopes such that
\begin{itemize}
\item $A_{i}\subset A$ and $Q=\bigcup_{i\in I}Q_{i}$.
\item $\forall i,j\in I$, $Q_{i}\cap Q_{j}$ is empty or a common face
of $Q_{i}$ and $Q_{j}$.
\item $\forall i,j\in I$, we have $A_{i}\cap Q_{i}\cap Q_{j}=A_{j}\cap Q_{i}\cap Q_{j}$. 
\end{itemize}
Since $Q_{i}=\mathrm{conv}(A_{i})$, we can simply take $S$ as the
family $\{A_{i}\mid i\in I\}$. Subdivision $T=\{A_{i}\mid i\in I\}$
is called a \emph{triangulation} of $(Q,A)$ if all $Q_{i}$ are $n$-simplex
and $A_{i}$ is the vertex set of $Q_{i}$. 

(2) We say subdivision $S$ \emph{refines} another one $S'$ (denoted
as $S\prec S'$, by no means strictly) if each $A_{i}\in S$ is contained
in some $A^{\prime}_{j}\in S'$. It defines a partial order on the
set of all subdivisions of $(Q,A)$. The unique maximum is the trivial
subdivision $\{A\}$, and the triangulations are the minimum. A subdivision
$S$ is called \emph{full} if $\bigcup_{i\in I}A_{i}=A$, which is
equivalent to that $A_{i}=Q_{i}\cap A$ for all $i$.

(3) Regular subdivisions. For any $f:A\ra\bbr$, its convex envelope
$f^{\vee}$ induces a subdivision $S(f)=\{A_{i}\mid i\in I\}$, which
is defined as follows. Let $\{Q_{i}\}_{i\in I}$ be the maximal subpolytopes
on which $f^{\vee}$ is affine, and we set 
\begin{equation}
A_{i}=\left\{ u\in A\cap Q_{i}\mid f^{\vee}(u)=f(u)\right\} .\label{eq: def S(f)}
\end{equation}
Subdivisions of the form $S(f)$ are called \textit{regular}. By \cite[Prop. 2.2.4]{de_loera_triangulations_2010},
$(Q,A)$ always admits a regular triangulation. See \cite[page 219]{gelfand_discriminants_1994}
for an irregular triangulation. 
\end{defn}

A natural way to compute $\int_{Q}f^{\vee}\mathrm{d}x$ is to refine
$S(f)$ by a triangulation $T$, then integrate $f^{\vee}$ over each
simplex of $T$. This procedure yields the GKZ vectors. 
\begin{defn}
(1) Affine interpolation. Given a triangulation $T$ of $(Q,A)$ and
$f:A\ra\bbr$, we can build a function $\varLambda_{T}(f)$ which
is affine over each simplex of $T$ and equals to $f$ at the simplex
vertices. Obviously, $\varLambda_{T}$ is a linear operator, and by
convexity we have 
\begin{equation}
\varLambda_{T}(f)\ge f^{\vee},\ \forall f:A\ra\bbr.\label{eq: interpolate and envelope}
\end{equation}

(2) GKZ (Gelfand--Kapranov--Zelevinsky) vectors. For a triangulation
$T=\{A_{i}\mid i\in I\}$ of $(Q,A)$, the attached \emph{GKZ vector}
$g_{T}:A\ra\bbr$ is 
\begin{equation}
g_{T}(u)\coloneqq\frac{1}{(n+1)\left|Q\right|}\sum_{i:\,u\in\mathrm{vert}(Q_{i})}\left|Q_{i}\right|,\ \textrm{for }u\in\bigcup_{i\in I}A_{i},\label{eq:def GKZ vec}
\end{equation}
that is the sum of the volumes of simplices with $u$ as a vertex.
If $u\in A$ is not used by $T$, we set $g_{T}(u)=0$. Note that
$g_{T}(u)$ is independent of the choices of Lebesgue measure. The
normalization factor is different from \cite{gelfand_discriminants_1994},
our choice makes $\sum_{u\in A}g_{T}(u)=1$. 
\end{defn}

\begin{prop}
\label{prop: T integral formula}For any triangulation $T$ of $(Q,A)$
and $f:A\ra\bbr$, we have 

(1) $\fint_{Q}\varLambda_{T}(f)=\left\langle f,g_{T}\right\rangle $.

(2) $\fint_{Q}f^{\vee}=\left\langle f,g_{T}\right\rangle \ \Leftrightarrow\ f^{\vee}=\varLambda_{T}(f)\ \Leftrightarrow\ T\prec S(f)$.

(3) Let $T$ go over all triangulations of $(Q,A)$, we have 
\begin{equation}
\fint_{Q}f^{\vee}=\min_{T}\left\langle f,g_{T}\right\rangle .\label{eq: int tent T vertex}
\end{equation}
\end{prop}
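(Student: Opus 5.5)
The plan is to prove (1) by integrating simplex by simplex, then get (2) and (3) from it together with the inequality (\ref{eq: interpolate and envelope}). Throughout, $\fint_{Q}$ means $|Q|^{-1}\int_{Q}\,\mathrm{d}x$.

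For (1), I would work on each simplex $Q_{i}$ of $T$ separately. Since $\varLambda_{T}(f)$ is affine on $Q_{i}$, its integral there equals $|Q_{i}|$ times its value at the barycenter. That value is the average of $f$ over the $n+1$ vertices, so
\[
\int_{Q_{i}}\varLambda_{T}(f)=\frac{|Q_{i}|}{n+1}\sum_{u\in\mathrm{vert}(Q_{i})}f(u).
\]
Summing over $i$, regrouping by the vertex $u$, and dividing by $|Q|$ gives $\sum_{u}f(u)g_{T}(u)$, by the definition (\ref{eq:def GKZ vec}). Points of $A$ not used by $T$ carry weight $0$, which is consistent with setting $g_{T}(u)=0$ there.

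For (2), the first equivalence comes from (1) and (\ref{eq: interpolate and envelope}). The difference $\varLambda_{T}(f)-f^{\vee}$ is continuous and nonnegative, so its integral vanishes iff it vanishes identically. For the second equivalence, suppose first $f^{\vee}=\varLambda_{T}(f)$. Then $f^{\vee}$ is affine on each simplex $Q_{i}$, so each $Q_{i}$ lies in some maximal affinity domain $Q_{j}'$ of $f^{\vee}$. Also $f^{\vee}(u)=f(u)$ at every vertex $u$ of $Q_{i}$, so $A_{i}\subset A_{j}'$ by (\ref{eq: def S(f)}); hence $T\prec S(f)$. Conversely, suppose each $A_{i}\subset A_{j}'$. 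Then $f^{\vee}$ is affine on $Q_{i}\subset Q_{j}'$ and agrees with $f$ at the vertices of $Q_{i}$. An affine function on a simplex is determined by its vertex values, so $f^{\vee}=\varLambda_{T}(f)$ on $Q_{i}$, and therefore everywhere.

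For (3), the inequality $\fint f^{\vee}\le\langle f,g_{T}\rangle$ for every $T$ follows from (1) and (\ref{eq: interpolate and envelope}). For equality, by (2) it suffices to find a triangulation $T$ refining $S(f)$. This is the main step, though it is standard. Each cell $(Q_{j}',A_{j}')$ of $S(f)$ is a marked polytope with $Q_{j}'=\mathrm{conv}(A_{j}')$, so it admits triangulations. They must be chosen compatibly on common faces. The cleanest way is a regular refinement: take $T=S(f+\epsilon h)$ for a generic $h$, for instance $h(u)=|u|^{2}$ perturbed generically, with $\epsilon$ small. I would cite \cite[Lemma 2.3.15 / §2.3]{de_loera_triangulations_2010} (or \cite[§7.2]{gelfand_discriminants_1994}) for two facts: regular subdivisions of the form $S(f+\epsilon h)$ refine $S(f)$ for small $\epsilon>0$, and genericity of $h$ makes the result a triangulation. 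Alternatively, one can triangulate the cells by a placing (pulling) order on $A$, which is automatically compatible on shared faces. With such a $T$, (2) gives equality, so the minimum is attained.
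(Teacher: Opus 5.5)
Your proof is correct and follows the same route as the paper: (1) by integrating the affine function $\varLambda_{T}(f)$ simplex by simplex (volume times the vertex average), (2) from (1) together with $\varLambda_{T}(f)\ge f^{\vee}$ and the fact that an affine function on a simplex is determined by its vertex values, and (3) by taking a triangulation refining $S(f)$. The one addition is that you justify the existence of that refining triangulation (via a regular refinement $S(f+\epsilon h)$ with generic $h$, or a pulling order), whereas the paper simply asserts it.
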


\begin{proof}
\textbf{(1)} Note that the integral of $\varLambda_{T}(f)$ over a
simplex $\Delta$ of $T$ is equal to $\left|\Delta\right|$ times
the average of $f$ over the vertices of $\Delta$. Then take the
sum over simplexes, the equality follows. \textbf{(2)} The first $\Leftrightarrow$
follows from (1) and (\ref{eq: interpolate and envelope}). Suppose
$T\prec S(f)$, this means that for each simplex $\Delta$ of $T$,
there is a subpolytope $(Q_{i},A_{i})$ of $S(f)$ such that $\mathrm{vert}(\Delta)\subset A_{i}$.
By definition (\ref{eq: def S(f)}) of $S(f)$, $f^{\vee}$ is affine
over $Q_{i}\supset\Delta$ and $f^{\vee}=f$ on $A_{i}$, thus $f^{\vee}=\varLambda_{T}(f)$
on $\Delta$, so on the whole $Q$. Conversely, if $f^{\vee}=\varLambda_{T}(f)$,
then $f^{\vee}$ is affine over each simplex $\Delta$ of $T$, and
$f^{\vee}=\varLambda_{T}(f)=f$ at the vertices of $\Delta$, these
imply $T\prec S(f)$. \textbf{(3)} $\le$ follows by (1) and (\ref{eq: interpolate and envelope}).
On the other hand, there exists a triangulation $T\prec S(f)$, then
(2) implies (\ref{eq: int tent T vertex}). 
\end{proof}

\begin{defn}
The secondary polytope associated to a marked polytope $(Q,A)$ is
the following convex hull in $\bbr^{A}$, 
\[
\Sigma(A)\coloneqq\mathrm{conv}\left\{ g_{T}\in\bbr^{A}\mid T\textrm{ is triangulation of }(Q,A)\right\} .
\]
\end{defn}

Immediately, (\ref{eq: int tent T vertex}) implies (see \cite[Lemma 1.9 (1.7), Chapter 7]{gelfand_discriminants_1994}): 
\begin{equation}
\fint_{Q}f^{\vee}=\min_{g\in\Sigma(A)}\left\langle f,g\right\rangle ,\ \forall f:A\ra\bbr.\label{eq: Int tent formula}
\end{equation}
This formula is crucial to solving minimization problem (\ref{eq: min M_A}).
Before that, we state the structure theorem of secondary polytopes. 
\begin{thm}[{\cite[§7 Proposition 1.11 \& Theorem 2.4]{gelfand_discriminants_1994}}]
\label{thm: structure of Sigma A} Given a $n$-dimensional marked
polytope $(Q,A)$. 

(1) The affine subspace spanned by $\Sigma(A)$ is 
\begin{equation}
\left\{ g\in\bbr^{A}\Bigm|\sum_{u\in A}g(u)=1,\ \sum_{u\in A}g(u)\cdot\vec{u}=\fint_{Q}\vec{x}\ \mathrm{d}x\right\} .\label{eq:position of Sigma_A}
\end{equation}
So $\dim\Sigma(A)=\left|A\right|-n-1$. 

(2) (Subdivision-face correspondence) For a regular subdivision $S$,
we define 
\[
F(S)\coloneqq\mathrm{conv}\left\{ g_{T}\in\bbr^{A}\mid\textrm{triangulation}\ T\prec S\right\} .
\]
Then the map $S\mapsto F(S)$ is an order-preserving bijection from
the poset of regular subdivisions of $(Q,A)$ to the face poset of
$\Sigma(A)$. The regular triangulations are in one-to-one correspondence
with the vertices of $\Sigma(A)$. 

(3) For any $f\in\bbr^{A}$, $\left\langle f,\cdot\right\rangle |_{\Sigma(A)}$
attains its minimum exactly on face $F\left(S(f)\right)$. For any
regular subdivision $S$, the outer-normal cone of $F(S)$ is 
\[
\left\{ f\in\bbr^{A}\mid\left\langle f,\cdot\right\rangle |_{\Sigma(A)}\textrm{ attains minimum on }F(S)\right\} =\left\{ f\in\bbr^{A}\mid S(f)\succ S\right\} .
\]
\end{thm}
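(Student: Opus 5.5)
The plan is to derive all three parts from the defining formula \eqref{eq:def GKZ vec} and from the identity \eqref{eq: Int tent formula}. Throughout, $\Sigma(A)$ is a polytope in $\bbr^{A}$, and the linear functional $\langle f,\cdot\rangle$ is minimized on it.

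\emph{Part (1).} Every GKZ vector satisfies $\sum_{u}g_{T}(u)=1$. For the barycenter, integrate the identity function $\vec{x}$ over each simplex $Q_{i}$ of $T$: the integral equals $|Q_{i}|$ times the average of the vertices of $Q_{i}$. Summing over $i$ and dividing by $|Q|$ gives $\sum_{u}g_{T}(u)\vec{u}=\fint_{Q}\vec{x}$. Alternatively, apply Proposition \ref{prop: T integral formula}(1) with $f$ affine, using $\varLambda_{T}(f)=f$. So $\Sigma(A)$ lies in the stated affine subspace, which has codimension $n+1$ because $A$ affinely spans $V$.

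The equality of dimensions is the real content. By Proposition \ref{prop: T integral formula}(3) and \eqref{eq: Int tent formula}, the orthogonal complement of $\mathrm{aff}\,\Sigma(A)-\Sigma(A)$ consists of those $f$ for which $\langle f,\cdot\rangle$ is constant on $\Sigma(A)$. Equivalently, every triangulation is refined by $S(f)$, i.e. the regular subdivision $S(f)$ is trivial and $f^{\vee}=f$ is affine on $A$. I would prove the contrapositive. If $f$ is not the restriction of an affine function, then $S(f)$ is a nontrivial subdivision, or else some point is not used. In either case, choose a triangulation $T\prec S(f)$ and a second triangulation $T'$ refining $S(-f)$ or $S(g)$ for a suitable perturbation, so that $\langle f,g_{T}\rangle<\langle f,g_{T'}\rangle$. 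A cleaner route is to take $T$ regular refining $S(f)$ and $T'$ regular refining $S(f')$ with $f'$ a generic lift, and then use part (3) below. This shows the complement is exactly the $(n+1)$-dimensional space of affine functions, hence $\dim\Sigma(A)=|A|-n-1$.

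\emph{Part (3).} This is the core. For $g_{T}$ a vertex, Proposition \ref{prop: T integral formula}(2)–(3) says $\langle f,g_{T}\rangle$ is minimal exactly when $T\prec S(f)$. Since a linear functional attains its minimum on a polytope at a face, namely the convex hull of the minimizing vertices, the minimizing face is $\mathrm{conv}\{g_{T}:T\prec S(f)\}=F(S(f))$. One subtlety is that distinct triangulations may give the same GKZ vector. Here only the set of points matters, so this is harmless. The normal cone statement follows: $F(S)$ is the minimizing face for $f$ iff $F(S(f))\supseteq F(S)$. By the order-preservation in (2), this is iff $S(f)\succ S$.

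\emph{Part (2).} For each regular $S=S(f)$, part (3) shows $F(S)$ is a face. Every face arises as the minimizing face of some $f$, so $S\mapsto F(S)$ is surjective onto faces. For monotonicity, $S\prec S'$ gives $F(S)\subseteq F(S')$ directly from the definition.

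The main obstacle is injectivity and order-reflection: $F(S)\subseteq F(S')$ must imply $S\prec S'$. I would recover $S$ from $F(S)$ as the coarsest common coarsening of the triangulations $T\prec S$. This requires the lemma that a regular subdivision $S$ is determined by the regular triangulations refining it, i.e. each cell $A_{i}$ is the union of the simplices of those triangulations lying in $Q_{i}$, with every point of $A_{i}$ used by some $T\prec S$. For this lemma I would perturb $f$ by $\epsilon h$ with $h$ generic and supported near a chosen point $u\in A_{i}$, pushing $u$ down slightly, which yields a regular triangulation refining $S$ that uses $u$. Once this holds, regular triangulations correspond to vertices, since $F(T)=\{g_{T}\}$. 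This perturbation lemma is where I expect most of the work.
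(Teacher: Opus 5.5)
The paper gives no argument for this theorem; it only cites Gelfand--Kapranov--Zelevinsky, so your self-contained route has to stand on its own. Several parts are sound:
\begin{itemize}
\item the first assertion of (3), identifying the minimizing face of $\langle f,\cdot\rangle$ with $\mathrm{conv}\{g_T : T\prec S(f)\}=F(S(f))$ via Proposition~\ref{prop: T integral formula};
\item surjectivity of $S\mapsto F(S)$;
\item monotonicity.
\end{itemize}

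The genuine gap is order-reflection: $F(S)\subseteq F(S')\Rightarrow S\prec S'$. Injectivity, the vertex/regular-triangulation correspondence and the normal-cone description in (3) all depend on it. Your proposed lemma does not deliver it. Knowing that every $u\in A_i$ is used by some $T\prec S$ only shows that $u$ is a marked point of $S'$. It does not rule out walls of $S'=S(f')$ cutting through the cell $Q_i$, i.e.\ it does not show that $f'^{\vee}$ is affine on $Q_i$. Also, what you want is the \emph{finest} common coarsening of the $T\prec S$; the coarsest one is always $\{A\}$.

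The missing idea is to refine $S=S(f)$ in the direction opposite to $f'$. Since $\{g_T\}$ is finite, for small $\epsilon>0$ every minimizer of $\langle f-\epsilon f',\cdot\rangle$ among the $g_T$ also minimizes $\langle f,\cdot\rangle$. So pick $T\prec S(f-\epsilon f')$. Then $T\prec S$, hence $g_T\in F(S)\subseteq F(S')$, hence $T\prec S'$. By Proposition~\ref{prop: T integral formula}(2) and linearity of $\varLambda_T$,
\[
f^{\vee}=\varLambda_T f,\qquad f'^{\vee}=\varLambda_T f',\qquad (f-\epsilon f')^{\vee}=f^{\vee}-\epsilon f'^{\vee}.
\]
On $Q_i$, $f^{\vee}$ is affine and $(f-\epsilon f')^{\vee}$ is convex, so $f'^{\vee}$ is concave there. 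Since $f'^{\vee}$ is also convex, it is affine on $Q_i$, so $Q_i$ lies in a single maximal cell $Q'_j$ of $S'$. For $u\in A_i$, the inequality $(f-\epsilon f')^{\vee}(u)\le f(u)-\epsilon f'(u)$ together with $f^{\vee}(u)=f(u)$ gives $f'^{\vee}(u)\ge f'(u)$. Combined with $f'^{\vee}\le f'$ this is an equality, so $A_i\subseteq A'_j$.

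The same convex-plus-concave mechanism is what your ``$T'\prec S(-f)$'' option in (1) needs, and you should spell it out. If $\langle f,\cdot\rangle$ is constant on $\Sigma(A)$, then any triangulation $T$ refines both $S(f)$ and $S(-f)$. Hence $\varLambda_T f$ is both convex and concave, i.e.\ an affine function $\alpha$. Then $f^{\vee}|_A\le f\le -(-f)^{\vee}|_A$ forces $f=\alpha$ on $A$. Your case split and the generic-lift alternative are then unnecessary.
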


\begin{example}
\label{exa: pentagon}Let $A=\left\{ (0,0),(1,0),(2,0),(1,1),(0,1)\right\} \subset\bbr^{2}$,
$Q=\mathrm{conv}(A)$ is a trapezoid. Its secondary polytope $\Sigma(A)$
is a pentagon, see Figure \ref{fig:pentagon} for the face-subdivision correspondence.
\begin{figure}[htbp]
\centering
\includegraphics[scale=0.3]{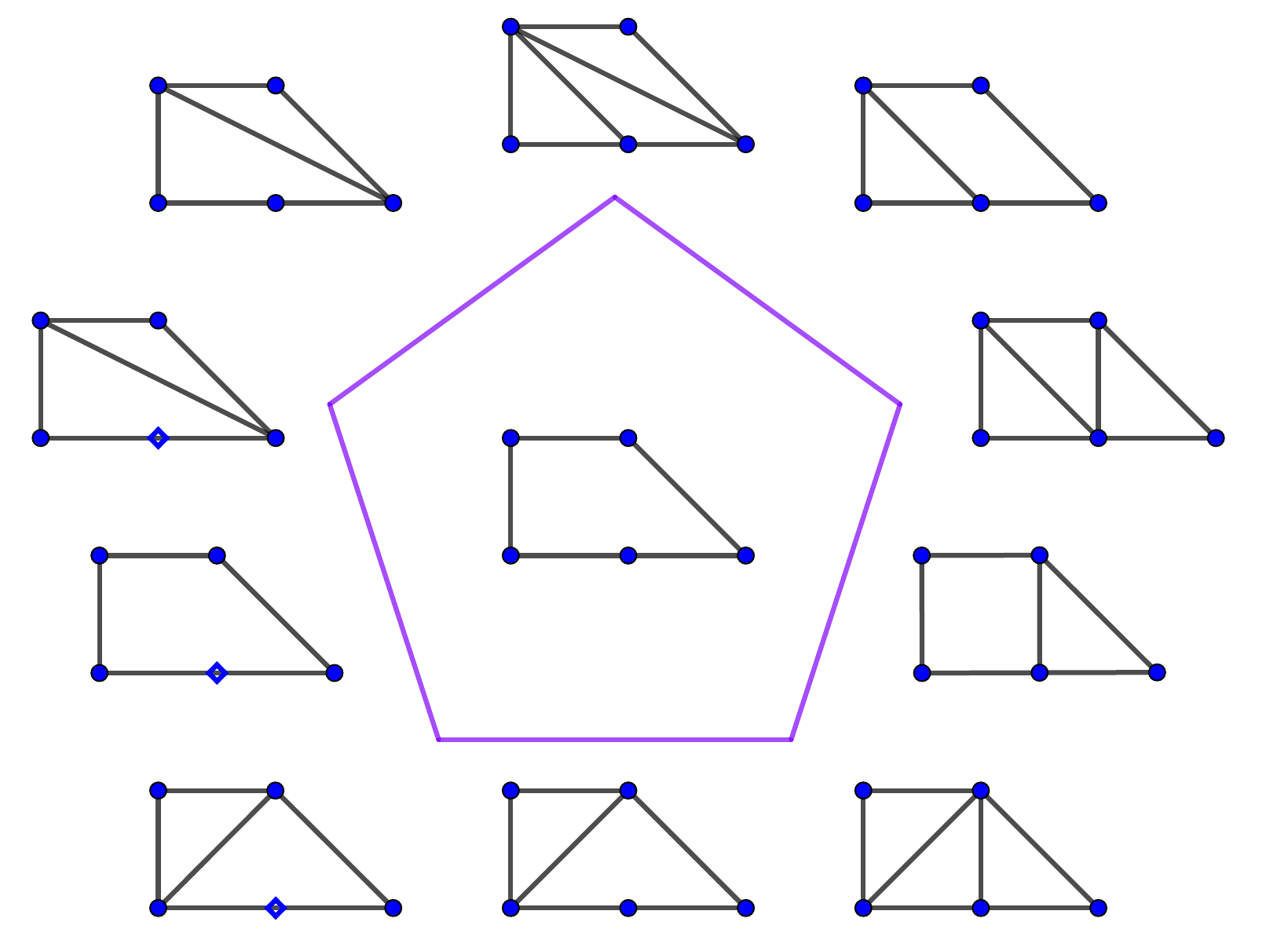}

\caption{\label{fig:pentagon}$\Sigma(A)$ and the corresponding subdivisions. }

\end{figure}
\end{example}

\subsection{\label{subsec: coefficient field}Integral representation of secondary
polytopes}

This section relates to §\ref{subsec:Estimate-the-integrals}. 

There is another gentle description of $\Sigma(A)$ due to \cite{billera_fiber_1992}.
Define a simplex 
\[
\Delta_{A}=\left\{ g\in\mathbb{R}^{A}\mid g\ge0,\sum_{u\in A}g(u)=1\right\} 
\]
in $\mathbb{R}^{A}$. Consider the natural map 
\[
\wp:\Delta_{A}\rightarrow Q,\ g\mapsto\sum_{u\in A}g(u)u.
\]

\begin{defn}
A \textit{coefficient field} $\lam$ on marked polytope $(Q,A)$ is
a Lebesgue-measurable section of $\wp$, which consists of $\left|A\right|$
functions $\lam_{u}(\cdot)\coloneqq\lam(\cdot)(u)$ on $Q$ such that
\[
\lam_{u}(x)\ge0,\ \sum_{u\in A}\lam_{u}(x)=1,\ \sum_{u\in A}\lam_{u}(x)u=x
\]
for almost all $x\in Q$, see Example \ref{exa:model lambda}. The
\textit{generalized GKZ vector} attached to $\lam$ is defined as
\[
g_{\lam}:A\ra\bbr,\ g_{\lam}(u)=\fint_{Q}\lam_{u}(x)\mathrm{d}x.
\]
\end{defn}

A triangulation $T$ of $(Q,A)$ induces a coefficient field $\lam_{T}$
which comes from the unique barycentric coordinates on each simplex.
One may verify that $g_{T}=g_{\lam_{T}}$. By \cite[Theorem 1.12]{gelfand_discriminants_1994},
$\Sigma(A)$ is the collection of all generalized GKZ vectors, namely
\begin{equation}
\Sigma(A)=\left\{ g_{\lam}\mid\text{coefficient field }\lam:Q\rightarrow\Delta_{A}\right\} .\label{eq: SP=00003DgGKZ}
\end{equation}

From this point of view, we can show that the faces of $\Sigma(A)$
are Minkowski sums of smaller secondary polytopes. 
\begin{prop}
\label{prop:Minkowski sum}For a marked polytope $(Q,A)$, consider
a face $F(S)$ of $\Sigma(A)$ corresponding to a regular subdivision
$S=\left\{ (Q_{i},A_{i})\right\} _{i\in I}$. Then we have 
\[
F(S)=\sum_{i\in I}\frac{\left|Q_{i}\right|}{\left|Q\right|}\Sigma(A_{i}),
\]
where the vectors in $\Sigma(A_{i})$ are regarded as vectors in $\mathbb{R}^{A}$
by zero extension. 
\end{prop}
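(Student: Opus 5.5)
Both sides are compact convex sets in $\bbr^{A}$, so I will compare their support functions: a compact convex set $K$ is determined by $h_K(f)\coloneqq\min_{g\in K}\langle f,g\rangle$ for all $f\in\bbr^{A}$. The right-hand side is a Minkowski sum with nonnegative weights, so its support function is $\sum_{i}\frac{|Q_i|}{|Q|}h_{\Sigma(A_i)}(f|_{A_i})$ (zero extension means only $f|_{A_i}$ enters). By (\ref{eq: Int tent formula}) applied to $(Q_i,A_i)$, this equals
\[
\sum_{i\in I}\frac{|Q_i|}{|Q|}\fint_{Q_i}(f|_{A_i})^{\vee_i}\,\mathrm{d}x=\frac{1}{|Q|}\sum_{i\in I}\int_{Q_i}(f|_{A_i})^{\vee_i}\,\mathrm{d}x,
\]
where $\vee_i$ is the convex envelope over $Q_i$ with nodes $A_i$.

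The containment $\supseteq$ uses the coefficient-field description. A point of the right-hand side has the form $\sum_i\frac{|Q_i|}{|Q|}g_{\lam^{(i)}}$ with $\lam^{(i)}$ a coefficient field on $(Q_i,A_i)$, by (\ref{eq: SP=00003DgGKZ}). Gluing the $\lam^{(i)}$ (the overlaps $Q_i\cap Q_j$ are null sets) gives a coefficient field $\lam$ on $(Q,A)$ with $g_\lam=\sum_i\frac{|Q_i|}{|Q|}g_{\lam^{(i)}}$, so this point lies in $\Sigma(A)$. To see it lies in $F(S)$, I will use Theorem \ref{thm: structure of Sigma A}(3). Take $f_0$ with $S(f_0)=S$, which exists since $S$ is regular. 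Then $f_0^\vee$ is affine on each $Q_i$ and agrees with $f_0$ on $A_i$, hence $\lam^{(i)}$-averages of $f_0$ reproduce $f_0^\vee$ pointwise, giving
\[
\langle f_0,g_\lam\rangle=\frac{1}{|Q|}\int_Q f_0^\vee\,\mathrm{d}x=\min_{\Sigma(A)}\langle f_0,\cdot\rangle.
\]
So $g_\lam$ lies in the minimizing face, which is $F(S(f_0))=F(S)$.

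The containment $\subseteq$ is checked on vertices, since $F(S)$ is the convex hull of $g_T$ over triangulations $T\prec S$. Each such $T$ restricts to a triangulation $T_i$ of $(Q_i,A_i)$: the simplices of $T$ lying in $Q_i$ have vertices in $A_i$ and cover $Q_i$. Splitting the sum (\ref{eq:def GKZ vec}) over $i$ gives $g_T=\sum_i\frac{|Q_i|}{|Q|}g_{T_i}$, since $g_{T_i}$ is normalized by $(n+1)|Q_i|$. The right-hand side is convex, so it contains $F(S)$.

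The main obstacle is the second half of the $\supseteq$ step, because a point of $\Sigma(A)$ lies in a face exactly when it minimizes the face's normal functional, and that requires a normal vector realizing precisely $S$. Regularity of $S$ supplies $f_0$, and one must note that $f_0^\vee=f_0$ on $A_i$ by the definition (\ref{eq: def S(f)}) of $S(f)$, which is needed for the pointwise reproduction above.
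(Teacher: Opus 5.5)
Your proof is correct. The $\supseteq$ inclusion matches the paper's: glue coefficient fields on the cells (the paper calls such fields \emph{confined} by $S$), then test against $f_0$ with $S(f_0)=S$ using Theorem~\ref{thm: structure of Sigma A}(3).

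For $\subseteq$ you take a genuinely different route. You work from the definition $F(S)=\mathrm{conv}\{g_T\mid T\prec S\}$. Each $T\prec S$ restricts to a triangulation $T_i$ of each $(Q_i,A_i)$. This holds because a full-dimensional simplex of $T$ with vertices in $A_j$ cannot lie in a proper common face $Q_i\cap Q_j$, so the simplices inside $Q_i$ have vertices in $A_i$ and tile $Q_i$. The GKZ sum then splits as $g_T=\sum_i\frac{|Q_i|}{|Q|}g_{T_i}$, and convexity of the Minkowski sum finishes.

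The paper instead starts from an arbitrary coefficient field $\lambda$ with $g_\lambda\in F(S)$. It uses the equality case of $f^\vee(x)\le\sum_u\lambda_u(x)f^\vee(u)\le\sum_u\lambda_u(x)f(u)$ to show that $\lambda$ is confined by $S$. Your version is more elementary and purely combinatorial, with no almost-everywhere equality analysis. The paper's version proves more: \emph{every} coefficient field representing a point of $F(S)$ is confined by $S$, so the face is characterized at the level of coefficient fields, not just vectors.

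Your opening support-function paragraph is never used; the two inclusions carry the whole proof. Comparing support functions directly would be harder. The reverse inequality would need cell-wise optimal triangulations $T_i$ to glue into a triangulation of $(Q,A)$, which is not automatic because independently chosen $T_i$ may disagree on a shared face. Gluing coefficient fields, whose overlaps are null sets, avoids exactly this problem.
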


\begin{proof}
We say a coefficient field $\lam$ is \textit{confined} by $S$ if
for any $i\in I$, 
\[
\lam_{u}(\cdot)=0\text{ a.e. on }Q_{i}\text{ for }u\notin A_{i}.
\]
It suffices to show 
\[
F(S)=\left\{ g_{\lam}\mid\lam\text{ is confined by }S\right\} .
\]
Since if $\lam$ is confined by $S$, after a modification on null
set, it induces a field $\lam^{(i)}$ on $(Q_{i},A_{i})$. For any
$u\in A$, 
\[
\left|Q\right|g_{\lam}(u)=\int_{Q}\lam_{u}(x)=\sum_{i\in I}\int_{Q_{i}}\lam^{(i)}_{u}(x)=\sum_{i\in I}\left|Q_{i}\right|g_{\lam^{(i)}}(u).
\]
By (\ref{eq: SP=00003DgGKZ}), $g_{\lam}$ belongs to the Minkowski
sum. The converse direction is similar. 

Take $f\in\bbr^{A}$ such that $S=S(f)$. If $\lam$ is confined by
$S$, since $f^{\vee}$ is affine on $Q_{i}$ and $f^{\vee}=f$ on
$A_{i}$, so 
\[
f^{\vee}(x)=\sum_{u\in A}\lam_{u}(x)f(u),\text{ for almost all }x\in Q.
\]
Integrating both sides, we obtain $\fint_{Q}f^{\vee}=\left\langle g_{\lam},f\right\rangle $.
Hence $\left\langle \cdot,f\right\rangle |_{\Sigma(A)}$ attains the
minimum at $g_{\lam}$. By Theorem \ref{thm: structure of Sigma A}
(3), we know $g_{\lam}\in F(S(f))$. 

Conversely, given $g_{\lam}\in F(S)$, by Jensen inequality, we have
\[
f^{\vee}(x)\le\sum_{u\in A}\lam_{u}(x)f^{\vee}(u)\le\sum_{u\in A}\lam_{u}(x)f(u),\ \forall x\in Q.
\]
Since $\fint_{Q}f^{\vee}=\left\langle g_{\lam},f\right\rangle $,
two $\le$ take equality for $x\in Q\backslash E$, where $E$ is
a null set. Fix $x\in Q^{\circ}_{i}\backslash E$, the equality condition
implies that $f^{\vee}$ is affine over the set 
\[
\mathrm{cvsupp}\lam(x)\coloneqq\mathrm{conv}\left\{ u\in A\mid\lam_{u}(x)>0\right\} 
\]
and $f^{\vee}(u)<f(u)$ only when $\lam_{u}(x)=0$. Since $x\in\mathrm{cvsupp}\lam(x)$,
we have $\mathrm{cvsupp}\lam(x)\subset Q_{i}$. By definition of $S(f)$,
these imply $\lam_{u}(x)=0$ for all $u\notin A_{i}$, thus $\lam$
is confined by $S$. 
\end{proof}

\subsection{Determine Chow-stability by secondary polytopes }

Define subspace 
\[
\bbr^{A}_{0}\coloneqq\left\{ f:A\rightarrow\mathbb{R}\Bigm|\sum_{u\in A}f(u)=0\right\} \subset\bbr^{A}
\]
and orthogonal projection
\[
\pi:\bbr^{A}\ra\bbr^{A}_{0},\ \pi(f)=f-\bar{f},\ \bar{f}\coloneqq\frac{1}{\left|A\right|}\sum_{u\in A}f(u).
\]
By (\ref{eq: Int tent formula}), we rewrite $M_{A}$ (\ref{eq: M_A})
as 
\begin{equation}
M_{A}(f)=-\fint_{Q}\left(\pi f\right)^{\vee}=-\min_{g\in\Sigma(A)}\left\langle \pi f,g\right\rangle =-\min_{h\in\pi\left(\Sigma(A)\right)}\left\langle f,h\right\rangle .\label{eq: M_A as support func}
\end{equation}
The last equality used $\left\langle \pi f,g\right\rangle =\left\langle f,\pi g\right\rangle $.
Now, $M_{A}$ is almost the support function of $\pi\left(\Sigma(A)\right)$.
By the same trick, we can handle the relative Chow-weights. 
\begin{defn}
\label{def: relative Chow-stab}Given a $n$-dimensional marked polytope
$(Q,A)$. 

(1) Let $\mathcal{A}$ be the space of affine functions. Since the
restriction map $\mathcal{A}\rightarrow\bbr^{A}$ is injective, we
can identify $\mathcal{A}$ with the image. Let $R(A)\coloneqq\mathcal{A}^{\perp}$
be the orthogonal complement in $\bbr^{A}$, that is 
\[
R(A)=\left\{ f:A\rightarrow\mathbb{R}\Bigm|\sum_{u\in A}f(u)=0,\ \sum_{u\in A}f(u)\,\overrightarrow{u}=0\right\} .
\]
It records the affine relations between the points in $A$. Let $\Pi:\bbr^{A}\rightarrow R(A)$
be the orthogonal projection. By (\ref{eq:position of Sigma_A}),
the projection image of $\Sigma(A)$ to $\mathcal{A}$ is a single
point, denoted by $\ell_{A}$, which is the unique affine function
satisfying 
\[
\fint_{Q}\alpha=\sum_{u\in A}\alpha(u)\ell_{A}(u),\ \forall\text{ affine }\alpha.
\]

\begin{figure}
\centering
\includegraphics[viewport=0bp 0bp 1150bp 670bp,clip,scale=0.3]{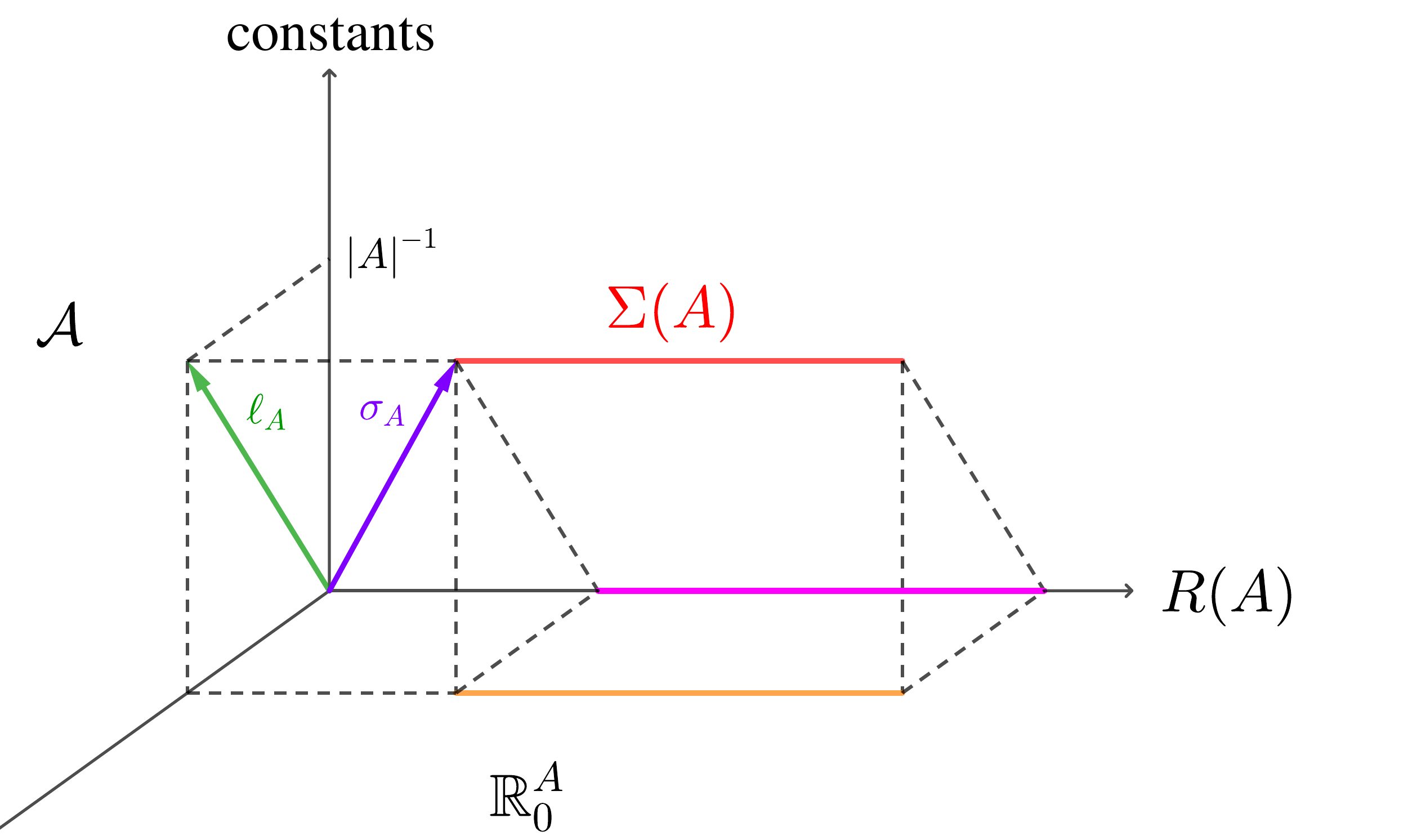}\caption{\label{fig: Location of second poly}Position of $\Sigma(A)$.}
\end{figure}

(2) We define the \emph{relative} Chow-weight functional $M^{\mathrm{rel}}_{A}:\bbr^{A}\rightarrow\mathbb{R}$
is 
\begin{equation}
M^{\mathrm{rel}}_{A}(f)\coloneqq-\fint_{Q}\left(\Pi f\right)^{\vee}=-\fint_{Q}f^{\vee}+\left\langle f,\ell_{A}\right\rangle .
\end{equation}
It vanishes on affine functions. We say $(Q,A)$ is \emph{relatively
Chow-semistable} if $M^{\mathrm{rel}}_{A}(f)\geq0$ for all $f\in\bbr^{A}$.
Otherwise, we call $(Q,A)$ is \emph{relatively Chow-unstable}. 
\end{defn}

\begin{rem}
When $(Q,A)=(P,P_{k})$ and $f$ takes rational numbers, $M^{\mathrm{rel}}_{A}(f)$
is the genuinely relative Chow-weight for toric variety $X_{P}\hookrightarrow\mathbb{P}R^{*}_{k}$
relative to the automorphism group $T_{\bbc}$, in the sense of \cite{szekelyhidi_extremal_2006}.
Note that our ``relative Chow-semistability'' differs from that
in \cite[Definition 5.3]{ono_algebro-geometric_2013}, which involves
the extremal affine function. 
\end{rem}

By (\ref{eq: Int tent formula}), we also have support function formula
\begin{equation}
M^{\mathrm{rel}}_{A}(f)=-\min_{g\in\Sigma(A)}\left\langle \Pi f,g\right\rangle =-\min_{h\in\Pi\left(\Sigma(A)\right)}\left\langle f,h\right\rangle .\label{eq: rel M_A as support}
\end{equation}
A consequence is that the (relative) Chow-semistability is determined
by the location of $\Sigma(A)$. 
\begin{thm}
\label{thm: when semistable}(1) Marked polytope $(Q,A)$ is Chow-semistable
(Definition \ref{def: Chow-weight}) if and only if the constant function
$\left|A\right|^{-1}\in\Sigma(A)$, equivalently $\ell_{A}\equiv\left|A\right|^{-1}$.
In addition, $(Q,A)$ is relatively Chow-semistable (Definition \ref{def: relative Chow-stab})
if and only if $\ell_{A}\in\Sigma(A)$. Refer to Figure  \ref{fig: stable}. 

(2) For toric variety $(X,L)$ given by a lattice polytope $P$, the
projective embedding $\iota_{k}(X)$ induced by $\left|kL\right|$
is Chow-semistable if and only if $\left|P_{k}\right|^{-1}\in\Sigma(P_{k})$,
where Chow-semistability is with regard to the whole linear group
action. 
\begin{figure}[h]
\centering
\includegraphics[viewport=150bp 200bp 1140bp 650bp,clip,scale=0.3]{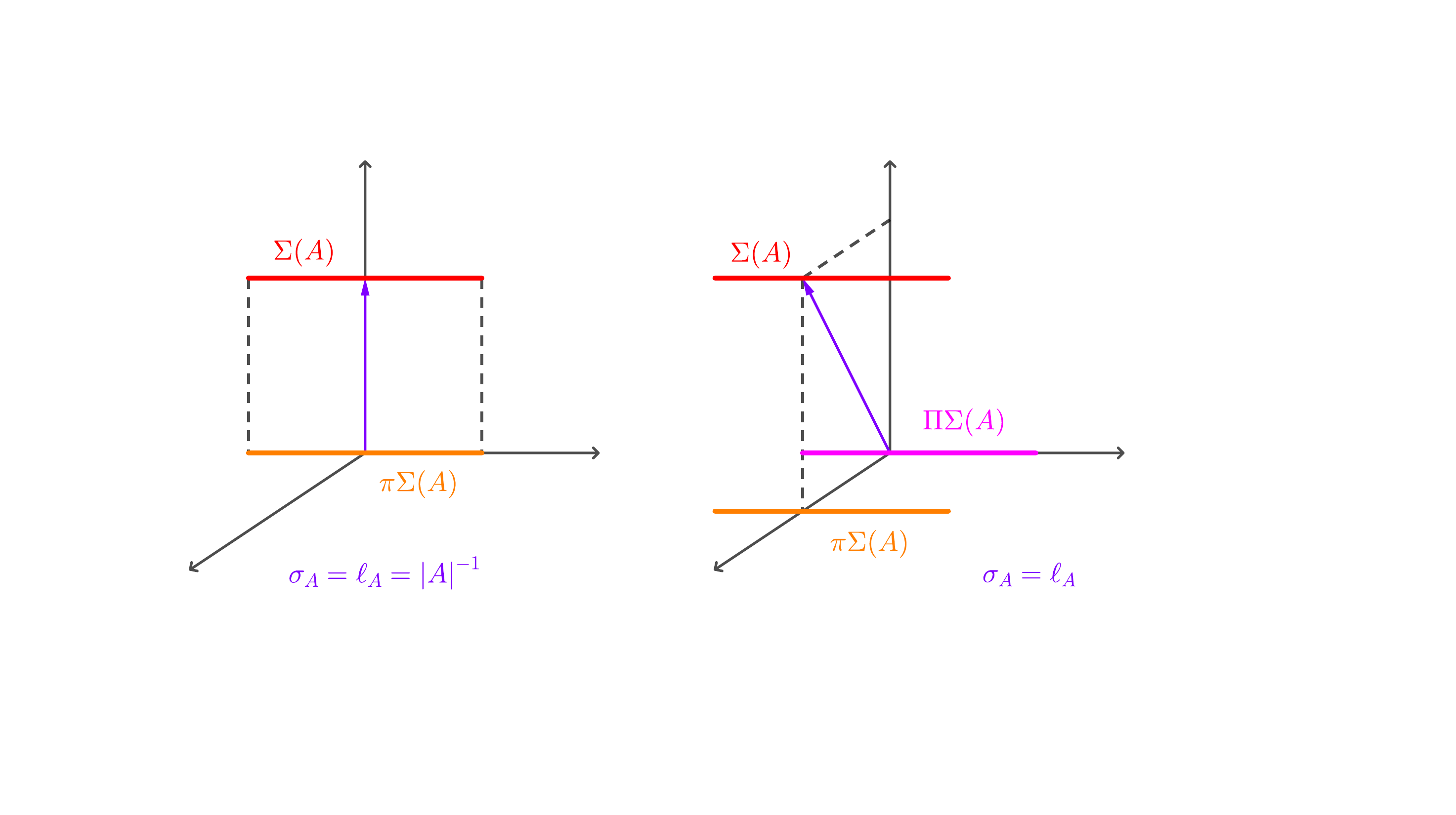}\caption{\label{fig: stable}Left: Chow-semistable. Right: Chow-unstable but relatively Chow-semistable. Refer to Figure \ref{fig: Location of second poly}. }
\end{figure}
\end{thm}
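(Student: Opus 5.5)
Part (1) follows from the support-function formulas (\ref{eq: M_A as support func}) and (\ref{eq: rel M_A as support}); part (2) reduces to part (1) through the symmetry of maximal destabilizers.

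\emph{Chow-semistability.} By (\ref{eq: M_A as support func}), $M_{A}(f)=-\min_{h\in\pi(\Sigma(A))}\langle f,h\rangle$. Hence $M_{A}\ge0$ on all of $\bbr^{A}$ if and only if, for every $f$, some $h\in\pi(\Sigma(A))$ has $\langle f,h\rangle\le0$. The set $\pi(\Sigma(A))$ is a compact convex set in $\bbr^{A}_{0}$. If $0\notin\pi(\Sigma(A))$, the separating hyperplane theorem gives $f\in\bbr^{A}_{0}$ with $\langle f,h\rangle>0$ on $\pi(\Sigma(A))$, so $M_{A}(f)<0$. Conversely, if $0\in\pi(\Sigma(A))$, the minimum is at most $0$ and $M_{A}\geq0$. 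So semistability is equivalent to $0\in\pi(\Sigma(A))$.

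Since every $g\in\Sigma(A)$ has total mass $1$ by (\ref{eq:position of Sigma_A}), we have $\pi g=g-|A|^{-1}$. Thus $0\in\pi(\Sigma(A))$ if and only if the constant $|A|^{-1}\in\Sigma(A)$. For the equivalent form, note that if the constant lies in $\Sigma(A)$, then its projection to $\mathcal{A}$ is $\ell_{A}$. Constants are affine, so $\ell_{A}\equiv|A|^{-1}$. I read the phrase ``equivalently $\ell_{A}\equiv|A|^{-1}$'' as a restatement of this in the semistable case, i.e. the constant is the point $\ell_A$ and it lies in $\Sigma(A)$. I would phrase the theorem's equivalence accordingly, since $\ell_{A}\equiv|A|^{-1}$ alone only says the projection to $\mathcal{A}$ matches and does not put the constant in $\Sigma(A)$.

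\emph{Relative Chow-semistability.} The same argument applies with $\Pi$ and $R(A)$ in place of $\pi$ and $\bbr^{A}_{0}$: the condition becomes $0\in\Pi(\Sigma(A))$. By Definition \ref{def: relative Chow-stab}(1), every $g\in\Sigma(A)$ decomposes as $g=\ell_{A}+\Pi g$. Hence $0\in\Pi(\Sigma(A))$ if and only if $\ell_{A}\in\Sigma(A)$.

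\emph{Part (2).} Chow-semistability for the full group $\mathrm{SL}(R_{k})$ means $M_{k}\ge0$ on all of $\mathcal{N}(R_{k})$. Suppose $\iota_{k}(X)$ is Chow-unstable. By \cite[Theorem 1, Corollary 2]{yao_maximal_2025}, the unique maximal destabilizer $\chi^{*}_{k}$ is $T_{\bbc}$-invariant and satisfies $M_{k}(\chi^{*}_{k})<0$. Under (\ref{eq:identify N W}) it corresponds to some $f\in\bbr^{P_{k}}$ with $M_{P_{k}}(f)<0$; this identification uses (\ref{eq:identify FS_k}), (\ref{eq:MA toric formula}) and (\ref{eq:chow weight Pk}). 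So $(P,P_{k})$ is Chow-unstable. Conversely, if $M_{P_{k}}(f)<0$ for some $f$, then $\chi_{f}$ destabilizes. Therefore Chow-semistability of $\iota_{k}(X)$ is equivalent to Chow-semistability of $(P,P_{k})$, and part (1) finishes the proof.

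The main obstacle is this symmetry reduction, which is quoted from \cite{yao_maximal_2025}. The only further check is that $M_{k}$ corresponds exactly to $M_{P_{k}}$ on invariant norms. That needs the scaling $\chi(s_{u,k})=-kf(u)$ together with the factor $1/k$ in $E_{k}$, which is already done in §\ref{subsec:Toric-reduction-of Chow}.
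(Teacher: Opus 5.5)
Your proof is correct and follows the paper's route: part (1) is exactly the support-function and separation argument behind (\ref{eq: M_A as support func}) and (\ref{eq: rel M_A as support}). For part (2) you get the reduction to $T_{\bbc}$-invariant norms directly from the invariance of the maximal destabilizer, which is the symmetry principle the introduction invokes, whereas the paper's proof just cites the torus-reduction statements \cite[Corollary 2.3, 2.7]{lee_asymptotic_2019}, \cite[Corollary 25]{yao_maximal_2025}. Your caveat is also justified: $\ell_A\equiv|A|^{-1}$ is only the barycenter condition and is merely necessary, so the intended equivalent is $\sigma_A\equiv|A|^{-1}$, as in the introduction's version. For example, $A=\{-1,-\epsilon,0,\epsilon,1\}\subset\bbr$ with $0<\epsilon<1/5$ has $\ell_A\equiv 1/5$, but every $g\in\Sigma(A)$ has $g(-1)\ge(1-\epsilon)/4>1/5$, so the constant $1/5$ is not in $\Sigma(A)$.
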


\begin{proof}
(1) follows from (\ref{eq: M_A as support func}) and (\ref{eq: rel M_A as support})
respectively. 

(2) By \cite[Corollary 2.3, 2.7]{lee_asymptotic_2019} or \cite[Corollary 25]{yao_maximal_2025},
Chow-semistability is equivalent to the semistability with respect
to the action of maximal torus. The latter one is the same as the
Chow-semistability of $(P,P_{k})$ in the sense of Definition \ref{def: Chow-weight}. 
\end{proof}

\begin{rem}
If $\left|P_{k}\right|^{-1}\in\Sigma(P_{k})$, by (\ref{eq:position of Sigma_A}),
we obtain a barycenter equality
\[
\frac{1}{\left|P_{k}\right|}\sum_{u\in P_{k}}\vec{u}=\frac{1}{V_{P}}\int_{P}\vec{x}\ \mathrm{d}x.
\]
This is a necessary condition for $X_{P}\hookrightarrow\mathbb{P}R^{*}_{k}$
being Chow-semistable, which was discovered by Ono \cite{ono_necessary_2011}.
Hence Chow-semistability is a very stringent property for toric varieties.
In contrast, relative Chow-semistability is far easier to satisfy.
\end{rem}

\begin{example}
Marked polytope $(Q,A)$ in Example \ref{exa: pentagon} is relatively
Chow-semistable, since 
\[
\ell_{A}=\left(\frac{5}{27},\frac{5}{27},\frac{5}{27},\frac{2}{9},\frac{2}{9}\right)\in\Sigma(A),
\]
which is the restriction of affine function $\frac{1}{27}(y+5)$. 
\end{example}

\subsection{Minimize the normalized Chow-weights }

Suppose $(Q,A)$ is Chow-unstable, we determine the minimizers of
$\bar{M}_{A}$ (\ref{eq: min M_A}). It follows from the basic fact
below. 
\begin{lem}
\label{lem: min-max}Let $K$ be a compact convex subset in an inner-product
space $W$. Suppose $0\notin K$, let $x^{*}\in K$ be the unique
point on $K$ with minimal norm. 

(1) $\left\langle x^{*},x\right\rangle \geq\left\langle x^{*},x^{*}\right\rangle $
for all $x\in K$. Moreover, $x^{*}$ is the unique point on $K$
satisfying this property. 

(2) Let $H(x)=\min_{y\in K}\left\langle x,y\right\rangle $, then
we have 
\begin{equation}
\max_{x\in W\backslash\{0\}}\frac{H(x)}{\left\Vert x\right\Vert }=\min_{y\in K}\left\Vert y\right\Vert =\left\Vert x^{*}\right\Vert .\label{eq: min-max}
\end{equation}
The maximizers for the left-hand side are $\left\{ ax^{*}\mid a>0\right\} $. 
\end{lem}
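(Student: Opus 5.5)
The plan is the standard Hilbert-space projection argument. Since $K$ is compact, convex and nonempty, the continuous function $\|\cdot\|^{2}$ attains its minimum on $K$ at some $x^{*}$, and $x^{*}\neq0$ because $0\notin K$. Uniqueness follows from strict convexity of $\|\cdot\|^{2}$: if $x_{1}\ne x_{2}$ both attain the minimum, the parallelogram law gives $\|\tfrac{x_{1}+x_{2}}{2}\|^{2}<\|x^{*}\|^{2}$, while the midpoint lies in $K$, a contradiction.

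For (1), fix $x\in K$ and consider $\phi(t)=\|x^{*}+t(x-x^{*})\|^{2}$ for $t\in[0,1]$. Its values are norms of points of $K$, so $\phi(t)\ge\phi(0)$, hence $\phi'(0)=2\langle x^{*},x-x^{*}\rangle\ge0$. This is the desired inequality. For uniqueness, suppose $z\in K$ satisfies $\langle z,x\rangle\ge\|z\|^{2}$ for all $x\in K$. Then Cauchy--Schwarz gives $\|z\|^{2}\le\langle z,x\rangle\le\|z\|\|x\|$, so $\|z\|\le\|x\|$ for every $x\in K$ (trivially if $z=0$, which anyway cannot occur). Thus $z$ is a norm minimizer, and $z=x^{*}$.

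For (2), I would prove two inequalities. First, for any $x\ne0$, $H(x)\le\langle x,x^{*}\rangle\le\|x\|\|x^{*}\|$, so the left side of (\ref{eq: min-max}) is at most $\|x^{*}\|$. Second, by (1), $H(x^{*})=\min_{y\in K}\langle x^{*},y\rangle=\|x^{*}\|^{2}$, since the minimum is attained at $y=x^{*}$. Hence $H(x^{*})/\|x^{*}\|=\|x^{*}\|$ and equality holds. Since $H$ is positively homogeneous, every $ax^{*}$ with $a>0$ is also a maximizer.

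For the converse, which is the only point needing care, let $x\ne0$ attain $H(x)/\|x\|=\|x^{*}\|$. Then equality must hold throughout $H(x)\le\langle x,x^{*}\rangle\le\|x\|\|x^{*}\|$. Equality in Cauchy--Schwarz, together with $\langle x,x^{*}\rangle=\|x\|\|x^{*}\|>0$, forces $x=ax^{*}$ with $a>0$. I do not expect a real obstacle; the only care needed is tracking the signs and the case $x^{*}\ne0$ in this equality analysis.
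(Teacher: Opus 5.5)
Your proof is correct. Part (1), the Cauchy--Schwarz upper bound in (2), and the attainment $H(x^{*})/\|x^{*}\|=\|x^{*}\|$ via (1) are the same as in the paper. In your uniqueness check for (1) you compare $\|z\|$ with $\|x\|$ for every $x\in K$, whereas the paper only uses $x=x^{*}$; this is the same idea.

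The one genuine difference is how you prove that the maximizers in (2) are exactly $\{ax^{*}\mid a>0\}$.
\begin{itemize}
\item The paper argues by contradiction. It normalizes two distinct maximizers $x_0\neq x_1$ to unit norm and notes that $\|\tfrac{x_0+x_1}{2}\|<1$. Concavity of $H$, together with the fact that the maximum of $H/\|\cdot\|$ is positive, then shows the midpoint would do strictly better.
\item You instead read off the equality case in the chain $H(x)\le\langle x,x^{*}\rangle\le\|x\|\|x^{*}\|$. Equality forces equality in Cauchy--Schwarz, and positivity of $\langle x,x^{*}\rangle$ then gives $x=ax^{*}$ with $a>0$.
\end{itemize}

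Your route is shorter and identifies the maximizers directly. However, it relies on already knowing the candidate $x^{*}$ and on the Cauchy--Schwarz equality case. The paper's midpoint argument uses only that $H$ is concave and positively homogeneous with positive maximum and that the norm is strictly convex. So it would apply to any such $H$ without knowing the maximizer in advance.

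Your parallelogram-law proof that $x^{*}$ is unique is a harmless addition; the lemma takes this as given.
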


\begin{proof}
(1) For any $x\in K$, let $x_{t}\coloneqq tx+(1-t)x^{*}\in K$ for
$t\in[0,1]$. Since $x^{*}$ has minimal norm, we have
\[
0\leq\frac{d}{dt}\left\Vert x_{t}\right\Vert ^{2}|_{t=0}=2\left\langle x-x^{*},x^{*}\right\rangle .
\]
Conversely, if $y\in K$ satisfies this condition. It implies $\left\Vert y\right\Vert ^{2}\leq\left\langle y,x^{*}\right\rangle \leq\left\Vert y\right\Vert \left\Vert x^{*}\right\Vert $,
so $\left\Vert y\right\Vert \leq\left\Vert x^{*}\right\Vert $. Since
the minimal-norm point is unique, we have $y=x^{*}$. 

(2) For any $x\in W\backslash\{0\}$ and $y\in K$, Cauchy--Schwarz
inequality $\left\langle x,y\right\rangle \le\left\Vert x\right\Vert \left\Vert y\right\Vert $
implies 
\[
\frac{\left\langle x,y\right\rangle }{\left\Vert x\right\Vert }\le\left\Vert y\right\Vert .
\]
Take $\inf_{y\in K}$ on both sides, we obtain $\frac{H(x)}{\left\Vert x\right\Vert }\le\min_{y\in K}\left\Vert y\right\Vert =\left\Vert x^{*}\right\Vert $.
Hence $\le$ side of (\ref{eq: min-max}) follows. On the other hand,
by (1), we have 
\[
\frac{H(x^{*})}{\left\Vert x^{*}\right\Vert }=\frac{\min_{y\in K}\left\langle x^{*},y\right\rangle }{\left\Vert x^{*}\right\Vert }=\frac{\left\langle x^{*},x^{*}\right\rangle }{\left\Vert x^{*}\right\Vert }=\left\Vert x^{*}\right\Vert .
\]
Hence (\ref{eq: min-max}) holds and $x^{*}$ is a maximizer of $\bar{H}(x)\coloneqq H(x)/\left\Vert x\right\Vert $. 

Uniqueness. Suppose $x_{0},x_{1}$ are both maximizers for $\bar{H}$.
Scaling them such that $\left\Vert x_{0}\right\Vert =\left\Vert x_{1}\right\Vert =1$,
then $H(x_{0})=H(x_{1})=\max\bar{H}>0$. Consider $x_{m}=(x_{0}+x_{1})/2\neq0$.
If $x_{0}\neq x_{1}$, then we have $\left\Vert x_{m}\right\Vert <1$.
Since $H$ is concave, 
\[
H(x_{m})\ge\frac{1}{2}\left(H(x_{0})+H(x_{1})\right)=\max\bar{H}.
\]
This follows that 
\[
\bar{H}(x_{m})=\frac{H(x_{m})}{\left\Vert x_{m}\right\Vert }\ge\frac{\max\bar{H}}{\left\Vert x_{m}\right\Vert }>\max\bar{H}.
\]
That is absurd, so $x_{0}=x_{1}$. 
\end{proof}
\begin{rem}\label{rem-quadmin}
    The same proof shows an essentially equivalent statement: for any $\lambda>0$, the functional
    \begin{equation*}
       x\mapsto\;\; -\min_{y\in K}\langle x, y\rangle+\frac{1}{2\lambda}\|x\|^2,
    \end{equation*}
    obtains the minimum at the point $\lambda x^*\in \lambda K$. 
\end{rem}

\begin{defn}[shortest GKZ vector]
 For a marked polytope $(Q,A)$, since $0\notin\Sigma(A)$, we define
the \emph{shortest GKZ vector} $\sigma_{A}\in\Sigma(A)$ is the unique
least-norm point on $\Sigma(A)$. By the subdivision-face correspondence
(Theorem \ref{thm: structure of Sigma A}), there is a unique regular
subdivision $S_{A}$ of $(Q,A)$ such that $F(S_{A})$ is the minimal
face containing $\sigma_{A}$. Refer to Figure \ref{fig: Location of second poly}. 
\end{defn}

\begin{rem}
In general, $\sigma_{A}$ is a convex combination of GKZ vectors,
or a generalized GKZ vector by (\ref{eq: SP=00003DgGKZ}). Refer to
Example \ref{exa: sGKZ}. 
\end{rem}

Now, we determine the maximal Chow-destabilizers by shortest GKZ vectors. 
\begin{thm}
\label{thm: Chow destabi} Suppose marked polytope $(Q,A)$ is Chow-unstable
(i.e. $\inf M_{A}<0$), then we have 
\[
\min\left\{ \frac{M_{A}(f)}{\left\Vert f\right\Vert }\Bigm|0\not\equiv f:A\ra\bbr\right\} =-\left\Vert \sigma_{A}-\left|A\right|^{-1}\right\Vert .
\]
The minimizers are $a\left(\sigma_{A}-\left|A\right|^{-1}\right)$,
$\forall a>0$. Applying to the case $(Q,A)=(P,P_{k})$, we obtain
the part (2) of Theorem \ref{thm: Chow-desta INTRO}. 
\end{thm}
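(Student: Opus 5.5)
We apply Lemma \ref{lem: min-max} to the compact convex set $K\coloneqq\pi\left(\Sigma(A)\right)\subset W\coloneqq\bbr^{A}$. The point is that, by (\ref{eq: M_A as support func}), $M_{A}=-H$ where $H(f)=\min_{h\in K}\left\langle f,h\right\rangle$. Hence
\[
\min_{f\neq0}\frac{M_{A}(f)}{\left\Vert f\right\Vert }=-\max_{f\neq0}\frac{H(f)}{\left\Vert f\right\Vert },
\]
and Lemma \ref{lem: min-max} (2) will identify this with $-\min_{h\in K}\left\Vert h\right\Vert$. It will also show that the minimizers are exactly the positive multiples of the least-norm point of $K$. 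So the proof consists of three things: checking the hypothesis $0\notin K$, identifying the least-norm point of $K$ as $\pi(\sigma_{A})=\sigma_{A}-\left|A\right|^{-1}$, and recovering the toric case.

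\textbf{Step 1: $0\notin K$.} Chow-instability means $M_{A}(f_{0})<0$ for some $f_{0}$, that is, $H(f_{0})>0$. If $0$ lay in $K$, then $H(f)\le\left\langle f,0\right\rangle=0$ for every $f$, a contradiction. So Lemma \ref{lem: min-max} applies to $K$.

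\textbf{Step 2: least-norm point of $K$.} By (\ref{eq:position of Sigma_A}), every $g\in\Sigma(A)$ satisfies $\sum_{u}g(u)=1$. Hence $\pi(g)=g-\left|A\right|^{-1}$ for all $g\in\Sigma(A)$, and $K=\Sigma(A)-\left|A\right|^{-1}$ is a translate of $\Sigma(A)$ lying in the hyperplane $\bbr^{A}_{0}$. Since $\left|A\right|^{-1}$ is orthogonal to $\bbr^{A}_{0}$, Pythagoras gives
\[
\left\Vert g\right\Vert ^{2}=\left\Vert g-\left|A\right|^{-1}\right\Vert ^{2}+\left\Vert \left|A\right|^{-1}\right\Vert ^{2}
\]
for all $g\in\Sigma(A)$. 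The last term is constant, so minimizing $\left\Vert g\right\Vert$ over $\Sigma(A)$ is the same as minimizing $\left\Vert \pi g\right\Vert$ over $\Sigma(A)$. Therefore the least-norm point of $K$ is $\pi(\sigma_{A})=\sigma_{A}-\left|A\right|^{-1}$, and by Lemma \ref{lem: min-max} (2) the minimum value is $-\left\Vert \sigma_{A}-\left|A\right|^{-1}\right\Vert$, with minimizers $a\left(\sigma_{A}-\left|A\right|^{-1}\right)$, $a>0$. Note that these minimizers lie in $\bbr^{A}_{0}$, which is consistent with $M_{A}$ being invariant under adding constants, since $M_{A}(f)=M_{A}(\pi f)$.

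\textbf{Step 3: the toric case.} The potential obstacle is uniqueness in the full space $\bbr^{A}$, as opposed to $\bbr^{A}_{0}$. This is already covered: Lemma \ref{lem: min-max} is applied with $W=\bbr^{A}$, and its uniqueness argument uses only concavity of $H$ and strict convexity of the norm. For $(Q,A)=(P,P_{k})$ we combine three facts. First, by the symmetry of the maximal destabilizer \cite[Corollary 2]{yao_maximal_2025}, $\chi^{*}_{k}$ is $T_{\bbc}$-invariant, so the minimization reduces to $\mathcal{N}(R_{k})^{T}\cong\bbr^{P_{k}}$. Second, on this subspace $M_{k}$ corresponds to $M_{P_{k}}$ via (\ref{eq:chow weight Pk}), and $\left\Vert \chi_{f}\right\Vert _{2}$ is proportional to $\left\Vert f\right\Vert$. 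Third, under (\ref{eq:identify N W}) we have $\chi_{f}(s_{u,k})=-kf(u)$.

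Taking $f=-\left(\sigma_{k}-\left|P_{k}\right|^{-1}\right)$ up to positive scaling gives $\chi^{*}_{k}(s_{u,k})=\left|P_{k}\right|^{-1}-\sigma_{k}(u)$ after absorbing the factor $k$ into $a$. The sign flip here must be checked against the convention relating $\chi$ and $f$; this bookkeeping is the only delicate point.
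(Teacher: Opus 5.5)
Your argument is the paper's own: you write $M_A=-H$ with $H(f)=\min_{h\in K}\langle f,h\rangle$ and $K=\pi(\Sigma(A))$ via (\ref{eq: M_A as support func}), and you apply Lemma \ref{lem: min-max}. In the toric case you then reduce to $\mathcal{N}(R_k)^T\cong\mathbb{R}^{P_k}$ by the symmetry principle. Steps 1 and 2 are correct, and they fill in three points the paper leaves tacit:
\begin{itemize}
\item $0\notin K$ is exactly Chow-instability;
\item the least-norm point of $K$ is $\pi(\sigma_A)=\sigma_A-|A|^{-1}$, by Pythagoras, since $\Sigma(A)\subset\{\sum_u g(u)=1\}$;
\item $\|\chi_f\|_2=kN_k^{-1/2}\|f\|$, which justifies replacing $\|\chi\|_2$ by $\|f\|$.
\end{itemize}

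The step that fails as written is the final bookkeeping you flagged; it has the wrong sign. By your own Step 2, the minimizers in $f$-coordinates are $f=a(\sigma_k-|P_k|^{-1})$ with $a>0$. The function $-(\sigma_k-|P_k|^{-1})$ that you choose is not among them. With $\chi_f(s_{u,k})=-kf(u)$ it gives $\chi_f(s_{u,k})=k(\sigma_k(u)-|P_k|^{-1})$, the opposite sign of what you claim. The two genuinely differ, since $\sigma_k\not\equiv|P_k|^{-1}$ in the unstable case by Theorem \ref{thm: when semistable}. Drop the minus sign: then $\chi_f(s_{u,k})=-ka(\sigma_k(u)-|P_k|^{-1})=ka(|P_k|^{-1}-\sigma_k(u))$, and absorbing $ka>0$ into the scaling gives exactly part (2) of Theorem \ref{thm: Chow-desta INTRO}.

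You should also record that $(P,P_k)$ is Chow-unstable in the sense of Definition \ref{def: Chow-weight}, so that the marked-polytope statement applies; the paper is tacit here too. This holds, for example, because the $T_{\mathbb{C}}$-invariant minimizer $\chi^*_k$ has negative Chow-weight.
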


\begin{proof}
Apply Lemma \ref{lem: min-max} to the formula (\ref{eq: M_A as support func})
of $M_{A}$. Take $K=\pi\left(\Sigma(A)\right)$ and the least-norm
point is $\sigma_{A}-\left|A\right|^{-1}$. 

For part (2) of Theorem \ref{thm: Chow-desta INTRO}. Suppose $(X,L)$
is the toric variety induced by $P$ and $\iota_{k}(X)$ is Chow-unstable.
By the symmetry principle \cite[Corollary 2]{yao_maximal_2025}, the
minimizers of (\ref{eq: min Chow  INTRO}) belong to $\mathcal{N}(R_{k})^{T}$
over which the Chow-weight has formula (\ref{eq: Chow weight formula}).
Then the statement follows. 
\end{proof}

\subsection{\label{subsec:Properties-of-shortest GKZ}Properties of shortest
GKZ vectors}

We establish two properties of $\sigma_{A}$, the convexity and positivity. 
\begin{thm}
\label{thm: convex of mGKZ} Let $\sigma_{A}:A\ra\bbr$ be the shortest
GKZ vector associated to marked polytope $(Q,A)$, then $\sigma^{\vee}_{A}|_{A}=\sigma_{A}$,
which means that $\sigma_{A}$ is convex. 
\end{thm}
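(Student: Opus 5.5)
Set $\sigma\coloneqq\sigma_{A}$ and $\tau\coloneqq\sigma^{\vee}|_{A}$. By Proposition \ref{prop:FS vs SN} (1) we have $\tau\le\sigma$ pointwise on $A$, so it suffices to show $\tau\ge\sigma$. The tool is the variational characterization of Lemma \ref{lem: min-max} (1): $\sigma$ is the unique point of $\Sigma(A)$ with $\langle\sigma,g\rangle\ge\|\sigma\|^{2}$ for all $g\in\Sigma(A)$. Using (\ref{eq: Int tent formula}), this says
\[
\fint_{Q}\sigma^{\vee}=\min_{g\in\Sigma(A)}\langle\sigma,g\rangle=\langle\sigma,\sigma\rangle .
\]
Equivalently, $\sigma$ lies in the face $F(S(\sigma))$ on which $\langle\sigma,\cdot\rangle$ is minimized (Theorem \ref{thm: structure of Sigma A} (3)).

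Next, use Proposition \ref{prop:Minkowski sum} to write $\sigma=g_{\lambda}$, where $\lambda$ is a coefficient field confined by $S(\sigma)=\{(Q_{i},A_{i})\}$. By the definition (\ref{eq: def S(f)}), each $A_{i}$ consists of points $u$ with $\sigma^{\vee}(u)=\sigma(u)$. Confinement means $\lambda_{u}=0$ a.e. on $Q_{i}$ whenever $u\notin A_{i}$. So if $\sigma(u)>\tau(u)$, then $u$ lies in no $A_{i}$, hence $\lambda_{u}=0$ a.e. on all of $Q$, and therefore
\[
\sigma(u)=g_{\lambda}(u)=\fint_{Q}\lambda_{u}=0 .
\]

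It remains to rule out $\sigma(u)=0>\tau(u)$. Points of $A$ carry nonnegative weights in any generalized GKZ vector, so $\sigma\ge0$. Since $\sigma^{\vee}$ is the largest convex function below $\sigma$ on $A$, and the constant $0$ is convex and lies below $\sigma\ge0$, we get $\sigma^{\vee}\ge0$, hence $\tau\ge0=\sigma(u)$. This contradicts $\tau(u)<\sigma(u)$, so no such $u$ exists, and $\tau=\sigma$.

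The main obstacle is the step that places $\sigma$ in the face $F(S(\sigma))$ and identifies that face with confined coefficient fields. This relies on Theorem \ref{thm: structure of Sigma A} (3) applied with $f=\sigma$ itself, together with Proposition \ref{prop:Minkowski sum}. Once that identification is in hand, the remaining nonnegativity argument is routine.
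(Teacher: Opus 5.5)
Your argument is correct, but it takes a different route from the paper's proof of this theorem.

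\textbf{The paper's proof.} It is purely variational. With $H(f)=\fint_Q f^{\vee}=\min_{g\in\Sigma(A)}\langle f,g\rangle$, Lemma \ref{lem: min-max} (2) says the maximizers of $H(f)/\|f\|$ are exactly the positive multiples of $\sigma_A$. Setting $\rho=\sigma_A^{\vee}|_A$, one has $0\le\rho\le\sigma_A$, so $\|\rho\|\le\|\sigma_A\|$. One also has $\rho^{\vee}=\sigma_A^{\vee}$, so $H(\rho)=H(\sigma_A)=\|\sigma_A\|^2>0$. Hence $\rho$ is also a maximizer, so $\rho=a\sigma_A$, and $\rho^{\vee}=\sigma_A^{\vee}$ forces $a=1$. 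This uses only the envelope formula and an elementary Hilbert-space fact, never the GKZ face--subdivision correspondence.

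\textbf{Your proof.} You use Lemma \ref{lem: min-max} (1) together with Theorem \ref{thm: structure of Sigma A} (3) to place $\sigma_A$ in the face $F(S(\sigma_A))$. You then use that every vector in this face vanishes at points not marked by $S(\sigma_A)$. This is essentially the mechanism the paper uses later in Theorem \ref{thm: positive of mGKZ} (2), where convexity is re-derived from the fullness of $S(\sigma_A)$. Your version is leaner there: you need only $\sigma_A\ge 0$ instead of first proving strict positivity.

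\textbf{Comparison.} Your route relies on the external GKZ structure theorem. In exchange it gives more geometric information: it shows $\sigma_A\in F(S(\sigma_A))$, and it explains that a point where convexity failed would have to carry zero weight.

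\textbf{A simplification.} The detour through coefficient fields and Proposition \ref{prop:Minkowski sum} is unnecessary. Since $F(S)=\mathrm{conv}\{g_T\mid T\prec S\}$, and each $g_T$ with $T\prec S$ vanishes off $\bigcup_i A_i$, every element of $F(S)$ does as well.
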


\begin{proof}
Consider functional: 
\[
H(f)\coloneqq\fint_{Q}f^{\vee}=\min_{g\in\Sigma(A)}\left\langle f,g\right\rangle ,\ f:A\ra\bbr.
\]
By Lemma \ref{lem: min-max}, maximizers of $\bar{H}(f)\coloneqq H(f)/\left\Vert f\right\Vert $
are $a\sigma_{A},\ \forall a>0$. 

Let $\rho\coloneqq\sigma^{\vee}_{A}|_{A}$, we have $0\le\rho\le\sigma_{A}$,
thus $\left\Vert \rho\right\Vert \le\left\Vert \sigma_{A}\right\Vert $.
By Proposition \ref{prop:FS vs SN} (2), we have $\rho^{\vee}=\sigma^{\vee}_{A}$.
These imply 
\[
\frac{H(\rho)}{\left\Vert \rho\right\Vert }=\frac{H(\sigma_{A})}{\left\Vert \rho\right\Vert }\ge\frac{H(\sigma_{A})}{\left\Vert \sigma_{A}\right\Vert }.
\]
Hence $\rho$ is also a maximizer of $\bar{H}$. We have $\rho=a\sigma_{A}$
for some $a>0$. But $\rho^{\vee}=\sigma^{\vee}_{A}$ implies $a=1$,
so $\rho=\sigma_{A}$. 
\end{proof}

It is obvious that $\sigma_{A}\ge0$ from its definition, further
we can show that $\sigma_{A}>0$ at every point. 
\begin{thm}
\label{thm: positive of mGKZ}Let $(Q,A)$ be a marked polytope. 

(1) Let $S_{A}$ be the unique regular subdivision such that \textup{$F(S_{A})$}
is the minimal face of $\Sigma(A)$ containing $\sigma_{A}$, then
we have $S(\sigma_{A})\succ S_{A}$. We cannot exclude the possibility
that $S(\sigma_{A})\neq S_{A}$. Conversely, if $g\in\Sigma(A)$ satisfies
$g\in F\left(S(g)\right)$, then $g=\sigma_{A}$. 

(2) We have $\sigma_{A}>0$ at every point in $A$. Subdivision $S_{A}$
must be full, so is $S(\sigma_{A})$, which implies $\sigma^{\vee}_{A}|_{A}=\sigma_{A}$. 
\end{thm}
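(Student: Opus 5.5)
The plan for (1) is to combine Lemma \ref{lem: min-max}(1) with the face description in Theorem \ref{thm: structure of Sigma A}(3). For (2), I would use a stellar-subdivision perturbation to show that a vanishing value of $\sigma_A$ cannot occur.

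\emph{Part (1).} Lemma \ref{lem: min-max}(1), applied with $K=\Sigma(A)$, gives $\langle\sigma_A,g\rangle\ge\langle\sigma_A,\sigma_A\rangle$ for all $g\in\Sigma(A)$. Hence the linear functional $\langle\sigma_A,\cdot\rangle|_{\Sigma(A)}$ attains its minimum at $\sigma_A$. By Theorem \ref{thm: structure of Sigma A}(3) the set of minimizers is exactly the face $F(S(\sigma_A))$, so $\sigma_A\in F(S(\sigma_A))$. The minimal face $F(S_A)$ containing $\sigma_A$ is therefore contained in $F(S(\sigma_A))$. Since $S\mapsto F(S)$ is an order-preserving bijection, this gives $S_A\prec S(\sigma_A)$. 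For the converse, suppose $g\in F(S(g))$. Then $\langle g,\cdot\rangle|_{\Sigma(A)}$ is minimized at $g$, i.e.\ $\langle g,h\rangle\ge\langle g,g\rangle$ for all $h\in\Sigma(A)$. The uniqueness clause of Lemma \ref{lem: min-max}(1) then forces $g=\sigma_A$.

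\emph{Part (2), the main step.} Write $\sigma_A=\sum_T c_T g_T$ as a convex combination with all $c_T>0$. By Theorem \ref{thm: structure of Sigma A}(2) we may take each $T$ to be a regular triangulation with $T\prec S_A$. Every point $v$ that is a vertex of some simplex of such a $T$ has $g_T(v)>0$, hence $\sigma_A(v)\ge c_Tg_T(v)>0$.

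Now suppose $\sigma_A(u_0)=0$ for some $u_0\in A$. Fix one $T$ in the decomposition. Then $u_0$ is not a vertex of $T$, so it lies in the relative interior of a face $\tau$ of $T$, where $\tau$ has dimension at least $1$. Write $u_0=\sum_k\beta_kv_k$ as the barycentric combination over the vertices $v_k$ of $\tau$, with all $\beta_k>0$. Let $T'$ be the stellar subdivision of $T$ at $u_0$: every simplex $\Delta\supset\tau$ is replaced by the cone from $u_0$ over its facets not containing $u_0$. A direct volume count gives
\[
(n+1)|Q|\,(g_{T'}-g_T)=\Big(\sum_{\Delta\supset\tau}|\Delta|\Big)\Big(\delta_{u_0}-\sum_k\beta_k\delta_{v_k}\Big).
\]
The reason is that in each $\Delta\supset\tau$, the vertex $v_k$ loses the one sub-simplex opposite it, of volume $\beta_k|\Delta|$, while $u_0$ gains total volume $|\Delta|$. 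Consider $\sigma_t\coloneqq\sigma_A+tc_T(g_{T'}-g_T)\in\Sigma(A)$ for $t\in[0,1]$. Since $\sigma_A$ has minimal norm, $\frac{d}{dt}\|\sigma_t\|^2|_{t=0}\ge0$. This derivative is a positive multiple of $\sigma_A(u_0)-\sum_k\beta_k\sigma_A(v_k)=-\sum_k\beta_k\sigma_A(v_k)<0$, because each $v_k$ is a vertex of $T$ and so $\sigma_A(v_k)>0$. This is a contradiction, so $\sigma_A>0$ on $A$.

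\emph{Remaining claims and main obstacle.} I expect the main obstacle to be the bookkeeping for this perturbation. One must check that the stellar subdivision $T'$ is still a triangulation of $(Q,A)$; regularity is not needed, since $\Sigma(A)$ contains all GKZ vectors. One must also check that the volume change is as claimed when $u_0$ lies on a lower-dimensional face shared by several simplices, including boundary faces. Once positivity is established, fullness follows. By minimality of $F(S_A)$, the point $\sigma_A$ lies in its relative interior, and $F(S_A)$ is spanned by the $g_T$ with $T\prec S_A$. Each $u\in A$ has $\sigma_A(u)>0$, so $u$ is used by some $T\prec S_A$ and hence lies in some cell of $S_A$; thus $S_A$ is full. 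Since $S(\sigma_A)\succ S_A$, every cell of $S_A$ lies in a cell of $S(\sigma_A)$, so $S(\sigma_A)$ is full as well. Fullness of $S(\sigma_A)$ means $\sigma_A^\vee(u)=\sigma_A(u)$ for every $u$ by (\ref{eq: def S(f)}), which recovers Theorem \ref{thm: convex of mGKZ}.
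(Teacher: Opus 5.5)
Your proof is correct. Part (1), and the deduction that $S_A$ and $S(\sigma_A)$ are full, match the paper's proof. Your positivity argument, however, takes a different route.

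The paper constructs no new triangulation. Part (1) gives $S(\sigma_A)\succ S_A\succ T_j$ for each $T_j$ in a decomposition $\sigma_A=\sum_i\lambda_i g_{T_i}$. Proposition~\ref{prop: T integral formula}(2) then yields $\sigma_A^\vee=\varLambda_{T_j}(\sigma_A)=\sum_i\lambda_i\varLambda_{T_j}(g_{T_i})\ge\lambda_j\varLambda_{T_j}(g_{T_j})>0$ on $Q$, and hence $\sigma_A\ge\sigma_A^\vee|_A>0$.

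You instead take a first variation of the norm inside $\Sigma(A)$, in the direction $g_{T'}-g_T$, where $T'$ is the stellar subdivision of $T$ at an unused point $u_0$. Your volume count $(n+1)|Q|(g_{T'}-g_T)=(\sum_{\Delta\supset\tau}|\Delta|)(\delta_{u_0}-\sum_k\beta_k\delta_{v_k})$ is right, and so is the sign of the derivative. Since $\Sigma(A)$ is by definition the convex hull of the GKZ vectors of all triangulations, the possible non-regularity of $T'$ does no harm.

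Each route has its advantage. The paper's is shorter given the machinery already in place, and it gives the stronger conclusion $\sigma_A^\vee>0$ on all of $Q$. But it depends on part (1), and hence on the face description in Theorem~\ref{thm: structure of Sigma A}(3). Yours uses only the variational characterization of the least-norm point and the standard fact that a stellar subdivision of a triangulation is again a triangulation. It therefore proves positivity independently of part (1), by a local combinatorial move.
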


\begin{proof}
(1) By Lemma \ref{lem: min-max} (1), function $\left\langle \sigma_{A},\cdot\right\rangle |_{\Sigma(A)}$
attains its minimum at $\sigma_{A}$, so on the whole face $F(S_{A})$,
since $\sigma_{A}$ is contained in the relative interior of $F(S_{A})$.
On the other hand, Theorem \ref{thm: structure of Sigma A} (3) says
that the minimizer set is face $F\left(S(\sigma_{A})\right)$. Hence
$F\left(S(\sigma_{A})\right)\supset F(S_{A})$, so $S(\sigma_{A})\succ S_{A}$.
For the last statement, $g\in F\left(S(g)\right)$ implies that $\left\langle g,\cdot\right\rangle |_{\Sigma(A)}$
attains its minimum at $g$, by Lemma \ref{lem: min-max} (1), we
can conclude $g=\sigma_{A}$. 

(2) First we handle a simple case, namely $\sigma_{A}$ is a vertex
of $\Sigma(A)$, then $\sigma_{A}=g_{T}$ for a regular triangulation
$T$ and $S_{A}=T$. By (1), we have $S(\sigma_{A})\succ T$. Use
Proposition \ref{prop: T integral formula} (2), we have 
\[
\sigma^{\vee}_{A}=\varLambda_{T}(\sigma_{A})=\varLambda_{T}(g_{T}).
\]
By definition of GKZ vector, $g_{T}>0$ at every vertex of $T$, thus
$\varLambda_{T}(g_{T})>0$ on $Q$, i.e. $\sigma^{\vee}_{A}>0$. Since
$\sigma_{A}\ge\sigma^{\vee}_{A}|_{A}$, it implies $\sigma_{A}>0$
at every point. 

General case. Since $\sigma_{A}\in F(S_{A})$, there exists triangulations
$T_{i}\prec S_{A}$ ($1\le i\le p$) such that 
\[
\sigma_{A}=\sum^{p}_{i=1}\lambda_{i}g_{T_{i}},\ \sum^{p}_{i=1}\lambda_{i}=1,\ \lam_{i}>0.
\]
Since $S(\sigma_{A})\succ S_{A}\succ T_{j}$, by Proposition \ref{prop: T integral formula}
(2), we have 
\[
\sigma^{\vee}_{A}=\varLambda_{T_{j}}(\sigma_{A}),\ \forall1\leq j\leq p.
\]
Since operator $\varLambda_{T_{j}}$ is linear, for each $j$ we have
\[
\varLambda_{T_{j}}(\sigma_{A})=\sum^{p}_{i=1}\lambda_{i}\varLambda_{T_{j}}(g_{T_{i}})\ge\lambda_{j}\varLambda_{T_{j}}(g_{T_{j}}),
\]
where we used $\varLambda_{T_{j}}(g_{T_{i}})\ge0$. Since $\varLambda_{T_{j}}(g_{T_{j}})>0$
on $Q$, so $\sigma^{\vee}_{A}=\varLambda_{T_{j}}(\sigma_{A})>0$.
Since $\sigma_{A}\ge\sigma^{\vee}_{A}|_{A}$, we obtain $\sigma_{A}>0$
at every point. 

Next we show $S_{A}$ is full. For any $a\in A$, we have $0<\sigma_{A}(a)=\sum^{p}_{i=1}\lambda_{i}g_{T_{i}}(a)$,
so there exists $j$ such that $g_{T_{j}}(a)>0$. By definition of
GKZ vector, it means that $a$ is a vertex of $T_{j}$. Since $T_{j}\prec S_{A}$,
$a$ is also a mark point of $S_{A}$, thus $S_{A}$ is full. Then
$S(\sigma_{A})$ is also full since $S(\sigma_{A})\succ S_{A}$. 
\end{proof}

\begin{rem}
Notice that $\sigma_{A}>0$ indicates to us how complicated $\sigma_{A}$
is. Represent $\sigma_{A}=\sum^{p}_{i=1}\lam_{i}g_{T_{i}}$ ($\sum\lam_{i}=1,\lam_{i}>0$)
by some triangulations $T_{i}$ of $(Q,A)$, then $\sigma_{A}>0$
implies that every point in $A$ is a vertex of some $T_{i}$. Furthermore,
if $\sigma_{A}=g_{T}$ is a vertex of $\Sigma(A)$, then $T$ must
use all the points in $A$. 
\end{rem}

\begin{example}
\label{exa: sGKZ} 
Consider marked polytope: 
\begin{figure}[h]
\centering
\includegraphics[viewport=200bp 120bp 800bp 620bp,clip,scale=0.3]{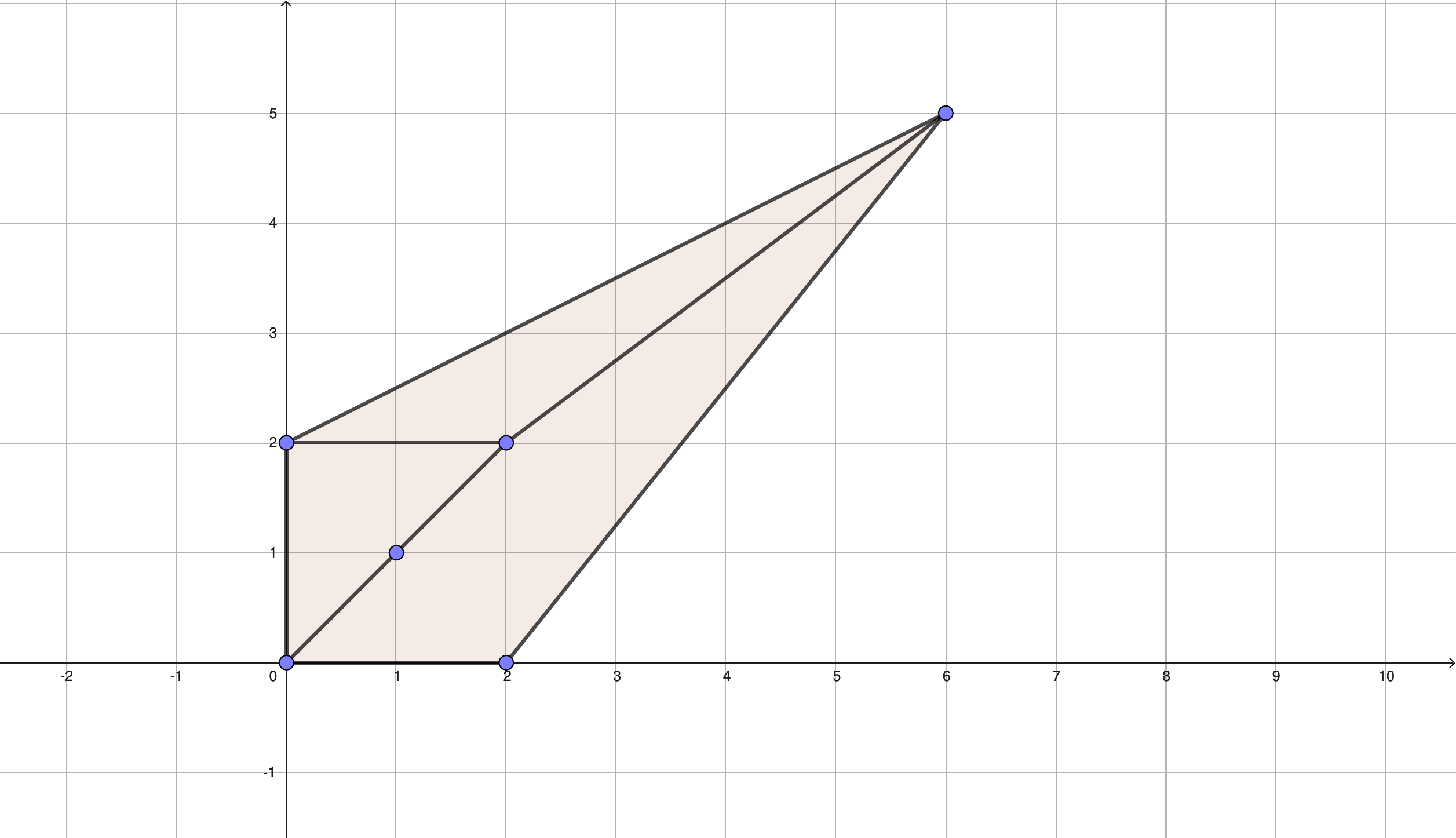}

\caption{\label{fig: SA}subdivision $S(\sigma_{A})=S_{A}$}
\end{figure}
\[
A=\left\{ (0,0),(0,2),(2,0),(2,2),(1,1),(6,5)\right\} \subset\bbr^{2},\ Q=\mathrm{conv}(A).
\]
There is a SageMath function \cite{whitney_sagemath_nodate} can output
$\Sigma(A)$. It turns out $\Sigma(A)$ is $3$-dimensional and has
$7$ facets, $15$ edges and $10$ vertices, see its \href{https://yeeyoe.github.io/mGKZ-example/}{ 3Dview}. 

Use the program $\mathtt{sGKZ}$ \cite{yao_program_2025}, which is
based on the algorithm in Appendix \ref{Appendix:Algorithm}, we obtain
\begin{align*}
\sigma_{A} & =\left(\frac{15}{121},\frac{5}{33},\frac{493}{3025},\frac{1511}{9075},\frac{1318}{9075},\frac{2267}{9075}\right)\\
 & \approx\left(0.1240,\,0.1515,\,0.1630,\,0.1665,\,0.1452,\,0.2498\right).
\end{align*}
It shows that $(Q,A)$ is relatively Chow-unstable, and $\sigma_{A}$
lies in the relative interior of a facet. In this example, we have
$S(\sigma_{A})=S_{A}$. Figure \ref{fig: SA} shows the subdivision $S_{A}$.
\end{example}

\section{\label{sec:4}Optimal test functions for K-stability}

\subsection{\label{subsec:Szekelyhidi's-optimal-test} Szekelyhidi's optimal test function}

We recap the optimal test-function for K-unstable toric varieties
in \cite{szekelyhidi_optimal_2008}. For an introduction to K-stability
of toric varieties, refer to \cite{donaldson_scalar_2002,szekelyhidi_extremal_2006,wang_k-stability_2014}. 

Let $P\subset M_{\bbr}$ be a lattice polytope. Given a rational piecewise
affine convex function $\eta$ over $P$, one can build a toric test-configuration
for $(X_{P},L_{P})$, see \cite{donaldson_scalar_2002}. Donaldson
shows that the attached DF-invariant is equal to 
\begin{equation}\label{eq-defMP}
\mathbb{M}_{P}(\eta)\coloneqq\int_{\partial P}\eta\ \mathrm{d}\sigma-\int_{P}\eta\bar{S}\ \mathrm{d}x,\ \bar{S}=\frac{V_{\partial P,\mathrm{d}\sigma}}{V_{P}},
\end{equation}
see (\ref{eq:EM}) for the definition of $\mathrm{d}\sigma$. When
$\eta$ is affine, it recovers the classical Futaki invariant with
respect to the automorphism torus action. 

Since we focus on the unstable case, we only recall the K-semistability
and its relative version. The relative DF-invariant is defined as
\begin{equation}
\mathbb{M}^{\mathrm{rel}}_{P}(\eta)\coloneqq\int_{\partial P}\eta\ \mathrm{d}\sigma-\int_{P}\eta\ell_{P}\ \mathrm{d}x,
\end{equation}
where $\ell_{P}$ is the unique affine function such that $\mathbb{M}^{\mathrm{rel}}_{P}$
vanishes on all affine functions. Conventionally, $\ell_{P}$ is called
the \textit{extremal affine function}, since it can give the Hamiltonian
function for the extremal vector field.
\begin{defn}[K-semistabilities]
 Given a lattice polytope $P\subset M_{\mathbb{R}}$. 

(1) we say $P$ is K-semistable if $\mathbb{M}_{P}(\eta)\geq0$ for
all $\eta\in\mathcal{C}(P)$. Otherwise, we say $P$ is K-unstable. 

(2) we say $P$ is relatively K-semistable if $\mathbb{M}^{\mathrm{rel}}_{P}(\eta)\geq0$
for all $\eta\in\mathcal{C}(P)$. Otherwise, we say $P$ is relatively
K-unstable. 
\end{defn}

\begin{example}
Using a random search program in \cite{yao_program_2025}, we have
found many unstable non-Delzant polygons, Figure \ref{fig:A-relatively-K-unstable} is an example.  
\begin{figure}[h]
\begin{centering}
\includegraphics[viewport=130bp 155bp 720bp 435bp,clip,scale=0.4]{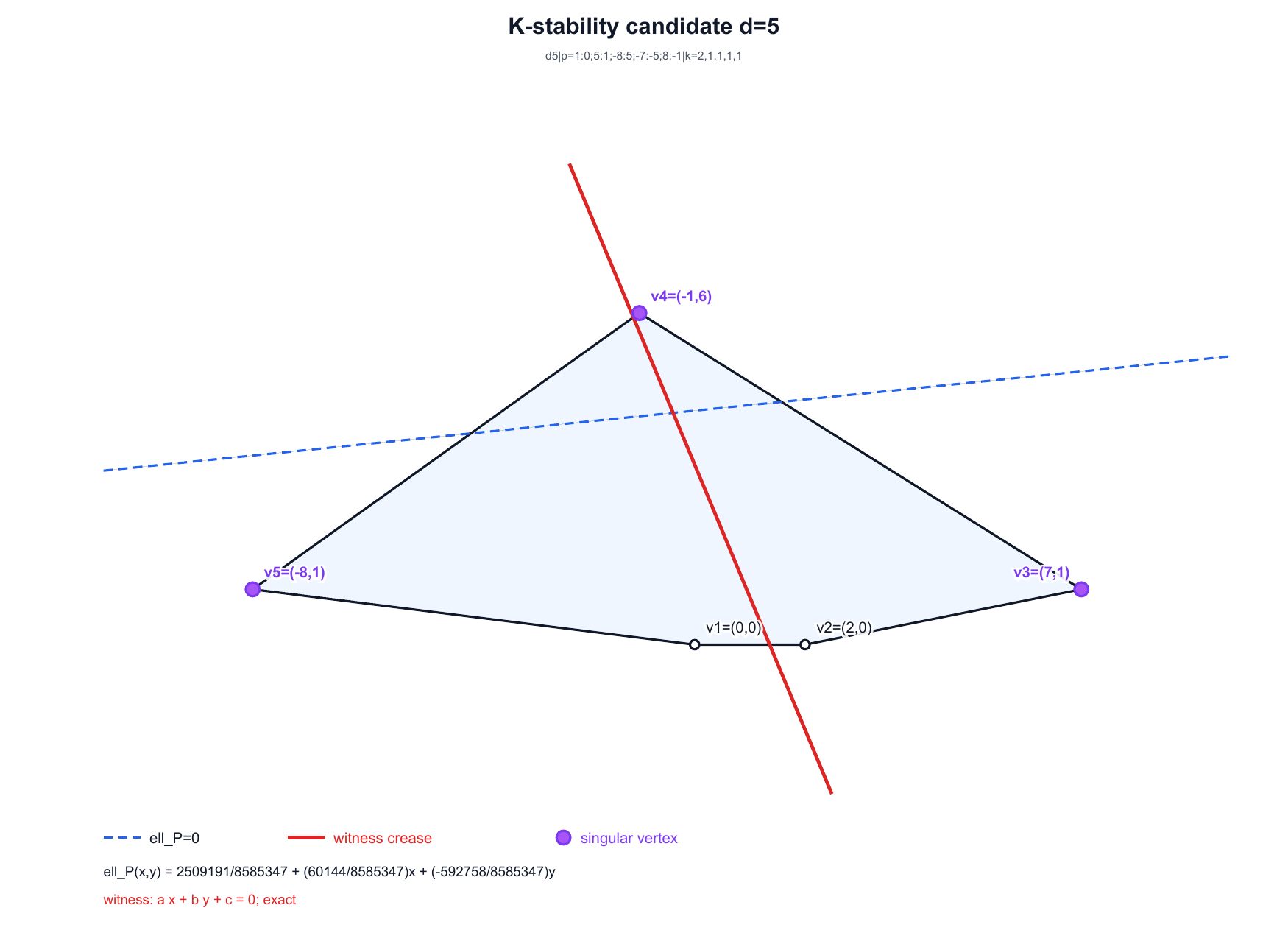}
\par\end{centering}
\caption{\label{fig:A-relatively-K-unstable}A relatively K-unstable pentagon. }
\end{figure}
The purple vertices are singular. The dashed line is
$\{\ell_{P}=0\}$. The red line is $\{\eta=0\}$, where 
\begin{align*}
\ell_{P} & =\frac{2509191}{8585347}+\frac{60144}{8585347}x-\frac{592758}{8585347}y,\\
\eta & =\max\left\{ \frac{9}{10}x+\frac{3}{8}y-\frac{11}{9},\;0\right\} \text{ such that }\mathbb{M}^{\mathrm{rel}}_{P}(\eta)<0.
\end{align*}
\end{example}

In the K-unstable case, Székelyhidi \cite{szekelyhidi_optimal_2008}
showed that the normalized invariant 
\[
\bar{\mathbb{M}}_{P}(\eta)\coloneqq\frac{\mathbb{M}_{P}(\eta)}{\left\Vert \eta\right\Vert _{L^{2}}}
\]
admits a minimizer in an enlarged space of convex functions, which
is defined as follows. Let $P^{*}$ equals to $P$ excluding all the
faces with codimension $\geq2$, which is also a convex set. Donaldson
introduced space 
\begin{equation}
\mathcal{C}_{1}(P)=\left\{ \eta:P^{*}\ra\bbr\text{ continuous convex }\Big|\int_{\partial P}\eta\,\mathrm{d}\sigma<\infty\right\} ,\label{eq:space C1}
\end{equation}
whose elements can be extended to lsc convex functions on $P$ (may
take $+\infty$ on $P\backslash P^{*}$). By \cite[Lemma 5.1.3]{donaldson_scalar_2002},
they are integrable over $P$, thus $\mathbb{M}_{P}$ is well-defined
on $\mathcal{C}_{1}(P)$. 
\begin{thm}[\cite{szekelyhidi_optimal_2008}]
\label{thm: Gabor paper} Given a lattice polytope $P\subset M_{\mathbb{R}}$. 

(1) Suppose $P$ is K-unstable, then 
\[
\inf\left\{ \bar{\mathbb{M}}_{P}(\eta)\mid0\neq\eta\in\mathcal{C}_{1}(P)\cap L^{2}(P)\right\} =-\left\Vert \Theta_{P}-\bar{S}\right\Vert _{L^2}
\]
admits a unique (up to scaling) minimizer $\bar{S}-\Theta_{P}\in\mathcal{C}_{1}(P)\cap L^{2}(P)$.
We call concave function $\Theta_{P}$ the optimal test-function for
$P$ (denoted by $B$ in \cite{szekelyhidi_optimal_2008}), which
satisfies variational inequality: 
\begin{equation}
\int_{\partial P}\eta\ge\int_{P}\eta\Theta_{P},\ \forall\eta\in\mathcal{C}_{1}(P)\cap L^{2}(P)\ \textrm{and}\ \int_{\partial P}\Theta_{P}=\int_{P}\Theta^{2}_{P}.\label{eq: property of Theta}
\end{equation}
When $P$ is K-semistable, we define $\Theta_{P}=\bar{S}$, above
conditions still hold. 

When $P$ is relatively K-semistable, we have $\Theta_{P}=\ell_{P}$.
If $P$ is relatively K-unstable, $\ell_{P}-\Theta_{P}$ is the unique
minimizer of $\mathbb{M}^{\mathrm{rel}}_{P}(\eta)/\left\Vert \eta\right\Vert $. 

(2) Consider the space of ``scalar curvatures'' 
\begin{equation}
\mathcal{S}(P)=\left\{ \theta\in L^{2}(P)\ \Big|\int_{\partial P}\eta\ge\int_{P}\eta\theta,\ \forall\eta\in\mathcal{C}_{1}(P)\cap L^{2}(P)\right\} ,
\end{equation}
then $\Theta_{P}$ is the unique element in $\mathcal{S}(P)$ with
minimal $L^{2}$-norm. Recall that if $u:P\ra\bbr$ is the symplectic
potential of an invariant Kähler metric, its scalar curvature is $S(u)=-\sum_{i,j}\partial_{i}\partial_{j}u^{ij}\in\mathcal{S}(P)$.

(3) Consider the Calabi flow on $X_{P}$ with invariant initial data,
which can be reduced into evolution equation on $P$: 
\[
\dot{u}_{t}=-S(u_{t})=\sum_{i,j}\partial_{i}\partial_{j}u^{ij}_{t}.
\]
Assume it admits long-time solutions, then $S(u_{t})\stackrel{L^{2}}{\rightarrow}\Theta_{P}$
as $t\rightarrow\infty$. 

(4) (canonical semi-stable decomposition) Assume $P$ is relatively
K-unstable and $\Theta_{P}$ is piecewise affine. Let $Q\subset P$
be a maximal subpolytope over which $\Theta_{P}$ is affine, then
$Q$ is relatively K-semistable in the following sense: 
\begin{equation}
\int_{\partial Q\cap\partial P}\eta\ \mathrm{d}\sigma\ge\int_{Q}\eta\Theta_{P}\ \mathrm{d}x,\ \forall\eta\in\mathcal{C}(Q).\label{eq: semi-decom}
\end{equation}
\end{thm}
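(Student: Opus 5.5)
The plan is to treat $\mathcal{S}(P)$ as a closed convex subset of the Hilbert space $L^{2}(P)$ and to obtain $\Theta_{P}$ as its least-norm point. This is an infinite-dimensional analogue of Lemma \ref{lem: min-max}, with $\mathcal{S}(P)$ in the role of $K$. The cleanest route to the minimizer goes through the quadratic functional of Remark \ref{rem-quadmin}. Define
\[
\Psi(\eta)=\tfrac{1}{2}\left\Vert \eta\right\Vert ^{2}_{L^{2}}+\mathbb{M}_{P}(\eta),\qquad\eta\in\mathcal{C}_{1}(P)\cap L^{2}(P).
\]
I would minimize $\Psi$ directly rather than the ratio $\bar{\mathbb{M}}_{P}$.

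\textbf{Step 1 (existence of a minimizer).} First normalize $\eta$ modulo affine functions; for instance, subtract a supporting affine function at the barycenter so that $\eta\ge0$ and $\eta$ vanishes at an interior point. For such normalized $\eta$, Donaldson's estimate \cite[Lemma 5.1.3]{donaldson_scalar_2002} bounds $\int_{P}\eta$ by $C\int_{\partial P}\eta\,\mathrm{d}\sigma$. Combined with the quadratic term, this should make $\Psi$ coercive. I then take a minimizing sequence. Convex functions with bounded $L^{2}$-norm and bounded boundary integrals are locally uniformly bounded on $P^{*}$, so after passing to a subsequence they converge locally uniformly. Fatou's lemma on $\partial P$ gives lower semicontinuity of $\int_{\partial P}\eta\,\mathrm{d}\sigma$, and weak lower semicontinuity of the norm handles the quadratic term. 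Hence a minimizer $\eta^{*}$ exists. Uniqueness follows from strict convexity of $\Psi$.

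\textbf{Step 2 (Euler--Lagrange inequality).} For any $\eta\in\mathcal{C}_{1}(P)\cap L^{2}(P)$, the function $\eta^{*}+t\eta$ lies in the cone for $t\ge0$. The one-sided derivative at $t=0$ therefore gives
\[
\int_{\partial P}\eta\ge\int_{P}\eta\,(\bar{S}-\eta^{*}),
\]
so $\Theta\coloneqq\bar{S}-\eta^{*}$ belongs to $\mathcal{S}(P)$. Using $(1\pm t)\eta^{*}$ instead gives equality when $\eta=\eta^{*}$. Since constants are tested with both signs, every $\theta\in\mathcal{S}(P)$ satisfies $\int_{P}\theta=V_{\partial P}$. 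These facts give the identity $\int_{\partial P}\Theta=\int_{P}\Theta^{2}$ in (\ref{eq: property of Theta}). Concavity of $\Theta$ holds because $\eta^{*}$ is convex.

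\textbf{Step 3 (parts (1) and (2)).} For any $\theta\in\mathcal{S}(P)$ and any admissible $\eta$, the defining inequality of $\mathcal{S}(P)$ and Cauchy--Schwarz give
\[
\mathbb{M}_{P}(\eta)\ge\langle\eta,\theta-\bar{S}\rangle\ge-\left\Vert \eta\right\Vert \left\Vert \theta-\bar{S}\right\Vert .
\]
Both inequalities become equalities for $\theta=\Theta$ and $\eta=\bar{S}-\Theta$, by Step 2. This proves the infimum formula in (1), and it shows $\Theta$ minimizes $\left\Vert \theta-\bar{S}\right\Vert$ over $\mathcal{S}(P)$. Because $\int_{P}\theta$ is constant on $\mathcal{S}(P)$ and $\bar{S}$ is a constant, this is the same as minimizing $\left\Vert \theta\right\Vert$, which gives (2). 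Uniqueness of the minimizer up to scaling follows exactly as in the proof of Lemma \ref{lem: min-max}(2): the midpoint of two normalized minimizers would be strictly better, by concavity of $\eta\mapsto-\mathbb{M}_{P}(\eta)$. The relative statements follow by repeating the argument on the orthogonal complement of the affine functions, with $\ell_{P}$ in place of $\bar{S}$.

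\textbf{Step 4 (parts (3) and (4)).} For (3), along the Calabi flow the Calabi energy $\left\Vert S(u_{t})\right\Vert ^{2}$ is non-increasing, and the K-energy decreases at rate $\left\Vert S(u_{t})-\bar{S}\right\Vert ^{2}$. At the same time, the K-energy slope along $u_{t}$ is bounded below through $\mathbb{M}_{P}$ and the bound of Step 3. These two facts together should force $\left\Vert S(u_{t})\right\Vert \to\left\Vert \Theta_{P}\right\Vert$. Since $S(u_{t})\in\mathcal{S}(P)$ and $\mathcal{S}(P)$ is convex, this norm convergence upgrades to $L^{2}$-convergence by the usual parallelogram argument for least-norm points.

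For (4), take $\eta\in\mathcal{C}(Q)$ and extend it to a convex function $\tilde{\eta}$ on $P$ that is piecewise affine outside $Q$. Since $-\Theta_{P}$ has a genuine crease along $\partial Q\cap\mathrm{int}P$, the function $-\Theta_{P}+\epsilon\tilde{\eta}$ remains convex for small $\epsilon$. Plugging it into the equality case of Step 2 localizes the variational inequality to $Q$ and yields (\ref{eq: semi-decom}).

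\textbf{Main obstacle.} I expect the hardest step to be the compactness and lower semicontinuity in Step 1, namely controlling the boundary integral under limits on the non-compact set $P^{*}$. The second delicate point is the convexity-preserving perturbation in (4) near codimension-two faces. For Step 1 I would first try Donaldson's boundary estimate together with Fatou's lemma on each facet.
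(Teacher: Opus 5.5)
You should know that the paper gives no proof of this theorem: it quotes it from \cite{szekelyhidi_optimal_2008}. The only related argument is Proposition~\ref{prop: recharac Theta} with the remark after it, which runs your Steps 2--3 in reverse (it assumes (\ref{eq: property of Theta}) and deduces that $-\Theta_{P}$ minimizes $\Psi$). Your treatment of (1) and (2) is sound: the direct method for your functional (the paper's $\breve{\mathbb{M}}_{P}$ in (\ref{eq: horn M})), a one-sided first variation, then Cauchy--Schwarz against $\mathcal{S}(P)$. Step 1 is more routine than you fear, but state coercivity correctly. $\breve{\mathbb{M}}_{P}$ is not invariant under adding affine functions, and Donaldson's bound $\int_{P}\phi\le C\int_{\partial P}\phi$ alone does not beat $-\bar{S}\int_{P}\phi$, since nothing forces $C\bar{S}<1$. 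What works is to write $\eta=\phi+\alpha$, with $\phi\ge0$ normalized at an interior point $x_{0}$, and use the interior estimate $\sup_{P}|\alpha|\le C\|\eta\|_{L^{2}}$ for convex $\eta$. Then $\breve{\mathbb{M}}_{P}(\eta)\ge\frac{1}{2}\|\eta\|^{2}-C\|\eta\|+\int_{\partial P}\phi$. This gives coercivity and a bound on $\int_{\partial P}\phi$, and Proposition~\ref{prop: Donaldson compact} with Fatou finishes. For the relative statement, say why you recover the same $\Theta_{P}$: testing with $\pm\alpha$ shows $\theta-\ell_{P}\perp\mathcal{A}$ for every $\theta\in\mathcal{S}(P)$, so $\|\theta\|^{2}=\|\theta-\ell_{P}\|^{2}+\|\ell_{P}\|^{2}$ on $\mathcal{S}(P)$.

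\textbf{Gap in (4).} Since $\tilde{\eta}$ is convex on $P$, the function $-\Theta_{P}+\epsilon\tilde{\eta}$ is convex for every $\epsilon\ge0$ without any crease. Its first variation only reproduces the global inequality $\int_{\partial P}\tilde{\eta}\ge\int_{P}\tilde{\eta}\Theta_{P}$. Splitting this as $[\int_{\partial P\cap Q}\eta-\int_{Q}\eta\Theta_{P}]+[\int_{\partial P\setminus Q}\tilde{\eta}-\int_{P\setminus Q}\tilde{\eta}\Theta_{P}]\ge0$ does not help. The second bracket is expected to be $\ge0$ (it is the semistability of the other pieces), which is the wrong sign. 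Nor can you make it vanish by taking $\tilde{\eta}$ affine on the neighbouring pieces. The trace of $\eta$ on a wall need not be affine, and the vanishing of $\int_{\partial P\cap Q_{j}}\alpha-\int_{Q_{j}}\alpha\Theta_{P}$ on each piece is itself part of the claim.

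The missing device is a perturbation equal to $-\Theta_{P}+\epsilon\eta$ on $Q$ and to $-\Theta_{P}$ off $Q$, up to a thin layer. Write $-\Theta_{P}=\max_{j}\alpha_{j}$ over its affine pieces. Approximate $\eta$ uniformly by piecewise-affine convex functions, so that it extends to a bounded convex $\tilde{\eta}$ on $P$. Set $\eta_{\epsilon}=\max\{\alpha_{Q}+\epsilon\tilde{\eta},\ \alpha_{j}\ (j\ne Q)\}$; this is convex automatically. The crease enters only through $\nabla(\alpha_{Q}-\alpha_{j})\neq0$. That confines the set where $\eta_{\epsilon}\neq-\Theta_{P}+\epsilon\tilde{\eta}\,\mathbf{1}_{Q}$ to an $O(\epsilon)$-neighbourhood of $\partial Q\cap\mathrm{int}P$, on which the discrepancy is $O(\epsilon)$. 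Hence, with $\Psi$ as in (\ref{eq: Inoue functional}), $\Psi(\eta_{\epsilon})-\Psi(-\Theta_{P})=\epsilon(\int_{\partial P\cap Q}\tilde{\eta}-\int_{Q}\tilde{\eta}\Theta_{P})+O(\epsilon^{2})$. Minimality of $-\Theta_{P}$, followed by letting the approximation converge, gives (\ref{eq: semi-decom}).

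\textbf{Gap in (3).} The K-energy is $\mathcal{F}(u)=\mathbb{M}_{P}(u)-\int_{P}\log\det D^{2}u\,\mathrm{d}x$, not $\mathbb{M}_{P}(u)$. Let $c\coloneqq\lim_{t}\|S(u_{t})-\bar{S}\|$; then $\mathcal{F}(u_{t})\le\mathcal{F}(u_{0})-c^{2}t$. Apply Step 3 to $u_{t}+\bar{S}t$, using $\mathbb{M}_{P}(1)=0$ and $\|u_{t}+\bar{S}t-u_{0}\|\le\int_{0}^{t}\|S(u_{s})-\bar{S}\|\,\mathrm{d}s$; this gives $\mathbb{M}_{P}(u_{t})\ge-\|\Theta_{P}-\bar{S}\|(ct+o(t))$. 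These two bounds combine to $c\le\|\Theta_{P}-\bar{S}\|$ only if $\int_{P}\log\det D^{2}u_{t}\le o(t)$ along the flow. That sublinear bound on the entropy term is an a priori estimate for the flow which nothing in Steps 1--3 supplies, and it is the substantive part of (3). (In \cite{szekelyhidi_optimal_2008} the Calabi flow statement is proved for toric surfaces.) Once $\|S(u_{t})\|\to\|\Theta_{P}\|$ is known, your parallelogram upgrade to $L^{2}$-convergence is fine.
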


\begin{rem}[no island]
 Under the assumptions in (4), we can deduce that each subpolytope
$Q$ must have a facet which is entirely contained in $\partial P$.
If not, we have $\left|\partial Q\cap\partial P\right|_{\mathrm{d}\sigma}=0$.
Take $\eta$ in (\ref{eq: semi-decom}) to be the affine functions
extending $\pm\Theta|_{Q}$, it implies 
\[
\int_{Q}\Theta^{2}=\int_{\partial Q\cap\partial P}\Theta\ \mathrm{d}\sigma=0.
\]
So $\Theta\equiv0$ on $Q$. Since $\Theta$ is concave, this implies
$\Theta\leq0$ on $P$, but this contradicts with $\int_{P}\Theta=V_{\partial P,\mathrm{d}\sigma}>0$,
which follows from (\ref{eq: property of Theta}). 
\end{rem}

\begin{example}[A potential example for semistable decomposition]
\begin{figure}[H]
\includegraphics[clip,scale=0.4]{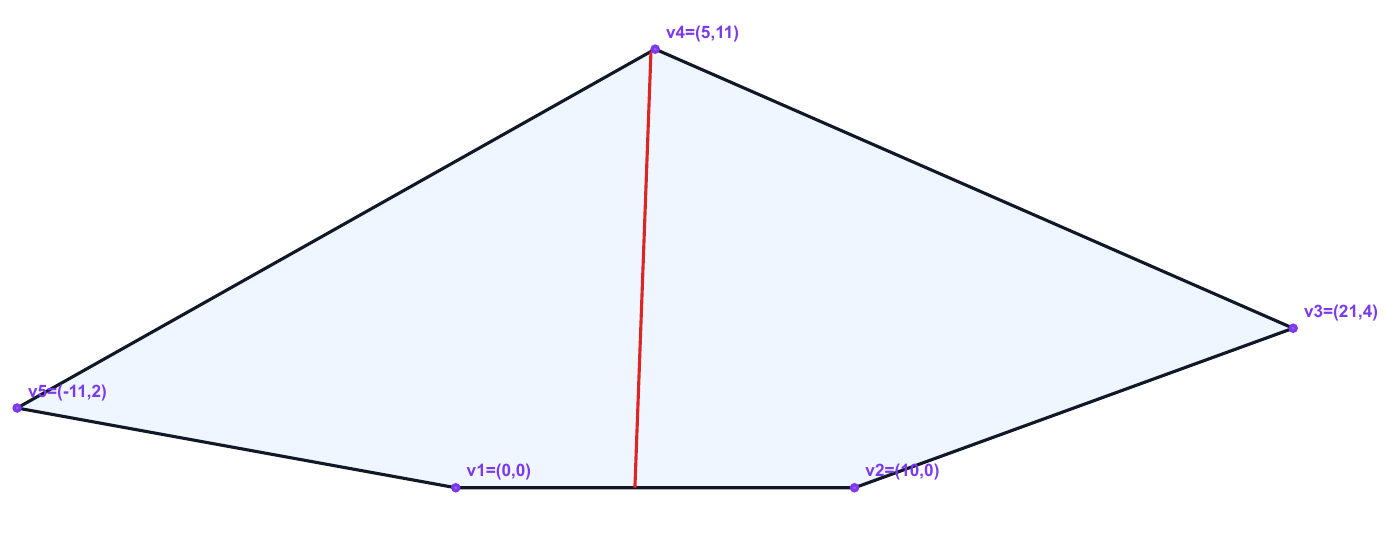}

\caption{\label{fig: exam decompose}The red line does not pass the top vertex! }
\end{figure}
See Figure (\ref{fig: exam decompose}). First we triangulate $P$
into many pieces, then minimize $\Psi$ among the piecewise affine
convex functions. The minimizer shows the rough location of the crease
line whose endpoints are $(10s,0)$ and $tv_{5}+(1-t)v_{4}$ $(0\le s,t\le1)$.
We have $\ell_{Q_{1}}=\ell_{Q_{2}}$ at two endpoints. Solve this
equation, we obtain 
\[
s\approx0.44922255,\ t\approx0.00652993,
\]
which are algebraic numbers whose minimal polynomials have degree
$28$. The semistability of $Q_{1},Q_{2}$ has only passed a numerical
test. Once verified, we can conclude $\Theta=\min\{\ell_{Q_{1}},\ell_{Q_{2}}\}$. 
\end{example}

\subsection{\texorpdfstring{More properties of $\Theta_{P}$}{More properties of Theta\_P}}

A central question is whether $\Theta$ is piecewise affine. The answer
is still unknown. A closely related result is Theorem \ref{thm: Li Zhou}.
In this section, we give some properties about $\Theta$. First, we
re-characterize $\Theta$ as the unique minimizer of a simpler convex
functional, which is inspired by the work \cite{inoue_entropies_2022-1}
of Inoue. 
\begin{prop}
\label{prop: recharac Theta}Convex function $-\Theta_{P}$ is the
unique minimizer of functional 
\begin{equation}
\Psi(\eta)\coloneqq\frac{1}{2}\int_{P}\eta^{2}\mathrm{d}x+\int_{\partial P}\eta\ \mathrm{d}\sigma,\ \eta\in\mathcal{C}_{1}(P)\cap L^{2}(P).\label{eq: Inoue functional}
\end{equation}
\end{prop}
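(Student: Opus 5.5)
Since the statement mirrors Remark \ref{rem-quadmin} in the infinite-dimensional setting, the plan is a direct convexity argument built on the variational inequality (\ref{eq: property of Theta}) and on Theorem \ref{thm: Gabor paper} (2). Write $\mathcal{K}\coloneqq\mathcal{C}_{1}(P)\cap L^{2}(P)$, which is a convex cone. First, $-\Theta_{P}\in\mathcal{K}$: in the unstable case $\bar{S}-\Theta_{P}\in\mathcal{K}$ and $-\bar{S}$ is affine, and in the semistable case $-\Theta_{P}$ is constant or affine.

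The key step is the expansion, for $\eta\in\mathcal{K}$ and $\theta\coloneqq\Theta_{P}$,
\[
\Psi(\eta)-\Psi(-\theta)=\frac{1}{2}\int_{P}(\eta+\theta)^{2}\,\mathrm{d}x+\Big(\int_{\partial P}(\eta+\theta)\,\mathrm{d}\sigma-\int_{P}(\eta+\theta)\theta\,\mathrm{d}x\Big).
\]
This is a routine algebraic identity: expand $\frac{1}{2}\int\eta^{2}=\frac{1}{2}\int(\eta+\theta)^{2}-\int(\eta+\theta)\theta+\frac{1}{2}\int\theta^{2}$. It is also where the second relation in (\ref{eq: property of Theta}), $\int_{\partial P}\theta=\int_{P}\theta^{2}$, enters. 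The bracket splits as $\big(\int_{\partial P}\eta-\int_{P}\eta\theta\big)+\big(\int_{\partial P}\theta-\int_{P}\theta^{2}\big)$. The first part is $\ge0$ by (\ref{eq: property of Theta}) applied to $\eta\in\mathcal{K}$, and the second part vanishes. Hence
\[
\Psi(\eta)\ge\Psi(-\theta)+\tfrac{1}{2}\|\eta+\theta\|^{2}_{L^{2}},
\]
which gives both minimality and uniqueness at once: equality forces $\eta=-\theta$ a.e., hence everywhere on $P^{*}$ by continuity.

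The main obstacle is checking that all integrals are finite and that the identity is legitimate. $\theta\in L^{2}$ is given. The boundary integral of $\theta$ must be finite, so we need $-\theta\in\mathcal{C}_{1}(P)$, which holds by construction. We also use $\int_{\partial P}\eta\,\mathrm{d}\sigma<\infty$ for $\eta\in\mathcal{C}_{1}(P)$, and $\int_{P}\eta\theta$ is finite by Cauchy--Schwarz. Each piece of the splitting is therefore finite, and no $\infty-\infty$ occurs.

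The semistable cases need a short separate check. There we set $\Theta_{P}=\bar{S}$ or $\ell_{P}$ and assert that (\ref{eq: property of Theta}) still holds, as Theorem \ref{thm: Gabor paper} (1) states. Once both relations in (\ref{eq: property of Theta}) are available, the argument above applies verbatim in every case.
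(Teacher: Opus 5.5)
Your proof is correct and is essentially the paper's argument. The paper derives $\Psi(\eta)\ge-\frac{1}{2}\|\Theta_{P}\|_{L^{2}}^{2}$ from the variational inequality (\ref{eq: property of Theta}) via Cauchy--Schwarz and AM--GM and then cites strict convexity for uniqueness, whereas your completed-square identity is exactly the one the paper records in the remark after the proposition, and its remainder $\frac{1}{2}\|\eta+\Theta_{P}\|_{L^{2}}^{2}$ gives uniqueness directly.
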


\begin{proof}
For any $\eta\in\mathcal{C}_{1}(P)\cap L^{2}(P)$, (\ref{eq: property of Theta})
implies 
\[
\int_{\partial P}\eta\geq\int_{P}\eta\Theta\geq-\left\Vert \eta\right\Vert _{L^2}\left\Vert \Theta\right\Vert _{L^2}\geq-\frac{1}{2}\left\Vert \eta\right\Vert ^{2}_{L^2}-\frac{1}{2}\left\Vert \Theta\right\Vert ^{2}_{L^2}.
\]
Thus $\Psi(\eta)\geq-\frac{1}{2}\left\Vert \Theta\right\Vert ^{2}_{L^2}$.
This lower bound is achieved by $\eta=-\Theta$ since $\int_{P}\Theta^{2}=\int_{\partial P}\Theta$.
The uniqueness follows from the strict convexity of $L^{2}$-norm. 
\end{proof}

\begin{rem}
\begin{enumerate}
\item
Since $-\Theta_{P}$ is a minimizer, for any convex $\eta$, we have
$\frac{\mathrm{d}}{\mathrm{d}t}\Psi(t\eta-\Theta_{P})|_{t=0}\ge0$
and $\frac{\mathrm{d}}{\mathrm{d}t}\Psi(-t\Theta_{P})|_{t=1}=0$.
These give us (\ref{eq: property of Theta}). Conversely, (\ref{eq: property of Theta})
can imply that $-\Theta_{P}$ is the minimizer of $\Psi$, since for any
convex $\eta$, we have 
\[
\Psi\left(\eta\right)-\Psi\left(-\Theta\right)=\frac{1}{2}\int_{P}\left(\eta+\Theta\right)^{2}+\int_{\partial P}\eta-\int_{P}\eta\Theta\ge0.
\]

\item 
The functional below is more closely related to $\mathbb{M}_{P}$
than $\Psi$, 
\begin{equation}
\breve{\mathbb{M}}_{P}(\eta)\coloneqq\mathbb{M}_{P}(\eta)+\frac{1}{2}\left\Vert \eta\right\Vert ^{2}_{L^{2}}=\Psi(\eta-\bar{S})+\frac{1}{2}\bar{S}^{2}V_{P}.\label{eq: horn M}
\end{equation}
Its relative version is 
\[
\breve{\mathbb{M}}^{\mathrm{rel}}_{P}(\eta)\coloneqq\mathbb{M}^{\mathrm{rel}}_{P}(\eta)+\frac{1}{2}\left\Vert \eta\right\Vert ^{2}_{L^{2}}=\Psi(\eta-\ell_{P})+\frac{1}{2}\int_{P}\ell^{2}_{P}.
\]
Apparently, minimizing these functionals is equivalent to minimizing
$\Psi$. 
\end{enumerate}
\end{rem}

\begin{prop}
We have $\sup_{P}\Theta_{P}=\sup_{\partial P}\Theta_{P}>0$.
\end{prop}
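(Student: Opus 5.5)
The plan is to use two facts about $\Theta=\Theta_P$: it is concave, and it satisfies the relations (\ref{eq: property of Theta}) from Theorem \ref{thm: Gabor paper}. Positivity comes first. Testing the variational inequality with the constant $\eta\equiv 1$ and then $\eta\equiv-1$ (both affine, hence in $\mathcal{C}_1(P)\cap L^2(P)$) gives
\[
\int_P\Theta\,\mathrm{d}x=V_{\partial P,\mathrm{d}\sigma}>0 .
\]
So $\sup_P\Theta\ge V_{\partial P,\mathrm{d}\sigma}/V_P>0$. It then suffices to prove $\sup_P\Theta=\sup_{\partial P}\Theta$, because the common value is then positive. Here $\sup_{\partial P}$ is understood via the lsc/usc extension. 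By Theorem \ref{thm: theta is continuous-INTRO}, $\Theta$ is continuous on $P$, so there is no ambiguity.

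For the main equality, $\ge$ is trivial, so I argue by contradiction. Suppose $m\coloneqq\sup_{\partial P}\Theta<\sup_P\Theta$. Pick $c$ with $\max(m,0)<c<\sup_P\Theta$ and set $\eta\coloneqq\min\{\Theta,c\}$. Then $-\eta=\max\{-\Theta,-c\}$ is convex and bounded, so it lies in $\mathcal{C}_1(P)\cap L^2(P)$. On $\partial P$ we have $\eta=\Theta$, since $\Theta\le m<c$ there. Comparing $\Psi$ (Proposition \ref{prop: recharac Theta}) at $-\eta$ and $-\Theta$: the boundary integrals agree. The $L^2$ term strictly decreases, because $\eta^2<\Theta^2$ on the nonempty open set $\{\Theta>c\}$, where $\Theta>c>0$. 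Hence $\Psi(-\eta)<\Psi(-\Theta)$, contradicting the minimality of $-\Theta$. If the boundary supremum is $-\infty$ or $\Theta$ is unbounded, the continuity from Theorem \ref{thm: theta is continuous-INTRO} rules this out.

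The main obstacle is checking that $\{\Theta>c\}$ has positive measure and that $\Theta=\eta$ $\mathrm{d}\sigma$-a.e. on $\partial P$. The first holds because $\Theta$ is concave and continuous on $\mathrm{int}P$ and $c<\sup\Theta$, so the superlevel set is a nonempty open convex set. The second uses continuity up to the boundary, so that boundary values are the restrictions of $\Theta$. If one prefers to avoid the continuity theorem, one can work on $P^*$ and use $\liminf$ boundary values in the sense of $\mathcal{C}_1(P)$. This works because truncating from above can only lower the boundary integral, so the inequality $\Psi(-\eta)\le\Psi(-\Theta)$ in the boundary term still goes in the right direction.
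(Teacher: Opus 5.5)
This is essentially the paper's proof. Your truncation $\eta=\min\{\Theta,c\}$ is exactly the paper's $u'=\max\{u,m+\delta\}$ with $u=-\Theta$ and $c=-(m+\delta)$, and positivity comes from $\int_P\Theta=V_{\partial P,\mathrm{d}\sigma}>0$ just as there.

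One correction to your closing aside: truncating $\Theta$ from above \emph{raises} the term $\int_{\partial P}(-\eta)\,\mathrm{d}\sigma$ appearing in $\Psi$, so a one-sided boundary inequality goes the wrong way. What actually lets you avoid the continuity theorem is the exact equality $\eta=\Theta$ on $\partial P\cap P^{*}$, where $-\Theta\in\mathcal{C}_{1}(P)$ is already finite and continuous; this set has full $\mathrm{d}\sigma$-measure.
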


\begin{proof}
Set $u=-\Theta$, we show $\inf_{P}u=\inf_{\partial P}u$. If this
is not true, let $m\coloneqq\inf_{P}u<\inf_{\partial P}u$. Since
$\int_{P}\Theta=\left|\partial P\right|_{\mathrm{d}\sigma}>0$, we
know $m<0$. Take $\delta>0$ such that $0>m+\delta<\inf_{\partial P}u$.
Consider $u'=\max\left\{ u,m+\delta\right\} $. Since $u\geq\inf_{\partial P}u>m+\delta$
on $\partial P$, we have $u'=u$ on $\partial P$. Then it is easy
to see
\[
\Psi(u')=\frac{1}{2}\int_{P}u'^{2}+\int_{\partial P}u'<\frac{1}{2}\int_{P}u^{2}+\int_{\partial P}u=\Psi(u).
\]
It is absurd since $u$ is a minimizer of $\Psi$. 
\end{proof}

In \cite{li_extremal_2023}, the authors showed the boundedness of
$\Theta_{P}$. We find that their proof can yield stronger results.
For readers' convenience, we reproduce the proof with some simplifications. 
\begin{proof}[Proof of Theorem \ref{thm: theta is continuous-INTRO}]
\label{proof of Li-Lian-Sheng}For readers to make comparisons, we
use the same notations as \cite[Theorem 6.2]{li_extremal_2023}. For
example, coordinates on $N_{\bbr}$ are denoted by $x$, and $\xi$
for $M_{\bbr}$. 

Translate $P$ such that $0\in\mathrm{int}P$, then $a_{\rho}>0$.
Let 
\[
u\coloneqq-\Theta_{P}:P\ra\bbr\cup\{+\infty\},
\]
which is lsc, convex and taking finite values on $P^{*}$. We set
$u=+\infty$ outside $P$. Fix any $p^{o}\in\partial u(0)$, let $\ell(\xi)\coloneqq u(0)+\left\langle p^{o},\xi\right\rangle $
be the attached support function. Define $u^{o}\coloneqq u-\ell$,
we have $u^{o}(\xi)\geq u^{o}(0)=0$ for $\forall\xi\in P$. Let 
\[
f^{o}(x)\coloneqq\left(u^{o}\right)^{*}(x)\coloneqq\sup_{\xi\in P}\left\langle x,\xi\right\rangle -u^{o}(\xi),\ x\in N_{\bbr}
\]
be the Legendre dual of $u^{o}$. We also have $f^{o}(x)\geq f^{o}(0)=0$.
Since $0\in\mathrm{int}P$, there exists a closed ball $B\subset\mathrm{int}P$
centered at $0$, then we have 
\[
I_{P}\leq u^{o}\leq I_{B}+\sup_{B}u^{o},
\]
where $I_{B}$ is the convex characteristic function of $B$, i.e.
$I_{B}=0$ on $B$ and $=+\infty$ otherwise. Taking the Legendre
dual, we obtain 
\[
\sup_{\xi\in P}\left\langle x,\xi\right\rangle \geq f^{o}(x)\geq\sup_{\xi\in B}\left\langle x,\xi\right\rangle -\sup_{B}u^{o},\ x\in N_{\bbr}.
\]
It implies that the sublevel set 
\[
S_{f^{o}}(h)\coloneqq\left\{ x\in N_{\bbr}\mid f^{o}(x)\leq h\right\} 
\]
is compact and convex. 

We introduce the key construction in \cite[before Lem. 3.9]{li_extremal_2023}.
For $h>0$, we define the \textit{Legendre truncation} of $u^{o}$
is 
\[
u^{o}_{h}(\xi)\coloneqq\sup_{x:f^{o}(x)\leq h}\left\langle x,\xi\right\rangle -f^{o}(x)=\left(\left(u^{o}\right)^{*}+I_{S_{f^{o}}(h)}\right)^{*}(\xi).
\]
It is the tropical analogue of the Fourier truncation. Intuitionally,
for $h\gg1$, $u^{o}_{h}$ is the flattening of $u$ over the region
with large slope. Since $f^{o}\geq0$, it is easy to see 
\begin{equation}
\sup_{f^{o}(x)\leq h}\left\langle x,\xi\right\rangle -h\leq u^{o}_{h}\leq u^{o},\ \sup_{f^{o}(x)\leq h}\left\langle x,\xi\right\rangle .\label{eq:bound of uo_h}
\end{equation}
Set 
\[
u_{h}\coloneqq u^{o}_{h}+\ell\leq u^{o}+\ell=u,
\]
which is nondecreasing in $h$. We will show that 
\[
u_{h}=u,\ \text{on }P\text{ when }h\gg1.
\]
The proof proceeds in several steps. For $h>0$, let 
\[
W_{h}\coloneqq\partial f^{o}\left\{ f^{o}\leq h\right\} \subset P,
\]
which is compact by \cite[Lemma A.22]{figalli_monge-ampere_2017},
and may not be convex.

\textbf{(a)} We show $u^{o}_{h}=u^{o}$ on $W_{h}$. Suppose $\xi\in\partial f^{o}(x)$
with $f^{o}(x)\leq h$, then we have 
\[
u^{o}_{h}(\xi)\geq\left\langle x,\xi\right\rangle -f^{o}(x)=u^{o}(\xi),
\]
where the equality is due to $\xi\in\partial f^{o}(x)$. Since $u^{o}_{h}\leq u^{o}$,
it forces $u^{o}_{h}(\xi)=u^{o}(\xi)$.

\textbf{(b)} We show that $W_{h}$ is a star set, i.e. if $\xi\in W_{h}$
then $t\xi\in W_{h}$ for any $t\in[0,1)$.

Suppose $\xi\in\partial f^{o}(x)$ with $f^{o}(x)\leq h$. Take any
$x_{t}\in\partial u^{o}(t\xi)$, since $t\xi\in\partial f^{o}(x_{t})$,
it suffices to show $f^{o}(x_{t})\leq h$. Since $x\in\partial u^{o}(\xi)$
and $x_{t}\in\partial u^{o}(t\xi)$, by the definition of subdifferentials,
we have 
\begin{align*}
u^{o}(t\xi) & \geq u^{o}(\xi)+\left\langle x,t\xi-\xi\right\rangle ,\\
u^{o}(\xi) & \geq u^{o}(t\xi)+\left\langle x_{t},\xi-t\xi\right\rangle .
\end{align*}
They imply $\left\langle x_{t},\xi\right\rangle \leq\left\langle x,\xi\right\rangle $.
Then we have 
\begin{align*}
f^{o}(x_{t}) & =\left\langle x_{t},t\xi\right\rangle -u^{o}(t\xi)\leq t\left\langle x,\xi\right\rangle -u^{o}(t\xi)\\
 & \leq t\left\langle x,\xi\right\rangle -\left[u^{o}(\xi)+\left\langle x,t\xi-\xi\right\rangle \right]\\
 & =\left\langle x,\xi\right\rangle -u^{o}(\xi)=f^{o}(x)\leq h.
\end{align*}
Since $W_{h}$ is a star set, we can describe it as 
\[
W_{h}=\left\{ tq\mid q\in\partial P,0\leq t\leq r_{h}(q)\leq1\right\} ,
\]
where $r_{h}:\partial P\ra[0,1]$ is a function. Since $W_{h}$ is
closed, $r_{h}(q)$ is usc in $q$ and nondecreasing in $h$. We simply
write $r_{q}\coloneqq r_{h}(q)$. 

\textbf{(c)} We show that $\lim_{h\ra\infty}r_{h}(q)=1$ uniformly
for $q\in\partial P$. 

For any $\varepsilon\in(0,1)$, let $P_{\varepsilon}\coloneqq(1-\varepsilon)P$.
By \cite[Lemma A.22]{figalli_monge-ampere_2017}, $\partial u^{o}\left(P_{\varepsilon}\right)$
is a compact set in $N_{\bbr}$. We claim that 
\[
P_{\varepsilon}\subset W_{h}\ \text{when }h>H_{\varepsilon}\coloneqq\sup_{\partial u^{o}\left(P_{\varepsilon}\right)}f^{o}.
\]
For any $\xi\in P_{\varepsilon}$, take $x\in\partial u^{o}(\xi)\subset\partial u^{o}\left(P_{\varepsilon}\right)$,
we have $\xi\in\partial f^{o}(x)$ and $f^{o}(x)\leq H_{\varepsilon}$,
so $\xi\in W_{h>H_{\varepsilon}}$. By the definition of $r_{h}(q)$,
we have 
\[
1-\varepsilon\leq r_{h}(q)\leq1,\ \forall q\in\partial P
\]
when $h>H_{\varepsilon}$. Hence \textbf{(c)} follows. 

\textbf{(d)} For any $q\in\partial P$, $u^{o}_{h}$ is affine restricting
on the ray $\left\{ tq\mid t\geq r_{q}\coloneqq r_{h}(q)\right\} $.
Explicitly, we have 
\[
u^{o}_{h}(tq)=\frac{t}{r_{q}}\left(u^{o}(r_{q}q)+h\right)-h,\ \text{for }t\geq r_{q}.
\]
Proof: For any $q\in\partial P$, fix a $t>r_{q}$. Since $S_{f^{o}}(h)$
is compact, the supremum 
\[
u^{o}_{h}(tq)\coloneqq\sup_{x\in S_{f^{o}}(h)}\left\langle x,\xi\right\rangle -f^{o}(x)=\sup_{x\in N_{\bbr}}\left\langle x,tq\right\rangle -\left(f^{o}(x)+I_{S_{f^{o}}(h)}(x)\right)
\]
is attained by some $x^{*}\in S_{f^{o}}(h)$, and this occurs if and
only if 
\begin{equation}
tq\in\partial\left(f^{o}+I_{S_{f^{o}}(h)}\right)(x^{*})=\partial f^{o}(x^{*})+\partial I_{S_{f^{o}}(h)}(x^{*}).\label{eq:Moreau-Rockafellar}
\end{equation}
The last equality is due to Moreau--Rockafellar theorem.

If $x^{*}$ does not lie on the boundary of $S_{f^{o}}(h)$, (\ref{eq:Moreau-Rockafellar})
implies $tq\in\partial f^{o}(x^{*})\subset W_{h}$. That is absurd
since $t>r_{q}$. Hence $x^{*}$ must lie on the boundary and $f^{o}(x^{*})=h$.

By the definition of subdifferentials, we note that 
\begin{align*}
\partial I_{S_{f^{o}}(h)}(x^{*}) & =\left\{ \xi\mid\left\langle x-x^{*},\xi\right\rangle \leq0,\ \forall x\in S_{f^{o}}(h)\right\} \\
 & =\left\{ \lam\eta\mid\lam\geq0,\ \eta\in\partial f^{o}(x^{*})\right\} .
\end{align*}
Thus by (\ref{eq:Moreau-Rockafellar}), we can write $tq=\eta_{1}+\lam\eta_{2}$
with $\eta_{i}\in\partial f^{o}(x^{*})$ and $\lam>0$. 

For any $a\geq1$, we find 
\[
atq=\eta_{1}+\left(a-1+a\lam\right)\frac{\left(a-1\right)\eta_{1}+a\lam\eta_{2}}{a-1+a\lam}\in\partial f^{o}(x^{*})+\partial I_{S_{f^{o}}(h)}(x^{*}),
\]
where the convexity of $\partial f^{o}(x^{*})$ is used. This inclusion
relation implies that the supremum 
\[
u^{o}_{h}(atq)\coloneqq\sup_{x\in N_{\bbr}}\left\langle x,atq\right\rangle -\left(f^{o}(x)+I_{S_{f^{o}}(h)}(x)\right)
\]
is still attained by $x^{*}$. So $u^{o}_{h}(atq)=\left\langle x^{*},atq\right\rangle -h$
for all $a\geq1$. Since $t$ can be arbitrarily close to $r_{q}$,
\textbf{(d)} follows. 

\textbf{(e)} It is ready to show $u=u_{h}$ for $h\gg1$. Let 
\[
v_{h}\coloneqq u-u_{h}=u^{o}-u^{o}_{h}\geq0.
\]
By property \textbf{(c)}, for any $q\in\partial P$, the function
$[r_{q},1]\ni t\mapsto v_{h}(tq)$ is convex and vanishes at $t=r_{q}$
by \textbf{(a)}. It implies that this function is nondecreasing, so
\begin{equation}
v_{h}(tq)\leq v_{h}(q),\ \forall t\in[r_{q},1].\label{eq:controll v_h}
\end{equation}
Note that $v_{h}(q)<\infty$, when $q$ lies in the relative interior
of facets, since $u<+\infty$ on $P^{*}$. 

To estimate the integrals over $P$, we decompose $P=\bigcup_{\rho\in\mathcal{F}_{P}(1)}P_{\rho}$,
where $P_{\rho}$ is the cone with apex $0$ and base $F_{\rho}$
(facet of $P$ associated to $\rho$). Consider the map 
\[
[0,1]\times F_{\rho}\ra P_{\rho},\ \left(t,q\right)\mapsto tq.
\]
One can check that the pullback of Lebesgue measure $\mathrm{d}\mu$
along this map is $a_{\rho}t^{n-1}\mathrm{d}t\wedge\mathrm{d\sigma}$. 

By \textbf{(a)}, $v_{h}(tq)=0$ when $t\in[0,r_{q}]$. Use (\ref{eq:controll v_h}),
we have 
\[
\int_{P_{\rho}}v_{h}\mathrm{d}\mu=a_{\rho}\int_{F_{\rho}}\int^{1}_{r_{q}}v_{h}(tq)t^{n-1}\mathrm{d}t\mathrm{d\sigma}\leq\frac{a_{\rho}}{n}\int_{F_{\rho}}\left(1-r^{n}_{q}\right)v_{h}(q)\mathrm{d\sigma}.
\]
Take the sum over $\rho$, we obtain 
\begin{equation}
0\leq\int_{P}v_{h}\mathrm{d}\mu\leq\frac{\max_{\rho}a_{\rho}}{n}\left(1-\inf_{q\in\partial P}r^{n}_{q}\right)\int_{\partial P}v_{h}\mathrm{d\sigma}.\label{eq:controll vh int}
\end{equation}
Compare the $L^{2}$-norms, 
\begin{align*}
\int_{P}u^{2}_{h}-\int_{P}u^{2} & =\int_{P}\left(u^{o}_{h}+\ell\right)^{2}-\int_{P}\left(u^{o}+\ell\right)^{2}=\int_{P}\left(u^{o}_{h}\right)^{2}-\int_{P}\left(u^{o}\right)^{2}-2\int_{P}v_{h}\ell\\
 & \leq2\max_{P}\left|\ell\right|\int_{P}v_{h}\leq\frac{2}{n}\max_{P}\left|\ell\right|\max_{\rho}a_{\rho}\left(1-\inf_{q\in\partial P}r^{n}_{q}\right)\int_{\partial P}v_{h}\mathrm{d\sigma}.
\end{align*}
where $\int_{P}\left(u^{o}_{h}\right)^{2}\leq\int_{P}\left(u^{o}\right)^{2}$
is due to $0\leq u^{o}_{h}\leq u^{o}$. 

We obtain a comparison of functional values, 
\[
\left(\frac{1}{2}\int_{P}u^{2}_{h}+\int_{\partial P}u_{h}\right)-\left(\frac{1}{2}\int_{P}u^{2}+\int_{\partial P}u\right)\leq\left[C\left(1-\inf_{q\in\partial P}r^{n}_{q}\right)-1\right]\int_{\partial P}v_{h}\mathrm{d\sigma}.
\]
By \textbf{(c)}, we know $\inf_{q\in\partial P}r^{n}_{q}\ra1$. So
there exists $h_{0}>0$, when $h>h_{0}$, the RHS $\leq-\frac{1}{2}\int_{\partial P}v_{h}$.
Since $u$ is a minimizer of (\ref{eq: Inoue functional}), we must
have $\int_{\partial P}v_{h}\mathrm{d\sigma}=0$ for $h>h_{0}$. By
(\ref{eq:controll vh int}), it implies $\int_{P}v_{h}=0$, so $u=u_{h}$
almost everywhere. Since $u$ is continuous on $P^{*}$ and $u_{h}$
is continuous everywhere, so $u=u_{h}$ on $P^{*}$. Note $u$ is
also lsc, so 
\[
u_{h}=u,\ \text{on }P\text{ when }h>h_{0}.
\]
Moreover, by (\ref{eq:bound of uo_h}), we have 
\[
\partial u\left(\mathrm{int}P\right)=\partial u_{h}\left(\mathrm{int}P\right)\subset\partial u_{h}\left(M_{\bbr}\right)=S_{f^{o}}(h)+p^{o}
\]
when $h>h_{0}$. Since $S_{f^{o}}(h)$ is bounded, so is $\partial u\left(\mathrm{int}P\right)$. 
\end{proof}

The following property is due to Li and Zhou. They deal with a general
problem and assume a smoothness condition (P2) which is not satisfied
by $\Theta_{P}$, but the proof is still valid. 
\begin{thm}[{\cite[Proposition 3.2]{li_k-stability_2022}}]
\label{thm: Li Zhou} Convex function $u=-\Theta_{P}$ satisfies
the homogenous Monge--Ampère equation in the sense of Alexandrov.
Precisely, for any compact set $K\subset\mathrm{int}P$, the Lebesgue
measure $\left|\partial u(K)\right|=0$. 
\end{thm}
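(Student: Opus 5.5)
Suppose, for contradiction, that some compact $K\subset\mathrm{int}P$ has $\left|\partial u(K)\right|>0$, where $u=-\Theta_{P}$. We will build a convex competitor that strictly lowers $\Psi$, contradicting Proposition \ref{prop: recharac Theta}. Since $\partial u(K)$ is bounded by Theorem \ref{thm: theta is continuous-INTRO}, we can push a small set of slopes inward without changing $u$ near $\partial P$. For $p\in N_{\bbr}$ set $u^{*}(p)=\sup_{\xi\in P}\{\langle p,\xi\rangle-u(\xi)\}$; it is finite and convex, and $u=u^{**}$ on $P^{*}$. Given a nonnegative bump $\phi\in C_{c}(N_{\bbr})$ supported in a small neighbourhood of a density point of $\partial u(K)$, put $u_{t}\coloneqq(u^{*}+t\phi)^{*}$ restricted to $P$ for small $t$ of either sign.

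The main point is that $u_{t}$ differs from $u$ only on the set $\{\xi:\partial u(\xi)\cap\mathrm{supp}\,\phi\neq\emptyset\}$. If $\mathrm{supp}\,\phi$ is a small neighbourhood of points of $\partial u(K)$, this set lies in a compact neighbourhood $K'$ of $K$ inside $\mathrm{int}P$. Hence $u_{t}=u$ near $\partial P$, and the boundary term $\int_{\partial P}u_{t}\,\mathrm{d}\sigma$ is unchanged. For $t<0$ we must check that $u^{*}+t\phi$ is still convex, or else replace it by its convex envelope; the localization property survives this because $u^{*}$ is a Legendre transform.

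Next we compute the first variation of $\frac{1}{2}\int_{P}u_{t}^{2}$. By the standard envelope formula from optimal transport / Monge--Ampère theory (as in Berman--Berndtsson or Figalli), for a.e.\ $\xi$ we have
\[
\frac{\mathrm{d}}{\mathrm{d}t}\Big|_{t=0}u_{t}(\xi)=-\phi(\nabla u(\xi)).
\]
Therefore
\[
\frac{\mathrm{d}}{\mathrm{d}t}\Big|_{t=0}\Psi(u_{t})=-\int_{P}u(\xi)\,\phi(\nabla u(\xi))\,\mathrm{d}\xi .
\]
Since $t$ can take either sign, minimality forces $\int_{P}u\,\phi(\nabla u)=0$ for every such $\phi$. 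Taking $\phi$ of the form $\phi(p)=\psi(p)\,g(p)$ and changing variables, this reads $\int u^{*}$-type pairings against $(\nabla u)_{\#}(u\,\mathrm{d}\xi)$. That push-forward sees only the absolutely continuous part of the gradient map, which is not obviously the quantity we want.

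So the real obstacle is to capture the Monge--Ampère measure $\mathrm{MA}(u)=|\partial u(\cdot)|$ rather than $\nabla u_{\#}\mathrm{d}\xi$. I would therefore follow Li--Zhou and use the dual variation instead: perturb $u$ by $u_{t}=\max\{u,\ \alpha-t\}$ for affine $\alpha$ touching $u$ from below on a section. Take the section $S=\{u<\alpha\}\Subset\mathrm{int}P$; such sections exist near any point of $K$ where $\mathrm{MA}(u)$ is positive, by Alexandrov's lemma. This competitor is convex and equals $u$ near $\partial P$. We then obtain
\[
\Psi(u_{t})-\Psi(u)=\frac{1}{2}\int_{\{u<\alpha-t\}}\big((\alpha-t)^{2}-u^{2}\big).
\]
Choosing $\alpha$ so that $\alpha<0$ on $S$, which is possible where $u<0$, i.e.\ where $\Theta>0$, this difference is negative and contradicts minimality.

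Where $u\ge0$, we instead lower $u$ on sections. Minimality then gives $u\,\mathrm{d}\xi$-balance conditions, and a positive Monge--Ampère mass allows a strictly convex dent that is cheaper. Precisely, replace $u$ on $S$ by the convex envelope of $u|_{\partial S}$ minus $\epsilon$ times a function with positive MA, and use the Alexandrov maximum principle to bound the change. I expect the delicate step to be showing the $L^{2}$ change is second order while the gain is first order. Vanishing of $\mathrm{MA}(u)$ on every compact $K\subset\mathrm{int}P$ then follows, because such $K$ is covered by finitely many sections.
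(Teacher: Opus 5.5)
Your cutting step is sound once it is phrased at exposed points rather than via ``Alexandrov's lemma''. Since $u^{*}$ is differentiable almost everywhere, almost every $p\in\partial u(K)$ has as contact set a single point $\xi_0\in K$. The sections $\{u<\ell_p+h\}$ then shrink to $\xi_0$, and if $u(\xi_0)<0$ the competitor $\max\{u,\ell_p+h\}$ strictly lowers $\Psi$. This proves $|\partial u(K)|=0$ for compact $K\subset\mathrm{int}P\cap\{\Theta_P>0\}$, and nothing more. You also correctly saw that the Legendre-side first variation only detects $(\nabla u)_{\#}(u\,\mathrm{d}\xi)$, so it cannot see singular Monge--Amp\`ere mass.

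The gap is the region $\{u\ge 0\}=\{\Theta_P\le 0\}$, and that is where the whole difficulty lies. There a raising competitor increases $\frac12\int_P u^2$, since $0\le u<\alpha$ on the section. So you need a convex competitor lying strictly below $u$ near $\xi_0$, and your ``dent'' does not supply one. On $S=\{u<\alpha\}$ one has $u=\alpha$ on $\partial S$, so the convex envelope of $u|_{\partial S}$ is $\alpha$ itself, which lies above $u$ in $S$. Adding a small multiple of a function with positive Monge--Amp\`ere measure does not produce a convex function that is $\le u$ and equals $u$ off $S$.

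Worse, such localized lowerings need not exist in exactly the singular situation you must exclude. Suppose that near $\xi_0$ one has $u(\xi)=u(\xi_0)+\max_i\langle p_i,\xi-\xi_0\rangle$ with $\mathrm{conv}\{p_i\}$ full-dimensional, i.e.\ a Dirac mass of the Monge--Amp\`ere measure at $\xi_0$. Let $\tilde u\le u$ be convex and agree with $u$ outside a compact subset of this conical neighbourhood. Restricting to lines through $\xi_0$ and using supporting lines at points where $\tilde u=u$ gives $\tilde u\ge u$, hence $\tilde u=u$ there. So any lowering near such a point is non-local and in general changes the boundary values, and your interior sign argument breaks down. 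The issue is the existence of the competitor, not whether the $L^2$ error is of second order, and the Alexandrov maximum principle does not address it. Points with $u(\xi_0)=0$, where neither raising nor lowering has a definite sign, are not treated at all.

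As written, the proposal establishes the statement only on $\{\Theta_P>0\}$. Extending it to all of $\mathrm{int}P$ needs a genuinely global ingredient. The paper itself gives no argument here; it defers entirely to Li--Zhou's Proposition 3.2.
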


\section{\label{subsec:  min Qk proof} Connect shortest GKZ vector with optimal
test-function}

In this section, we address the main question: the maximal Chow-destabilizers
quantize the maximal K-destabilizers. For toric varieties, we have
shown that it can be reduced to the relation between $\Theta_{P}$
and the shortest GKZ vectors $\sigma_{k}$ for $(P,P_{k})$. First
we introduce a functional whose minimizer is given by $\sigma_{k}$,
meanwhile, it converges to the functional $\Psi$. 

\subsection{\texorpdfstring{Quantum functional $\Psi_{k}$}{Quantum functional Psi\_k}}

Consider the expansion (\ref{eq:EM}) of Riemann sums, we introduce
a functional 
\[
\Psi_{k}(\eta)\coloneqq\frac{1}{2k^{n}}\sum_{u\in P_{k}}\eta(u)^{2}+\frac{2}{k^{n-1}}\left(\sum_{u\in P_{k}}\eta(u)-k^{n}\int_{P}\eta\ \mathrm{d}x\right),\ \eta\in\mathcal{C}(P),
\]
which pointwisely converges to $\Psi$.

\begin{thm}
\label{thm: min Psi_k} Functional $\Psi_{k}$ admits a unique minimizer
\begin{equation}
\psi_{k}\coloneqq2V_{P}k^{n+1}\sigma^{\vee}_{k}-2k.\label{eq:quan minimizer}
\end{equation}
\end{thm}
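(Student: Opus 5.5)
The plan is to reduce $\Psi_{k}$ from $\mathcal{C}(P)$ to a finite-dimensional problem on $\bbr^{P_{k}}$, and then apply Remark \ref{rem-quadmin} to the secondary polytope $\Sigma(P_{k})$. Throughout, $N_{k}=\left|P_{k}\right|$ and $\mathbf{1}\in\bbr^{P_{k}}$ is the constant function $1$.

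\textbf{Step 1 (reduction to $\bbr^{P_{k}}$).} $\Psi_{k}(\eta)$ depends on $\eta$ only through $f\coloneqq\eta|_{P_{k}}$ and $\int_{P}\eta$, and the integral enters with a negative sign. For $\eta\in\mathcal{C}(P)$ with $\eta|_{P_{k}}=f$, Proposition \ref{prop:FS vs SN} (1) gives $\eta\le(\eta|_{P_{k}})^{\vee}=f^{\vee}$ on $P$. Hence $\int_{P}\eta\le\int_{P}f^{\vee}$, with equality iff $\eta=f^{\vee}$, since both are continuous. Define on $\bbr^{P_{k}}$
\[
G(f)\coloneqq\frac{1}{2k^{n}}\left\Vert f\right\Vert ^{2}+\frac{2}{k^{n-1}}\Big(\langle f,\mathbf{1}\rangle-k^{n}\int_{P}f^{\vee}\,\mathrm{d}x\Big).
\]
Then $\Psi_{k}(\eta)\ge G(\eta|_{P_{k}})$, with equality iff $\eta=(\eta|_{P_{k}})^{\vee}$. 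Moreover, if $f$ is convex in the sense $f^{\vee}|_{P_{k}}=f$, then $\Psi_{k}(f^{\vee})=G(f)$.

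\textbf{Step 2 (support-function form).} By (\ref{eq: Int tent formula}), $\int_{P}f^{\vee}=V_{P}\min_{g\in\Sigma(P_{k})}\langle f,g\rangle$. Writing $\frac{2}{k^{n-1}}\langle f,\mathbf{1}\rangle=2kV_{P}\langle f,(k^{n}V_{P})^{-1}\mathbf{1}\rangle$, we get
\[
G(f)=2kV_{P}\Big[-\min_{h\in K}\langle f,h\rangle+\frac{1}{4k^{n+1}V_{P}}\left\Vert f\right\Vert ^{2}\Big],\qquad K\coloneqq\Sigma(P_{k})-(k^{n}V_{P})^{-1}\mathbf{1}.
\]
This is exactly the functional of Remark \ref{rem-quadmin} with $\lambda=2k^{n+1}V_{P}$. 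I expect one point to need care: the remark is stated under $0\notin K$, which may fail here. I would re-check that its proof goes through in general. The functional is strictly convex and coercive, so it has a unique minimizer. The variational characterization of the nearest point, Lemma \ref{lem: min-max} (1), shows that this minimizer is $\lambda x^{*}$, where $x^{*}$ is the least-norm point of $K$; this argument does not use $0\notin K$. If $0\in K$, then $x^{*}=0$ and the minimizer is $0$, which is consistent with the general formula.

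\textbf{Step 3 (identify $x^{*}$).} By (\ref{eq:position of Sigma_A}), every $g\in\Sigma(P_{k})$ satisfies $\langle g,\mathbf{1}\rangle=1$. Hence $\left\Vert g-c\mathbf{1}\right\Vert ^{2}=\left\Vert g\right\Vert ^{2}-2c+c^{2}N_{k}$, so the least-norm point of $K$ is $x^{*}=\sigma_{k}-(k^{n}V_{P})^{-1}\mathbf{1}$. Therefore the unique minimizer of $G$ is $f^{*}=2V_{P}k^{n+1}\sigma_{k}-2k$.

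\textbf{Step 4 (back to $\mathcal{C}(P)$ and uniqueness).} Theorem \ref{thm: convex of mGKZ} gives $\sigma_{k}^{\vee}|_{P_{k}}=\sigma_{k}$. Convex envelope commutes with positive scaling and with adding constants, so $(f^{*})^{\vee}=2V_{P}k^{n+1}\sigma_{k}^{\vee}-2k=\psi_{k}$ and $\psi_{k}|_{P_{k}}=f^{*}$. Hence $\Psi_{k}(\psi_{k})=G(f^{*})\le G(\eta|_{P_{k}})\le\Psi_{k}(\eta)$ for every $\eta\in\mathcal{C}(P)$, so $\psi_{k}$ is a minimizer.

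For uniqueness, let $\eta$ be any minimizer. Then both inequalities are equalities. Uniqueness of the minimizer of $G$ forces $\eta|_{P_{k}}=f^{*}$, and the equality case of Step 1 forces $\eta=(f^{*})^{\vee}=\psi_{k}$.
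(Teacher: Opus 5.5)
Your proof is correct and follows essentially the paper's route. You reduce $\Psi_k$ to the finite-dimensional functional on $\bbr^{P_k}$ (your $G$ is the paper's $\breve{M}_{P_k}$) via $\eta\le(\eta|_{P_k})^{\vee}$, identify its unique minimizer through the support-function formula for $\Sigma(P_k)$ and its least-norm point, and return to $\mathcal{C}(P)$ using the convexity of $\sigma_k$. The only difference is packaging: you invoke Remark~\ref{rem-quadmin} for the shifted set $\Sigma(P_k)-(V_Pk^n)^{-1}$, rightly checking that it does not need $0\notin K$, whereas the paper writes out the Cauchy--Schwarz/AM--GM lower bound with $\sigma_k$ itself and verifies equality at $f^*$ using $\int_P\sigma_k^{\vee}=V_P\Vert\sigma_k\Vert^2$, which sidesteps that hypothesis.
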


\begin{proof}
\textbf{Step 1}. Reduce the minimization problem 
\begin{equation}
\inf\left\{ \Psi_{k}(\eta)\mid\eta\in\mathcal{C}(P)\right\} \label{eq: min Psi_k}
\end{equation}
to (\ref{eq: minimize Mk horn}). For any $\eta\in\mathcal{C}(P)$,
let $\eta_{k}=\left(\eta|_{P_{k}}\right)^{\vee}$. By Proposition
\ref{prop:FS vs SN}, we have $\eta_{k}\ge\eta$ and $\eta_{k}|_{P_{k}}=\eta|_{P_{k}}$,
these imply $\Psi_{k}(\eta_{k})\leq\Psi_{k}(\eta)$ with equality
only if $\eta_{k}=\eta$. Hence problem (\ref{eq: min Psi_k}) can
be reduced to 
\begin{equation}
\inf\left\{ \Psi_{k}(\eta)\mid\eta\in\mathrm{PL}(P,P_{k})\right\} ,\label{eq: inf Q horn PL}
\end{equation}
recall $\mathrm{PL}(P,P_{k})$ is the image of envelope map $\vee:\bbr^{P_{k}}\ra\mathcal{C}(P)$. 

By Proposition \ref{prop:FS vs SN} (3), $\vee:\mathcal{C}(P_{k})\ra\mathrm{PL}(P,P_{k})$
is a bijection. Hence we can change the domain of (\ref{eq: inf Q horn PL})
to $\mathcal{C}(P_{k})$. Since for any $f\in\mathcal{C}(P_{k})$,
we have $f^{\vee}|_{P_{k}}=f$, it follows that 
\[
\Psi_{k}\left(f^{\vee}\right)=\frac{1}{2k^{n}}\sum_{u\in P_{k}}f(u)^{2}+\frac{2}{k^{n-1}}\left(\sum_{u\in P_{k}}f(u)-k^{n}\int_{P}f^{\vee}\right)\eqqcolon\breve{M}_{P_{k}}(f).
\]
By changing the domain, problem (\ref{eq: inf Q horn PL}) is equivalent
to 
\begin{equation}
\inf\left\{ \breve{M}_{P_{k}}(f)\mid f\in\mathcal{C}(P_{k})\right\} .\label{eq: minimize Mk horn}
\end{equation}

\textbf{Step 2}. We claim that 
\[
f^{*}\coloneqq2V_{P}k^{n+1}\left(\sigma_{k}-\frac{1}{V_{P}k^{n}}\right)
\]
is actually the unique minimizer of $\breve{M}_{P_{k}}$ over larger
space $\bbr^{P_{k}}$. The uniqueness is obvious, since $\breve{M}_{P_{k}}$
is strictly convex. 

Once this claim holds, since $f^{*}\in\mathcal{C}(P_{k})$ by Theorem
\ref{thm: convex of mGKZ} and Proposition \ref{prop:FS vs SN} (2),
it is also the unique minimizer of (\ref{eq: minimize Mk horn}).
Then trace back the reductions in Step 1, we see $\left(f^{*}\right)^{\vee}$
is the unique minimizer of $\Psi_{k}$, that is (\ref{eq:quan minimizer}). 

Next we verify the above claim. For any $f\in\bbr^{P_{k}}$, by (\ref{eq: Int tent formula}),
we have 
\[
\frac{1}{V_{P}}\int_{P}f^{\vee}=\min_{g\in\Sigma(P_{k})}\left\langle f,g\right\rangle \le\left\langle f,\sigma_{k}\right\rangle =\left\langle f,\sigma_{k}-\frac{1}{V_{P}k^{n}}\right\rangle +\frac{1}{V_{P}k^{n}}\sum_{u\in P_{k}}f(u),
\]
\[
\left\langle f,\sigma_{k}-\frac{1}{V_{P}k^{n}}\right\rangle \le\left\Vert f\right\Vert \left\Vert \sigma_{k}-\frac{1}{V_{P}k^{n}}\right\Vert \le\frac{1}{4V_{P}k^{n+1}}\left\Vert f\right\Vert ^{2}+V_{P}k^{n+1}\left\Vert \sigma_{k}-\frac{1}{V_{P}k^{n}}\right\Vert ^{2}.
\]
Combine these inequalities together, we obtain 
\[
\breve{M}_{P_{k}}(f)\geq-2V^{2}_{P}k^{n+2}\left\Vert \sigma_{k}-\frac{1}{V_{P}k^{n}}\right\Vert ^{2}.
\]
Next we check that this lower bound is achieved by $f^{*}$. Denote
$f^{*}=a(\sigma_{k}-c)$ for short, where $a=2V_{P}k^{n+1}$ and $c=\frac{1}{V_{P}k^{n}}$.
Using identities 
\[
\sum_{u\in P_{k}}\sigma_{k}(u)=1\text{ and }\int_{P}\sigma^{\vee}_{k}=V_{P}\left\Vert \sigma_{k}\right\Vert ^{2},
\]
we compute 
\begin{align*}
\sum_{u\in P_{k}}f^{*}(u)-k^{n}\int_{P}\left(f^{*}\right)^{\vee} & =a\left(2-c\left|P_{k}\right|-c^{-1}\left\Vert \sigma_{k}\right\Vert ^{2}\right)=-\frac{a}{c}\left\Vert \sigma_{k}-c\right\Vert ^{2}.
\end{align*}
Hence 
\[
\breve{M}_{P_{k}}(f^{*})=\frac{a^{2}}{2k^{n}}\left\Vert \sigma_{k}-c\right\Vert ^{2}-\frac{2}{k^{n-1}}\frac{a}{c}\left\Vert \sigma_{k}-c\right\Vert ^{2}=-2V^{2}_{P}k^{n+2}\left\Vert \sigma_{k}-c\right\Vert ^{2}.
\]
\end{proof}

\begin{rem}
The toric version of general functional $\breve{\mathbb{Q}}_{k}$
in \cite{yao_maximal_2025} is  
\[
\frac{1}{2k^{n}}\sum_{u\in P_{k}}\eta(u)^{2}+2kV_{P}\left(\frac{1}{\left|P_{k}\right|}\sum_{u\in P_{k}}\eta(u)-\fint_{P}\eta\right),
\]
which pointwisely converges to $\breve{\mathbb{M}}_{P}(\eta)$ (\ref{eq: horn M}).
However, it equals to 
\[
\Psi_{k}\left(\eta-\frac{s_{k}}{r_{k}}\right)+\frac{V_{P}s^{2}_{k}}{2r_{k}},\ r_{k}=\frac{\left|P_{k}\right|}{V_{P}k^{n}}\rightarrow1,\ s_{k}=2k(r_{k}-1)\rightarrow\bar{S}.
\]
Hence for fixed $k$, minimizing the above functional amounts to minimizing
$\Psi_{k}$. Indeed, by using Remark \ref{rem-quadmin} and the proof of Theorem \ref{thm: Chow destabi}, we know that $\breve{\mathbb{Q}}_{k}$ obtains the minimum at $\eta_k=2k^{n+1}V_P(\sigma_k^\vee-\frac{1}{|P_k|})=2k^{n+1}V_P\sigma_k^\vee-\frac{2k}{r_k}$ and one sees that $\eta_k-\frac{s_k}{r_k}=\psi_k$ from \eqref{eq:quan minimizer}. 
\end{rem}

The following task is to show the minimizer of $\Psi_{k}$ converges
to that of $\Psi$ under a certain topology. This is a typical variational
problem involving $\Gamma$-convergence. 

\subsection{\label{subsec:Estimate-the-integrals}Uniform bounds for integrals
via lattice sums}

This section is devoted to the key estimates in the proof of minimizer
convergence. 
\begin{prop}[Rough estimate]
Given a lattice polytope $P$, there exists a constant $C_{P}>0$
(depending on $P$) such that for any $k\ge1$, there exists a triangulation
$T$ of $(P,P_{k})$ with GKZ vector $g_{T}\le C_{P}k^{-n}$. Then
for any convex function $\phi\ge0$ on $P$, we have 
\begin{equation}
\fint_{P}\phi\mathrm{d}x\le\frac{C_{P}}{k^{n}}\sum_{u\in P_{k}}\phi(u).\label{eq:rough bound}
\end{equation}
\end{prop}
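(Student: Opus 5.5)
Fix once and for all a triangulation $T_{0}$ of the lattice polytope $(P,\mathrm{vert}(P))$, or of $(P,P\cap M)$; its simplices $\Delta_{1},\dots,\Delta_{m}$ are lattice simplices of dimension $n$. For each lattice simplex $\Delta=\mathrm{conv}(v_{0},\dots,v_{n})$ and each $k\ge1$, I will produce a triangulation of $(k\Delta)\cap M$ (equivalently of $(\Delta,\Delta\cap k^{-1}M)$) whose simplices all have normalized volume bounded by a constant depending only on $\Delta$. The natural choice is to pull back, via the affine map sending the standard simplex $\Delta^{n}=\mathrm{conv}(0,e_{1},\dots,e_{n})$ to $\Delta$, the standard dilation triangulation of $k\Delta^{n}$. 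This can be taken to be the restriction of the Freudenthal/Kuhn triangulation of the cube lattice by the hyperplanes $x_{i}=c$ and $x_{i}-x_{j}=c$, which triangulates $k\Delta^{n}$ into $k^{n}$ unimodular simplices. Under the affine map $A$, these become simplices of volume $|\Delta|k^{-n}$, with vertices in $A(k^{-1}\mathbb{Z}^{n})\subset\Delta\cap k^{-1}M$. They need not use all points of $\Delta\cap k^{-1}M$, since $A(\mathbb{Z}^{n})$ may be a sublattice of $M$, but a triangulation is not required to use all marked points. Each simplex then has volume $|\Delta|k^{-n}$.

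Next I glue the pieces together. Adjacent simplices $\Delta_{i},\Delta_{j}$ of $T_{0}$ share a face, and I need the induced triangulations to agree there. For the Kuhn triangulation, the restriction to a face of $k\Delta^{n}$ is the Kuhn triangulation of that face, but this depends on the ordering of vertices. So I will fix a global total order on the vertices of $T_{0}$ and build each $k\Delta_{i}$'s Kuhn triangulation using the induced vertex order; this is the standard staircase construction. The restrictions to common faces then coincide. The result is a triangulation $T$ of $(P,P_{k})$ in which every simplex has volume at most $\max_{i}|\Delta_{i}|\,k^{-n}$. The main obstacle is exactly this compatibility; if I prefer to avoid checking the staircase details, an alternative is to cite the existence of compatible dilation triangulations from \cite{de_loera_triangulations_2010}.

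Now I bound the GKZ vector. Each point $u$ lies in at most $D$ simplices of $T$, where $D$ is bounded by $m$ times the maximal number of Kuhn simplices meeting at a vertex, which is $(n+1)!$ per simplex. By (\ref{eq:def GKZ vec}),
\[
g_{T}(u)\le\frac{D\max_{i}|\Delta_{i}|\,k^{-n}}{(n+1)V_{P}}\eqqcolon C_{P}k^{-n}.
\]

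Finally, for a convex $\phi\ge0$, I use Proposition \ref{prop: T integral formula}(1) together with $\phi\le\varLambda_{T}(\phi|_{P_{k}})$, which holds by convexity on each simplex. This gives
\[
\fint_{P}\phi\le\fint_{P}\varLambda_{T}(\phi|_{P_{k}})=\langle\phi|_{P_{k}},g_{T}\rangle\le C_{P}k^{-n}\sum_{u\in P_{k}}\phi(u),
\]
where the last step uses $\phi\ge0$. This is (\ref{eq:rough bound}).
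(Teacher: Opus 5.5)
Your proof is correct and is essentially the paper's argument: triangulate $P$ into lattice simplices, pull back a Freudenthal--Kuhn triangulation of the $k$-th dilate of each, bound the number of fine simplices through any point to get $g_T\le C_Pk^{-n}$, and conclude via Jensen's inequality and Proposition~\ref{prop: T integral formula}(1), with your global vertex order supplying the compatibility on shared faces that the paper leaves implicit. One cosmetic fix: the arrangement $x_i=c$, $x_i-x_j=c$ triangulates the (unimodularly equivalent) Kuhn simplex $\{k\ge x_1\ge\cdots\ge x_n\ge0\}$ rather than $k\,\mathrm{conv}(0,e_1,\dots,e_n)$ in standard coordinates, so use the Kuhn simplex as the source of your affine map.
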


\begin{proof}
First we triangulate $P$ using diagonals. For each resulting simplex
$\Delta$, take an affine bijection from the standard simplex $\Delta_{n}=\mathrm{conv}\{0,e_{1},\dots,e_{n}\}$
to $\Delta$. The Freudenthal--Kuhn triangulation of $\Delta_{n}$
uses the points in $k^{-1}\mathbb{Z}^{n}$ and consists of $k^{n}$
simplices. Via the affine map, it induces a triangulation $T$ of
$(P,P_{k})$, see Figure \ref{fig:Kuhn}. The number of simplices
adjacent to a point $u\in P_{k}$ is bounded above by a constant which
is independent of $k$. This implies $g_{T}\le C_{P}k^{-n}$ for a
constant $C_{P}$. 
\begin{figure}[H]
\begin{centering}
\includegraphics[clip,scale=0.2]{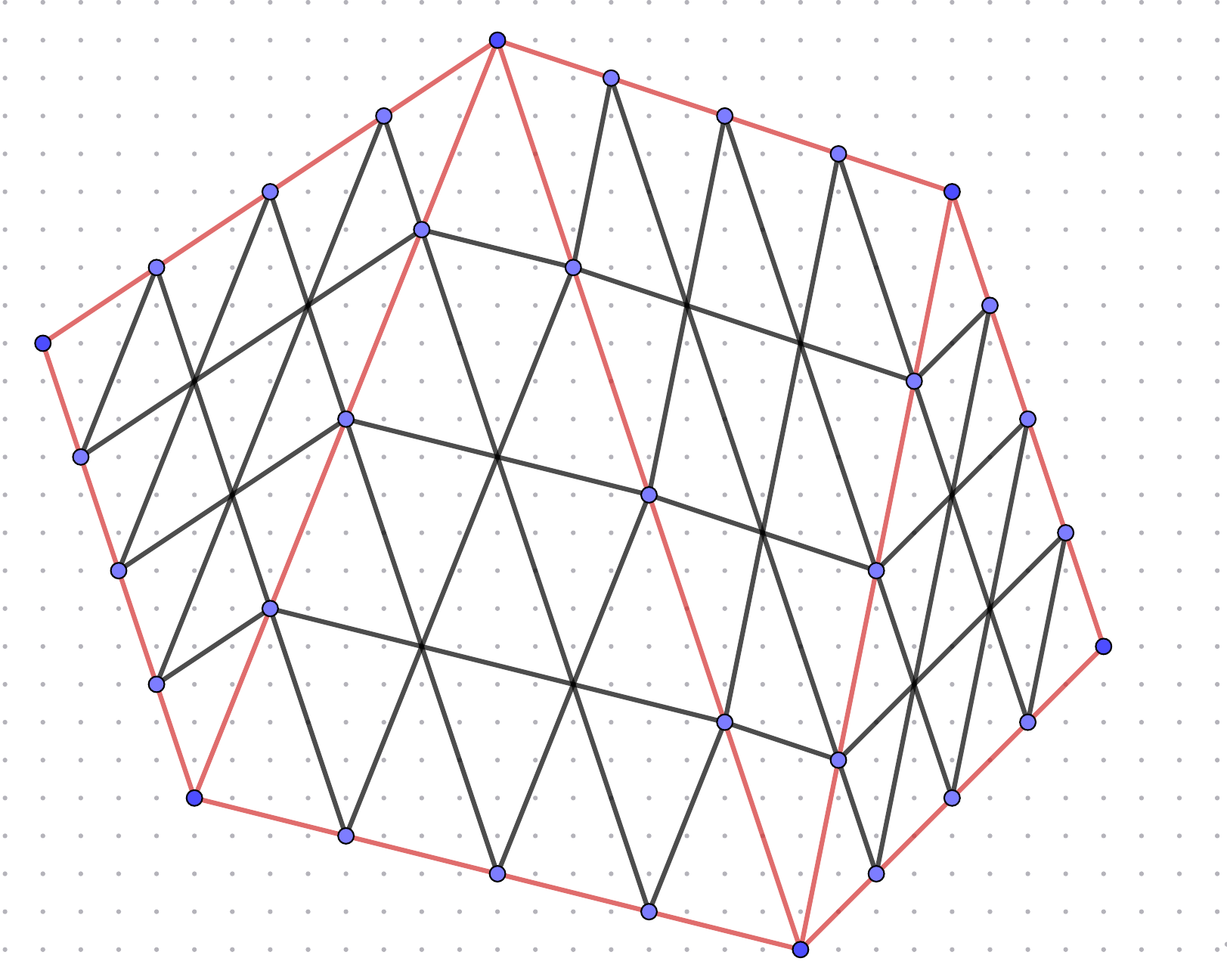}
\par\end{centering}
\caption{\label{fig:Kuhn}Freudenthal--Kuhn triangulation, $k=4$}
\end{figure}

Finally, by (\ref{eq: Int tent formula}), we have 
\[
\fint_{P}\phi\le\fint_{P}\left(\phi|_{P_{k}}\right)^{\vee}\le\sum_{u\in P_{k}}g_{T}(u)\phi(u)\le\frac{C_{P}}{k^{n}}\sum_{u\in P_{k}}\phi(u).
\]
\end{proof}

When $P$ is a Delzant polytope, we derive a sharper bound than (\ref{eq:rough bound}).
Let $\lam:P\ra\Delta_{P_{k}}$ be a coefficient field for $(P,P_{k})$,
which is defined in §\ref{subsec: coefficient field}. For any $\phi\in\mathcal{C}(P)$,
Jensen inequality gives 
\[
\phi(x)\le\sum_{u\in P_{k}}\lam_{u}(x)\phi(u),\ \forall x\in P.
\]
Integrating over $P$, we obtain an upper bound in the form of a lattice
sum, 
\begin{equation}
\int_{P}\phi\mathrm{d}x\le\sum_{u\in P_{k}}\left(\int_{P}\lam_{u}(x)\mathrm{d}x\right)\phi(u).\label{eq:lam give bound}
\end{equation}

We will derive a sharp bound by selecting a special $\lam$, which is based on the following model.
\begin{example}
\label{exa:model lambda} Consider an $n$-dimensional lattice rectangular box $Q=\prod^{n}_{i=1}[0,m_{i}]\subset\mathbb{R}^{n}$
with $m_{i}\in\mathbb{Z}_{>0}$. There is a canonical division of
$Q$ into $\left(\prod^{n}_{i=1}m_{i}\right)$ number of $n$-dimensional
unit lattice square boxes. For $x=(x_{1},\dots,x_{n})\in Q$ belongs
to box $\prod^{n}_{i=1}[j_{i},j_{i}+1]$ with $j_{i}\in\{0,1,\dots,m_{i}-1\}$,
we have 
\[
x_{i}=\left(1-(x_{i}-j_{i})\right)\cdot j_{i}+(x_{i}-j_{i})(j_{i}+1)=\sum^{m_{i}}_{j=0}(1-|x_{i}-j|)_{+}\cdot j,
\]
where $t_{+}=\max\{t,0\}$. Define a coefficient field on $Q$: for
any $u\in Q\cap\mathbb{Z}^{n}$, set 
\[
\lambda_{u}(x)=\prod^{n}_{i=1}(1-|x_{i}-u_{i}|)_{+}.
\]
Then for $x\in Q$, the following equalities are satisfied: 
\begin{eqnarray*}
\sum_{u\in Q\cap\mathbb{Z}^{n}}\lambda_{u}(x) & = & \prod^{n}_{i=1}\sum^{m_{i}}_{u_{i}=0}(1-|x_{i}-u_{i}|)_{+}=1,\\
\sum_{u\in Q\cap\mathbb{Z}^{n}}\lambda_{u}(x)u_{k} & = & \sum^{m_{k}}_{u_{k}=0}(1-|x_{k}-u_{k}|)_{+}u_{k}\cdot\prod_{i\neq k}\sum^{m_{i}}_{u_{i}=0}(1-|x_{i}-u_{i}|)_{+}\\
 & = & x_{k}.
\end{eqnarray*}
It is easy to see that for each $x\in Q$, $\lambda_{u}(x)>0$ only
when $d_{\infty}(u,x)<1$. In other words, the support of $\lambda(x)$
is contained in a single unit square. Moreover, due to the separation
of variables, we easily calculate: 
\[
\int_{Q}\lambda_{u}(x)\mathrm{d}x=2^{-\mathrm{cd}(u)},
\]
where $\mathrm{cd}(u)$ is the the number of facets of $Q$ that contain
$u$. For any $\phi\in\mathcal{C}(Q)$, (\ref{eq:lam give bound})
gives the estimate: 
\[
\int_{Q}\phi\mathrm{d}x\le\sum_{u\in Q\cap\mathbb{Z}^{n}}2^{-\mathrm{cd}(u)}\phi(u).
\]
\end{example}

The following
result is a simplification of \cite[Corollary 2.16, 2.18]{lee_asymptotic_2026}. 
The basic idea of proof is that,
up to rescaling, near each vertex of the Delzant polytope, we can use uni-modular linear transformation to reduce the estimates to the model case as in the above example. Then we will use partition of unity to cover the whole polytope using open subsets associated to all vertices and finally sum up the estimates.  
\begin{thm}[Fine estimate]\label{thm-fine}
For a Delzant lattice polytope $P$, there exists constant $C_{1},C_{2},C_{b}>0$
and $k_{0}$ (all only depend on $P$) such that for any $0\le\phi\in\mathcal{C}(P)$
and $k>k_{0}$, we have 
\begin{equation}
k^{n}\int_{P}\phi\le\sum_{u\in P_{k}}2^{-\mathrm{cd}(u)}\phi(u)+\frac{C_{1}}{k}\sum_{u\in P_{k}\cap\partial P}\phi(u)+\frac{C_{2}}{k^{2}}\sum_{u\in P_{k}}\phi(u),\label{eq:key estimate}
\end{equation}
where $\mathrm{cd}(u)$ is the number of facets containing $u$. Apply it to each facet, we obtain 
\begin{equation}
k^{n-1}\int_{\partial P}\phi\mathrm{d}\sigma\le\left(1+\frac{C_{b}}{k}\right)\sum_{u\in P_{k}\cap\partial P}\phi(u).\label{eq: b key estimate}
\end{equation}
\end{thm}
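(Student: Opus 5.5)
The plan is to build a single coefficient field $\lam$ on $(P,P_{k})$ that looks like the box model of Example \ref{exa:model lambda} near each vertex, and then apply the Jensen bound (\ref{eq:lam give bound}). Since $\lam$ is only required to be measurable, no partition of unity on the functions is needed: we simply partition $P$ itself into measurable pieces and use a different local field on each piece.

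\textbf{Step 1: vertex charts.} By the Delzant condition, at each vertex $v$ of $P$ there is a unimodular affine map $A_{v}\in\mathrm{GL}_{n}(\bbz)\ltimes\bbz^{n}$ with $A_{v}(v)=0$ and $A_{v}(P)\cap U=\bbr^{n}_{\ge0}\cap U$ for a neighbourhood $U$ of $0$. Here $U$ can be taken to be a large cube $[0,R]^{n}$ with $R$ depending only on $P$, after shrinking to fit inside $A_{v}(P)$ near the vertex. Rescale by $k$ and take boxes $Q=[0,m]^{n}$ with $m\approx ck$. On $A_{v}^{-1}(k^{-1}Q)$ we use the pulled-back box field. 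Unimodularity means $k^{-1}\bbz^{n}$ corresponds to $P_{k}$ and Lebesgue measure is preserved.

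\textbf{Step 2: cover $P$ by such pieces.} The vertex cubes do not cover $P$: regions near a face but away from all vertices remain, and so does the interior. I would handle these as follows.
\begin{itemize}
\item For a point near the relative interior of a face $F$ of codimension $j$, the Delzant condition again gives a local unimodular chart $\bbr^{j}_{\ge0}\times\bbr^{n-j}$. In this chart, use the product field: tent functions in all directions, with the half-tent at the boundary coordinates. This is exactly the box formula. It should work uniformly: take a global lattice-cube tiling in each chart and assign each point of $P$ to one chart, say the chart of the lowest-dimensional face whose $\delta$-neighbourhood contains it.
\item On the interior, use the standard cube tent field on $k^{-1}\bbz^{n}$ cubes contained in $P$.
\end{itemize}
Define $\lam(x)$ as the field of whichever piece $x$ lies in. Then $\int_{P}\lam_{u}$ equals the sum over pieces of the partial integrals of each local tent.

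\textbf{Step 3: weight bound.} For $u$ deep inside a single piece, $\int\lam_{u}=k^{-n}2^{-\mathrm{cd}(u)}$ exactly, by Example \ref{exa:model lambda}. The error comes only from lattice points whose tent support meets a seam between pieces. A tent from one chart truncated by a seam contributes at most $O(k^{-n})$ to each such $u$. The danger is that two charts both assign mass near $u$, so the total exceeds $2^{-\mathrm{cd}(u)}k^{-n}$.

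I expect this to be the main obstacle, and the fix I would try first is to make the seams compatible. Seams between charts with the same lattice, meaning unimodular transforms of $\bbz^{n}$, can be placed along hyperplanes of the cube tiling in both charts wherever possible. If two charts share a common cube tiling, the seam is invisible and contributes no error.
\begin{itemize}
\item Where the tilings disagree (say $\mathrm{GL}_{n}(\bbz)$ images of cubes along interior seams), the number of affected lattice points is $O(k^{n-1})$.
\item Interior such points can be absorbed into $C_{2}k^{-2}\sum\phi(u)$ only if that sum dominates. This is not clear, since $\phi$ could concentrate there.
\item Better: place every inter-chart seam on a neighbourhood of $\partial P$, where excess weight is charged to $C_{1}k^{-1}\sum_{\partial P}\phi$. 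Away from the boundary, use a single chart. Near a face, use the face chart. Compatibility between a face chart and the interior cube chart is arranged by choosing the interior tiling to be the face chart's tiling near that face, and interpolating through a measurable (non-continuous) switch.
\end{itemize}
Any excess at a non-boundary lattice point $u$ adjacent to the switch must then be shifted using convexity. Along a segment from $u$ to its boundary projection, $\phi(u)$ is controlled by boundary values plus nearby interior values. This is how the $k^{-1}\sum_{\partial P}$ and $k^{-2}\sum$ terms arise, and I expect it to be the most delicate bookkeeping in the proof.

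\textbf{Step 4: the facet estimate.} Finally, (\ref{eq: b key estimate}) follows by applying (\ref{eq:key estimate}) in dimension $n-1$ to each facet $F$, which is itself a Delzant lattice polytope with normalized measure $\mathrm{d}\sigma$. On $F$, the weights $2^{-\mathrm{cd}_{F}(u)}\le1$ are bounded, and the $C_{1}k^{-1}$ and $C_{2}k^{-2}$ terms are $O(k^{-1})\sum_{F\cap P_{k}}\phi$. Summing over facets, a boundary point lies on at most $\mathrm{cd}(u)$ facets. The weights $2^{-\mathrm{cd}_{F}(u)}$ with $\mathrm{cd}_{F}(u)=\mathrm{cd}(u)-1$ sum to $\mathrm{cd}(u)2^{1-\mathrm{cd}(u)}\le1$, which gives $(1+C_{b}/k)\sum_{P_{k}\cap\partial P}\phi$.
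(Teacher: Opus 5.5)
There is a genuine gap, and it is in Step 3. That step is where the estimate has to be proved, and it is never carried out. The seam errors it would have to control are not small.

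If you switch measurably between two tent fields built on different unimodular bases, then at lattice points within $O(1/k)$ of the seam the weights $k^{n}\int_{P}\lambda_{u}$ generally differ from $2^{-\mathrm{cd}(u)}$ by amounts of order one. For example, in scaled coordinates on $\mathbb{Z}^{2}$, put the square tent field on $\{t_{1}<0\}$ and the tent field of the basis $(1,0),(1,1)$ on $\{t_{1}>0\}$. Then the columns $t_{1}=-1$ and $t_{1}=1$ receive weights $1+\frac{1}{24}$ and $1-\frac{1}{24}$. Your charts have $k$-independent size, so this happens at $\asymp k^{n-1}$ lattice points. By contrast, (\ref{eq:key estimate}) tolerates only $O(k^{-2})$ excess at interior points and $O(k^{-1})$ at boundary points.

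Neither remedy you propose closes this.
\begin{itemize}
\item Compatible seams do not exist in general. Take the Delzant polygon $\mathrm{conv}\{(0,0),(3,0),(1,1),(0,1)\}$. Any box-type field that is valid along the bottom edge must come from a basis $(1,0),(c,1)$. Near $(0,0)$ this forces $c=0$, and near $(3,0)$ it forces $c=-2$. So somewhere along that edge two fields with different $c$ meet. Two such tilings share only horizontal tile edges, so the seam between them inside the strip $0<y<1/k$ cannot consist of common tile facets.
\item Bounding $\phi(u)$ at a seam point by convexity along a segment to $b\in\partial P$ puts a coefficient of order one, not $O(1/k)$, on $\phi(b)$. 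That destroys the weight $\frac{1}{2}$ on facets, which is exactly what (\ref{eq:key usage}) needs (cf.\ Remark \ref{rem: fail nonDelzant}).
\end{itemize}

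The missing idea is that you should not avoid the partition of unity. The constraints $\lambda\ge0$, $\sum_{u}\lambda_{u}=1$, $\sum_{u}\lambda_{u}u=x$ are affine in $\lambda$, so coefficient fields can be \emph{convexly combined}. The paper sets
\[
\lambda_{u}(x)=\sum_{v}\rho_{v}(x)\,B_{\Gamma_{v}^{-1}u}(\Gamma_{v}^{-1}x),
\]
where:
\begin{itemize}
\item $B$ is the product tent field on the whole vertex cone $C_{v}$, with half tents in the boundary coordinates;
\item $\{\rho_{v}\}$ is a smooth, $k$-independent partition of unity with $\mathrm{supp}\,\rho_{v}\subset U_{v}=\{l_{F}>\varepsilon\ \forall F\not\ni v\}$.
\end{itemize}
On $U_{v}$ the cone field is valid once $k$ is large. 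So no face charts or interior chart are needed, and there are no seams at all. One then has $k^{n}\int_{P}\lambda_{u}=\sum_{v}\int\rho_{v}(\Gamma_{v}t)B_{\Gamma_{v}^{-1}u}(t)\,\mathrm{d}t$, and one Taylor-expands $\rho_{v}$ at $u$ over the $O(1/k)$-support of the tent:
\begin{itemize}
\item the zeroth-order term gives $\sum_{v}\rho_{v}(u)2^{-\mathrm{cd}(u)}=2^{-\mathrm{cd}(u)}$;
\item the first-order term vanishes by the symmetry of full tents, and survives as $O(1/k)$ only in half-tent directions, i.e.\ at boundary points;
\item the remainder is $O(k^{-2})$.
\end{itemize}
These are precisely the $C_{1}/k$ and $C_{2}/k^{2}$ terms.

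Your Step 4 (the facet estimate) is correct and matches the paper.
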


\begin{proof}
For a vertex $v$ of $P$, let $e^{1}_{v},\dots,e^{n}_{v}\in M$ be
the primitive generators of the edges emanating from $v$, which form
a lattice basis since $P$ is Delzant. Let $C_{v}$ be the vertex
cone at $v$, define a map 
\[
\Gamma_{v}:\mathbb{R}^{n}_{\ge0}\rightarrow C_{v},\ \Gamma_{v}(t)=v+\frac{t_{1}}{k}e^{1}_{v}+\cdots+\frac{t_{n}}{k}e^{n}_{v},
\]
which is bijective between $\mathbb{\mathbb{Z}}^{n}_{\ge0}$ and $C_{v}\cap k^{-1}M$.

First we define a coefficient field for $\left(\mathbb{R}^{n}_{\ge0},\mathbb{\mathbb{Z}}^{n}_{\ge0}\right)$,
then transport it to $P$ via the map $\Gamma_{v}$, and finally patch
them together. For $n=1$, let 
\[
b_{0}(t)=(1-t)_{+},\ b_{j}(t)=\left(1-\left|t-j\right|\right)_{+}\text{ for }j\ge1.
\]
Easy to check 
\[
\sum_{j\ge0}b_{j}(t)=1,\ \sum_{j\ge0}b_{j}(t)\cdot j=t,\ \forall t\ge0.
\]
We extend it to $\mathbb{R}^{n}_{\ge0}$ by taking the tensor product,
\[
B_{m}(t)\deff b_{m_{1}}(t_{1})\cdots b_{m_{n}}(t_{n}),\text{ for }m\in\mathbb{\mathbb{Z}}^{n}_{\ge0}\text{ and }t\in\mathbb{R}^{n}_{\ge0}.
\]
Then we have 
\begin{equation}
\sum_{m\in\mathbb{\mathbb{Z}}^{n}_{\ge0}}B_{m}(t)=1,\ \sum_{m\in\mathbb{\mathbb{Z}}^{n}_{\ge0}}B_{m}(t)\cdot m=t,\ \forall t\in\mathbb{R}^{n}_{\ge0}.\label{eq: field B}
\end{equation}
Next we patch 
\[
\left\{ B_{\Gamma^{-1}_{v}u}(\Gamma^{-1}_{v}\cdot):P\ra\bbr\mid u\in P_{k}\right\} _{v}
\]
 together via a partition of unity. Suppose 
\[
P=\left\{ x\in M_{\bbr}\mid l_{F}(x)\ge0,\text{ for all facet }F\right\} .
\]
Fix a small $\varepsilon>0$, for a vertex $v$ of $P$, set 
\[
U_{v}\coloneqq\left\{ x\in P\mid l_{F}(x)>\varepsilon,\ \forall F\not\ni v\right\} .
\]
Then $\left\{ U_{v}\right\} _{v}$ forms an open cover of $P$. Take
a partition of unity $\left\{ \rho_{v}:P\ra\bbr\right\} _{v}$ which
are smooth up to $\partial P$ and satisfy 
\[
\sum_{v}\rho_{v}=1,\ \mathrm{supp}\rho_{v}\subset U_{v}.
\]
For $x\in P$, let $F_{x}$ be the smallest face containing $x$.
Note that $\rho_{v}(x)=0$ if $v$ is not a vertex of $F_{x}$.

We define 
\[
\lam_{u}(x)=\sum_{v\in\mathrm{vert}P}\rho_{v}(x)B_{\Gamma^{-1}_{v}u}\left(\Gamma^{-1}_{v}x\right),\ \forall x\in P,\ u\in P_{k}.
\]
For any $x\in P$, we check 
\[
\sum_{u\in P_{k}}\lam_{u}(x)=\sum_{v\in\mathrm{vert}F_{x}}\rho_{v}(x)\sum_{u\in P_{k}}B_{\Gamma^{-1}_{v}u}\left(\Gamma^{-1}_{v}x\right)=\sum_{v\in\mathrm{vert}F_{x}}\rho_{v}(x)=1,
\]
where $\sum_{u\in P_{k}}B_{\Gamma^{-1}_{v}u}\left(\Gamma^{-1}_{v}x\right)=1$
holds since (\ref{eq: field B}), $x\in\mathrm{relint}F_{x}$ and
$k$ is sufficiently large (depending on $\varepsilon$). Similarly,
we have 
\[
\sum_{u\in P_{k}}\lam_{u}(x)u=x,\ \forall x\in P.
\]
Thus $\lam$ is a coefficient field for $(P,P_{k})$. It yields the
upper bound (\ref{eq:lam give bound}). Next we give an upper bound
for $\int_{P}\lam_{u}(x)$.

For a vertex $v$ and $u\in P_{k}$, changing the variables, we have
\begin{equation}
\int_{P}\rho_{v}(x)B_{\Gamma^{-1}_{v}u}\left(\Gamma^{-1}_{v}x\right)\mathrm{d}x=k^{-n}\int_{\mathbb{R}^{n}_{\ge0}}\rho_{v}\left(\Gamma_{v}t\right)B_{\Gamma^{-1}_{v}u}(t)\mathrm{d}t,\label{eq: int rho}
\end{equation}
where we extend $\rho_{v}$ by zero to the entire cone $C_{v}$. Note
the support of $B_{\Gamma^{-1}_{v}u}(t)$ is 
\[
\left\{ t\in\mathbb{R}^{n}_{\ge0}\mid\left|t-\Gamma^{-1}_{v}u\right|_{\infty}\le1\right\} .
\]
Consider the Taylor expansion of $\rho_{v}\left(\Gamma_{v}t\right)$
near the point $\Gamma^{-1}_{v}u$, we have 
\[
\rho_{v}\left(\Gamma_{v}t\right)=\rho_{v}(u)+\frac{1}{k}\sum^{n}_{i=1}D_{e^{i}_{v}}\rho_{v}(u)\left(t_{i}-\left(\Gamma^{-1}_{v}u\right)_{i}\right)+R^{u}_{v}(t),
\]
where $\left|R^{u}_{v}(t)\right|\le Ck^{-2}\left|t-\Gamma^{-1}_{v}u\right|^{2}_{2}$
and $C$ depends on the $C^{2}$-norm of $\rho_{v}$. Bring this expansion
back to (\ref{eq: int rho}). Via direct computation, we have 
\[
\int_{\mathbb{R}^{n}_{\ge0}}B_{\Gamma^{-1}_{v}u}(t)\mathrm{d}t=2^{-\mathrm{cd}(u)},
\]
\[
\int_{\mathbb{R}^{n}_{\ge0}}\left(t_{i}-\left(\Gamma^{-1}_{v}u\right)_{i}\right)B_{\Gamma^{-1}_{v}u}(t)\mathrm{d}t=\begin{cases}
2^{-\mathrm{cd}(u)}/3, & \left(\Gamma^{-1}_{v}u\right)_{i}=0,\\
0, & \left(\Gamma^{-1}_{v}u\right)_{i}>0.
\end{cases}
\]
Finally, we obtain upper bound, 
\[
k^{n}\int_{P}\lam_{u}(x)\le2^{-\mathrm{cd}(u)}+\frac{C_{1}}{k}I_{\partial P}(u)+\frac{C_{2}}{k^{2}},
\]
where $C_{i}$ depends on the $C^{i}$-norm of $\{\rho_{v}\}$, and
$I_{\partial P}(u)$ is the indicator function of $\partial P$. Apply
this bound to (\ref{eq:lam give bound}), (\ref{eq:key estimate})
follows. 

Apply (\ref{eq:key estimate}) to each facet $F$, we obtain 
\[
k^{n-1}\int_{F}\phi\le\sum_{u\in F_{k}}2^{1-\mathrm{cd}(u)}\phi(u)+\frac{C^{F}_{1}}{k}\sum_{u\in\partial F\cap k^{-1}M}\phi(u)+\frac{C^{F}_{2}}{k^{2}}\sum_{u\in F_{k}}\phi(u).
\]
For Delzant polytopes, a boundary lattice point $u$ belongs to $\mathrm{cd}(u)$
facets. Take the sum over $F$, we obtain 
\[
k^{n-1}\int_{\partial P}\phi\le\sum_{u\in P_{k}\cap\partial P}\mathrm{cd}(u)\cdot2^{1-\mathrm{cd}(u)}\phi(u)+\frac{C^{\prime}_{1}}{k}\sum_{u\in\partial^{>1}P\cap k^{-1}M}\phi(u)+\frac{C^{\prime}_{2}}{k^{2}}\sum_{u\in P_{k}\cap\partial P}\phi(u),
\]
where $\partial^{>1}P$ is the union of faces with codimension $>1$.
Since $\mathrm{cd}(u)\cdot2^{1-\mathrm{cd}(u)}\le1$, (\ref{eq: b key estimate})
follows. 
\end{proof}

\subsection{Control the normalized convex functions}

For $f,g:P_{k}\ra\bbr$ and $\eta,\xi:P\ra\bbr$, we define 
\begin{gather*}
\left\langle f,g\right\rangle _{k}\coloneqq\frac{1}{k^{n}}\sum_{u\in P_{k}}f(u)g(u),\ \left\Vert f\right\Vert ^{2}_{k}\coloneqq\left\langle f,f\right\rangle _{k},\\
\left\langle \eta,\xi\right\rangle _{k}\coloneqq\frac{1}{k^{n}}\sum_{u\in P_{k}}\eta(u)\xi(u),\ \left\Vert \eta\right\Vert ^{2}_{k}\coloneqq\left\langle \eta,\eta\right\rangle _{k}.
\end{gather*}
Let $\pi_{k}:\bbr^{P_{k}}\ra\mathcal{A}$ be the orthogonal projection
with respect to $\left\langle \cdot,\cdot\right\rangle _{k}$. For
$\eta:P\ra\bbr$, we write $\pi_{k}\eta\coloneqq\pi_{k}(\eta|_{P_{k}})$. 

Fix $x_{0}\in\mathrm{int}P$, we say a convex function $\phi:P\ra\bbr$
is normalized if 
\[
\phi\ge0\text{ and }\phi(x_{0})=0.
\]

\begin{prop}
Fix a point $x_{0}\in\mathrm{int}P$. There exists a constant $C_{\pi}(P,x_{0})>0$
such that 
\begin{equation}
\left\Vert \phi\right\Vert _{k}\le C_{\pi}\left\Vert \phi-\pi_{k}\phi\right\Vert _{k}\label{eq:projection control}
\end{equation}
holds for all normalized convex functions at $x_{0}$. 
\end{prop}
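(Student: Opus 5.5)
The plan is to argue by contradiction and compactness, using that $\pi_{k}$ is the $\langle\cdot,\cdot\rangle_{k}$-orthogonal projection onto $\mathcal{A}$. This gives $\left\Vert \phi-\pi_{k}\phi\right\Vert _{k}=\min_{\alpha\in\mathcal{A}}\left\Vert \phi-\alpha\right\Vert _{k}$ and $\left\Vert \pi_{k}\phi\right\Vert _{k}\le\left\Vert \phi\right\Vert _{k}$. Both sides of (\ref{eq:projection control}) are $1$-homogeneous, and the normalized convex functions form a cone. So suppose the estimate fails. Then there are $k_{j}$ and normalized $\phi_{j}$ with $\left\Vert \phi_{j}\right\Vert _{k_{j}}=1$ and $\left\Vert \phi_{j}-\alpha_{j}\right\Vert _{k_{j}}\to0$, where $\alpha_{j}\coloneqq\pi_{k_{j}}\phi_{j}$. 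The aim is to show $\left\Vert \alpha_{j}\right\Vert _{k_{j}}\to0$. This contradicts $1=\left\Vert \phi_{j}\right\Vert _{k_{j}}\le\left\Vert \phi_{j}-\alpha_{j}\right\Vert _{k_{j}}+\left\Vert \alpha_{j}\right\Vert _{k_{j}}$.

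\textbf{Step 1 (compactness of the affine parts).} On the finite-dimensional space $\mathcal{A}$, the norms $\left\Vert \cdot\right\Vert _{k}$ converge to the $L^{2}(P)$-norm, since Riemann sums of quadratic polynomials converge. Hence for $k\ge k_{0}$ they are uniformly equivalent to $\sup_{P}|\cdot|$. As $\left\Vert \alpha_{j}\right\Vert _{k_{j}}\le1$, the $\alpha_{j}$ are uniformly bounded, and after passing to a subsequence $\alpha_{j}\to\alpha$ uniformly on $P$.

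\textbf{Step 2 (compactness of the convex parts, case $k_{j}\to\infty$).} Apply the rough estimate (\ref{eq:rough bound}) to $\phi_{j}\ge0$, together with Cauchy--Schwarz and $|P_{k}|=\mathrm{O}(k^{n})$. This gives $\int_{P}\phi_{j}\le C\left\Vert \phi_{j}\right\Vert _{k_{j}}\le C$. A nonnegative convex function with bounded integral is uniformly bounded and uniformly Lipschitz on each compact $K\subset\mathrm{int}P$, with constants depending only on $K$ and $P$. By Arzel\`a--Ascoli, a further subsequence converges locally uniformly on $\mathrm{int}P$ to a convex $\phi\ge0$ with $\phi(x_{0})=0$.

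\textbf{Step 3 (identify the limits).} Fix $K\subset\mathrm{int}P$, say a finite union of closed cubes. Because the $\phi_{j}-\alpha_{j}$ are equi-Lipschitz on $K$ and converge uniformly there to $\phi-\alpha$, the Riemann sums give
\[
\int_{K}(\phi-\alpha)^{2}=\lim_{j}\frac{1}{k_{j}^{n}}\sum_{u\in P_{k_{j}}\cap K}(\phi_{j}-\alpha_{j})^{2}(u)\le\lim_{j}\left\Vert \phi_{j}-\alpha_{j}\right\Vert _{k_{j}}^{2}=0.
\]
Hence $\phi=\alpha$ on $\mathrm{int}P$. Then $\alpha$ is a nonnegative affine function vanishing at the interior point $x_{0}$, so $\alpha\equiv0$. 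Combined with Step 1 this gives $\left\Vert \alpha_{j}\right\Vert _{k_{j}}\to0$, which is the desired contradiction.

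\textbf{Main obstacle: bounded $k_{j}$.} Here the limiting argument above is unavailable. One can reduce to a fixed $k$ and try the same scheme in the finite-dimensional space $\bbr^{P_{k}}$. The difficulty is that the normalization $\phi(x_{0})=0$ is not seen by the values on $P_{k}$ when $x_{0}\notin P_{k}$. One has to show that a normalized convex $\phi$ whose restriction to $P_{k}$ is nearly affine has small restriction, which requires controlling how steep a convex function can dip at $x_{0}$ between lattice points. I would first try to exclude this using the convex envelope, since $\phi\le(\phi|_{P_{k}})^{\vee}$, at least when $x_{0}$ lies in the interior of a simplex of a fixed triangulation. If this fails, I would state (\ref{eq:projection control}) for $k\ge k_{0}(P,x_{0})$. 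That restricted version is all the later $\Gamma$-convergence argument needs, and Steps 1--3 prove it.
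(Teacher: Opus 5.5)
Your Steps 1--3 are essentially the paper's proof. Like the paper, you argue by contradiction with $\left\Vert \phi\right\Vert _{k}=1$, bound $\int_{P}\phi$ via the rough estimate (\ref{eq:rough bound}) and Cauchy--Schwarz, and extract a locally uniform limit. The paper cites Donaldson's compactness result where you use the mean-value bound and Arzel\`a--Ascoli. Both then identify the limit as a nonnegative affine function vanishing at $x_{0}$. The paper writes the bad sequence as $\phi_{k}$ and tacitly lets $k\to\infty$. It never treats bounded $k$, so the hole you point out is present in the paper's argument too.

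That hole cannot be filled, because the inequality is false for small $k$. Suppose $P_{k}\cap\mathrm{int}P=\emptyset$, e.g.\ $k=1$ with $P=[0,1]^{n}$ or the standard simplex, both Delzant. Let $\phi$ be the convex envelope of the data equal to $c>0$ on $P_{k}$ and $0$ at $x_{0}$. Then $\phi\ge0$ and $\phi(x_{0})=0$. A point of $\partial P$ is never a convex combination of points of $P$ giving positive weight to the interior point $x_{0}$, so $\phi\equiv c$ on $P_{k}$. Hence $\phi-\pi_{k}\phi=0$ on $P_{k}$ while $\left\Vert \phi\right\Vert _{k}>0$.

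This example also shows that your proposed criterion ($x_{0}$ interior to a simplex of a triangulation) is not the relevant one. Moreover, if $\phi|_{P_{k}}=\alpha|_{P_{k}}$ with $\alpha\in\mathcal{A}$, the bound $\phi\le(\phi|_{P_{k}})^{\vee}=\alpha$ by itself only gives $\alpha(x_{0})\ge0$.

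What does work at a fixed $k$ is an interior point $u\in P_{k}\cap\mathrm{int}P$:
\begin{itemize}
\item Write $u=tx_{0}+(1-t)w$ with $t>0$ and $w\in P$.
\item Convexity together with $\phi\le\alpha$ gives $\alpha(u)\le(1-t)\alpha(w)$, i.e.\ $\alpha(x_{0})\le0$.
\item Since $\alpha\ge0$ on $P=\mathrm{conv}P_{k}$, this forces $\alpha\equiv0$.
\item The restrictions $\phi|_{P_{k}}$ of normalized convex functions form a closed cone in $\bbr^{P_{k}}$: those $f\ge0$ whose envelope, after adjoining the value $0$ at $x_{0}$, still restricts to $f$.
\item Compactness of the unit sphere of this cone then gives a constant for that $k$.
\end{itemize}

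So the right statement is your fallback: (\ref{eq:projection control}) for $k\ge k_{0}(P,x_{0})$, or for all $k$ with $P_{k}\cap\mathrm{int}P\neq\emptyset$. That is all Proposition \ref{prop: boundedness} and the convergence argument use. They already need $k>k_{0}$ for the fine estimate, and finitely many $k$ do not affect uniform bounds.
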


\begin{proof}
If not, there exists a sequence of normalized convex functions $\phi_{k}$
so that 
\[
\left\Vert \phi_{k}\right\Vert _{k}=1,\text{ but }\left\Vert \phi_{k}-\pi_{k}\phi_{k}\right\Vert _{k}\ra0.
\]
By (\ref{eq:rough bound}) and C-S inequality, $\int_{P}\phi_{k}$
is uniformly bounded. A compactness result \cite[Corollary 5.2.5]{donaldson_scalar_2002}
implies that, after passing to a subsequence, $\phi_{k}$ locally
uniformly converges to a normalized convex function $\phi$. 

Let $\alpha_{k}=\pi_{k}\phi_{k}$, we have $\left\Vert \alpha_{k}\right\Vert _{k}\le\left\Vert \phi_{k}\right\Vert _{k}=1$.
Since $\mathcal{A}$ is finite-dimensional, via a contradiction argument,
we can find a constant $C>0$ such that 
\begin{equation}
C^{-1}\left\Vert \alpha\right\Vert _{k}\le\sup_{P}\left|\alpha\right|\le C\left\Vert \alpha\right\Vert _{k},\ \forall\alpha\in\mathcal{A},\ k\ge1.\label{eq: finite dim norms}
\end{equation}
By the compactness in finite-dimensional spaces, we may assume $\alpha_{k}\ra\alpha\in\mathcal{A}$. 

Let $\eta_{k}=\phi_{k}-\alpha_{k}$ and $\eta=\phi-\alpha$, so $\left\Vert \eta\right\Vert _{k}\ra0$.
For any subpolytope $K\subset\mathrm{int}P$, by the uniform convergence
on $K$, we have 
\[
\frac{1}{k^{n}}\sum_{u\in P_{k}\cap K}\eta_{k}(u)^{2}-\frac{1}{k^{n}}\sum_{u\in P_{k}\cap K}\eta(u)^{2}\ra0,\ \frac{1}{k^{n}}\sum_{u\in P_{k}\cap K}\eta(u)^{2}\ra\int_{K}\eta^{2}.
\]
It implies 
\[
\int_{K}\eta^{2}=\lim_{k}\frac{1}{k^{n}}\sum_{u\in P_{k}\cap K}\eta_{k}(u)^{2}\le\lim_{k}\left\Vert \eta\right\Vert ^{2}_{k}=0
\]
for all $K$. Thus $\phi=\alpha$, and $\alpha=0$ since $\phi$ is
normalized. But this contradicts with 
\[
\int_{P}\alpha^{2}\leftarrow\left\Vert \alpha_{k}\right\Vert ^{2}_{k}=\left\Vert \phi_{k}\right\Vert ^{2}_{k}-\left\Vert \eta\right\Vert ^{2}_{k}\ra1.
\]
\end{proof}

\subsection{Bound the minimizers }

We rewrite 
\[
\Psi_{k}(\eta)=\frac{1}{2}\left\Vert \eta\right\Vert ^{2}_{k}+L_{k}(\eta),\ L_{k}(\eta)\coloneqq\frac{2}{k^{n-1}}\left(\sum_{u\in P_{k}}\eta(u)-k^{n}\int_{P}\eta\ \mathrm{d}x\right).
\]
Since $L_{k}$ is linear, there is a unique affine function $\beta_{k}\in\mathcal{A}$,
such that 
\begin{equation}
L_{k}(\alpha)=\left\langle \alpha,\beta_{k}\right\rangle _{k},\ \forall\alpha\in\mathcal{A}.\label{eq: def beta_k}
\end{equation}
Since $\psi_{k}$ minimizes $\Psi_{k}$, for any $\alpha\in\mathcal{A}$,
we have 
\[
0=\frac{\mathrm{d}}{\mathrm{d}t}\Psi_{k}(\psi_{k}+t\alpha)|_{t=0}=\left\langle \alpha,\psi_{k}+\beta_{k}\right\rangle _{k}.
\]
Hence 
\[
\beta_{k}=-\pi_{k}(\psi_{k})=2k-2V_{P}k^{n+1}\ell_{P_{k}}.
\]

\begin{prop}
$\lim_{k}\beta_{k}=\ell_{P}$. 
\end{prop}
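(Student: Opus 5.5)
The plan is to work in the finite-dimensional space $\mathcal{A}$ of affine functions. There, (\ref{eq: def beta_k}) says that $\beta_k$ is the Riesz representative of the linear functional $L_k|_{\mathcal{A}}$ with respect to the inner product $\langle\cdot,\cdot\rangle_k$ restricted to $\mathcal{A}$. I will show that both the functional and the inner product converge as $k\to\infty$, and that the limit inner product is positive definite. Convergence of $\beta_k$ then follows from convergence of the corresponding finite linear systems. The limit should be the representative of $\alpha\mapsto\int_{\partial P}\alpha\,\mathrm{d}\sigma$ with respect to the $L^2(P)$ inner product, and that representative is exactly $\ell_P$ by its defining property $\mathbb{M}^{\mathrm{rel}}_P(\alpha)=0$ for all affine $\alpha$.

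\textbf{Step 1 (convergence of the linear functional).} Fix an affine $\alpha$. Both $\alpha$ and $-\alpha$ belong to $\mathcal{C}(P)$, so the Riemann sum expansion (\ref{eq:EM}) from the Appendix applies to $\alpha$ and gives
\[
\sum_{u\in P_k}\alpha(u)=k^n\int_P\alpha\,\mathrm{d}x+\frac{k^{n-1}}{2}\int_{\partial P}\alpha\,\mathrm{d}\sigma+\mathrm{o}(k^{n-1}).
\]
Substituting into the definition of $L_k$ yields $L_k(\alpha)\to\int_{\partial P}\alpha\,\mathrm{d}\sigma$. For affine $\alpha$ this is also classical: Ehrhart--Euler--Maclaurin for polynomials gives an error of order $\mathrm{O}(k^{n-2})$.

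\textbf{Step 2 (convergence of the inner product).} For fixed affine $\alpha,\gamma$, the product $\alpha\gamma$ is a polynomial. Hence $\langle\alpha,\gamma\rangle_k=k^{-n}\sum_{u\in P_k}\alpha(u)\gamma(u)\to\int_P\alpha\gamma\,\mathrm{d}x$ as an ordinary Riemann sum.

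\textbf{Step 3 (solving the linear system).} Take the basis $e_0=1,e_1=x_1,\dots,e_n=x_n$ of $\mathcal{A}$ and write $\beta_k=\sum_j b^{(k)}_j e_j$. Then (\ref{eq: def beta_k}) becomes $G_k b^{(k)}=l^{(k)}$, where $G_k=(\langle e_i,e_j\rangle_k)_{ij}$ and $l^{(k)}=(L_k(e_i))_i$. By Steps 1--2, $G_k\to G=(\int_P e_ie_j)_{ij}$ and $l^{(k)}\to l=(\int_{\partial P}e_i\,\mathrm{d}\sigma)_i$. The matrix $G$ is the $L^2(P)$ Gram matrix of linearly independent functions on a full-dimensional $P$, so it is invertible. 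Therefore $G_k$ is invertible for large $k$ and $b^{(k)}=G_k^{-1}l^{(k)}\to G^{-1}l$. The affine function $\beta=\sum_j(G^{-1}l)_je_j$ satisfies $\int_P\alpha\beta=\int_{\partial P}\alpha\,\mathrm{d}\sigma$ for all $\alpha\in\mathcal{A}$, so $\beta=\ell_P$ by uniqueness of the extremal affine function. Since $\mathcal{A}$ is finite-dimensional, coefficient convergence means convergence in any norm, in particular uniformly on $P$, consistent with (\ref{eq: finite dim norms}).

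The only point needing care is Step 1: one must check that the boundary measure normalization in (\ref{eq:EM}) is the same $\mathrm{d}\sigma$ used in $\mathbb{M}^{\mathrm{rel}}_P$, so that the factor $2$ in $L_k$ exactly cancels the $\tfrac12$. This matching is what makes the limit $\ell_P$ rather than a multiple of it; the rest of the argument is finite-dimensional linear algebra.
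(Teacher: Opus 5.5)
Your proposal is correct and follows the same route as the paper: write $\beta_k$ in the basis $1,x_1,\dots,x_n$ and test the defining identity against these basis functions to get a linear system. The paper then uses the Riemann sum expansion to show the system's coefficients converge to those of the system defining $\ell_P$; your explicit remark that the limiting Gram matrix $\bigl(\int_P e_ie_j\bigr)$ is invertible supplies the step the paper leaves implicit.
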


\begin{proof}
Let $\beta_{k}=\sum^{n}_{i=1}c^{k}_{i}x_{i}+c^{k}_{0}$. Take $\alpha=1$
and $x_{j}$ ($1\le j\le n$) in (\ref{eq: def beta_k}), we obtain
a system of linear equations satisfying by $c^{k}_{i}$, 
\begin{align*}
\sum^{n}_{i=1}\left\langle 1,x_{i}\right\rangle _{k}c^{k}_{i}+\left\langle 1,1\right\rangle _{k}c^{k}_{0} & =L_{k}(1)\\
\sum^{n}_{i=1}\left\langle x_{j},x_{i}\right\rangle _{k}c^{k}_{i}+\left\langle x_{j},1\right\rangle _{k}c^{k}_{0} & =L_{k}(x_{j}),\ 1\le j\le n.
\end{align*}
By the expansion of Riemann sums, we have 
\[
\left\langle x_{j},x_{i}\right\rangle _{k}\ra\int_{P}x_{j}x_{i},\ L_{k}(x_{j})\ra\int_{\partial P}x_{j},\ \text{etc.}
\]
Since the coefficients of system tends to that of the system for $\ell_{P}$,
we have $\beta_{k}\ra\ell_{P}$. 
\end{proof}

We introduce 
\[
L^{\mathrm{rel}}_{k}(\eta)\coloneqq L_{k}(\eta)-\left\langle \eta,\beta_{k}\right\rangle _{k}.
\]
Since (\ref{eq: def beta_k}), $L^{\mathrm{rel}}_{k}(\eta)$ is affine
invariant. The minimizer of 
\[
\Psi^{\mathrm{rel}}_{k}(\eta)\coloneqq\frac{1}{2}\left\Vert \eta\right\Vert ^{2}_{k}+L^{\mathrm{rel}}_{k}(\eta)
\]
is $\psi^{\perp}_{k}\coloneqq\psi_{k}-\pi_{k}(\psi_{k})=\psi_{k}+\beta_{k}$.

Fix a point $x_{0}\in\mathrm{int}P$, choose a support affine function
$\alpha_{k}$ of $\psi^{\perp}_{k}$ at $x_{0}$, and set 
\[
\phi_{k}\coloneqq\psi^{\perp}_{k}-\alpha_{k}.
\]
Then $\phi_{k}$ is normalized at $x_{0}$. 
\begin{prop}
\label{prop: boundedness}Suppose $P$ is a Delzant lattice polytope,
then 
\[
\left\Vert \psi^{\perp}_{k}\right\Vert _{k},\ \left\Vert \phi_{k}\right\Vert _{k},\ \sup_{P}\left|\alpha_{k}\right|,\ \int_{\partial P}\phi_{k},\ \int_{P}\phi^{2}_{k}
\]
are all uniformly bounded. 
\end{prop}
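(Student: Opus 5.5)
The plan is to compare $\psi^{\perp}_{k}$ with the competitor $\eta=0$ in $\Psi^{\mathrm{rel}}_{k}$, and to extract a quadratic-versus-linear inequality from the fine estimate (\ref{eq:key estimate}). Since $\psi^{\perp}_{k}$ minimizes $\Psi^{\mathrm{rel}}_{k}$, we have
\[
\tfrac{1}{2}\left\Vert \psi^{\perp}_{k}\right\Vert ^{2}_{k}+L^{\mathrm{rel}}_{k}(\psi^{\perp}_{k})\le\Psi^{\mathrm{rel}}_{k}(0)=0 .
\]
$L^{\mathrm{rel}}_{k}$ is affine invariant, so $L^{\mathrm{rel}}_{k}(\psi^{\perp}_{k})=L^{\mathrm{rel}}_{k}(\phi_{k})=L_{k}(\phi_{k})-\left\langle \phi_{k},\beta_{k}\right\rangle _{k}$. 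Moreover $\pi_{k}\psi^{\perp}_{k}=0$ and $\phi_{k}-\pi_{k}\phi_{k}=\psi^{\perp}_{k}-\pi_{k}\psi^{\perp}_{k}=\psi^{\perp}_{k}$. Hence (\ref{eq:projection control}) gives $\left\Vert \phi_{k}\right\Vert _{k}\le C_{\pi}\left\Vert \psi^{\perp}_{k}\right\Vert _{k}$, which will let us close the estimate.

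\textbf{Key step (lower bound for $L_{k}(\phi_{k})$).} Apply (\ref{eq:key estimate}) to the normalized, hence nonnegative, convex function $\phi_{k}$. This gives
\[
\sum_{u\in P_{k}}\phi_{k}(u)-k^{n}\int_{P}\phi_{k}\ge\sum_{u\in P_{k}}\bigl(1-2^{-\mathrm{cd}(u)}\bigr)\phi_{k}(u)-\frac{C_{1}}{k}\sum_{u\in P_{k}\cap\partial P}\phi_{k}(u)-\frac{C_{2}}{k^{2}}\sum_{u\in P_{k}}\phi_{k}(u).
\]
For boundary points $1-2^{-\mathrm{cd}(u)}\ge\frac{1}{2}$, and for interior points this term is $0$. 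Hence, for $k$ large,
\[
L_{k}(\phi_{k})\ge\frac{1}{2}k^{1-n}\sum_{u\in P_{k}\cap\partial P}\phi_{k}(u)-\frac{2C_{2}}{k}\cdot\frac{1}{k^{n}}\sum_{u\in P_{k}}\phi_{k}(u).
\]
By Cauchy--Schwarz and $\left|P_{k}\right|\le Ck^{n}$, we have $k^{-n}\sum\phi_{k}\le C\left\Vert \phi_{k}\right\Vert _{k}$. Also $\beta_{k}\to\ell_{P}$ gives $\left\Vert \beta_{k}\right\Vert _{k}\le C$, so $\left|\left\langle \phi_{k},\beta_{k}\right\rangle _{k}\right|\le C\left\Vert \phi_{k}\right\Vert _{k}$. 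Combining,
\[
\tfrac{1}{2}\left\Vert \psi^{\perp}_{k}\right\Vert ^{2}_{k}+\tfrac{1}{2}k^{1-n}\sum_{u\in P_{k}\cap\partial P}\phi_{k}(u)\le C\left\Vert \phi_{k}\right\Vert _{k}\le CC_{\pi}\left\Vert \psi^{\perp}_{k}\right\Vert _{k}.
\]

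\textbf{Consequences.} This bounds $\left\Vert \psi^{\perp}_{k}\right\Vert _{k}$, and therefore $\left\Vert \phi_{k}\right\Vert _{k}$. Consequently $\left\Vert \alpha_{k}\right\Vert _{k}\le\left\Vert \psi^{\perp}_{k}\right\Vert _{k}+\left\Vert \phi_{k}\right\Vert _{k}$ is bounded, and (\ref{eq: finite dim norms}) then bounds $\sup_{P}\left|\alpha_{k}\right|$. The same inequality bounds $k^{1-n}\sum_{P_{k}\cap\partial P}\phi_{k}$, so (\ref{eq: b key estimate}) bounds $\int_{\partial P}\phi_{k}\,\mathrm{d}\sigma$. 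Finally, $\phi^{2}_{k}$ is convex and nonnegative, being the square of a nonnegative convex function. The rough bound (\ref{eq:rough bound}) then gives $\int_{P}\phi^{2}_{k}\le CV_{P}\left\Vert \phi_{k}\right\Vert ^{2}_{k}$, which is bounded.

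\textbf{Main obstacle.} The delicate point is the lower bound on $L_{k}(\phi_{k})$. It needs the sharp coefficient $2^{-\mathrm{cd}(u)}$ from the Delzant fine estimate, so that a positive multiple of the boundary sum survives. The error terms then have to be absorbed, either into that boundary term or into the linear term $C\left\Vert \phi_{k}\right\Vert _{k}$. The rough estimate alone would lose a constant factor and could make $L_{k}$ unbounded below.
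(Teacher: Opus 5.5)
Your proof is correct and follows essentially the same route as the paper. Both compare $\psi^{\perp}_{k}$ with the competitor $0$ in $\Psi^{\mathrm{rel}}_{k}$, use the affine invariance of $L^{\mathrm{rel}}_{k}$, apply the Delzant fine estimate with $1-2^{-\mathrm{cd}(u)}\ge\frac{1}{2}$ on boundary points, close the quadratic-versus-linear inequality via $\left\Vert \phi_{k}\right\Vert _{k}\le C_{\pi}\left\Vert \psi^{\perp}_{k}\right\Vert _{k}$, and finish with the boundary estimate, the equivalence of norms on affine functions, and the rough bound applied to the convex function $\phi^{2}_{k}$.
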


\begin{proof}
Since $\psi^{\perp}_{k}$ minimizes $\Psi^{\mathrm{rel}}_{k}$, we
have 
\begin{equation}
0=\Psi^{\mathrm{rel}}_{k}(0)\ge\Psi^{\mathrm{rel}}_{k}(\psi^{\perp}_{k})=\frac{1}{2}\left\Vert \psi^{\perp}_{k}\right\Vert ^{2}_{k}+L^{\mathrm{rel}}_{k}(\phi_{k}),\label{eq:rel psi bound}
\end{equation}
where we used the affine invariance of $L^{\mathrm{rel}}_{k}$. Employ
the fine estimate (\ref{eq:key estimate}), note that $1-2^{-\mathrm{cd}(u)}\ge\frac{1}{2}$
for $u\in P_{k}\cap\partial P$, we have 
\begin{align}
L_{k}(\phi_{k}) & \ge\frac{2}{k^{n-1}}\left(\left(\frac{1}{2}-\frac{C_{1}}{k}\right)\sum_{u\in P_{k}\cap\partial P}\phi_{k}(u)-\frac{C_{2}}{k^{2}}\sum_{u\in P_{k}}\phi_{k}(u)\right)\label{eq: estimate Lk}\\
\left(\text{suppose }\frac{C_{1}}{k}<\frac{1}{4}\right)\quad & \ge\frac{1}{2k^{n-1}}\sum_{u\in P_{k}\cap\partial P}\phi_{k}(u)-\frac{2C_{2}}{k^{n+1}}\sum_{u\in P_{k}}\phi_{k}(u)\nonumber \\
\text{C-S inequality}\quad & \ge\frac{1}{2k^{n-1}}\sum_{u\in P_{k}\cap\partial P}\phi_{k}(u)-\frac{C^{\prime}_{2}}{k}\left\Vert \phi_{k}\right\Vert _{k}.\nonumber 
\end{align}
Since $\beta_{k}\ra\ell_{P}$, it is uniformly bounded, 
\[
\left|\left\langle \phi_{k},\beta_{k}\right\rangle _{k}\right|\le\frac{C_{\beta}}{k^{n}}\sum_{u\in P_{k}}\phi_{k}(u)\le C^{\prime}_{\beta}\left\Vert \phi_{k}\right\Vert _{k}.
\]
Bring these estimates to (\ref{eq:rel psi bound}), 
\[
0\ge\Psi^{\mathrm{rel}}_{k}(\psi^{\perp}_{k})=\frac{1}{2k^{n-1}}\sum_{u\in P_{k}\cap\partial P}\phi_{k}(u)+\frac{1}{2}\left\Vert \psi^{\perp}_{k}\right\Vert ^{2}_{k}-\left(C^{\prime}_{\beta}+\frac{C^{\prime}_{2}}{k}\right)\left\Vert \phi_{k}\right\Vert _{k}.
\]
Since $\phi_{k}-\pi_{k}\phi_{k}=\psi^{\perp}_{k}$, (\ref{eq:projection control})
gives 
\[
\left\Vert \phi_{k}\right\Vert _{k}\le C_{\pi}\left\Vert \psi^{\perp}_{k}\right\Vert _{k}.
\]
Since $\phi_{k}\ge0$, we obtain
\[
0\ge\frac{1}{2}\left\Vert \psi^{\perp}_{k}\right\Vert ^{2}_{k}-\left(C^{\prime}_{\beta}+\frac{C^{\prime}_{2}}{k}\right)C_{\pi}\left\Vert \psi^{\perp}_{k}\right\Vert _{k}.
\]
It implies the boundedness of $\left\Vert \psi^{\perp}_{k}\right\Vert _{k}$,
$\left\Vert \phi_{k}\right\Vert _{k}$ and $\frac{1}{k^{n-1}}\sum_{u\in P_{k}\cap\partial P}\phi_{k}(u).$
Then $\int_{\partial P}\phi_{k}\mathrm{d}\sigma$ is bounded since
(\ref{eq: b key estimate}). Since $\alpha_{k}=\psi^{\perp}_{k}-\phi_{k}$,
$\left\Vert \alpha_{k}\right\Vert _{k}$ is bounded, thus $\{\alpha_{k}\}$
are uniformly bounded by (\ref{eq: finite dim norms}). 

Finally, since $\phi_{k}\ge0$, $\phi^{2}_{k}$ is also convex. Employ
the rough estimate (\ref{eq:rough bound}), 
\[
\fint_{P}\phi^{2}_{k}\le C_{P}\left\Vert \phi_{k}\right\Vert ^{2}_{k}\le C.
\]
\end{proof}

\subsection{Convergence \label{subsec:Convergence}}

First we recall a compactness result for normalized convex functions. 
\begin{prop}[{\cite[Proposition 5.2.6]{donaldson_scalar_2002}}]
\label{prop: Donaldson compact}Let $P$ be a lattice polytope with
a point $x_{0}\in\mathrm{int}P$. If $\phi_{k}\in\mathcal{C}(P)$
is a sequence of normalized convex functions such that $\int_{\partial P}\phi_{k}\mathrm{d}\sigma\le C$,
then there exists a subsequence locally uniformly converges to a function
$\phi\in\mathcal{C}_{1}(P)$, and 
\begin{equation}
\int_{\partial P}\phi\mathrm{d}\sigma\le\liminf_{k}\int_{\partial P}\phi_{k}\mathrm{d}\sigma.\label{eq: Donald liminf}
\end{equation}
\end{prop}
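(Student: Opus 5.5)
The plan is to get uniform interior bounds from the boundary integral, apply Arzelà--Ascoli on compact subsets of $\mathrm{int}P$, and then control the limit on the facets by a radial convexity argument combined with Fatou's lemma. Throughout, translate so that $x_{0}=0\in\mathrm{int}P$ and use the cone decomposition $P=\bigcup_{\rho}P_{\rho}$ from the proof of Theorem \ref{thm: theta is continuous-INTRO}, with polar coordinates $(t,q)\in[0,1]\times F_{\rho}$ and measure $a_{\rho}t^{n-1}\mathrm{d}t\wedge\mathrm{d}\sigma$.

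\textbf{Step 1: an $L^{1}$ bound.} Since each $\phi_{k}$ is convex, nonnegative and vanishes at $0$, we have $\phi_{k}(tq)\le t\phi_{k}(q)$ for $q\in\partial P$ and $t\in[0,1]$. Integrating in polar coordinates gives
\[
\int_{P}\phi_{k}\,\mathrm{d}x\le\frac{\max_{\rho}a_{\rho}}{n+1}\int_{\partial P}\phi_{k}\,\mathrm{d}\sigma\le C'.
\]

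\textbf{Step 2: interior bounds and extraction.} Fix a compact $K\subset\mathrm{int}P$ and $r>0$ such that the $2r$-neighbourhood of $K$ lies in $\mathrm{int}P$. A convex function is subharmonic, so for $x$ in the $r$-neighbourhood of $K$ we have $\phi_{k}(x)\le|B_{r}|^{-1}\int_{B_{r}(x)}\phi_{k}\le C'/|B_{r}|$. Combined with $\phi_{k}\ge0$, this makes $\phi_{k}$ uniformly bounded on the $r$-neighbourhood of $K$. The standard estimate for convex functions then gives $\mathrm{Lip}(\phi_{k}|_{K})\le2\sup|\phi_{k}|/r$. By Arzelà--Ascoli on an exhaustion of $\mathrm{int}P$ by compacts, and a diagonal argument, a subsequence converges locally uniformly on $\mathrm{int}P$ to a convex $\phi\ge0$ with $\phi(0)=0$.

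\textbf{Step 3: boundary behaviour and (\ref{eq: Donald liminf}).} For $q\in\partial P$ set $\phi(q)\coloneqq\lim_{t\to1^{-}}\phi(tq)\in[0,+\infty]$. This limit exists because $t\mapsto\phi(tq)$ is convex, nonnegative and vanishes at $t=0$, hence nondecreasing. The resulting function is the lsc convex extension of $\phi$. Passing to the limit in $\phi_{k}(tq)\le t\phi_{k}(q)$ gives $\phi(tq)\le t\liminf_{k}\phi_{k}(q)$, and letting $t\to1$ gives $\phi(q)\le\liminf_{k}\phi_{k}(q)$ pointwise on $\partial P$. Fatou's lemma then yields (\ref{eq: Donald liminf}), and in particular $\int_{\partial P}\phi\,\mathrm{d}\sigma<\infty$.

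\textbf{Step 4: $\phi\in\mathcal{C}_{1}(P)$.} This is the step I expect to be the main obstacle: showing that $\phi$ is finite and continuous on all of $P^{*}$, not merely finite $\sigma$-a.e. on the facets. Fix a facet $F$. The restriction $\phi|_{F}$ is convex and finite a.e. on $F$, so it is finite on the convex hull of a full-measure subset of $F$, which is all of $\mathrm{relint}F$. Around a point $q\in\mathrm{relint}F$, I would take a small simplex whose vertices are points of $F$ near $q$ together with interior points of $P$. Convexity bounds $\phi$ above on this simplex, hence on a relative neighbourhood of $q$ in $P$. A convex function that is bounded above on such a neighbourhood is continuous at points of the relative interior of that polyhedral piece; by Gale--Klee--Rockafellar \cite{gale_convex_1968}, an lsc convex function on a polytope is upper semicontinuous as well wherever it is locally bounded. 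This gives continuity of $\phi$ on $P^{*}$, so $\phi\in\mathcal{C}_{1}(P)$.
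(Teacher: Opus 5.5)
Your proof is correct. The paper gives no argument of its own for this proposition; it simply quotes \cite[Proposition 5.2.6]{donaldson_scalar_2002}. So your write-up is a self-contained substitute rather than a variant of something in the text.

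\textbf{Comparison with the paper.} Your Steps 1--2 re-derive the two other facts the paper imports from Donaldson elsewhere. These are the integrability of $\mathcal{C}_{1}(P)$ (an $L^{1}$ bound by the boundary integral) and interior compactness of normalized convex functions with bounded $L^{1}$ norm \cite[Corollary 5.2.5]{donaldson_scalar_2002}. You obtain them using the cone decomposition and polar coordinates from step (e) of the proof of Theorem \ref{thm: theta is continuous-INTRO}. Step 4 uses Gale--Klee--Rockafellar just as the paper does when showing $F^{*}\in\mathcal{C}(P)$.

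The citation buys brevity. Your version buys transparency in two ways. First, it yields the pointwise inequality $\bar\phi\le\liminf_{k}\phi_{k}$ on all of $\partial P$, not just the integrated one. Second, it makes clear that convergence is only claimed on $\mathrm{int}P$, which is the right reading. On $P=[-1,1]$ with $x_{0}=0$, the functions $\phi_{k}=\max\{0,k(x-1)+1\}$ tend to $0$ locally uniformly on the interior while $\phi_{k}(1)=1$. So neither uniform convergence near the point $1\in P^{*}$ nor equality in (\ref{eq: Donald liminf}) can be expected.

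\textbf{Tightening Step 4.} It is easier than you feared, but state a few points precisely.
\begin{enumerate}
\item A full-measure set $E\subset F$ has convex hull \emph{containing} $\mathrm{relint}F$, not equal to it, and containment is all you need. If $q\in\mathrm{relint}F\setminus\mathrm{conv}E$, separate $q$ from $\mathrm{conv}E$ inside $\mathrm{Aff}(F)$ by an affine $\ell$ with $\ell(q)\ge\sup_{\mathrm{conv}E}\ell$. The set $\{\ell>\ell(q)\}\cap F$ is then nonempty, relatively open, and missed by $E$, a contradiction.
\item Lower semicontinuity of $\bar\phi$ on $P$ follows from your own radial trick. Taking radial limits in the convexity inequality shows $\bar\phi$ is convex. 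Hence if $x_{j}\to q$ in $P$, then $\phi(tq)=\lim_{j}\bar\phi(tx_{j})\le t\liminf_{j}\bar\phi(x_{j})$ for every $t<1$.
\item For upper semicontinuity, note that your simplex $S$ lies in $P^{*}$, since its vertices are in $\mathrm{relint}F\cup\mathrm{int}P$. So $\bar\phi$ is finite on $S$, hence upper semicontinuous relative to $S$ by Gale--Klee--Rockafellar. Since $S$ contains a relative neighbourhood of $q$ in $P$, this gives continuity at $q$.
\item Your $\liminf$ is taken along the extracted subsequence, which is how the paper later applies the proposition. If the full-sequence $\liminf$ is intended, first pass to a subsequence realizing it.
\end{enumerate}
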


The following lemma will be used latter. 
\begin{lem}
\label{lem: little}Let $P$ be a lattice polytope, $\psi_{k}\in C^{0}(P)$
locally uniformly converges to $\psi\in L^{1}(P)$ on $\mathrm{int}P$,
and $\left\Vert \psi_{k}\right\Vert _{k}\le C_{\psi}$ for all $k$.
Another sequence $\gamma_{k}\in C^{0}(P)$ uniformly converges to
$\gamma$. Then we have 
\[
\lim_{k}\left\langle \psi_{k},\gamma_{k}\right\rangle _{k}=\int_{P}\psi\gamma\mathrm{d}x.
\]
\end{lem}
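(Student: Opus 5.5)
The plan is to split $P$ into a compact interior piece, where convergence is uniform, and a thin boundary collar, which is small in both the integral and the lattice sum uniformly in $k$. Fix $\delta>0$ and let $K_\delta\coloneqq\{x\in P\mid d(x,\partial P)\ge\delta\}$, a compact convex subset of $\mathrm{int}P$. Write
\[
\left\langle \psi_k,\gamma_k\right\rangle_k-\int_P\psi\gamma=I_1+I_2+I_3,
\]
where $I_1$ is the difference of the sums and integrals restricted to $K_\delta$, $I_2=\frac{1}{k^n}\sum_{u\in P_k\setminus K_\delta}\psi_k(u)\gamma_k(u)$, and $I_3=-\int_{P\setminus K_\delta}\psi\gamma$.

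\textbf{Interior term.} On $K_\delta$ we have $\psi_k\gamma_k\to\psi\gamma$ uniformly, and $\psi$ is continuous there as a locally uniform limit of continuous functions. Hence
\[
\frac{1}{k^n}\sum_{u\in P_k\cap K_\delta}\bigl(\psi_k\gamma_k-\psi\gamma\bigr)(u)\to0,
\]
because $|P_k|/k^n$ is bounded. The Riemann sum $\frac{1}{k^n}\sum_{u\in P_k\cap K_\delta}\psi\gamma(u)$ converges to $\int_{K_\delta}\psi\gamma$, since $\psi\gamma$ is continuous on the compact convex set $K_\delta$, whose boundary has measure zero. Therefore $I_1\to0$ for each fixed $\delta$.

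\textbf{Boundary terms.} By Cauchy--Schwarz and the uniform bound $\sup|\gamma_k|\le C_\gamma$,
\[
|I_2|\le C_\gamma\,\left\Vert\psi_k\right\Vert_k\Bigl(\frac{\#(P_k\setminus K_\delta)}{k^n}\Bigr)^{1/2}\le C_\gamma C_\psi\,(C\delta)^{1/2},
\]
using $\#(P_k\setminus K_\delta)\le C\delta k^n$ for $k\ge1/\delta$. This counting bound holds because the collar is a union of slabs of width $\delta$ along the facets, each containing $O(\delta k^n+k^{n-1})$ lattice points. Next, $|I_3|\le C_\gamma\int_{P\setminus K_\delta}|\psi|$, which tends to $0$ as $\delta\to0$ because $\psi\in L^1(P)$ and $|P\setminus K_\delta|\to0$. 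Taking $\limsup_k$ and then letting $\delta\to0$ gives the claim.

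The main obstacle is $I_2$. The $\psi_k$ need not be uniformly controlled near $\partial P$, so the argument has to rely only on the discrete $L^2$ bound $\left\Vert\psi_k\right\Vert_k\le C_\psi$ together with a lattice-point count in the collar that is uniform in $k$. The count must be $O(\delta k^n)$ and not merely $o(k^n)$. One point of care is the order of limits: fix $\delta$, send $k\to\infty$ so that $k\ge1/\delta$ and the count is valid, and only then send $\delta\to0$.
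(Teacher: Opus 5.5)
Your proof is correct and follows the paper's argument. Both split the difference into three pieces: an interior compact piece where $\psi_k\gamma_k\to\psi\gamma$ uniformly, a boundary lattice sum bounded by Cauchy--Schwarz against $\left\Vert\psi_k\right\Vert_k\le C_\psi$, and an integral tail controlled by integrability of $\psi\gamma$. The only cosmetic difference is that the paper takes an arbitrary subpolytope $K\subset\mathrm{int}P$ and uses $\#(P_k\setminus K)/k^n\to|P\setminus K|$ for large $k$, whereas you use the explicit collar $K_\delta$ with the lattice-point count $O(\delta k^n)$.
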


\begin{proof}
Suppose $\sup_{P}\left|\gamma_{k}\right|\le C_{\gamma}$ for all $k$.
For any $\varepsilon>0$, we can find a subpolytope $K\subset\mathrm{int}P$,
such that 
\[
2C_{\gamma}C_{\psi}\left|P\backslash K\right|^{1/2}<\varepsilon\text{ and }\left|\int_{P\backslash K}\psi\gamma\right|<\varepsilon.
\]
We have 
\[
\left|\left\langle \psi_{k},\gamma_{k}\right\rangle _{k}-\int_{P}\psi\gamma\right|\le\left|\frac{1}{k^{n}}\sum_{P_{k}\cap K}\psi_{k}\gamma_{k}-\int_{K}\psi\gamma\right|+\left|\frac{1}{k^{n}}\sum_{P_{k}\backslash K}\psi_{k}\gamma_{k}\right|+\left|\int_{P\backslash K}\psi\gamma\right|.
\]
The first term on the RHS tends to zero since $\psi_{k}\gamma_{k}\ra\psi\gamma$
uniformly on $K$. By C-S inequality, 
\[
\text{middle term }\le C_{\gamma}\frac{\left|P_{k}\backslash K\right|^{1/2}}{k^{n/2}}\left\Vert \psi_{k}\right\Vert _{k}\le2C_{\gamma}C_{\psi}\left|P\backslash K\right|^{1/2}<\varepsilon.
\]
Hence there exists $N_{\varepsilon}>0$ such that the sum of three
terms is $<3\varepsilon$ for $k>N_{\varepsilon}$. 
\end{proof}

\subsubsection{Prove part (1) of Theorem \ref{thm:main converge}. }
\begin{proof}
Suppose that $\psi_{k}$ does not $L^{2}$-converge to $\psi$, then
there exists $\delta>0$ and a subsequence $\psi_{k_{j}}$ such that
\[
\left\Vert \psi_{k_{j}}-\psi\right\Vert _{L^{2}}\ge\delta,\ \forall j\ge1.
\]
We will show that $\psi_{k_{j}}$ admits a subsequence which $L^{2}$-converges
to $\psi$. It is a contradiction, we are done. For simplicity of
notation, we denote $\psi_{k_{j}}$ by $\psi_{k}$.

\textbf{Step 1}. Find the subsequence.

By Proposition \ref{prop: boundedness}, we have uniform boundedness,
\[
\left\Vert \phi_{k}\right\Vert _{k}+\sup_{P}\left|\alpha_{k}\right|+\int_{\partial P}\phi_{k}+\int_{P}\phi^{2}_{k}\le C.
\]
Using the weak compactness in $L^{2}(P)$ and Proposition \ref{prop: Donaldson compact},
we can take a subsequence (still denoted by $\psi_{k},\phi_{k}$)
such that 
\[
\phi_{k}\rightharpoonup\phi_{*}\text{ in }L^{2}(P),\ \phi_{k}\rightarrow\phi_{*}\text{ locally uniformly on }\mathrm{int}P,
\]
where $\phi_{*}\in\mathcal{C}_{1}(P)\cap L^{2}(P)$. Moreover, by
(\ref{eq: Donald liminf}) we have 
\begin{equation}
\liminf_{k}\int_{\partial P}\phi_{k}\ge\int_{\partial P}\phi_{*}.\label{eq: Donald liminf 2}
\end{equation}

Since $\alpha_{k}\in\mathcal{A}$ is also uniformly bounded, we can
assume $\alpha_{k}\ra\alpha\in\mathcal{A}$ after passing to a subsequence.
Recall that 
\[
\psi_{k}=\psi^{\bot}_{k}-\beta_{k}=\phi_{k}+\alpha_{k}-\beta_{k}\text{ and }\beta_{k}\ra\ell_{P},
\]
we have 
\[
\psi_{k}\rightharpoonup\psi_{*}\coloneqq\phi_{*}+\alpha-\ell_{P}\text{ in }L^{2}(P),\ \psi_{k}\rightarrow\psi_{*}\text{ locally uniformly on }\mathrm{int}P.
\]
Since $\left\Vert \psi_{k}\right\Vert _{k}$ is also bounded, we may
assume $\left\Vert \psi_{k}\right\Vert _{k}$ converges. We will not
take further subsequence in the following.

\textbf{Step 2}. show $\psi_{*}=\psi\coloneqq-\Theta_{P}$ and $\left\Vert \psi_{k}\right\Vert _{k}\ra\left\Vert \psi\right\Vert _{L^{2}}$.

It suffices to show 
\begin{align}
\liminf_{k}\left\Vert \psi_{k}\right\Vert ^{2}_{k} & \ge\int_{P}\psi^{2}_{*},\text{ and }\label{eq: liminf L2 norm}\\
\liminf_{k}L_{k}(\psi_{k}) & \ge\int_{\partial P}\psi_{*}\mathrm{d}\sigma.\label{eq: liminf Lk}
\end{align}
Once these hold, since $\psi_{k}$ and $\psi$ are the minimizer of
$\Psi_{k}$ and $\Psi$ respectively, we have 
\begin{align*}
\Psi(\psi)=\lim_{k}\Psi_{k}(\psi) & \ge\limsup_{k}\Psi_{k}(\psi_{k})\ge\liminf_{k}\Psi_{k}(\psi_{k})\\
 & \ge\liminf_{k}\frac{1}{2}\left\Vert \psi_{k}\right\Vert ^{2}_{k}+\liminf_{k}L_{k}(\psi_{k})\\
 & \ge\frac{1}{2}\int_{P}\psi^{2}_{*}+\int_{\partial P}\psi_{*}\mathrm{d}\sigma=\Psi(\psi_{*})\ge\Psi(\psi),
\end{align*}
where the first equality is due to the continuity of $\Theta_{P}$
and (\ref{eq:EM}). All inequalities become equalities, thus 
\begin{equation}
\lim_{k}\Psi_{k}(\psi_{k})=\Psi(\psi)\text{ and }\Psi(\psi_{*})=\Psi(\psi).\label{eq:inf converge}
\end{equation}
The uniqueness of minimizer implies $\psi_{*}=\psi$. (\ref{eq: liminf L2 norm})
also takes equality, so 
\begin{equation}
\lim_{k}\left\Vert \psi_{k}\right\Vert _{k}=\left\Vert \psi\right\Vert _{L^{2}},\label{eq: normk converge}
\end{equation}
since we know that $\left\Vert \psi_{k}\right\Vert _{k}$ converges
in Step1. 

To prove (\ref{eq: liminf L2 norm}), take any compact $K\subset\mathrm{int}P$,
by uniform convergence, 
\[
\liminf_{k}\left\Vert \psi_{k}\right\Vert ^{2}_{k}\ge\lim_{k}\frac{1}{k^{n}}\sum_{u\in P_{k}\cap K}\psi_{k}(u)^{2}=\int_{K}\psi^{2}_{*}.
\]
Take the supremum over $K$, (\ref{eq: liminf L2 norm}) follows.

For (\ref{eq: liminf Lk}), combine the estimate (\ref{eq: estimate Lk})
and (\ref{eq: b key estimate}), we have 
\begin{align}
L_{k}(\phi_{k}) & \ge\left(1-\frac{2C_{1}}{k}\right)\frac{1}{k^{n-1}}\sum_{u\in P_{k}\cap\partial P}\phi_{k}(u)-\frac{2C_{2}}{k^{n+1}}\sum_{u\in P_{k}}\phi_{k}(u)\label{eq:key usage}\\
 & \ge\left(1-\frac{2C_{1}}{k}\right)\left(1+\frac{C_{b}}{k}\right)^{-1}\int_{\partial P}\phi_{k}\mathrm{d}\sigma-\frac{2C_{2}}{k^{n+1}}\sum_{u\in P_{k}}\phi_{k}(u).\nonumber 
\end{align}
The last term tends to zero since $\left\Vert \phi_{k}\right\Vert _{k}\le C$.
Take $\liminf_{k}$ on both sides, use (\ref{eq: Donald liminf 2}),
we obtain 
\[
\liminf_{k}L_{k}(\phi_{k})\ge\int_{\partial P}\phi_{*}\mathrm{d}\sigma.
\]
This implies (\ref{eq: liminf Lk}) since $\alpha_{k}\ra\alpha$ and
$\beta_{k}\ra\ell_{P}$ uniformly.

\textbf{Step 3}. Show the subsequence $\psi_{k}\xrightarrow{L^{2}}\psi$.

Since $\phi^{2}_{k}$ is also convex, apply (\ref{eq:key estimate}),
\[
\int_{P}\phi^{2}_{k}\le\frac{1}{k^{n}}\sum_{u\in P_{k}}\phi_{k}(u)^{2}+\frac{C_{1}}{k^{n+1}}\sum_{u\in P_{k}\cap\partial P}\phi_{k}(u)^{2}+\frac{C_{2}}{k^{n+2}}\sum_{u\in P_{k}}\phi_{k}(u)^{2}
\]
The last two terms tends to zero since $\left\Vert \phi_{k}\right\Vert _{k}\le C$.
Take $\limsup$ on both sides, 
\[
\limsup_{k}\int_{P}\phi^{2}_{k}\le\limsup_{k}\left\Vert \phi_{k}\right\Vert ^{2}_{k}.
\]
We show the RHS is $\int_{P}\phi^{2}_{*}$. Write $\psi_{k}=\phi_{k}+\gamma_{k}$,
where $\gamma_{k}\coloneqq\alpha_{k}-\beta_{k}\ra\gamma\coloneqq\alpha-\ell_{P}$.
Use (\ref{eq: normk converge}) and Lemma \ref{lem: little}, 
\[
\left\Vert \phi_{k}\right\Vert ^{2}_{k}=\left\Vert \psi_{k}\right\Vert ^{2}_{k}-2\left\langle \psi_{k},\gamma_{k}\right\rangle +\left\Vert \gamma_{k}\right\Vert ^{2}_{k}\ra\left\Vert \psi\right\Vert ^{2}_{L^{2}}-2\int_{P}\psi\gamma+\int_{P}\gamma^{2}=\int_{P}\phi^{2}_{*}.
\]
Hence $\limsup\int_{P}\phi^{2}_{k}\le\int_{P}\phi^{2}_{*}$. Lower
semicontinuity of $L^{2}$-norm (for weak convergence) gives the converse
direction: $\liminf\int_{P}\phi^{2}_{k}\ge\int_{P}\phi^{2}_{*}$.
Hence we have 
\[
\lim_{k}\left\Vert \phi_{k}\right\Vert _{L^{2}}=\left\Vert \phi_{*}\right\Vert _{L^{2}}.
\]
This implies $\phi_{k}\xrightarrow{L^{2}}\phi_{*}$ since we already
have $\phi_{k}\rightharpoonup\phi_{*}$. So 
\[
\psi_{k}=\phi_{k}+\gamma_{k}\xrightarrow{L^{2}}\phi_{*}+\gamma=\psi.
\]
This is just the $L^{2}$-convergence of the subsequence selected
in Step 1. As stated at the beginning of the proof, by a contradiction
argument, we can obtain the $L^{2}$-convergence of the full sequence.
By (\ref{eq:inf converge}), we also have $\lim_{k}\Psi_{k}(\psi_{k})=\Psi(\psi)$. 

For the locally uniformly convergence, if there is a compact set $K\subset\mathrm{int}P$
such that $\sup_{K}\left|\psi_{k}-\psi\right|\not\rightarrow0$. By
the above argument, we can find a subsequence which locally uniformly
converges to $\psi$. It is a contradiction.

Finally, we show $\mathbb{M}^{\mathrm{rel}}_{P}(\psi_{k})\ra\mathbb{M}^{\mathrm{rel}}_{P}(\psi)$.
Since $\psi_{k}\xrightarrow{L^{2}}\psi$, it suffices to show $\int_{\partial P}\psi_{k}\ra\int_{\partial P}\psi$.
For any subsequence $\psi_{k_{j}}$, we show it admits a further subsequence
so that its boundary integral converges to $\int_{\partial P}\psi$.
Then we are done. 

Choose a subsequence as in Step 1 above, and still denote it by $\psi_{k}$.
We can use all the results obtained in Step 2 and 3. Combine $\lim\Psi_{k}(\psi_{k})=\Psi(\psi)$
with (\ref{eq: normk converge}), we have $\lim L_{k}(\psi_{k})=\int_{\partial P}\psi$.
Recall $\psi_{k}=\phi_{k}+\gamma_{k}$ and $\gamma_{k}\rightrightarrows\gamma$,
it follows that 
\[
\lim_{k}L_{k}(\phi_{k})=\int_{\partial P}\phi_{*},\  \phi_{*}=\psi-\gamma.
\]
Take $\limsup_{k}$ on both sides of (\ref{eq:key usage}), we obtain
\[
\int_{\partial P}\phi_{*}=\lim_{k}L_{k}(\phi_{k})\ge\limsup_{k}\int_{\partial P}\phi_{k}.
\]
(\ref{eq: Donald liminf 2}) gives the reverse side, hence $\int_{\partial P}\phi_{k}\ra\int_{\partial P}\phi_{*}$.
This implies $\int_{\partial P}\psi_{k}\ra\int_{\partial P}\psi$
since $\gamma_{k}\rightrightarrows\gamma$. 
\end{proof}

\begin{rem}
\label{rem: fail nonDelzant}The most delicate step in the proof is
(\ref{eq:key usage}). It requires an estimate of the form: 
\[
k^{n}\int_{P}\phi\le\sum_{u\in P_{k}}w(u)\phi(u)+\frac{C_{1}}{k}\sum_{u\in P_{k}\cap\partial P}\phi(u)+\frac{C_{2}}{k^{2}}\sum_{u\in P_{k}}\phi(u)
\]
holds for all convex $\phi\ge0$, and the weight $w(u)$ satisfies
\[
\begin{cases}
w(u)=1, & u\in\mathrm{int}P,\\
w(u)=1/2, & \mathrm{cd}(u)=1,\\
w(u)\le1/2, & \mathrm{cd}(u)>1.
\end{cases}
\]
Such estimate may not exist for non-Delzant polytope, for instance
\[
P=\mathrm{conv}\left((0,0),(1,0),(1,m)\right),\ m\ge4,
\]
which is singular at the vertex $(0,0)$. We take $\phi=\max\left\{ 1-kx_{1},0\right\} $
and compute 
\[
k^{2}\int_{P}\phi=\frac{m}{6}\text{ and }\phi(u)=0,\ \forall u\in P_{k}\backslash\{(0,0)\}.
\]
The above inequality implies 
\[
\frac{2}{3}\le\frac{m}{6}\le w(0,0)+\frac{C_{1}}{k}+\frac{C_{2}}{k^{2}}.
\]
Since $w(0,0)\le1/2$, for any $C_{1},C_{2}$, we can find a sufficiently
large $k$ making this inequality fail. 
\end{rem}

\subsubsection{Prove part (2) of Theorem \ref{thm:main converge}. }
\begin{proof}
This is also a typical $\Gamma$-convergence argument as the Step
2 above, but we use $C^{0}$-topology. In the following, convergence
always means the uniform convergence on $P$.

We assume that $\{\psi_{k}\}$ is $C^{0}$-precompact. Since $\Psi_{k}(\psi)\ge\Psi_{k}(\psi_{k})$,
we have 
\[
\Psi(\psi)=\lim_{k}\Psi_{k}(\psi)\ge\limsup_{k\rightarrow\infty}\Psi_{k}(\psi_{k}),
\]
where the first equality is due to the continuity of $\Theta_{P}$
and (\ref{eq:EM}).

If one can show that each subsequence of $\psi_{k}$ has a further
subsequence which uniformly converges to $\psi$, then by a contradiction
argument, we obtain $\psi_{k}\ra\psi$ uniformly. Without loss of
generality, we only consider the sequence $\psi_{k}$ itself.

By the precompactness assumption, $\psi_{k}$ has a subsequence $\psi_{k_{j}}$
which uniformly converges to $\psi_{*}\in\mathcal{C}(P)$. It remains
to show $\psi_{*}=\psi$. Apply (\ref{eq:EM enhanced}), we have 
\[
\liminf_{j\rightarrow\infty}\Psi_{k_{j}}(\psi_{k_{j}})=\Psi(\psi_{*})\geq\Psi(\psi).
\]
We already have $\Psi(\psi)\ge\limsup_{j}\Psi_{k_{j}}(\psi_{k_{j}})$.
It forces $\Psi(\psi_{*})=\Psi(\psi)$, so $\psi_{*}=\psi$ by the
uniqueness of minimizer.
\end{proof}

\appendix

\section{Two-term asymptotics for the Riemann sum of convex functions \label{Appendix: Riemann sum}}

We establish a folklore expansion of Riemann sum for convex functions. 
\begin{thm}
Let $P\subset\mathbb{R}^{n}$ be a lattice polytope. For any $k\in\mathbb{Z}_{>0}$,
set $P_{k}=P\cap k^{-1}\mathbb{Z}^{n}$. Let $f_{k}:P\rightarrow\mathbb{R}$
be a sequence of convex functions (continuous up to $\partial P$)
which uniformly converges to $f$. Then we have 
\begin{equation}
\lim_{k\rightarrow\infty}\frac{1}{k^{n-1}}\left(\sum_{u\in P_{k}}f_{k}(u)-k^{n}\int_{P}f_{k}\ \mathrm{d}x\right)=\frac{1}{2}\int_{\partial P}f\ \mathrm{d}\sigma.\label{eq:EM enhanced}
\end{equation}
In particular, when $f_{k}=f$ for all $k$, we have expansion 
\begin{equation}
\sum_{u\in P_{k}}f(u)=k^{n}\int_{P}f\ \mathrm{d}x+\frac{k^{n-1}}{2}\int_{\partial P}f\ \mathrm{d}\sigma+\mathrm{o}(k^{n-1}).\label{eq:EM}
\end{equation}
In the above, $\mathrm{d}x$ is the standard Lebesgue measure, and
the boundary measure $\mathrm{d}\sigma$ is defined as follows. For
each facet $F\subset\partial P$ spanning affine subspace $\mathrm{Aff}(F)$,
$\mathrm{d}\sigma|_{F}$ is the Lebesgue measure normalized by the
lattice $\mathrm{Aff}(F)\cap\mathbb{Z}^{n}$. 
\end{thm}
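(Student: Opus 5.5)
First I reduce to a single uniformly convergent sequence with a common modulus of continuity. The uniform limit $f$ is convex and continuous on the compact set $P$, hence uniformly continuous. Uniform convergence then gives $\sup_P|f_k-f|\to0$. Since $|P_k|=O(k^n)$ and $|P|<\infty$, however, replacing $f_k$ by $f$ costs an error of size $k\cdot\sup|f_k-f|$ after dividing by $k^{n-1}$, which need not vanish. So I cannot simply substitute $f$. Instead I will prove the bound with constants controlled by convexity and a modulus of continuity. The strategy is to compute the functional
\[
D_k(g)\coloneqq\frac{1}{k^{n-1}}\Bigl(\sum_{u\in P_k}g(u)-k^n\int_P g\Bigr)
\]
exactly (or up to $o(1)$) on affine functions, and then to use convexity to get matching upper and lower bounds for general $g$.

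\textbf{Step 1 (affine and piecewise affine case).} For affine $\alpha$, the classical Ehrhart/Euler--Maclaurin theory for lattice polytopes gives $\sum_{u\in P_k}\alpha(u)=k^n\int_P\alpha+\frac{k^{n-1}}{2}\int_{\partial P}\alpha\,\mathrm{d}\sigma+O(k^{n-2})$. The boundary term carries the lattice-normalized measure $\mathrm{d}\sigma$, which is the second Ehrhart coefficient, weighted by $\alpha$ through the linear moment version. I would invoke this directly as known; it can also be obtained by triangulating $P$ into unimodular-up-to-scaling pieces. The same statement holds for a rational subpolytope after passing to a multiple.

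\textbf{Step 2 (sandwich by localization).} Fix $\delta>0$ and subdivide $P$ into lattice subpolytopes $Q_j$ of diameter $<\delta$, after rescaling by an integer $m$. Only the $k$ that are multiples of $m$ are compatible with this subdivision; for the other $k$ I would handle the discrepancy by a comparison, which is the delicate point. On each $Q_j$, Step 1 gives
\[
D^{Q_j}_k(\alpha)\to\tfrac12\int_{\partial Q_j}\alpha\,\mathrm{d}\sigma
\]
for affine $\alpha$, with interior faces counted from both sides. I would avoid this double counting by using half-open decompositions instead. On a piece $Q_j$, the oscillation of $f_k$ is at most $\omega(\delta)+2\|f_k-f\|_\infty$. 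This is not enough by itself, since the interior term is of order $k$. Convexity is the essential additional input. For a convex $g$ on a small simplex, I bound $g$ above by its affine interpolant $\varLambda(g)$ and below by a supporting affine function. The discrepancy $\sum g-k^n\int g$ on an interior simplex is controlled via (\ref{eq: Int tent formula})-type GKZ identities: for a triangulation $T$ of $(Q,Q\cap k^{-1}M)$, the Riemann sum minus the integral equals $\langle g,\mathbf 1-k^n|Q|g_T\rangle$. The GKZ vector of the Freudenthal--Kuhn triangulation equals $k^{-n}$ at interior points, so the interior contributions cancel exactly, and only points within $O(1/k)$ of $\partial P$ survive. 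This is the same mechanism as in Theorem~\ref{thm-fine}.

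\textbf{Step 3 (boundary layer).} After Step 2, $D_k(f_k)$ reduces to a weighted sum of $f_k$ over boundary-layer lattice points with weights of total mass matching $\frac12\int_{\partial P}\mathrm{d}\sigma$, up to $o(1)$ coming from the exact affine computation. Along each facet these points sit in a region where $f_k$ is uniformly close to $f|_{\partial P}$, because of uniform convergence on all of $P$, including the boundary. This gives convergence to $\frac12\int_{\partial P}f\,\mathrm{d}\sigma$.

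The main obstacle is Step 2 for non-Delzant/non-unimodular vertices and for $k$ not divisible by the subdivision index. Interior cancellation requires a local triangulation whose GKZ weights equal $k^{-n}$ exactly away from $\partial P$, and near lower-dimensional faces the weights are irregular. I would handle this by showing that the total mass near faces of codimension $\ge2$ is $O(k^{n-2})$ lattice points, so it contributes $o(1)$, given the uniform $L^\infty$ bound on $f_k$. Finally, (\ref{eq:EM}) follows by taking $f_k=f$.
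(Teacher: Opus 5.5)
Your proposal has a genuine gap, located at the step you state as an identity. By Proposition~\ref{prop: T integral formula}(1), $\fint_Q\varLambda_T(g)=\langle g,g_T\rangle$. Hence for any triangulation $T$ of $(Q,Q\cap k^{-1}M)$,
\[
\sum_{u}g(u)-k^{n}\int_{Q}g=\bigl\langle g,\mathbf{1}-k^{n}|Q|g_{T}\bigr\rangle+k^{n}\int_{Q}\bigl(\varLambda_{T}(g)-g\bigr).
\]
The last term vanishes only when $g$ is piecewise affine on $T$; for convex $g$ it is merely nonnegative. So even if you grant a triangulation with exact interior cancellation, your argument gives only $\liminf_k D_k(f_k)\ge\frac12\int_{\partial P}f\,\mathrm{d}\sigma$. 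This is the same one-sided mechanism as in (\ref{eq:key estimate}).

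The matching upper bound requires $k\int_P\bigl(\varLambda_{T_k}(f_k)-f_k\bigr)\,\mathrm{d}x\to0$, and nothing in the proposal proves it. Comparing with supporting affine functions on pieces of size $\delta$, or even $C/k$, bounds this defect only by $O(k\,\omega(\delta))$, respectively $O(k\,\omega(C/k))$. That is already $O(1)$ for Lipschitz $f$, as you yourself observe for the interior term. This convexity-defect estimate is the real content of the theorem, and it is where the common modulus of continuity of $\{f_k\}$ must be used. Your Step 3 (``$D_k(f_k)$ reduces to a weighted boundary-layer sum'') presupposes it.

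The paper's proof runs with the signs reversed. After a McShane extension that gives all $f_k$ a common modulus $\hat\omega$ on $\mathbb{R}^n$, it uses the cubes $C_k(u)$ centred at lattice points, which tile $\mathbb{R}^n$ independently of $P$. For interior cells, Jensen gives $f_k(u)\le k^n\int_{C_k(u)}f_k$, which is the easy direction. The $O(k^{n-1})$ boundary-layer cells carry weights $|w_k(u)|\le1$ and are handled without convexity, by approximating $f$ by a smooth function and quoting Euler--Maclaurin. The hard direction, $\sum_{u\in A_k}\bigl(k^n\int_{C_k(u)}f_k-f_k(u)\bigr)=o(k^{n-1})$, comes from bounding each term by second differences $h_u(v)=\delta_vf_k(u+v)-\delta_vf_k(u)$. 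These telescope along lattice lines to $O(|B_k|\,\hat\omega(1/k))$. A telescoping argument of this kind, using second differences of $f_k$ along the edge directions of $T$, is what you would need to control your defect term.

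Independently, your lower bound needs, for every large $k$, a triangulation of $(P,P_k)$ with $k^n|P|g_T(u)=1$ at interior points and $=\frac12$ at facet points, off an $O(k^{n-2})$ set. Freudenthal--Kuhn gives this on a unimodular simplex, but several things stand in the way in general:
\begin{itemize}
\item an arbitrary unimodular triangulation does not have constant interior weights;
\item a lattice polytope of dimension $\ge3$ need not be unimodularly triangulable;
\item the ``comparison'' you invoke for the remaining $k$ is not supplied.
\end{itemize}
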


\begin{rem}
(1) when $f$ is smooth on an open set containing $P$, (\ref{eq:EM})
is just the leading two terms of the Euler--Maclaurin expansion.
There is vast literature discussing it, see \cite{tate_asymptotic_2011}
and its references. (2) if $f$ is merely continuous, (\ref{eq:EM})
may fail even in one dimension (Weierstrass type functions). (3) If
$f$ is convex and piece-wisely linear with rational coefficients,
(\ref{eq:EM}) can be derived from the asymptotics of lattice point
counting, see \cite[Prop. 4.1.3]{donaldson_scalar_2002}. In our setting,
(\ref{eq:EM}) cannot be derived from these results via a direct approximation.
The key in the following proof is decomposing the Riemann sum into
interior part and boundary part, and we handle the boundary part via
approximation. 
\end{rem}

\begin{proof}
\textbf{Step 1}. We extend $f_{k},f$ to $\mathbb{R}^{n}$ such that
uniform convergence still holds and they share a common continuity
modulus $\hat{\omega}$, i.e. for $g=f_{k}$ and $f$, we have 
\[
\left|g(x_{1})-g(x_{2})\right|\leq\hat{\omega}\left(\left\Vert x_{1}-x_{2}\right\Vert \right),\ \forall x_{i}\in\mathbb{R}^{n}.
\]
The above norm is $\left\Vert x\right\Vert \coloneqq\max_{1\leq i\leq n}\left|x_{i}\right|$,
so we use the distance $d(x,y)=\left\Vert x-y\right\Vert $. Let 
\[
\omega_{k}(\delta)\coloneqq\sup\left\{ \left|f_{k}(x)-f_{k}(y)\right|\mid x,y\in P,\ d(x,y)\leq\delta\right\} ,
\]
be the minimal continuity modulus of $f_{k}$. By the uniform convergence,
it is easy to see 
\[
\omega(\delta)\coloneqq\sup_{k}\omega_{k}(\delta)\rightarrow0,\text{ as }\delta\rightarrow0.
\]
Thus for $g=f_{k}$ or $f$, we have 
\[
\left|g(x)-g(y)\right|\leq\omega\left(\left\Vert x-y\right\Vert \right),\ \forall x,y\in P.
\]
To apply McShane's extension, the continuity modulus is required to
be subadditive. So we take $\hat{\omega}(\delta)$ be the concave
envelope of function $0\leq\delta\mapsto\omega(\delta)$. Since $\hat{\omega}(\delta)\rightarrow0$,
it is subadditive, i.e. $\hat{\omega}(\delta_{1}+\delta_{2})\leq\hat{\omega}(\delta_{1})+\hat{\omega}(\delta_{2})$.
Then we define McShane's extension: 
\[
\tilde{f}_{k}(x)\coloneqq\inf_{y\in P}f_{k}(y)+\hat{\omega}\left(\left\Vert x-y\right\Vert \right),\ \forall x\in\mathbb{R}^{n},
\]
and $\tilde{f}(x)$ is defined similarly. It coincides with original
function on $P$. 

They share a common continuity modulus $\hat{\omega}$. For any $x_{1},x_{2}\in\mathbb{R}^{n}$
and $y\in P$, by the subadditivity, we have 
\[
\tilde{f}_{k}(x_{1})\leq f_{k}(y)+\hat{\omega}\left(\left\Vert x_{1}-y\right\Vert \right)\leq f_{k}(y)+\hat{\omega}\left(\left\Vert x_{2}-y\right\Vert \right)+\hat{\omega}\left(\left\Vert x_{1}-x_{2}\right\Vert \right).
\]
Take $\inf_{y\in P}$ on both sides, 
\[
\tilde{f}_{k}(x_{1})\leq\tilde{f}_{k}(x_{2})+\hat{\omega}\left(\left\Vert x_{1}-x_{2}\right\Vert \right).
\]
Swap $x_{1},x_{2}$, we obtain 
\[
\left|\tilde{f}_{k}(x_{1})-\tilde{f}_{k}(x_{2})\right|\leq\hat{\omega}\left(\left\Vert x_{1}-x_{2}\right\Vert \right).
\]
This also holds for $\tilde{f}$. 

Finally, we show that $\tilde{f}_{k}$ uniformly converges to $\tilde{f}$
on $\mathbb{R}^{n}$. Take any $\varepsilon>0$, we have $\left|f_{k}-f\right|<\varepsilon$
on $P$ when $k>N$. Then for any $x\in\mathbb{R}^{n}$ and $y\in P$,
we have 
\[
f(y)+\hat{\omega}\left(\left\Vert x-y\right\Vert \right)-\varepsilon\leq f_{k}(y)+\hat{\omega}\left(\left\Vert x-y\right\Vert \right)\leq f(y)+\hat{\omega}\left(\left\Vert x-y\right\Vert \right)+\varepsilon.
\]
Take $\inf_{y\in P}$ on both sides, we obtain $\left|\tilde{f}_{k}(x)-\tilde{f}(x)\right|<\varepsilon$
for all $x\in\mathbb{R}^{n}$ when $k>N$. 

We will simply denote $\tilde{f}_{k},\tilde{f}$ by $f_{k}$ and $f$
respectively. 

\textbf{Step 2}: interior-boundary decomposition. 

For a (scaled) lattice point $u\in k^{-1}\mathbb{Z}^{n}$, we define
a cell (or box)
\[
C_{k}(u)\coloneqq u+\left(-\frac{1}{2k},\frac{1}{2k}\right)^{n}=\left\{ x\in\mathbb{R}^{n}\mid\left\Vert x-u\right\Vert <\frac{1}{2k}\right\} 
\]
with volume $\left|C_{k}(u)\right|=k^{-n}$. Let 
\[
A_{k}\coloneqq\left\{ u\in k^{-1}\mathbb{Z}^{n}\mid C_{k}(u)\subset P\right\} ,\ B_{k}\coloneqq\left\{ u\in k^{-1}\mathbb{Z}^{n}\mid C_{k}(u)\cap\partial P\neq\emptyset\right\} ,
\]
refer to Figure \ref{fig: EM proof} below. A key fact: there exists a constant $C_{P}$
such that 
\[
\left|A_{k}\right|\leq C_{P}k^{n},\ \left|B_{k}\right|\leq C_{P}k^{n-1}\text{ for }\forall k\geq1.
\]
We decompose the sum and integral ($\chi_{P}$ is the indicator function),
\[
\sum_{u\in P_{k}}f_{k}(u)=\sum_{u\in A_{k}}f_{k}(u)+\sum_{u\in B_{k}}f_{k}(u)\chi_{P}(u),
\]
\begin{equation}
\int_{P}f_{k}=\sum_{u\in A_{k}}\int_{C_{k}(u)}f_{k}+\sum_{u\in B_{k}}\int_{C_{k}(u)\cap P}f_{k}.\label{eq:int f_k}
\end{equation}
We try to find cancellations between them (modulo $\mathrm{o}(k^{n-1})$-errors). 

For the integral $\int_{C_{k}(u)\cap P}f_{k}$, by the uniform continuity,
we have 
\[
\left|\int_{C_{k}(u)\cap P}\left(f_{k}(x)-f_{k}(u)\right)\mathrm{d}x\right|\leq k^{-n}\hat{\omega}\left(\frac{1}{2k}\right).
\]
Thus 
\begin{align*}
\sum_{u\in B_{k}}\int_{C_{k}(u)\cap P}f_{k} & =\sum_{u\in B_{k}}f_{k}(u)\left|C_{k}(u)\cap P\right|+r_{k},\\
\text{where }\left|r_{k}\right| & \leq\left|B_{k}\right|k^{-n}\hat{\omega}\left(\frac{1}{2k}\right)\leq\frac{C_{P}}{k}\hat{\omega}\left(\frac{1}{2k}\right).
\end{align*}
Put this back into (\ref{eq:int f_k}), we obtain 
\begin{align}
\sum_{u\in P_{k}}f_{k}(u)-k^{n}\int_{P}f_{k} & =\sum_{u\in A_{k}}\left(f_{k}(u)-k^{n}\int_{C_{k}(u)}f_{k}\right)+\sum_{u\in B_{k}}f_{k}(u)w_{k}(u)+\mathrm{o}\left(k^{n-1}\right)\nonumber \\
\text{define} & \eqqcolon k^{n-1}\mathcal{A}_{k}(f_{k})+k^{n-1}\mathcal{B}_{k}(f_{k})+\mathrm{o}\left(k^{n-1}\right),
\end{align}
where the weight $w_{k}(u)$ is 
\[
w_{k}(u)\coloneqq\chi_{P}(u)-k^{n}\left|C_{k}(u)\cap P\right|=\begin{cases}
\frac{\left|C_{k}(u)\backslash P\right|}{\left|C_{k}(u)\right|}, & u\in P,\\
-\frac{\left|C_{k}(u)\cap P\right|}{\left|C_{k}(u)\right|}, & u\notin P.
\end{cases}
\]
Note that $\left|w_{k}(u)\right|\leq1$. Now, the question is reduced
to prove 
\[
\mathcal{A}_{k}(f_{k})\rightarrow0\text{ and }\mathcal{B}_{k}(f_{k})\rightarrow\frac{1}{2}\int_{\partial P}f\mathrm{d}\sigma.
\]

\textbf{Step 3}: show 
\begin{equation}
\mathcal{B}_{k}(f_{k})\coloneqq\frac{1}{k^{n-1}}\sum_{u\in B_{k}}f_{k}(u)w_{k}(u)\rightarrow\frac{1}{2}\int_{\partial P}f\mathrm{d}\sigma.\label{eq:EM step2}
\end{equation}
In this step, the convexity of $f_{k}$ is not used. We derive (\ref{eq:EM step2})
from (\ref{eq:EM} for smooth functions) via approximation, so it
is not a direct proof. Before that, we give an intuitive but not fully
rigorous demonstration. Readers may skip this part. 

\textit{A heuristic proof of} (\ref{eq:EM step2}):

\begin{figure}[htbp]
\centering
\includegraphics[clip,scale=0.25]{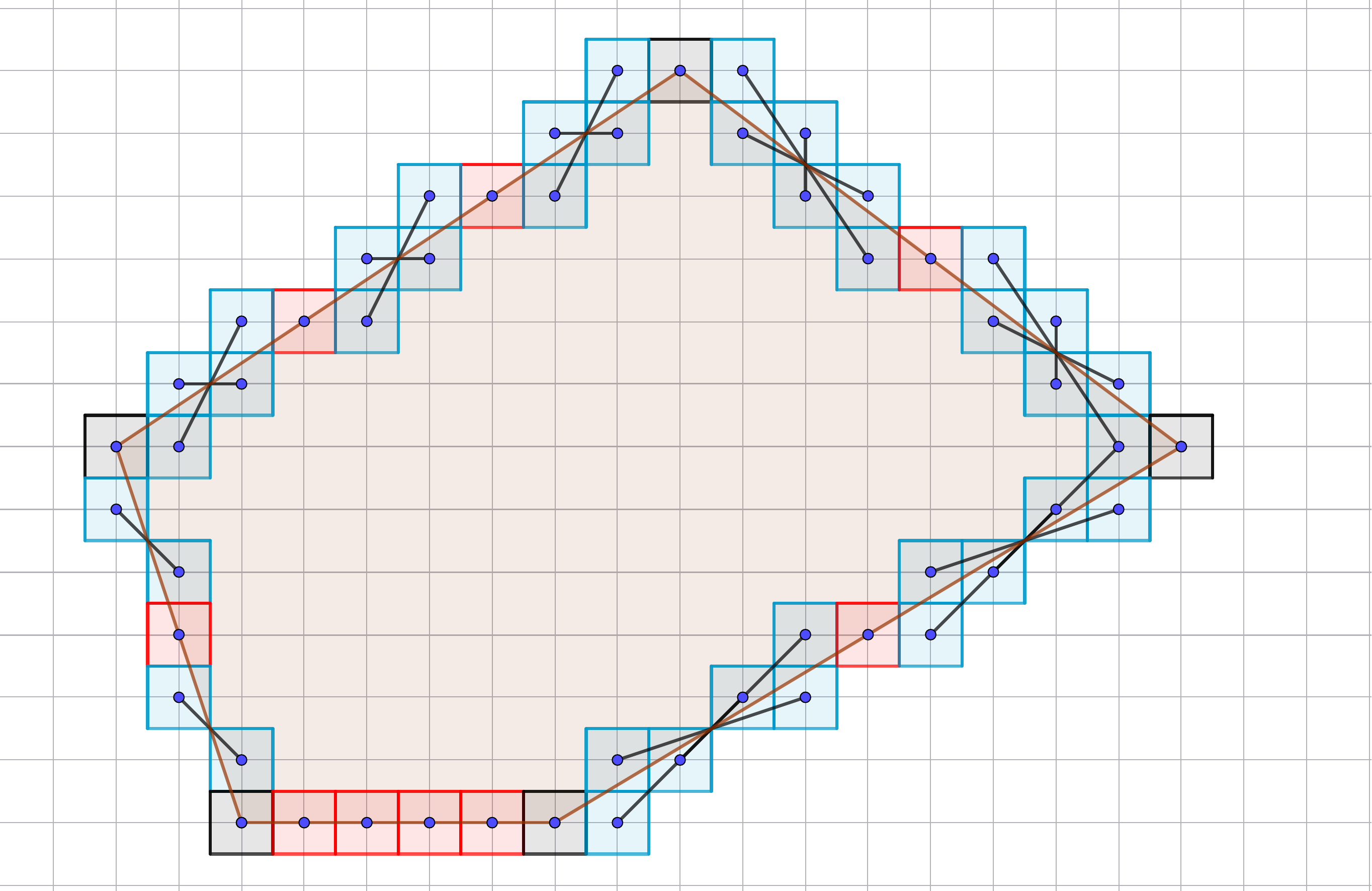}
\caption{\label{fig: EM proof} colored boxes are $C_{k}(u)$ with $u\in B_{k}$.}
\end{figure}

Let $\partial^{>1}P$
be the union of faces of $P$ with codimension $>1$, and we set 
\[
B^{>1}_{k}\coloneqq\left\{ u\in B_{k}\mid d_{\infty}\left(u,\partial^{>1}P\right)<\frac{1}{k}\right\} 
\]
(black cells in Figure \ref{fig: EM proof}). Note that $\left|B^{>1}_{k}\right|=\mathrm{O}\left(k^{n-2}\right)$,
so we have $\sum_{u\in B^{>1}_{k}}f_{k}(u)w_{k}(u)=\mathrm{O}\left(k^{n-2}\right)$,
so $B^{>1}_{k}$ summand makes no contribution to $\lim_{k}\mathcal{B}_{k}(f_{k})$. 

Consider the other summands with $u\in B_{k}\backslash B^{>1}_{k}$,
$u$ may lie either inside, outside or on the boundary of $P$. We
claim that only the summands that $u\in\partial P$ make contribution
to the limit $\lim_{k}\mathcal{B}_{k}$. As the figure illustrates,
except $\mathrm{O}\left(k^{n-2}\right)$ points, each $u\in\mathrm{int}P\backslash B^{>1}_{k}$
can be assigned a partner $u'$ outside of $P$ such that 
\[
w_{k}(u)=-w_{k}(u')\text{ and }d_{\infty}(u,u')\leq\frac{C}{k}.
\]
In Figure \ref{fig: EM proof}, partner points are connected by a line segment. For
the associated two terms in the sum $\mathcal{B}_{k}(f)$, we have
\[
\left|f_{k}(u)w_{k}(u)+f_{k}(u')w_{k}(u')\right|=\left|f_{k}(u)-f_{k}(u')\right|\left|w_{k}(u)\right|\leq\hat{\omega}\left(\frac{C}{k}\right).
\]
On the other hand, for $u\in B^{\partial}_{k}\coloneqq\left(B_{k}\cap\partial P\right)\backslash B^{>1}_{k}$,
we have $w_{k}(u)=\frac{1}{2}$. Combining all these estimates, we
conclude 
\[
\lim_{k\rightarrow\infty}\mathcal{B}_{k}(f_{k})=\lim_{k\rightarrow\infty}\frac{1}{k^{n-1}}\sum_{u\in B^{\partial}_{k}}\frac{1}{2}f_{k}(u)=\frac{1}{2}\int_{\partial P}f\mathrm{d}\sigma.
\]

\textit{Rigorous proof of} (\ref{eq:EM step2}). For any $\varepsilon>0$,
take a smooth function $g$ on $\mathbb{R}^{n}$ such that 
\[
\left|f-g\right|\leq\varepsilon\text{ on }U\left(P,1\right)\coloneqq\left\{ x\in\mathbb{R}^{n}\mid d(x,P)<1\right\} .
\]
We will show $\mathcal{A}_{k}(g)\rightarrow0$ below. Combine it with
(\ref{eq:EM}) for smooth functions \cite{tate_asymptotic_2011},
we have $\mathcal{B}_{k}(g)\rightarrow\frac{1}{2}\int_{\partial P}g\mathrm{d}\sigma$.
Hence there exists $K>0$ (depending on $g$) such that when $k>K$,
we have 
\[
\left|\mathcal{B}_{k}(g)-\frac{1}{2}\int_{\partial P}g\mathrm{d}\sigma\right|<\varepsilon,\ B_{k}\subset U\left(P,1\right),
\]
\[
\text{and }\left|f_{k}-f\right|<\varepsilon\text{ on }\mathbb{R}^{n}.
\]
Thus $\left|f_{k}-g\right|\leq2\varepsilon$ on $U\left(P,1\right)$.
These imply 
\begin{align*}
\left|\mathcal{B}_{k}(f_{k})-\frac{1}{2}\int_{\partial P}f\mathrm{d}\sigma\right| & \leq\left|\mathcal{B}_{k}(f_{k})-\mathcal{B}_{k}(g)\right|+\left|\mathcal{B}_{k}(g)-\frac{1}{2}\int_{\partial P}g\mathrm{d}\sigma\right|+\frac{1}{2}\int_{\partial P}\left|g-f\right|\mathrm{d}\sigma\\
 & \leq\frac{1}{k^{n-1}}\sum_{u\in B_{k}}\left|f_{k}(u)-g(u)\right|+\varepsilon+\frac{\varepsilon}{2}\left|\partial P\right|_{\mathrm{d}\sigma}\\
 & \leq\frac{2\varepsilon}{k^{n-1}}\left|B_{k}\right|+\varepsilon+\frac{\varepsilon}{2}\left|\partial P\right|_{\mathrm{d}\sigma}\leq C^{\prime}_{P}\varepsilon,\text{ for }k>K.
\end{align*}
Thus (\ref{eq:EM step2}) holds. 

Now we show $\mathcal{A}_{k}(g)\rightarrow0$. Since $g$ is smooth,
near each $u\in A_{k}$, we have Taylor expansion 
\begin{equation}
g(x)=g(u)+\nabla g(u)\cdot(x-u)+R_{u}(x),\ \left|R_{u}(x)\right|\leq C_{g}\left\Vert x-u\right\Vert ^{2},\label{eq:taylor}
\end{equation}
where the constant $C_{g}$ depends on the $C^{2}$-norm of $g$.
Integrating both sides over $C_{k}(u)$, we obtain 
\[
\int_{C_{k}(u)}g(x)\mathrm{d}x=k^{-n}g(u)+\int_{C_{k}(u)}R_{u}(x)\mathrm{d}x,
\]
where the integral of linear term vanishes by symmetry. Hence 
\[
\left|g(u)-k^{n}\int_{C_{k}(u)}g\right|\leq k^{n}\int_{C_{k}(u)}\left|R_{u}(x)\right|\mathrm{d}x\leq C_{g}k^{n}\int_{C_{k}(u)}\left\Vert x-u\right\Vert ^{2}\mathrm{d}x=\mathrm{O}\left(k^{-2}\right).
\]
It implies $\left|\mathcal{A}_{k}(g)\right|\leq\frac{1}{k^{n-1}}\left|A_{k}\right|\mathrm{O}\left(k^{-2}\right)=\mathrm{O}\left(k^{-1}\right)\rightarrow0$.

\textbf{Step 4}: show 
\[
-\mathcal{A}_{k}(f_{k})\coloneqq\frac{1}{k^{n-1}}\sum_{u\in A_{k}}\left(k^{n}\int_{C_{k}(u)}f_{k}-f_{k}(u)\right)\rightarrow0.
\]
By the convexity of $f_{k}$, each summand is nonnegative, thus $-\mathcal{A}_{k}(f_{k})\ge0$.
To bound $-\mathcal{A}_{k}(f_{k})$ from the above, we need to consider
more interior regions. Let 
\[
A^{\mathrm{in}}_{k}\coloneqq\left\{ u\in P_{k}\mid d_{\infty}(u,\partial P)\ge k^{-1}\right\} \subset A_{k}.
\]
Note that $\left|A_{k}\backslash A^{\mathrm{in}}_{k}\right|\le C_{P}k^{n-1}$,
this implies the sum over $A_{k}\backslash A^{\mathrm{in}}_{k}$ in
$-\mathcal{A}_{k}(f_{k})$ converges to zero, since we have 
\[
\sum_{u\in A_{k}\backslash A^{\mathrm{in}}_{k}}\left(k^{n}\int_{C_{k}(u)}f_{k}-f_{k}(u)\right)\le k^{n}\sum_{u\in A_{k}\backslash A^{\mathrm{in}}_{k}}\int_{C_{k}(u)}\left|f_{k}-f_{k}(u)\right|\le\left|A_{k}\backslash A^{\mathrm{in}}_{k}\right|\hat{\omega}\left(\frac{1}{2k}\right).
\]
It remains to handle the part $\sum_{u\in A^{\mathrm{in}}_{k}}$. 

For $u\in A^{\mathrm{in}}_{k}$, we define a function
\[
h_{u}(x)\coloneqq f_{k}(u+x)+f_{k}(u-x)-2f_{k}(u),\ \left\Vert x\right\Vert \le k^{-1},
\]
which will play the role of $R_{u}(x)$ in (\ref{eq:taylor}). Since
$d_{\infty}(u,\partial P)\ge k^{-1}$ and $f_{k}$ is convex on $P$,
$h_{u}(x)$ is convex on $\left\{ \left\Vert x\right\Vert \le k^{-1}\right\} $
and $h_{u}(x)\geq0=h_{u}(0)$. 

For $u\in A^{\mathrm{in}}_{k}$, by the convexity, we have 
\begin{align*}
k^{n}\int_{C_{k}(u)}f_{k}\mathrm{d}x-f_{k}(u) & =\frac{k^{n}}{2}\int_{C_{k}(0)}h_{u}(x)\mathrm{d}x\leq_{a}\frac{1}{2}\max_{v\in\left\{ \pm\frac{1}{2k}\right\} ^{n}}h_{u}(v)\leq\frac{1}{2}\sum_{v\in\left\{ \pm\frac{1}{2k}\right\} ^{n}}h_{u}(v)\\
 & \leq_{b}\frac{1}{4}\sum_{v\in\left\{ \pm\frac{1}{2k}\right\} ^{n}}h_{u}(2v)=\frac{1}{4}\sum_{v\in\left\{ \pm\frac{1}{k}\right\} ^{n}}h_{u}(v)
\end{align*}
where $\leq_{a}$ is because that a convex function on $C_{k}(0)$
attains its maximum on some vertex, and $\leq_{b}$ also uses convexity:
$2h_{u}(v)\leq h_{u}(0)+h_{u}(2v)$. Take the sum over $u\in A^{\mathrm{in}}_{k}$,
\begin{equation}
\sum_{u\in A^{\mathrm{in}}_{k}}\left(k^{n}\int_{C_{k}(u)}f_{k}-f_{k}(u)\right)\leq\frac{1}{4}\sum_{u\in A^{\mathrm{in}}_{k}}\sum_{v\in\left\{ \pm\frac{1}{k}\right\} ^{n}}h_{u}(v)=\frac{1}{4}\sum_{v\in\left\{ \pm\frac{1}{k}\right\} ^{n}}\sum_{u\in A^{\mathrm{in}}_{k}}h_{u}(v).\label{eq:estimate A_k}
\end{equation}
There is a lot of cancelation in the sum $\sum_{u\in A^{\mathrm{in}}_{k}}h_{u}(v)$.
Define the difference operator 
\[
\delta_{v}f_{k}(u)\coloneqq f_{k}(u)-f_{k}(u-v),
\]
then we have 
\begin{equation}
\left|\delta_{v}f_{k}(u)\right|\leq\hat{\omega}\left(\frac{1}{k}\right),\text{ for }v\in\left\{ \pm\frac{1}{k}\right\} ^{n}.\label{eq: controll diff operator}
\end{equation}
Rewrite 
\[
h_{u}(v)=\left(f_{k}(u+v)-f_{k}(u)\right)-\left(f_{k}(u)-f_{k}(u-v)\right)=\delta_{v}f_{k}(u+v)-\delta_{v}f_{k}(u).
\]
Fix a direction $v\in\left\{ \pm\frac{1}{k}\right\} ^{n}$, consider
the points $u\in A^{\mathrm{in}}_{k}$ lying on a chain $\left\{ b+lv\mid l\in\mathbb{Z}\right\} $,
$b\in B_{k}$. After cancelation, the sum 
\[
\sum_{l:\,u=b+lv\in A^{\mathrm{in}}_{k}}h_{u}(v)=\sum_{l:\,u=b+lv\in A^{\mathrm{in}}_{k}}\delta_{v}f_{k}\left(b+(l+1)v\right)-\delta_{v}f_{k}\left(b+lv\right)
\]
is reduced to the sum of two end terms, which is controlled by $2\hat{\omega}\left(k^{-1}\right)$
using (\ref{eq: controll diff operator}). Hence we have 
\[
\sum_{u\in A^{\mathrm{in}}_{k}}h_{u}(v)\leq\sum_{b\in B_{k}}\sum_{l:\,b+lv\in A^{\mathrm{in}}_{k}}h_{b+lv}\left(v\right)\leq\sum_{b\in B_{k}}2\hat{\omega}\left(\frac{1}{k}\right)\leq2\left|B_{k}\right|\hat{\omega}\left(\frac{1}{k}\right).
\]
Bring this estimate back to (\ref{eq:estimate A_k}), we conclude
\[
\frac{1}{k^{n-1}}\sum_{u\in A^{\mathrm{in}}_{k}}\left(k^{n}\int_{C_{k}(u)}f_{k}-f_{k}(u)\right)\leq\frac{2^{n}}{4k^{n-1}}2\left|B_{k}\right|\hat{\omega}\left(\frac{1}{k}\right)\rightarrow0.
\]
\end{proof}

\section{Algorithm for the shortest GKZ vectors \label{Appendix:Algorithm}}

If the point set $A$ is large, the secondary polytope $\Sigma(A)$
would be a huge monster. The number of vertices (i.e. the number of
regular triangulations) is at least exponential with respect to $\left|A\right|$.
Actually, \cite{fernandez_associahedra_2025} shows that any $n$
points in general position on the plane has at least 
\[
C_{n-2}:=\frac{1}{n+1}\binom{2n-4}{n-2}\sim4^{n}n^{-3/2}
\]
regular triangulations, where $C_{n-2}$ is Catalan number, the number
of triangulations for the regular $n$-gon. Hence, to find $\sigma_{A}$,
we need an algorithm without visiting the entire $\Sigma(A)$.

Given $\rho\in\Sigma(A)$, Lemma \ref{lem: min-max} (1) says that
$\rho=\sigma_{A}$ iff $\min_{g\in\Sigma(A)}\left\langle \rho,g\right\rangle =\left\Vert \rho\right\Vert ^{2}$.
Hence we use 
\[
\delta(\rho)\coloneqq\left\langle \rho,\rho\right\rangle -\min_{g\in\Sigma(A)}\left\langle \rho,g\right\rangle \ge0
\]
to measure the deviation of $\rho$ from $\sigma_{A}$. Actually,
we have estimate 
\[
\left\Vert \rho-\sigma_{A}\right\Vert \le\sqrt{2\delta(\rho)}.
\]
This is because 
\begin{align*}
\left\Vert \rho-\sigma_{A}\right\Vert ^{2} & =\left\Vert \rho\right\Vert ^{2}+\left\Vert \sigma_{A}\right\Vert ^{2}-2\left\langle \rho,\sigma_{A}\right\rangle \\
 & \le2\left\Vert \rho\right\Vert ^{2}-2\min_{g\in\Sigma(A)}\left\langle \rho,g\right\rangle =2\delta(\rho).
\end{align*}
The good news is that we can compute $\delta(\rho)$ without knowing
$\Sigma(A)$. Take any triangulation $T\prec S(\rho)$. By Theorem
\ref{thm: structure of Sigma A} (3), we have $\min_{g\in\Sigma(A)}\left\langle \rho,g\right\rangle =\left\langle \rho,g_{T}\right\rangle $. 

The following is the flowchart. $\mathtt{sGKZ}$ \cite{yao_program_2025}
is a computer program based on this algorithm, see $\mathtt{QUICKSTART}$
to run an example. 

$\ $

\noindent\fbox{\begin{minipage}[t]{1\columnwidth - 2\fboxsep - 2\fboxrule}%
sGKZ algorithm:

Input: marked polytope $(Q,A)$, error tolerance $\epsilon$.
\begin{description}
\item [{Step1}] an initial triangulation $T_{0}$ of $(Q,A)$. Set $\mathbb{V}=\{g_{T_{0}}\}$,
which is the set of collected GKZ vectors.
\item [{Step2}] Solve a quadratic programming problem (numerically, not
exactly): 
\begin{equation}
\rho=\mathrm{argmin}\left\{ \left|g\right|^{2}\mid g\in\mathrm{conv}(\mathbb{V})\subset\Sigma(A)\right\} .\label{eq: QP problem}
\end{equation}
\item [{Step3}] Check the deviation $\delta(\rho)$ and enlarge $\mathbb{V}$.
Take a regular triangulation $T\prec S(\rho)$. By the above discussion,
we have 
\[
\delta(\rho)\coloneqq\left\langle \rho,\rho\right\rangle -\left\langle \rho,g_{T}\right\rangle .
\]
If $\delta(\rho)<\epsilon$, jump to \textbf{Step 4}. Otherwise, add
$g_{T}$ into $\mathbb{V}$ and jump to \textbf{Step 2}. More efficiently,
we may add several $g_{T}$ with $T\prec S(\rho)$ into $\mathbb{V}$.
\item [{Step4}] Solve (\ref{eq: QP problem}) exactly over rational numbers,
its solution is $\rho_{e}$. Compute $T_{e}\prec S(\rho_{e})$ and
$\delta(\rho_{e})$ as above. If $\delta(\rho_{e})=0$, output $\sigma_{A}=\rho_{e}$.
Otherwise, add $g_{T_{e}}$ into $\mathbb{V}$ and jump to \textbf{Step
2}.
\end{description}
\end{minipage}}

\printbibliography

\end{document}